\newtheorem{theorem}{Theorem}[section]
\newtheorem{thm}{Theorem}
\newtheorem{cor}[thm]{Corollary}
\newtheorem{lemma}[theorem]{Lemma}
\newtheorem{conjecture}[theorem]{Conjecture}
\newtheorem{corollary}[theorem]{Corollary}
\newtheorem{proposition}[theorem]{Proposition}
\newtheorem{claim}[theorem]{Claim}
\theoremstyle{definition}
\newtheorem{definition}[theorem]{Definition}
\newtheorem{example}[theorem]{Example}
\theoremstyle{remark}
\newtheorem{remark}[theorem]{Remark}
\newtheorem{remarks}[theorem]{Remarks}
\numberwithin{equation}{section}
\renewcommand{\theequation}{\arabic{section}-\arabic{equation}}
\def\wt{\widetilde}
\def\ol{\overline}
\def\lra{\longrightarrow}
\def\({$($}
\def\){$)$}
\def\chit{\chi_{\rm top}}
\def\bbp{\mathbb P}
\def\cala{\mathcal A}
\def\calf{\mathcal F}
\def\calh{\mathcal H}
\def\calm{\mathcal M}
\def\calo{\mathcal O}
\def\calr{\mathcal R}
\def\cals{\mathcal S}
\def\calt{\mathcal T}
\def\calth{\mathcal T \mathcal H}
\def\calx{\mathcal X}
\def\Pic{\text{{\rm Pic\,}}}
\def\rank{\text{{\rm rank\,}}}
\def\v{\vspace{0.15cm}}
\newcommand{\adele}{{{\mathbb{A}}_f}}
\newcommand{\Cores}{\mathrm{Cor}}
\newcommand{\Hbb}{\mathbb{H}}
\newcommand{\Qbb}{\mathbb{Q}}
\newcommand{\Rbb}{\mathbb{R}}
\newcommand{\Zbb}{\mathbb{Z}}
\newcommand{\Zbhat}{\hat{\Zbb}}
\newcommand{\Mat}{\mathrm{Mat}}
\newcommand{\Mcal}{\mathcal{M}}
\newcommand{\Acal}{\mathcal{A}}
\newcommand{\Hscr}{\mathscr{H}}
\newcommand{\Cbb}{\mathbb{C}}
\newcommand{\la}{\leftarrow}
\newcommand{\ra}{\rightarrow}
\newcommand{\mono}{\hookrightarrow}
\newcommand{\Tcal}{\mathcal{T}}
\newcommand{\GSp}{\mathrm{GSp}}
\newcommand{\Sp}{\mathrm{Sp}}
\newcommand{\bsh}{\backslash}
\newcommand{\isom}{\simeq}
\newcommand{\Gbb}{\mathbb{G}}
\newcommand{\mrm}{\mathrm{m}}
\newcommand{\Gbf}{\mathbf{G}}
\newcommand{\Hbf}{\mathbf{H}}
\newcommand{\Xcal}{\mathcal{X}}
\newcommand{\Sbb}{\mathbb{S}}
\newcommand{\Res}{\mathrm{Res}}
\newcommand{\ad}{\mathrm{ad}}
\newcommand{\Ad}{\mathrm{Ad}}
\newcommand{\GL}{\mathrm{GL}}
\newcommand{\gfrak}{\mathfrak{g}}
\newcommand{\Lie}{\mathrm{Lie}}
\newcommand{\SL}{\mathrm{SL}}
\newcommand{\ot}{\overset}
\newcommand{\der}{\mathrm{der}}
\newcommand{\inv}{{-1}}
\newcommand{\ubf}{\mathbf{u}}
\newcommand{\Ker}{\mathrm{Ker}}
\newcommand{\Vbb}{\mathbb{V}}
\newcommand{\End}{\mathrm{End}}
\newcommand{\SU}{\mathrm{SU}}
\newcommand{\xbar}{\bar{x}}
\newcommand{\Spec}{\mathrm{Spec}\,}
\begin{document}

\title[Oort conjecture on Shimura curves in Torelli locus]
{The Oort conjecture on Shimura curves in the Torelli locus of curves}

%    Information for first author
\author{Xin Lu}
%    Address of record for the research reported here
\address{Institut f\"ur Mathematik, Universit\"at Mainz, Mainz, Germany, 55099}

\email{lvxinwillv@gmail.com}
%    \thanks will become a 1st page footnote.
\thanks{This work is supported by SFB/Transregio 45 Periods, Moduli Spaces and Arithmetic of Algebraic Varieties of the DFG (Deutsche Forschungsgemeinschaft),
and partially supported by National Key Basic Research Program of China (Grant No. 2013CB834202) and NSFC}

%    Information for second named author
\author{Kang Zuo}
\address{Institut f\"ur Mathematik, Universit\"at Mainz, Mainz, Germany, 55099}
\email{zuok@uni-mainz.de}

%    General info
\subjclass[2010]{Primary 11G15, 14G35, 14H40; Secondary 14D07, 14K22}
%14D07 Variation of Hodge structures
%11G15 Complex multiplication and moduli of abelian varieties
%14K22 Complex multiplication
%14G35 Modular and Shimura varieties
%14H40 Jacobians, Prym varieties
%14H10 Families, moduli (algebraic)///curves

\date{August 18, 2014}

%\dedicatory{This paper is dedicated to our advisors.}

\keywords{Shimura curves, Torelli locus, complex multiplication, Jacobians, families.}

\maketitle

\begin{abstract}
Oort has conjectured that there do not exist Shimura curves
contained generically in the Torelli locus of genus-$g$ curves when
$g$ is large enough. In this paper we prove the Oort conjecture for
Shimura curves of Mumford type and Shimura curves parameterizing
principally polarized $g$-dimensional abelian varieties
isogenous to $g$-fold self-products of elliptic curves for $g>11$. We also prove that
there do not exist Shimura curves contained generically in the
Torelli locus of hyperelliptic curves of genus $g>7$. As a
consequence, we obtain a finiteness result regarding smooth genus-$g$ curves with
completely decomposable Jacobians, which is related to a question of Ekedahl and Serre.
\end{abstract}

\tableofcontents

\section{Introduction}\label{sectionintroduction}
This paper is devoted to the study of the conjecture of Oort on special subvarieties of the Siegel modular variety
that are contained in the Torelli locus.
In this section we state the main results and explain the basic idea of the proofs.

\subsection{Oort's conjecture}

We start with the conjectures of Coleman and Oort. A more thorough survey of the subject
is found in the beautiful paper \cite{mo11}.

Fix $n\geq 3$ an integer, we have  $\calm_g=\calm_{g,[n]}$ the moduli space of smooth projective curves over
complex number $\mathbb C$ of genus $g\geq 2$ with a full level $n$-structure,
and  $\cala_g=\cala_{g,[n]}$  the moduli space of $g$-dimensional
principally polarized  abelian varieties over $\mathbb C$ with full level-$n$ structure.
In this paper we treat them as the moduli schemes over $\mathbb C$ of the corresponding moduli functors.
No specific choice of the level $n(\geq3)$ is made because it is only imposed to assure the representability,
which plays no essential role in our study.

Recall that the Torelli morphism $$j^\circ:~\calm_g\lra \cala_g$$ associates to a curve
its Jacobian with its canonical principal polarization and level structure.
The image of $j^\circ$, denoted as $\calt_g^\circ$, is a locally closed subvariety in $\cala_g$,
whose closure is denoted as $\calt_g$.
$\calt_g$ is called the Torelli locus (in $\cala_g$) and $\calt_g^\circ$ is referred as the open Torelli locus.
We also have the Torelli locus $\calth_g\subseteq \calt_g$ of hyperelliptic curves corresponding to Jacobians of hyperelliptic curves.

A closed subvariety $Z\subseteq\cala_g$ of positive dimension is said to be {\it contained generically}
in the Torelli locus
(resp. the Torelli locus of hyperelliptic curves), written as $Z\Subset \calt_g$ (resp. $Z\Subset \calth_g$),
if $Z\subseteq\calt_g$ and $Z\cap\calt_g^\circ\neq\emptyset$
(resp. $Z\subseteq\calth_g$ and $Z\cap\calt_g^\circ\neq\emptyset$).

As is explained in \cite{hida shimura,milne05,moonen98},
the moduli scheme $\Acal_g=\Acal_{g,[n]}$ is isomorphic to a connected Shimura variety,
namely a geometrically connected component of the Shimura variety defined by the Shimura datum $(\GSp_{2g},\Hscr_g^\pm)$
associated to the group of symplectic similitude $\GSp_{2g}$, using some compact open subgroup
$K(n)\subset \GSp_{2g}(\adele)$, cf. Example \ref{Siegel modular variety}.
In $\cala_{g}$ there are special subvarieties and totally geodesic subvarieties,
the definitions of which are given later in Section \ref{sectiondefspecial}.
Special subvarieties are totally geodesic subvarieties containing CM points, cf. \cite{moonen98}.
They are of particular interest because on the one hand they are locally symmetric in the sense of differential geometry,
and on the other hand they parameterize abelian varieties with prescribed Hodge classes, cf.\cite{deligne milne}.
Special subvarieties of dimension zero are exactly CM points,
i.e. points in $\cala_{g}$ that parameterize abelian varieties with complex multiplication.

It was conjectured by Coleman \cite{coleman87}, that when the genus $g$ is sufficiently large,
there should be at most finitely many CM points on $\cala_g$ contained in the open Torelli locus $\calt_g^\circ$.
Oort \cite{oort97} made the following conjecture by combining Coleman's idea with the conjecture of Andr\'e-Oort:

\begin{conjecture}[Oort]\label{conjOort}
For $g$ large, there does not exist a special subvariety of positive dimension contained
generically in the Torelli locus $\calt_g$.
\end{conjecture}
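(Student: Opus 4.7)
The plan is to proceed in two stages: first reduce the conjecture to the case of Shimura curves, then rule those out for $g$ large via a Higgs-bundle and Arakelov analysis of the associated family of Jacobians. Suppose $Z \Subset \calt_g$ is a special subvariety of positive dimension. Since $Z$ is locally symmetric, Shimura subcurves of $Z$ are dense, so at least one such curve $C$ meets the open set $Z \cap \calt_g^\circ$, and then $C \Subset \calt_g$. It therefore suffices to show that for $g$ sufficiently large no Shimura curve satisfies $C \Subset \calt_g$; from here on $C$ denotes such a hypothetical curve.

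After a finite \'etale base change $C' \to C$ and Mumford's semistable reduction one obtains a semistable family $f\colon S \to \ol{C'}$ of genus-$g$ curves whose relative Jacobian realises the weight-one $\Qbb$-VHS carried by $C$. Let $\Gbf$ be its generic Mumford-Tate group. The Hodge bundle $E^{1,0} := f_*\omega_{S/\ol{C'}}$ splits as $E^{1,0} = U \oplus A$, where the flat unitary summand $U$ comes from the compact simple factors of $\Gbf^{\der}$ and the ample summand $A$ carries the whole Higgs field
\[ \theta\colon A \lra (E^{1,0})^\vee \otimes \Omega^1_{\ol{C'}}(\log \Delta). \]
By Torelli, $\theta$ at each smooth fibre is identified with multiplication against quadratic differentials on that curve, so its rank and image are controlled by the canonical geometry of a generic fibre.

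Two numerical ingredients then collide. The Arakelov equality forced on Shimura subcurves pushes $\deg A$ to the maximal slope $\tfrac12 \rank(A)\cdot \deg \Omega^1_{\ol{C'}}(\log \Delta)$, while the Kodaira-Spencer / Torelli description, combined with Clifford-type bounds on the canonical map of a smooth fibre, caps $\rank(A)$ from above. Plugging these into the Satake classification of holomorphic embeddings of Hermitian symmetric domains into $(\GSp_{2g},\Hscr_g^\pm)$ yields, for every admissible $\Gbf$, a bound of the shape $g \leq c(\Gbf)$ depending only on the Shimura datum.

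The main obstacle is the \emph{uniformity} in $\Gbf$: collapsing this case-by-case analysis into a single threshold $g_0$ valid for every special subvariety requires a uniform estimate on how large the unitary summand $U$ can grow relative to $g$ without being excluded by Torelli. It is precisely at this point that the present paper retreats to the three restricted families of Shimura data it treats. I would attack the general case by coupling a refined analysis of the second fundamental form of $U \subset E^{1,0}$ with Nori-type statements on the Gauss map of the Jacobian, aiming to bound $\rank(U)$ and $\rank(A)$ simultaneously across all admissible Shimura data.
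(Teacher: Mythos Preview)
This statement is Conjecture~\ref{conjOort}, which the paper does \emph{not} prove: it is stated as an open conjecture, and the paper establishes only the partial results Theorems~\ref{mainthm1}--\ref{mainthm3} for three restricted classes of Shimura curves. You implicitly acknowledge this yourself when you write that ``the present paper retreats to the three restricted families of Shimura data it treats.'' So what you have written is not a proof but a strategy sketch, and it should be read as such.

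Two places in the sketch need work even as a strategy. First, the reduction step ``Shimura subcurves of $Z$ are dense, so at least one such curve $C$ meets $Z\cap\calt_g^\circ$'' is not free. You need that every special subvariety of positive dimension in $\cala_g$ contains at least one Shimura curve; once you have one, Hecke translation gives density (cf.\ Remark~\ref{remarkheckedense}). But the existence of a single sub-Shimura-datum of dimension one inside an arbitrary $(\Gbf',X')$ is a statement about $\Qbb$-subgroups of $\Gbf'$, and for $\Qbb$-simple groups of real rank one (e.g.\ $\Qbb$-forms with $\Gbf'^{\der}(\Rbb)\cong\SU(n,1)$ or $SO(n,2)$) this requires genuine input---the paper points to the constructions of Kudla and Kudla--Rapoport in Section~1.5 precisely because this step is nontrivial.

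Second, and more seriously, the analytic core of your argument is left open. The paper's partial results do \emph{not} proceed via ``Clifford-type bounds on the canonical map'' or the ``Satake classification''; they combine the Arakelov equality on the Shimura side with two surface-theoretic inequalities on the Torelli side (a Miyaoka--Yau type bound, Theorems~\ref{thmupper2} and~\ref{thmupper1}, and sharp slope inequalities, Theorems~\ref{thmlower2}--\ref{thmlower1}) to force a strict Arakelov inequality for any curve $C\Subset\calt_g$ of the relevant type. Your proposed route through bounding $\rank U$ and $\rank A$ via the second fundamental form and Nori-type Gauss-map arguments is a different program entirely, and as you say, the uniformity in $\Gbf$ is exactly the missing piece. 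Until that uniformity is supplied, the sketch does not yield the conjecture---nor does the paper.
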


The Andr\'e-Oort conjecture predicts that  in a Shimura variety,
a closed geometrically irreducible subvariety is special if and only if it contains a Zariski dense subset of CM points.
It is thus immediate that the formulation of Oort is equivalent to the one by Coleman modulo Andr\'e-Oort.
The readers are referred to \cite{noot06}, \cite{scanlon bourbaki}, \cite{yafaev bordeaux}, etc.
for surveys on the recent progress towards the Andr\'e-Oort conjecture.

\subsection{Progress on the Oort conjecture}
Although Coleman made his conjecture for $g\geq 4$, counterexamples have been found for $4\leq g\leq 7$.
However, if one aims at the non-existence of special subvarieties of a certain type with $g$ sufficiently large,
then much more evidence is available, as the concrete ``type'' of the special subvarieties often imposes constraints
so that no universal family of curves could be produced from such special subvarieties subject to the Torelli morphism
(cf. \cite{chai-oort,mo11}).

In \cite{hain99}, based on the properties of the mapping class groups (cf. \cite{farbmasur}),
Hain proved, under some natural technical assumptions,
that a special subvariety $Z$ of $\cala_g$ should be either a ball quotient or
that its intersection with the Torelli locus of hyperelliptic curves $\calth_g$ should be a divisor in $Z$.
This suggests that the ball quotients should play a special role in the study of the conjecture.
Building on Hain's method, de Jong and Zhang \cite{djongzh07} have proved that
$Z$ cannot be a Hecke translate of a Hilbert modular subvariety if $g\geq 5$,
which has also been proved for $g=4$ by Bainbridge and M\"oller \cite{bainbridgemoeller14} using degeneration techniques
and independent of properties of the mapping class groups.

In \cite{djongnoot89},
de Jong and Noot have proposed an approach based on an $p$-adic obstruction constructed by  Dwork-Ogus \cite{do86}
and they proved that  base varieties of  some specific
universal families  of curves arising from cyclic covers of $\mathbb P^1$ are not contained generically in $\calt_g$.
Recently,  Moonen \cite{moonen10} has extended  de Jong and Noot's result and proved that there are exactly twenty
families of curves coming from cyclic covers of $\mathbb P^1$ such that the base varieties
are contained generically in $\calt_g$, which implies that Conjecture \ref{conjOort} holds if
the corresponding special subvarieties arise from a universal cyclic cover of $\bbp^1$.

In \cite{kukulies10},  Kukulies proved  Oort's conjecture  for rational Shimura curves parameterizing principally
polarized abelian varieties isogenous to $g$-fold self-product of elliptic curves for $g\gg 0$.
His approach combines ideas of M\"oller, Viehweg and the second named author on the characterization of Shimura curves
and the Sato-Tate conjecture for modular curves, which is of arithmetic nature.

There has been also other progress on the conjecture, cf.
\cite{cfg13,frghpe14,grumoeller,mohajerzuo,moller11,peters14,rohde}
and further discussions in \cite{mo11}.

\subsection{Main results}\label{sectionmainresults}
In this paper we focus on the conjecture of Oort for Shimura curves of the following types::
\begin{enumerate}
\item[I.] Shimura curves
parameterizing principally polarized $g$-dimensional abelian varieties
that are isogenous to a $g$-fold self-product of some elliptic curve;%\vspace{0.1cm}
\item[II.] Shimura curves of Mumford type;%\vspace{0.1cm}
\item[III.] Shimura curves in the Torelli locus of hyperelliptic curves.
\end{enumerate}

In this paper Shimura curves are special subvarieties of dimension one, cf. Remark \ref{terminology}.
The precise definition of the Shimura curves of type I and II are given in Section \ref{sectiontwotype}.
While  Shimura curves  of type I are  of PEL type,
Shimura curves of  Mumford type
constructed in \cite{vz04} by generalizing  Mumford's original example in \cite{mumford69},
are not of PEL-type  except precisely two classes (see Section \ref{sectiontwotype}).

Our first main result is the following:
\begin{thm}\label{mainthm1}
For $g>11$, there does not exist a Shimura curve of  types I or II  contained generically in $\mathcal T_g$.
\end{thm}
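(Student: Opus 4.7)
The plan is to argue by contradiction. Suppose $C\Subset\calt_g$ is a Shimura curve of type I or II. After passing to the étale cover of $C$ determined by a sufficiently divisible level structure, I obtain a semistable family $f\colon X\to\bar C$ of projective curves of genus $g$ with discriminant divisor $\Delta\subset\bar C$, whose induced polarized variation of Hodge structures of weight one coincides with the one pulled back from the tautological VHS on $\cala_g$ along $C\hookrightarrow\cala_g$. Let $E:=f_*\omega_{X/\bar C}$ denote the Hodge bundle on $\bar C$, equipped with its logarithmic Higgs field
\[
\theta\colon E\longrightarrow E^{\vee}\otimes\Omega^1_{\bar C}(\log\Delta).
\]

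The first step is to extract the structural consequences of the Shimura hypothesis. By Deligne--Schmid semisimplicity of polarized VHS together with the Viehweg--Zuo characterization of special subvarieties via maximal Higgs fields, $E$ admits an orthogonal decomposition $E=A\oplus U$, with $U$ a flat (unitary) summand and $A$ the ``ample'' part realizing the Arakelov equality
\[
\deg A \;=\; \tfrac{1}{2}\,\rank(A)\cdot\deg\Omega^1_{\bar C}(\log\Delta).
\]
For type I, killing the isogeny by a further finite base change identifies the VHS with the $g$-fold self-sum of a single rank-$2$ weight-$1$ modular sub-VHS, so $\rank A=g$ and $U=0$. For type II, the Mumford/Viehweg--Zuo construction forces $\rank A=1$ and $\rank U=g-1$, with $\theta$ of generic rank one.

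Next I would bring in the curve-theoretic inequalities arising from the fact that $f$ is a family of curves. The slope inequality of Cornalba--Harris and Xiao, together with Mumford's formula for the relative dualizing sheaf, yields
\[
\tfrac{4(g-1)}{g}\,\deg E \;\leq\; \omega_{X/\bar C}^{\,2} \;=\; 12\deg E-\sum_{p\in\Delta}\delta_p,
\]
with $\delta_p\geq 0$ recording the contribution of each singular fibre. The essential bridge between the Hodge-theoretic and the geometric sides is the multiplication map $\mathrm{Sym}^2 E\to f_*\omega_{X/\bar C}^{\otimes 2}$: its kernel is the space of quadrics through the canonical image of a generic fibre $X_t\subset\mathbb{P}H^0(X_t,\omega_{X_t})^{\vee}$, and it interacts with $\theta$ via the iterated Higgs map $\mathrm{Sym}^2 A\to (A\otimes U)\otimes\Omega^1_{\bar C}(\log\Delta)$. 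For type II, the rank-one constraint on $\theta$ forces the quadric hull of each canonical curve to have large codimension, contradicting Petri/Noether bounds on $\dim I_2(X_t)$ once $g$ is large. For type I, the isogeny $\Jac(X_t)\sim E_0^{\,g}$ forces the canonical image to sit inside a very rigid self-product, equally incompatible with a smooth curve of high genus.

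The numerical endgame then consists in substituting the Arakelov equality and the slope inequality into these quadric-hull constraints and observing that the resulting estimate on $\deg\Omega^1_{\bar C}(\log\Delta)$ and $g$ fails precisely for $g>11$. I expect the main obstacle to be this very bridging step: controlling the second fundamental form of $E$ and the kernel of $\mathrm{Sym}^2 E\to f_*\omega_{X/\bar C}^{\otimes 2}$ through the semistable degeneration, so as to convert the Shimura-theoretic rank constraint on $\theta$ into a sharp quadric bound on each smooth fibre. The naive slope inequality alone is too weak to rule out either type of Shimura curve; it is this refined analysis of the canonical multiplication map, combined with the Arakelov equality forced by the Shimura datum, that yields the explicit threshold $g>11$.
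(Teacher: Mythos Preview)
Your proposal contains a genuine structural error that derails the argument for type II, and it is missing two ingredients that are essential to reaching the threshold $g>11$.

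\textbf{The characterization of type II is wrong.} You claim that for Mumford-type Shimura curves the Higgs field has generic rank one, with $\rank A=1$ and $\rank U=g-1$. This is not so. The numerical characterization (Theorem~\ref{theoremnumchar}) asserts that $C$ is a Shimura curve of type I \emph{or} II if and only if the Higgs field is \emph{strictly} maximal, i.e.\ the flat part $U$ vanishes and $\rank A=g$. For Mumford type, although the underlying $\Qbb$-group is a form of $\SL_2$, the symplectic representation arises via the corestriction $\Cores_{F/\Qbb}A\hookrightarrow\Mat_{2^m}(\Qbb)$, and the resulting Higgs field is an isomorphism on the full $E^{1,0}$. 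So the two types are indistinguishable at the level of the Higgs decomposition, and your Petri/Noether argument for type II, which rests on a rank-one constraint, has no foothold.

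\textbf{Two missing ingredients.} First, the family of curves does not live over $\bar C$: the Torelli morphism $j^\circ$ is $2:1$, ramified along the hyperelliptic locus, so one must pull back to a cover $\bar B\to\bar C$ with ramification locus $\Lambda$. Proposition~\ref{proprelationHiggs} shows that the Arakelov equality on $\bar C$ translates into
\[
\deg\bar f_*\omega_{\ol S/\ol B}=\tfrac{g}{2}\bigl(\deg\Omega^1_{\ol B}(\log\Delta_{nc})-|\Lambda|\bigr),
\]
and the term $|\Lambda|$ must be controlled; this is where the multiplication map $S^2(\bar f_*\omega_{\ol S/\ol B})\to\bar f_*\omega_{\ol S/\ol B}^{\otimes 2}$ enters (Theorem~\ref{thmlower1}), not via Petri but via a length estimate on its cokernel over $\Lambda\cup\Delta_{ct}$. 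Second, the slope inequality alone gives only a lower bound on $\omega_{\ol S/\ol B}^2$; the contradiction requires pairing it against an \emph{upper} bound, namely the Miyaoka--Yau type inequality
\[
\omega_{\ol S/\ol B}^2\leq(2g-2)\deg\Omega^1_{\ol B}(\log\Delta_{nc})+\text{(boundary terms)}
\]
of Theorem~\ref{thmupper1}. It is the combination of the sharpened slope inequality (coefficient $\tfrac{5g-6}{g}$, not $\tfrac{4(g-1)}{g}$) with this Miyaoka--Yau bound, together with the Noether formula to eliminate $|\Lambda|$, that produces the explicit cutoff $g>11$. Your outline has neither the upper bound nor the mechanism to absorb $|\Lambda|$.
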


This answers a question of Moonen and Oort \cite[Question\,6.7]{mo11} in the one-dimensional case.
The result by Kukulies \cite{kukulies10} also studies Shimura curves of type I, but it is more restrictive:
it only deals with  rational Shimura curves,
and the bound for $g$ depends on the curve in question,
which is not uniform. Our approach is of differential-geometric nature.
It gives an explicit lower bound, without resorting to deep number-theoretic results like the Sato-Tate conjecture.

Note that Theorem \ref{mainthm1} also implies a partial answer to a question raised by Ekedahl and Serre \cite{eserre},
where they asked for the existence of a smooth curve of higher genus whose Jacobian is completely decomposable,
i.e., isogenous to a $g$-fold product of elliptic curves. In fact we obtain the following corollary of finiteness:

\begin{cor}\label{corE-S}
For each fixed integer $g>11$, there exist, up to isomorphism, at most
finitely many smooth projective curves of genus $g$
whose Jacobians are isogenous to $g$-fold self-product of a single elliptic curve with bounded isogenous degrees.
\end{cor}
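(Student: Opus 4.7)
The plan is to deduce Corollary \ref{corE-S} from Theorem \ref{mainthm1} by a pigeonhole argument. Fix $g>11$ and an upper bound $d$ on the isogeny degrees, and suppose for contradiction that there exist pairwise non-isomorphic smooth projective genus-$g$ curves $C_1,C_2,\dots$ together with elliptic curves $E_1,E_2,\dots$ such that each $\Jac(C_i)$ admits an isogeny to $E_i^g$ of degree at most $d$. By Torelli's theorem the classes $[\Jac(C_i)]\in\calt_g^\circ$ are pairwise distinct, and hence form an infinite subset of the open Torelli locus.

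The crucial geometric input is that the locus
\[
\mathcal{Y}_d:=\bigl\{\,[A]\in\cala_g : A\text{ is isogenous, of degree}\leq d,\text{ to }E^g\text{ for some elliptic curve }E\,\bigr\}
\]
is contained in a finite union $Z_1\cup\cdots\cup Z_N$ of Shimura curves of type I. Indeed, an isogeny $\phi:E^g\to A$ of degree $\leq d$ is determined by its kernel $K:=\ker\phi\subset E^g$, a finite flat subgroup scheme of order $\leq d$. Up to isomorphism, the possible pairs consisting of an abstract group scheme $K$, its embedding-type into $E^g$, and a principal polarization on $E^g/K$ form a \emph{finite} set of combinatorial data; for each fixed datum, allowing $E$ to vary over all elliptic curves produces a one-parameter family $\{E^g/K\}$ in $\cala_g$, which is by definition a Shimura curve of type I in the sense of Section \ref{sectionmainresults}.

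By the pigeonhole principle, some $Z_k$ contains $[\Jac(C_i)]$ for infinitely many indices $i$. Since $Z_k$ is an irreducible algebraic curve and $\calt_g$ is Zariski closed in $\cala_g$, the fact that $Z_k\cap\calt_g$ contains infinitely many points forces $Z_k\subseteq\calt_g$. Because each $C_i$ is smooth, $[\Jac(C_i)]\in\calt_g^\circ$, so $Z_k\cap\calt_g^\circ\neq\emptyset$. Hence $Z_k\Subset\calt_g$ is a Shimura curve of type I contained generically in the Torelli locus, contradicting Theorem \ref{mainthm1}.

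The only substantive step in this plan is the finiteness of the decomposition of $\mathcal{Y}_d$ into Shimura curves of type I, which is essentially the well-known fact that isogenies of bounded degree between principally polarized abelian varieties form a bounded family (equivalently, the finiteness of Hecke correspondences of bounded degree on $\cala_g$). All remaining steps—Torelli, pigeonhole, and the closedness of $\calt_g$—are formal.
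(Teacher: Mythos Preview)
Your proposal is correct and follows the same approach as the paper, which simply states that Corollary~\ref{corE-S} follows from Theorem~\ref{mainthm1} together with Lemma~\ref{algebraic interpretation of Shimura curves of type I}. You have spelled out the pigeonhole/contradiction structure that the paper leaves implicit: bounded isogeny degree means only finitely many Hecke translates of the basic Shimura curve of type I are involved (this is the content of Lemma~\ref{algebraic interpretation of Shimura curves of type I}(2) combined with the standard finiteness of Hecke operators of bounded degree), so infinitely many Jacobians would force one such curve to lie generically in $\calt_g$, contradicting Theorem~\ref{mainthm1}.
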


In \cite{djongzh07} De Jong and Zhang have shown that Hecke translates of Hilbert modular varieties associated to
totally real fields of degree $g$ over $\Qbb$ are not contained generically in the Torelli locus $\Tcal_g$ for $g\geq 5$
(which also holds for $g=4$ proved by Bainbridge-M\"oller \cite{bainbridgemoeller14}).
Using the non-existence of Shimura curves of type I
in the Torelli locus, we obtain an alternative approach to their result, under a slightly varied formulation with $g>11$:

\begin{cor}[real multiplication]\label{real multiplication}
Let $F$ be a totally real \'etale $\Qbb$-algebra of dimension $g$ over $\Qbb$, and
let $Z\subseteq\Acal_g$ be a Hecke translate of the generalized Hilbert modular subvariety defined by $(\Gbf^F,X^F)$
(cf. Definition {\rm \ref{Hilbert modular variety}}). Then $Z$ is not contained generically in $\Tcal_g$ for $g>11$.
\end{cor}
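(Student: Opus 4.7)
The plan is to reduce to Theorem \ref{mainthm1} by exhibiting, inside any such $Z$, a Shimura curve of type I that is contained generically in $\Tcal_g$; this will yield a contradiction for $g>11$. Suppose for contradiction that $Z\Subset\Tcal_g$, and write $Z=T_\alpha Z_0$, where $Z_0$ is a geometrically connected component of the generalized Hilbert modular subvariety associated to $(\Gbf^F,X^F)$ and $T_\alpha$ is a Hecke correspondence.

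First I would construct a Shimura curve of type I inside $Z_0$. Since $F$ is a totally real étale $\Qbb$-algebra of degree $g$, the structure inclusion $\Qbb\hookrightarrow F$ induces an embedding of $\Qbb$-algebraic groups $\SL_2\hookrightarrow\Res_{F/\Qbb}\SL_2=\Gbf^F$ and hence an embedding of Shimura data $(\SL_2,\Hbb^{\pm})\hookrightarrow(\Gbf^F,X^F)$. The image of this embedding at the level of Shimura varieties is a curve $C_0\subset Z_0$ parametrizing abelian varieties of the form $E\otimes_\Zbb\Ocal_F$ as $E$ varies over the relevant modular curve. Because $\Ocal_F\otimes_\Zbb\Qbb=F$ has $\Qbb$-dimension $g$, the abelian variety $E\otimes_\Zbb\Ocal_F$ is isogenous to $E^g$; equipped with the natural principal polarization (induced by a totally positive element of $F$) and a compatible level structure, $C_0$ is therefore a Shimura curve of type I, and so is its Hecke translate $C:=T_\alpha C_0\subset Z$.

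The curve $C$ need not meet $\Tcal_g^\circ$. To remedy this I would let Hecke operators from $\Gbf^F(\adele)$ act on $C_0$ and then apply $T_\alpha$, producing a family of Shimura curves of type I contained in $Z$ whose union is Zariski dense in $Z$. Since $Z\cap\Tcal_g^\circ$ is nonempty and Zariski open in $Z$ by the assumption $Z\Subset\Tcal_g$, at least one such translate $C'\subset Z$ must meet $\Tcal_g^\circ$, so $C'\Subset\Tcal_g$, contradicting Theorem \ref{mainthm1} for $g>11$. The main subtle point is the Zariski density of the Hecke orbit of $C_0$ in $Z_0$, which should follow from strong approximation for $\Gbf^F$; one must also verify that polarizations and level structures propagate properly under the correspondences, but this is standard in the formalism of connected Shimura varieties.
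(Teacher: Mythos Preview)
Your argument is correct and follows essentially the same route as the paper: embed $(\GL_2,\Hscr_1^\pm)$ diagonally into $(\Gbf^F,X^F)$ to obtain a Shimura curve of type I inside $Z$, then use density of Hecke translates to move this curve so that it meets $\Tcal_g^\circ$, and conclude by Theorem~\ref{mainthm1}. Two minor remarks: $\Res_{F/\Qbb}\SL_2$ is the derived subgroup of $\Gbf^F$ rather than $\Gbf^F$ itself (one adjoins a central $\Gbb_\mrm$), and for the density step the paper invokes real approximation of $\Gbf(\Qbb)_+$ in $\Gbf(\Rbb)_+$ (Remark~\ref{remarkheckedense}) rather than strong approximation, which is a bit lighter but serves the same purpose.
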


Here by totally real \'etale $\Qbb$-algebra we mean a product of finitely many totally real number fields,
and the generalized Hilbert modular subvariety they define are,
up to finite covering, products of usual Hilbert modular varieties
(involving several totally real fields). Such Shimura data contain the subdatum $(\GL_2,\Hscr_1^\pm)$
induced by the natural embedding $\Qbb\mono F$, hence the special subvariety $Z$ they define contain
a Shimura curve of type I.
Note that the idea of \cite{djongzh07} goes back to \cite{hain99} which relies on properties of the mapping class
groups studied in \cite{farbmasur}. It only works for the usual Hilbert modular varieties associated to a totally
real field of dimension $g\geq 5$.
It does not cover the case of the $g$-fold product of modular curves embedded in $\cala_g$,
because in this case the lattice involved is essentially reducible with low rank factors,
i.e. commensurable with $SL_2(\mathbb Z)^g$.
Our approach focuses on modular curves diagonally embedded in these generalized Hilbert modular varieties,
which has no restriction on the dimension of the factor fields of the totally real \'etale algebra,
and the proof does not involve the mapping class groups.

The following is a weaker result regarding Oort's conjecture
for Shimura curves of types I and II, already proved in our previous
preprint \cite[Theorem\,1.2]{luzuo13}.
\begin{thm}\label{mainthm2}
For $g>4$, there does not exist any one-dimensional family of semi-stable curves of genus $g$
with strictly maximal Higgs field.
\end{thm}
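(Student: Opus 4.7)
The plan is to run an Arakelov-type slope calculation and extract a numerical contradiction from the strict maximality of the Higgs field. Setup: let $f\colon X\to\ol Y$ be a non-isotrivial semi-stable family of genus-$g$ curves over a smooth projective curve $\ol Y$, with degenerate locus $\Delta\subseteq\ol Y$. The weight-one variation of Hodge structure $R^1f_*\Qbb$ on $\ol Y\setminus\Delta$, extended following Deligne across $\Delta$, yields the logarithmic Higgs bundle $(E^{1,0}\oplus E^{0,1},\theta)$ on $\ol Y$ with $E^{1,0}\isom f_*\omega_{X/\ol Y}$ of rank $g$. The hypothesis of strictly maximal Higgs field is that
$$\theta\colon E^{1,0}\lra E^{0,1}\otimes\Omega^1_{\ol Y}(\log\Delta)$$
is an isomorphism of sheaves; in particular the Fujita decomposition of $E^{1,0}$ has no flat (unitary) direct summand, and one obtains the Arakelov equality $2\deg f_*\omega_{X/\ol Y}=g\cdot(2g(\ol Y)-2+\#\Delta)$.

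Next I would combine this Arakelov equality with two classical tools to bound the discriminant degree $\deg\delta_f$. For an upper bound, Xiao's slope inequality $\omega_{X/\ol Y}^2\geq\frac{4(g-1)}{g}\deg f_*\omega_{X/\ol Y}$ (valid for any non-isotrivial semi-stable family of curves of genus $g\geq 2$) combined with Noether's formula $12\deg f_*\omega_{X/\ol Y}=\omega_{X/\ol Y}^2+\deg\delta_f$ eliminates $\omega_{X/\ol Y}^2$, producing an explicit linear upper bound for $\deg\delta_f$ in terms of $2g(\ol Y)-2+\#\Delta$. For a matching lower bound, I would extract information from the isomorphism $\theta$ itself: at each $y\in\Delta$ the local log Kodaira-Spencer map is bijective, so the nilpotent residue of the monodromy around $y$ acts with maximal Jordan type on the associated graded, which forces the singular fibre $X_y$ to carry at least a linear-in-$g$ number of non-separating nodes and hence a quantitative lower bound on its contribution to $\deg\delta_f$.

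The two bounds on $\deg\delta_f$ are compatible only when $g\leq 4$; for $g>4$ they contradict each other, yielding the theorem. The main technical difficulty lies in the lower-bound step, where one must relate the rank of $\theta$ at each $y\in\Delta$ to the combinatorial type of the singular fibre $X_y$---distinguishing separating from non-separating nodes and tracking the dual graph---in a manner sharp enough that the comparison with Xiao's inequality is tight. Mumford's classical $g=4$ family (and related low-genus examples built from modular curves) saturate the inequality, which is why the threshold $g>4$ is sharp and why a coarser Hodge-theoretic argument would not improve it.
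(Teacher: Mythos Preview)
Your plan does not close, and the gap is in the lower-bound step. Let me be concrete. With the Arakelov equality $\deg f_*\omega_{X/\ol Y}=\frac{g}{2}D$ where $D=2g(\ol Y)-2+\#\Delta_{nc}$ (note: the log pole locus is $\Delta_{nc}$, the fibres with \emph{non-compact} Jacobian, not the full discriminant), Xiao's inequality and Noether give $\delta_f\leq(4g+2)D$. On the other side, your monodromy observation is correct as far as it goes: strict maximality forces the nilpotent residue at each $y\in\Delta_{nc}$ to be of rank $g$, so the fibre $X_y$ is totally degenerate and contributes at least $g$ non-separating nodes. But this yields only $\delta_f\geq g\cdot\#\Delta_{nc}$, and since the number of nodes on a stable genus-$g$ curve is at most $3g-3$, you cannot push the linear constant past $3$. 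Now compare: for $g(\ol Y)\geq 1$ one has $D\geq\#\Delta_{nc}$, so the inequalities $g\cdot\#\Delta_{nc}\leq\delta_f\leq(4g+2)D$ are perfectly compatible for all $g$. There is no contradiction. (Your scheme does happen to work over $\bbp^1$, but that is not the general statement.) Moreover, fibres over $\Delta_{ct}$ contribute to $\delta_f$ without contributing to $\#\Delta_{nc}$, so the monodromy gives you no handle on them at all.

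The paper's argument goes in a different direction: rather than sandwiching $\delta_f$, it sandwiches $\omega_{\ol S/\ol B}^2$. The upper bound is a Miyaoka--Yau type inequality
\[
\omega_{\ol S/\ol B}^2\;\leq\;(2g-2)\deg\Omega^1_{\ol B}(\log\Delta_{nc})+2\delta_1(\Upsilon_{ct})+3\delta_h(\Upsilon_{ct}),
\]
strict when $\Delta_{nc}\neq\emptyset$ or $\Delta=\emptyset$; the lower bound is Moriwaki's refinement of Xiao,
\[
\omega_{\ol S/\ol B}^2\;\geq\;\tfrac{4(g-1)}{g}\deg\bar f_*\omega_{\ol S/\ol B}+\tfrac{3g-4}{g}\delta_1(\Upsilon)+\tfrac{7g-16}{g}\delta_h(\Upsilon).
\]
The $\delta_i$ correction terms in Moriwaki's inequality are essential---plain Xiao is not enough---because they overwhelm the $\delta_i(\Upsilon_{ct})$ terms in the Miyaoka--Yau bound precisely when $g>4$. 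Eliminating $\omega^2$ between the two gives $\deg\bar f_*\omega_{\ol S/\ol B}<\frac{g}{2}\deg\Omega^1_{\ol B}(\log\Delta_{nc})$ directly. The missing idea in your sketch is this Miyaoka--Yau upper bound on $\omega^2$; no amount of local monodromy analysis at $\Delta_{nc}$ will substitute for it.
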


Higgs bundles and Higgs fields  on curves will be among the main tools in our proofs.
They are briefly recalled in the next subsection, and more details will be given in
Sections \ref{sectionHiggsonC} and \ref{sectionrepresent}.
For a semi-stable family of curves with strictly maximal Higgs field,
its image under the Torelli morphism is already a Shimura curve in $\Acal_g$ due to \cite{vz04},
and hence Theorem \ref{mainthm1} implies Theorem \ref{mainthm2} for $g>11$.
On the other hand, the converse is not true, namely not all Shimura curves of type I or II
contained generically in $\Tcal_g$ arise this way; see Section \ref{sectionrepresent} for further discussions,
and especially Example \ref{exshimurag=3strict} for $g=3$.

Our last main result focuses on Shimura curves in the Torelli locus of hyperelliptic curves,
which holds more generally for totally geodesic curves in $\Acal_g$:
\begin{thm}\label{mainthm3}
For $g>7$, there does not exist totally geodesic curves contained generically
in the Torelli locus of hyperelliptic curves $\calth_g$.
\end{thm}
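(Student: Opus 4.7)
The plan is to argue by contradiction: suppose that a positive-dimensional totally geodesic curve $Z \subseteq \Acal_g$ is contained generically in the hyperelliptic Torelli locus $\calth_g$. After replacing $Z$ by a suitable étale cover of a smooth compactification and performing semistable reduction, one obtains a smooth projective curve $C$, a finite subset $S \subset C$, and a semistable family $f \colon X \to C$ of genus-$g$ hyperelliptic curves with smooth fibers over $C \setminus S$, such that the induced Jacobian morphism $C \to \Acal_g$ generically factors through $Z$ and is non-constant. The totally geodesic hypothesis then yields, after a further étale base change, a decomposition of logarithmic Higgs bundles $(E,\theta) = (A,\theta_A) \oplus (U,0)$ on $(C,S)$, where $(A,\theta_A)$ has strictly maximal Higgs field of rank $r := \rank A^{1,0}$ with $1 \leq r \leq g$, and $(U,0)$ is a polarized unitary flat summand. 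Maximality forces the Arakelov-type equality $\deg A^{1,0} = \tfrac{r}{2}(2g_C-2+\#S)$, together with $\deg U^{1,0}=0$.

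Next I would exploit the hyperelliptic structure of $f$. Let $\sigma$ be the relative hyperelliptic involution and $\pi \colon X \to Y := X/\sigma$ the quotient, exhibiting $X/C$ as a double cover of a family of semistable rational curves $Y \to C$ branched along a horizontal divisor $B \subset Y$ meeting each smooth fiber in $2g+2$ points. The canonical isomorphism $H^0(X_t,\omega_{X_t}) \isom H^0(\bbp^1,\Ocal(g-1))$ on each smooth fiber globalizes to a filtration of $E^{1,0}$ whose graded pieces are line bundles built from $Y \to C$ and $B$; a relative Kodaira--Spencer computation then expresses $\theta$ in terms of the infinitesimal variation of $B$ and gives it a rigid matrix form relative to this filtration. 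This rigidity severely restricts how a sub-Higgs bundle $A \subseteq E$ with strictly maximal Higgs field can sit inside $(E,\theta)$.

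The final step is to combine the Arakelov equality $\deg A^{1,0} = \tfrac{r}{2}(2g_C-2+\#S)$ with a hyperelliptic slope inequality in the spirit of Cornalba--Harris and Xiao, bounding $\deg f_*\omega_{X/C}$ from above in terms of $g$, $g_C$, and the contributions of singular fibers. Together with the degree-filtration restrictions on $A^{1,0}$ imposed by hyperellipticity, and with Theorem \ref{mainthm2} to rule out the extremal case $r=g$ once $g>4$, a careful numerical comparison is expected to yield the bound $g \leq 7$.

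The hardest step will be the slope-inequality part: the Cornalba--Harris and Xiao estimates control only the total degree $\deg f_*\omega_{X/C}$, whereas the argument requires a bound isolating the degree of the strictly-maximal summand $A^{1,0}$ from the trivially-contributing unitary summand $U^{1,0}$, refined enough to interact with the hyperelliptic filtration coming from the double cover $X \to Y$. Producing such a sharpened estimate, and extracting exactly $g \leq 7$ from the ensuing numerical comparison, is the heart of the argument and the most delicate technical input.
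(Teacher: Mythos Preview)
Your proposal has the right broad architecture (contradiction, Arakelov equality for the maximal summand, hyperelliptic slope input), but it is missing two ingredients that the paper shows are essential, and without them the argument cannot close.

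First, you have only one side of the squeeze. The paper's proof of the strict Arakelov inequality \eqref{eqnstrictarak2} works by bounding $\omega_{\ol S/\ol B}^2$ from \emph{above} via a Miyaoka--Yau type inequality (Theorem~\ref{thmupper2}) and from \emph{below} via a sharp hyperelliptic slope inequality (Theorem~\ref{thmlower3}), then combining. Cornalba--Harris and Xiao give you only relations between $\omega_{\ol S/\ol B}^2$, $\deg f_*\omega_{X/C}$, and boundary classes; they do \emph{not} by themselves bound $\deg f_*\omega_{X/C}$ in terms of $2g_C-2+\#S$. That comparison requires the Miyaoka--Yau input, which you never invoke. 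Without it there is no mechanism to produce a numerical contradiction with the Arakelov equality.

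Second, and this is exactly the difficulty you flag at the end, you have no concrete plan to relate $\rank A^{1,0}$ (equivalently $\rank U^{1,0}$) to anything appearing in the slope inequality. The paper's solution is Theorem~\ref{thmFtrivial}: for a hyperelliptic semi-stable family, after a finite \'etale base change one has $\rank F^{1,0}_{\ol B}=q_{\bar f}$, the relative irregularity. This is a nontrivial result (Section~\ref{sectionflathyper}) requiring a global-invariant-cycle argument with unitary coefficients and Bogomolov's lemma. Once you know $\rank A^{1,0}=g-q_{\bar f}$, the $q_{\bar f}$-dependent slope inequality \eqref{eqnlower3} (whose proof in turn rests on the key observation that the double cover is fibred when $q_{\bar f}>0$, Proposition~\ref{boundofxi_0prop}) becomes a statement about $\rank A^{1,0}$, and the squeeze with Miyaoka--Yau yields the strict inequality for $g>7$ after a delicate case analysis ($\Delta_{nc}\neq\emptyset$ versus $\Delta_{nc}=\emptyset$). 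Your proposed route via an explicit filtration of $E^{1,0}$ by line bundles and a matrix form for $\theta$ is not what the paper does and is left entirely vague; it is not clear it can substitute for the $q_{\bar f}=\rank F^{1,0}$ identification.

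A minor point: your set $S$ should be the locus of fibers with non-compact Jacobian ($\Delta_{nc}$ in the paper's notation), not the full singular locus; the Arakelov equality is stated with $\deg\Omega^1_{\ol B}(\log\Delta_{nc})$.
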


\subsection{The main idea of proofs}\label{sectionmainidea}
In this subsection we explain the main idea of the proofs.

As is mentioned earlier, we intend to exclude the existence of Shimura curves of certain type in the open Torelli locus
using the natural constraints from the geometry of special subvarieties.
For the three types of Shimura curves studied in this paper, the constraints mainly come from two types of inequalities:
\begin{enumerate}
\item[(i).]
Arakelov inequalities of Higgs bundles for curves in $\cala_g$,
which are actually equalities when applied to Shimura curves;

\item[(ii).] strict Arakelov inequalities of Higgs bundles for curves
contained generically in the Torelli locus.
\end{enumerate}

The Arakelov (in)equalities of Higgs bundles play a crucial role in the works \cite{mvz12,vz04}
characterizing the geometry of totally geodesic subvarieties in Shimura varieties.
The survey \cite{viehweg09} by Viehweg is highly recommended.

In our case,
the universal family of abelian varieties $\mathfrak h:\,\Xcal_g \ra \Acal_g$
gives rise to the Higgs bundle $(E,\theta)$ on $\Acal_g$,
where $E=E^{1,0}\oplus E^{0,1}$ is the graded quotient of the Hodge filtration on the
$\Qbb$-VHS whose underlying local system is $\Vbb=R^1\mathfrak h_*\Qbb_{\Xcal_g}$,
and $\theta$ is induced by the Gauss-Manin connection.
For a smooth closed curve $\phi:\,C\hookrightarrow \Acal_g$ with a suitable smooth compactification $\ol C$
by joining a finite set of cusps $\Delta_{\ol C}$,
by pulling-back along $\phi$ one has the universal family of abelian varieties $h:\,X\to C$ and a
local system $\mathbb V_C:=Rh_*\Qbb_{X}=\phi^* \mathbb V$,
which underlies a $\Qbb$-VHS whose associated Higgs bundle on $C$ extends to a logarithmic Higgs bundle
$(E_{\ol C},\,\theta_{\ol C})$ on $\ol C$.
$(E_{\ol C},\theta_{\ol C})$ decomposes further into a direct sum of Higgs bundles
(cf. \cite{fujita78} or \cite{kollar87}):
\begin{equation}\label{decompC}
\left(E_{\ol C}^{1,0}\oplus E_{\ol C}^{0,1},~\theta_{\ol C}\right)
=\left(A_{\ol C}^{1,0}\oplus A_{\ol C}^{0,1},~\theta_{\ol C}\big|_{A_{\ol C}^{1,0}}\right)\oplus
\left(F_{\ol C}^{1,0}\oplus F_{\ol C}^{0,1},~ 0\right),
\end{equation}
where $A_{\ol C}^{1,0}$ is an ample vector bundle, while $F_{\ol C}^{1,0}$
and $F_{\ol C}^{0,1}$ are flat vector bundles associated to unitary
local subsystems  $\mathbb F_{C}^{1,0}\oplus \mathbb F_{C}^{0,1}\subseteq \mathbb V_{C}.$

\begin{definition}[\cite{vz03}]\label{defmaximalC}
The Higgs bundle $\left(E_{\ol C}^{1,0}\oplus E_{\ol C}^{0,1},~\theta_{\ol C}\right)$
is said to be  with {\it maximal} Higgs field if
$$\theta_{\ol C}\big|_{A_{\ol C}^{1,0}}:\,A_{\ol C}^{1,0} \lra A_{\ol C}^{0,1}\otimes \Omega^1_{\ol C}(\log \Delta_{\ol C})$$
is an isomorphism, and to be with  {\it strictly maximal}  Higgs field if furthermore $F_{\ol C}^{1,0}\oplus F_{\ol C}^{0,1}=0$.
By  \cite{vz03} the Higgs field $\theta_{\ol C}$ is strictly maximal (resp. maximal)  if and only if
the following {\it Arakelov equality} holds
\begin{equation}\label{arakelovC}
\begin{aligned}
\deg E^{1,0}_{\ol C}&= {g\over 2}\cdot\deg\Omega^1_{\ol C}(\log\Delta_{\ol C}),\\
\Big(\text{resp.}\quad \deg E^{1,0}_{\ol C}&= {\rank A_{\ol C}^{1,0}\over 2}\cdot\deg\Omega^1_{\ol C}(\log\Delta_{\ol C})~\Big).
\end{aligned}
\end{equation}
\end{definition}

The following theorem gives numerical characterizations of Shimura curves,
and more generally of totally geodesic curves:
\begin{theorem}[\cite{moller11,vz04}]\label{theoremnumchar}
Let $C \subseteq \cala_g$ be a smooth closed curve,
and $\left(E_{\ol C}^{1,0}\oplus E_{\ol C}^{0,1},~\theta_{\ol C}\right)$ the associated logarithmic Higgs bundle. Then

{\rm(i).} $C$ is a Shimura curve of type I or II if and only if
the associated logarithmic Higgs bundle $\left(E_{\ol C}^{1,0}\oplus E_{\ol C}^{0,1},~\theta_{\ol C}\right)$
has the strictly maximal Higgs field.

{\rm (ii).} $C$ is a totally geodesic curve
if and only if the associated logarithmic Higgs bundle has maximal Higgs field.
\end{theorem}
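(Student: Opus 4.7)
The plan is to combine the classical Arakelov inequality for weight-one variations of Hodge structure on a curve with Simpson's non-abelian Hodge theory and a Griffiths--Schmid curvature computation, and then to distinguish Shimura curves of types I and II among totally geodesic curves by a representation-theoretic classification. The starting point is the Arakelov inequality
\[
2\deg E^{1,0}_{\ol C} \leq \rank A^{1,0}_{\ol C} \cdot \deg \Omega^1_{\ol C}(\log \Delta_{\ol C}),
\]
valid for any weight-one polarized $\Qbb$-VHS extending logarithmically to $\ol C$, with equality equivalent to $\theta_{\ol C}|_{A^{1,0}_{\ol C}}$ being an isomorphism (the maximal Higgs field condition); strict maximality additionally requires the flat unitary summand $F^{1,0}_{\ol C}\oplus F^{0,1}_{\ol C}$ to vanish. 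This would be derived from Simpson's polystable decomposition of the logarithmic Higgs bundle together with the saturation properties of $\ker\theta_{\ol C}$ inside $E^{1,0}_{\ol C}$.

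For part (ii), I would obtain the equivalence with the totally geodesic property via a curvature argument. The lifted period map $\tilde\phi:\tilde C\to\Hscr_g$ is holomorphic and monodromy-equivariant, and its Hodge-theoretic differential coincides (after pullback) with $\theta_{\ol C}|_{A^{1,0}_{\ol C}}$. A direct Griffiths--Schmid computation shows that the deficit in the Arakelov inequality is proportional to the $L^2$-norm of the second fundamental form of $\tilde\phi$ with respect to the Bergman metric on $\Hscr_g$, so that equality is equivalent to the vanishing of this second fundamental form, i.e.~to $\tilde\phi$ being a totally geodesic immersion, which is exactly the condition that $C\subseteq\Acal_g$ be a totally geodesic curve.

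For part (i), assume now $C$ is totally geodesic with strictly maximal Higgs field; we must show $C$ is a Shimura curve of type I or II, and conversely. The generic Mumford--Tate group $H\subseteq\Sp(\mathbb V_{C,\Qbb})$ has real rank one since $\dim C=1$, and the absence of a flat summand forces $H$ to act on $\mathbb V_C$ with no unitary factor, so $C$ is a Shimura curve attached to a subdatum $(H,X_H)\hookrightarrow(\GSp_{2g},\Hscr_g^\pm)$. Classifying the $\Qbb$-simple adjoint groups of real rank one together with their non-unitary weight-one Hodge representations of dimension at most $2g$ then forces $H^{\ad}$ to be either a $\Qbb$-form of $\mathrm{PGL}_2$ whose standard representation, possibly induced along a totally real extension, produces $\SL_2^g$-type monodromy (type I), or a $\Qbb$-form of Mumford type built from a corestriction of a quaternion algebra (type II). The converse implication, that these Shimura data do produce strictly maximal Higgs fields, would follow from an explicit uniformization computation on the universal cover together with the $\SL_2$-decomposition of the standard representation.

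The main obstacle is the classification step in (i): ruling out all other rank-one Shimura subdata whose weight-one VHS has no unitary factor. This requires the full representation-theoretic machinery developed in \cite{vz04,moller11}, and crucially uses that the restriction of the standard representation of $\GSp_{2g}$ to $H$ must decompose into Hodge substructures compatible both with the Hodge filtration and with the symplectic form, so that after fixing the real rank one condition only the $\SL_2$-type and Mumford-type profiles survive. Part (ii) and the ``if'' direction of (i) should follow comparatively directly once the Arakelov inequality and the Griffiths--Schmid second fundamental form identity are in place.
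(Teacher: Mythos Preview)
The paper does not prove this theorem at all: it is stated with a citation to \cite{moller11,vz04} and used as a black box in the deduction of the main results. There is therefore no ``paper's own proof'' to compare against.

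That said, your outline is a reasonable sketch of the strategy actually carried out in the cited references. A few remarks on accuracy: the proof in \cite{vz04} is more Higgs-bundle-theoretic than differential-geometric---the strictly maximal Higgs field condition is exploited directly via Simpson's correspondence to show that the underlying local system is a tensor product of a rank-two uniformizing system and a unitary system, and the classification of the $\Qbb$-structure then proceeds by analyzing endomorphism algebras of the corresponding VHS rather than by an abstract enumeration of rank-one subdata. Your curvature/second-fundamental-form formulation of part~(ii) is closer to the viewpoint of \cite{moller11} and \cite{mvz12}; it is correct in spirit, but note that the precise statement ``the deficit equals the $L^2$-norm of the second fundamental form'' requires some care with the normalization of metrics and with the contribution of the cusps. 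The ``if'' direction of (i) in \cite{vz04} is indeed done by an explicit computation on the Shimura datum, as you suggest. The substantial work you correctly flag as the main obstacle---ruling out all other rank-one profiles---occupies most of \cite{vz04} and is not a short argument.
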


Now Theorems \ref{mainthm1} and \ref{mainthm3} are   immediate consequences of the  theorem above
and the following strict Arakelov inequalities for a smooth closed curve $C \Subset \calt_g \subseteq \cala_g$.
\begin{theorem}\label{theoremstrictarak}
Let $C \subseteq \cala_g$ be a smooth closed curve,
and $\left(E_{\ol C}^{1,0}\oplus E_{\ol C}^{0,1},~\theta_{\ol C}\right)$ be the associated logarithmic Higgs bundle.

{\rm (i).} If $C \Subset \calt_g$ with $g>11$, then
\begin{equation}\label{eqnstrictarak1}
\deg E^{1,0}_{\ol C}< {g\over 2}\cdot\deg\Omega^1_{\ol C}(\log\Delta_{\ol C}).
\end{equation}

{\rm (ii).} If $C \Subset \calth_g$ with  $g>7$, then
\begin{equation}\label{eqnstrictarak2}
\deg E^{1,0}_{\ol C}< {\rank A_{\ol C}^{1,0}\over 2}\cdot\deg\Omega^1_{\ol C}(\log\Delta_{\ol C}).
\end{equation}
\end{theorem}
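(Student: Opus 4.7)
The plan is to prove both statements by contradiction. If equality held in \eqref{eqnstrictarak1} (resp. \eqref{eqnstrictarak2}), then by Theorem \ref{theoremnumchar} the Higgs field $\theta_{\ol C}$ would be strictly maximal (resp. maximal). Since $C \Subset \calt_g$ (resp. $C \Subset \calth_g$), after a finite \'etale base change on $\ol C$ one realizes $C$ as the base of a family of semistable (resp. semistable hyperelliptic) curves $f: \ol S \to \ol C$ smooth outside $\Delta_{\ol C}$. Under this geometric origin $E^{1,0}_{\ol C} \cong f_{*}\omega_{\ol S/\ol C}$, and the restriction $\theta_{\ol C}|_{A^{1,0}_{\ol C}}$ coincides with the Kodaira--Spencer map induced by $f$; in particular it factors through the multiplication map $\mathrm{Sym}^{2} f_{*}\omega_{\ol S/\ol C} \to f_{*}\omega^{\otimes 2}_{\ol S/\ol C}$, so that the maximality assumption forces a surjection $\mathrm{Sym}^{2} A^{1,0}_{\ol C} \twoheadrightarrow A^{0,1}_{\ol C}\otimes \Omega^{1}_{\ol C}(\log \Delta_{\ol C})$.

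The strategy is to translate the Arakelov equality into sharp numerical constraints on the fibred surface $\ol S$ and then to show that these constraints cannot hold once $g$ is large. The main ingredients are: (a) the Noether formula relating $\deg f_{*}\omega_{\ol S/\ol C}$ to $\omega^{2}_{\ol S/\ol C}$ and the total number of nodes in the singular fibres over $\Delta_{\ol C}$; (b) the Xiao/Cornalba--Harris type slope inequality bounding $\omega^{2}_{\ol S/\ol C}$ from below by a multiple of $\deg f_{*}\omega_{\ol S/\ol C}$; and (c) the strong ampleness condition that the generic surjectivity of $\mathrm{Sym}^{2} A^{1,0}_{\ol C} \to A^{0,1}_{\ol C}\otimes \Omega^{1}_{\ol C}(\log \Delta_{\ol C})$ imposes on the numerical invariants of $A^{1,0}_{\ol C}$ once $\rank A^{1,0}_{\ol C}$ is large. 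Assembling (a)--(c) and comparing with the assumed equality one finds, for $g>11$, an inequality of the wrong sign, contradicting strict maximality.

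The hyperelliptic case (ii) is more delicate because equality is assumed only in the maximal (not strictly maximal) Arakelov equality, so the unitary piece $F^{1,0}_{\ol C}\oplus F^{0,1}_{\ol C}$ need not vanish and must be analyzed separately. Here one exploits the hyperelliptic involution $\iota$ acting on $f_{*}\omega_{\ol S/\ol C}$: its action on $\mathrm{Sym}^{2} f_{*}\omega_{\ol S/\ol C}$ is controlled via the theory of hyperelliptic quadratic differentials, and one shows that a non-trivial flat summand is forced into the $\iota$-equivariant picture in a way incompatible with the ample part $A^{1,0}_{\ol C}$ being so large once $g>7$. The main technical obstacle lies precisely in this quantitative step: extracting from the slope/Noether setup and the $\mathrm{Sym}^{2}$ factorization the precise thresholds $11$ and $7$, which requires a careful balancing of the contribution of the singular fibres, the ampleness of $A^{1,0}_{\ol C}$ coming from the (strictly) maximal Higgs field, and, in (ii), the finer hyperelliptic constraint on $\mathrm{Sym}^{2} f_{*}\omega_{\ol S/\ol C}$.
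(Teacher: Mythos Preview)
Your outline has the right overall spirit (contradiction, pass to a semi-stable fibration, play numerical inequalities against the assumed Arakelov equality), but there are two genuine gaps that prevent it from working as written.

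\textbf{The ramification of the Torelli morphism is not handled.} You write that ``after a finite \'etale base change on $\ol C$ one realizes $C$ as the base of a family of semistable curves.'' This is not available in general. The Torelli morphism $j^\circ:\calm_g\to\cala_g$ is $2:1$ and is ramified exactly along the hyperelliptic locus; if $C\Subset\calt_g$ but $C\nsubseteq\calth_g$, the representing family $\bar f:\ol S\to\ol B$ lives over a (possibly ramified) double cover $\bar j_B:\ol B\to\ol C$ with ramification locus $\Lambda$, and no \'etale base change on $\ol C$ removes $\Lambda$. The paper's proof of part~(i) is organized precisely around this difficulty: Proposition~\ref{proprelationHiggs} and Corollary~\ref{propnonhyper} translate the Arakelov equality on $\ol C$ into the modified equality
\[
\deg \bar f_*\omega_{\ol S/\ol B}=\tfrac{g}{2}\cdot\deg\Omega^1_{\ol B}(\log\Delta_{nc})-\tfrac{g}{2}\,|\Lambda|,
\]
and Theorems~\ref{thmupper1} and~\ref{thmlower1} are tailor-made Miyaoka--Yau and slope inequalities in which $|\Lambda|$ appears explicitly (indeed, the proof of Theorem~\ref{thmlower1} via the multiplication map $\varrho$ uses Lemma~\ref{lemmapropertyoff} to double the local contribution of $\cals$ at points of $\Lambda$). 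Combining these is what produces the threshold $g>11$. Your ingredients (a)--(c) omit both the $\Lambda$-correction and the Miyaoka--Yau upper bound for $\omega_{\ol S/\ol B}^2$; with only Noether and a slope lower bound you cannot close the loop, because you never bound $\omega_{\ol S/\ol B}^2$ from above in terms of $\deg\Omega^1_{\ol B}(\log\Delta_{nc})$.

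\textbf{The hyperelliptic case uses a different mechanism.} For part~(ii) the paper does not argue via the $\iota$-action on $\mathrm{Sym}^2 f_*\omega$. Instead it first shows (Theorem~\ref{thmFtrivial}) that, after an \'etale base change, $\rank F^{1,0}_{\ol B}=q_{\bar f}$, so the quantity $\rank A^{1,0}_{\ol C}$ becomes $g-q_{\bar f}$. The numerical contradiction then comes from pairing the Miyaoka--Yau inequality \eqref{eqnupper2} with the sharpened hyperelliptic slope inequality \eqref{eqnlower3}, whose proof (Proposition~\ref{boundofxi_0prop}) rests on the key observation that the double cover $\tilde\pi:\wt S\to\wt Y$ induced by the hyperelliptic involution is \emph{fibred} when $q_{\bar f}>0$, yielding the bound $q_{\bar f}\leq(g-1)/2$ and the relation \eqref{boundofxi_0eqn} among the $\delta_i$'s and $\xi_j$'s. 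A careful sign analysis of the resulting coefficients (the $\alpha$'s, $\beta$'s, $\xi_i$'s, $\eta_i$'s in Section~\ref{sectionpfofArak}) is what gives the threshold $g>7$. Your proposed route through ``hyperelliptic quadratic differentials'' is not developed enough to see whether it could recover these bounds, and in any case it is not the paper's argument.
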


Details of the proof of the strict Arakelov inequalities are given in Section \ref{sectionpfofArak}.
Roughly speaking, we have the following diagram in which the central square is Cartesian:
$$\xymatrix@M=0.15cm{
B\ar[rr]^-{normalization}&&\big(j^\circ\big)^{-1}(C)\ar[rr]\ar@{^(->}[d] && C \ar@{^(->}[d]^{\Subset}& \\
&&\Mcal_g\ar[rr]^{j^\circ}  && \calt_g \ar@{^(->}[r]&  \Acal_g
}$$
By pulling back the universal family over $\calm_g$ to $B$,
one obtains a family $f:\,S \to B$ of curves, which can be naturally extended to a
family $\bar f:\, \ol S \to \ol B$ of semi-stable curves over the smooth compactification
$\ol B \supseteq B$.
Then Theorem \ref{theoremstrictarak} will be a combination of two types of inequalities
(Miyaoka-Yau type inequality and sharp slope inequality) for a family of semi-stable curves,
the proofs of which rely highly on the geometrical property of fibred surfaces.

\subsection{Further perspectives}
The method of the paper was stimulated by the numerical characterization of Shimura curves using the Arakelov equality
given by M\"oller, Viehweg and the second named author \cite{moller11,vz04}.
In \cite{mvz12}, they generalized it to high dimension, and obtained
the numerical characterization of special subvarieties of arbitrary dimension in $\cala_g$,
where the Arakelov equality plays a key role.
It suggests a new way to prove Oort's conjecture by
proving a strict Arakelov inequality for subvarieties of $\cala_g$ which are contained generically
in $\calt_g$ for $g\gg0$.

In \cite{hain99}, Hain has dealt with those special subvarieties in $\calt_g$ containing no
divisor which is also special.
It does not treat special subvarieties associated to groups like $SO(n,2)$ and $SU(n,1)$,
which are of real rank one.
However, in such special subvarieties, one can construct special curves
following \cite{kudladuke,kudlarapoport}.
This is the subject of our ongoing research,
where we intend to exclude such special subvarieties by studying
the strict Arakelov inequalities for special curves in them, cf. \cite{chenlutanzuo}.

\vspace{0.15cm}
The paper is organized as follows. In Section \ref{sectionprelim} we collect facts about
special subvarieties in Shimura varieties, logarithmic Higgs bundles,
and surfaces fibred over curves with semi-stable fibers.
In Section \ref{sectionrepresent} we construct a family $\bar f:\,\ol S \to \ol B$ of semi-stable curves
representing a smooth closed curve $C\Subset\Tcal_g$,
and we explain the relation between the associated logarithmic Higgs bundles over $\ol B$ and over $\ol C$.
In Section \ref{sectionconclusion},
We prove the strict Arakelov inequalities and the main results, based on two types of inequalities
(Miyaoka-Yau type inequality and sharp slope inequality) for a family of semi-stable curves,
whose proofs are given in Sections \ref{sectionupper} and \ref{sectionlower} respectively.
An intermediate result needed in Section \ref{sectionconclusion} is proved in Section \ref{sectionflathyper},
which focuses on the flat part of the Higgs bundle associated to a family of semi-stable hyperelliptic curves.
Finally in Section \ref{sectionexample}, we present examples of Shimura curves
contained generically in the Torelli locus.

\section{Preliminaries}\label{sectionprelim}
\subsection{Special subvarieties in Shimura varieties}\label{sectiondefspecial}
We first recall the general notions of Shimura data and Shimura varieties,
following \cite{deligne77} and \cite{milne05}.

\begin{definition}[Shimura data and Shimura varieties]\label{Shimura data and Shimura varieties}Write $\Sbb=\Res_{\Cbb/\Rbb}\Gbb_\mrm$ for the Deligne torus.

(1) A Shimura datum is a pair $(\Gbf,X)$ where

\begin{enumerate}
\item[SD1.] $\Gbf$ is a connected reductive $\Qbb$-group,
            such that $\Gbf^\ad$ has no compact factors defined over $\Qbb$;
\item[SD2.] $X$ is a $\Gbf(\Rbb)$-conjugacy class of homomorphisms of $\Rbb$-groups $x:\Sbb\ra \Gbf_\Rbb$,
            such that \begin{itemize}\item the composition $\Ad\circ x:\Sbb\ra \Gbf_\Rbb\ra \GL_\Rbb(\gfrak_\Rbb)$
            defines a pure Hodge structure of type $\{(-1,1),(0,0),(1,-1)\}$ on the Lie algebra $\gfrak=\Lie\,\Gbf$;

\item the conjugation by $x(\sqrt{-1})$ induces a Cartan involution on $\Gbf^\ad(\Rbb)$.
\end{itemize}
\end{enumerate}

It turns out that each connected component of $X$ is an Hermitian symmetric domain.
We write $\Gbf(\Rbb)_+$ for the stabilizer in $\Gbf(\Rbb)$ of any such connected component,
and we put $\Gbf(\Qbb)_+=\Gbf(\Rbb)_+\cap\Gbf(\Qbb)$.

(2) Let $(\Gbf,X)$ be a Shimura datum and let $K\subseteq\Gbf(\adele)$ be a compact open subgroup.
The Shimura variety associated to $(\Gbf,X)$ at level $K$ is an algebraic variety $M_K(\Gbf,X)$
whose $\Cbb$-points are described by the formula $$ M_K(\Gbf,X)(\Cbb)=\Gbf(\Qbb)\bsh [X\times\Gbf(\adele)/K],$$
where $\Gbf(\Qbb)$ acts on $X\times \Gbf(\adele)/K$ diagonally.
Fix $X^+$ a connected component of $X$, we actually have the following expression of geometrically connected components
$$M_K(\Cbb)=\coprod_a\Gamma_K(a)\bsh X^+,$$
where $a$ runs through a set of representatives of the quotient
$\Gbf(\Qbb)_+\bsh\Gbf(\adele)/K$, and $\Gamma_K(a)=\Gbf(\Qbb)_+\cap aKa^\inv$ acts on $X^+$ through $\Gbf^\ad(\Rbb)^+$.

By \cite{baily borel}, each $\Gamma_K(a)\bsh X^+$ is a quasi-projective algebraic variety over $\Cbb$.
Moreover $M_K(\Gbf,X)$ admits a canonical model over some number fields (cf. \cite{milne ann arbor}).
\end{definition}

\begin{example}[{Siegel modular variety, cf. \cite[\S\,6]{milne05}}]\label{Siegel modular variety}
Let $(V=\Qbb^{2g},\psi)$ be the standard symplectic space of dimension $2g$,
with symplectic basis $e_1,e_{-1},\cdots,e_{g},e_{-g}$ such that
$\psi(e_i,e_{-j})=\delta_{ij}$.
Denote by $\GSp_{2g}$ the connected reductive $\Qbb$-group of symplectic similitude of $(V,\psi)$,
and $\Hscr_g^\pm$ be the Siegel double half space of complex symmetric $g\times g$ matrices with definite imaginary part.
Then $(\GSp_{2g},\Hscr_g^\pm)$ is a Shimura datum, because $\Hscr_g^\pm$ is identified with the set of homomorphisms
$h:\Sbb\ra\GSp_{2g,\Rbb}$ such that the composition $\Sbb\ra\GSp_{2g,\Rbb}\ra\GL_{2g,\Rbb}$
defines a complex structure on $V_\Rbb$ with $(x,y)\mapsto\psi(x,h(\sqrt{-1})y)$ symmetric definite.

Let $n\geq 3$ be an integer and take $K$ to be the principal $n$-th congruence subgroup
$$K=K(n)=\Ker\big(\GSp_{2g}(\Zbhat)\ra\GSp_{2g}(\Zbb/n)\big).$$
Then $M_K(\GSp_{2g},\Hscr_g^\pm)$ is just $\Acal_{g,[n]}$, the moduli scheme over $\Qbb$
parameterizing principally polarized abelian varieties with full level-$n$ structure.
The set of geometrically connected components of $M_K(\GSp_{2g},\Hscr_g^\pm)$ is in bijection
with the set $\mu_n$ of $n$-th roots of 1, each of them isomorphic to $\Gamma(n)\bsh \Hscr_g^+$,
with $\Gamma(n)=\Ker(\Sp_{2g}(\Zbb)\ra\Sp_{2g}(\Zbb/n))$ the $n$-th principal congruence subgroup.

For simplicity we will use $\Acal_g$ to denote the fixed connected component indexed by $1\in\mu_n$
of the Shimura variety $M_K(\GSp_{2g},\Hscr_g^\pm)$ described above.
$\Acal_g$ has its canonical model over $\Qbb(\zeta_n)$ the $n$-th cyclotomic field. It is isomorphic to  the moduli scheme $\Acal_{g,1,n}$ constructed by Mumford in \cite{mumfordGIT}.
The whole Shimura variety $M_K(\GSp_{2g},\Hscr_g^\pm)$ is the $\Qbb$-scheme by composing
$\Acal_{g,1,n}\ra\Spec\Qbb(\zeta_n)$ with $\Spec\Qbb(\zeta_n)\ra\Spec \Qbb$, cf. \cite[Chapter 6, Section 6.4]{hida shimura}.
\end{example}

We will mainly focus on geometrically connected components:
\begin{definition}[connected Shimura data and varieties]\label{connected Shimura data and varieties}

(1) A connected Shimura datum is a triple $(\Gbf,X;X^+)$
where $(\Gbf,X)$ is a Shimura datum and $X^+$ is a connected component of $X$.

(2) A connected Shimura variety is a quotient space of the form $Z=\Gamma\bsh X^+$,
where $X^+$ comes from some connected Shimura datum $(\Gbf,X;X^+)$, and $\Gamma$ is a congruence subgroup of
$\Gbf^\der(\Rbb)_+:=\Gbf(\Rbb)_+\cap\Gbf^\der(\Rbb)$, namely the stabilizer of $X^+$ in $\Gbf^\der(\Rbb)$.
\end{definition}

\begin{remarks}\label{compare with the classical definition}
(i). The definition above of connected Shimura varieties differs slightly from the  one adopted in
\cite{milne05}, where $\Gamma$ is taken to be a congruence subgroup of $\Gbf^\ad(\Qbb)^+$.
Since the center of $\Gbf(\Rbb)_+$ acts on $X^+$ trivially,
it only differs from our version of connected Shimura varieties by a finite covering.

(ii). Just like general Shimura varieties in the sense of Definition \ref{Shimura data and Shimura varieties},
connected Shimura varieties admit canonical models over suitable number fields.
In this paper it suffices to treat them as complex algebraic varieties.
\end{remarks}

\begin{definition}[special subvarieties]\label{special (Shimura) subvarieties}
Let $(\Gbf,X;X^+)$ be a connected Shimura datum, which defines a connected  Shimura variety $Z=\Gamma\bsh X^+$.

(1) A Shimura subdatum  of $(\Gbf,X)$ is a Shimura datum $(\Gbf',X')$
such that $\Gbf'$ is a $\Qbb$-subgroup of $\Gbf$ and $X'$ is the $\Gbf'(\Rbb)$-orbit of some $h\in X$
such that $h(\Sbb)\subseteq\Gbf'_\Rbb$.
Note that $X'\mono X$ is equivariant with respect to $\Gbf'(\Rbb)\mono\Gbf(\Rbb)$.

A  connected Shimura subdatum  of $(\Gbf,X;X^+)$ is a connected Shimura datum  $(\Gbf',X';X'^+)$,
such that $(\Gbf',X')$ is a Shimura subdatum of $(\Gbf,X)$ and $X'^+$
is a connected component of $X'$ which is contained in $X^+$.

(2) Write $\ubf_\Gamma$ for canonical projection $X^+ \ra \Gamma\bsh X^+,\ x\mapsto \Gamma x$,
which we call the uniformization map of $Z$.
Then a special subvariety of $Z$ is of the form $Z'=\ubf_\Gamma(X'^+)$,
where $X'^+$ comes from some subdatum $(\Gbf',X';X'^+)$.

$Z'$ is actually the image of a morphism between connected Shimura varieties $\Gamma'\bsh X'^+\ra \Gamma\bsh X$
for some congruence subgroup $\Gamma'\subseteq\Gbf'^\der(\Qbb)_+$,
and it is a closed subvariety of $Z$ over $\Cbb$, which actually admits a model over some number field.
\end{definition}

We mention briefly the notion of Hecke translation in the setting of connected Shimura varieties.

\begin{definition}[Hecke translation]\label{Hecke translation}
Let $Z=\Gamma\bsh X^+$ be a connected Shimura variety defined by $(\Gbf,X;\,X^+)$.
For $a\in\Gbf(\Qbb)_+$, the Hecke correspondence associated to $a$ is the following diagram
$$\Gamma\bsh \ot{q}\la \Gamma_a\bsh X^+\ot{q_a}\ra \Gamma\bsh X^+$$
where
\begin{itemize}
\item $\Gamma_a=\Gamma\cap a^\inv\Gamma a$;
\item $q(\Gamma_a x)=\Gamma x$ and $q_a(\Gamma_a x)=\Gamma ax$.
\end{itemize}
Both $q$ and $q_a$ are finite morphisms of degree equal to $[\Gamma:\Gamma_a]$.
For $Z'$ a closed irreducible subvariety of $Z$, any irreducible component of $q_a(q^\inv Z')$
is called a Hecke translate of $Z'$ by $a$.

This also makes sense for general cycles in $Z$,
where we write $q_{a*}q^*(Z')$ as multiplicities could arise,
and the map $q_{a*}q^*$ is called the Hecke operator associated to $a$ (acting on the space of cycles).
\end{definition}

\begin{remark}\label{remarkheckedense}
It is easy to verify that the Hecke translate of a special subvariety remains special.
Moreover, for any special point $s\in Z$, the union of all the Hecke translates of $s$
using $a\in \Gbf(\Qbb)_+$ is dense in $Z$ for the analytic topology,
because $\Gbf(\Qbb)_+$ is dense in $\Gbf(\Rbb)_+$ by the real approximation of linear $\Qbb$-groups.
\end{remark}

\begin{remark}[terminology]\label{terminology}
In \cite{ullmo yafaev} etc. a Shimura subvariety of $M_K(\Gbf,X)$ is the image of a morphism
between Shimura varieties $f:M_{K'}(\Gbf',X')\ra M_K(\Gbf,X)$ given by a morphism of Shimura data
$f:(\Gbf',X')\ra (\Gbf,X)$, using some compact open subgroup $K'\subset\Gbf'(\adele)\cap K$.
In this setting we have the notion of Hecke correspondence given by adelic points $a\in\Gbf(\adele)$,
and special subvarieties are defined as geometrically irreducible components of the
(adelic) Hecke translate of a Shimura subvariety.

It turns out that the special subvarieties thus defined are subvarieties contained in
suitable connected components of $M_K(\Gbf,X)$.
Shifting between connected components following \cite[Lemma\,2.13]{chen kuga},
we see that the special subvarieties in the sense of \cite{ullmo yafaev} are the same as ours when restricted to a
connected component of $M_K(\Gbf,X)$. In particular, the notion of Hecke translation is not involved
in our definition of special subvarieties, although it will be needed elsewhere,
like the description of Shimura curves of type I.

Since we only work with connected Shimura varieties, the adjective ``connected''
will be often omitted if no ambiguity occurs, and our special subvarieties will be also called Shimura subvarieties,
like the Shimura curves in Section \ref{sectiontwotype}.
\end{remark}

To end the subsection, we include the notion of totally geodesic subvarieties following \cite{moonen98}:

\begin{definition}[totally geodesic subvarieties]\label{totally geodesic subvarieties}
Let $Z=\Gamma\bsh X^+$ be a Shimura variety defined by $(\Gbf,X;\,X^+)$, with $\ubf_\Gamma$ the uniformization map.

A totally geodesic subvariety of $Z$ is of the form $\ubf_\Gamma(Y_1^+\times\{y_2\})$,
where for some subdatum $(\Hbf,Y;\,Y^+)\subseteq(\Gbf,X;\,X^+)$
we have $(\Hbf^\ad,Y^\ad;\,Y^{\ad +})\isom(\Hbf_1,Y_1;\,Y_1^+)\times(\Hbf_2,Y_2;\,Y_2^+)$
and $y_2\in Y_2^+$. Here $(\Hbf^\ad,Y^\ad;\,Y^{\ad +})$ is deduced from $(\Hbf,Y;\,Y^+)$
by taking $Y^\ad $ to be the $\Hbf^\ad(\Rbb)$-orbit of the composition
$\Sbb\ot{y}\ra\Hbf_\Rbb\ra\Hbf^\ad_\Rbb$ using any $y\in Y$;
in particular, $Y^+=Y^{\ad +}$ as the center of $\Hbf(\Rbb)$ acts on $Y$ trivially.
\end{definition}

\begin{remark}
Totally geodesic subvarieties can also be defined in terms of differential geometry.
However, we do not need the fine geometry of these subvarieties,
except for a numerical characterization in the case of curves due to Viehweg and the second named author,
cf. Theorem \ref{theoremnumchar} and the original paper \cite{vz04}.
\end{remark}

\subsection{Two types of Shimura curves}\label{sectiontwotype}
We recall briefly the definition of two types of Shimura curves that will be studied later.

The first class of Shimura curves are modular curves ``diagonally'' embedded in $\Acal_g$,
and the embedding factors through a slightly generalized form of Hilbert modular varieties,
which we describe as follows
\begin{definition}[Hilbert modular variety]\label{Hilbert modular variety}
(1) A totally real \'etale $\Qbb$-algebra is a   finite dimensional \'etale $\Qbb$-algebra $F$
(necessarily commutative) such that the $\Rbb$-algebra $F\otimes_\Qbb\Rbb$ is isomorphic to
the direct product $\Rbb$-algebra $\Rbb^g$, with $g=\dim_\Qbb F$.
It is clear that $F$ is isomorphic to a finite product of totally real number fields $\prod\limits_iF_i$
with $\sum\limits_i[F_i:\Qbb]=g$, and $F$ is a $\Qbb$-form of the product algebra $\Qbb^g$.

(2) Let $F$ be a totally real \'etale $\Qbb$-algebra of dimension $g$.
Then $\Gbf:=\Res_{F/\Qbb}\GL_2$ is a $\Qbb$-form of the $d$-fold product $\GL_2^d$,
which splits after the base change $\Qbb\ra \Rbb$.
We thus have $\Gbf_\Rbb\isom\prod_{\sigma}\GL_{2,\sigma} $,
where $\GL_{2,\sigma}$ stands for $\GL_{2,\Rbb}$ indexed by one of the
$g$ distinct homomorphisms of $\Rbb$-algebras $\sigma:F\otimes_\Qbb\Rbb\ra\Rbb$.

Let $X$ be the $\Gbf(\Rbb)$-conjugacy class of the homomorphism
$$h:\Sbb\ra(\Res_{F/\Qbb}\GL_2)_\Rbb,\qquad \rho\exp(\sqrt{-1}\theta)\mapsto \bigg(\left[ \begin{array}{cc}
 \rho\cos\theta & \rho\sin\theta\\
-\rho\sin\theta & \rho\cos\theta
\end{array} \right]\bigg)_\sigma$$ using $\Gbf_\Rbb\isom\prod_\sigma\GL_{2,\sigma}$.
It is then immediate that $(\Gbf,X)$ is a Shimura datum in the sense of \cite{deligne77},
with $X\isom\prod_{\sigma}\Hscr^\pm_{1,\sigma}$   the $g$-fold product of $\Hscr_1^\pm$.

Note that the center of $\Gbf$ is $\Res_{F/\Qbb}\Gbb_\mrm$,
which is too big to be put into $\GSp_{2g}$ directly respecting the moduli interpretation.
We thus restrict to the $\Qbb$-subgroup
$$\Gbf^F:=\{g\in\Gbf: \det(g)\in\Gbb_{\mrm,\Qbb}\subseteq\Res_{F/\Qbb}\Gbb_\mrm \}.$$
In other words, we may start with an embedding $\Res_{F/\Qbb}\SL_2\mono\Sp_{2g}$.
This can be done by choosing an $F$-linear structure on the $\Qbb$-Lagrangian decomposition $V=V_+\oplus V_-$
described in Example \ref{Siegel modular variety}, say identify $e_1,\cdots,e_g$ with a $\Qbb$-basis of $F$
and extend this $F$-linear structure from $V_+=\bigoplus\limits_{i=1}^{g}\Qbb e_i$
to $V_-=\bigoplus\limits_{i=1}^{g}\Qbb e_{-i}$ respecting the symplectic form $\psi$.
The embedding $\Res_{F/\Qbb}\SL_2\mono\Sp_{2g}$ extends to $\Gbf^F\mono\GSp_{2g}$
by joining a central $\Qbb$-torus isomorphic to $\Gbb_\mrm$.
It is then clear that the homomorphism $h$ mentioned above has its image in $\Gbf^F_\Rbb$,
and we get a smaller Shimura datum $(\Gbf^F,X^F=\Gbf^F(\Rbb)\cdot h)$,
which is a subdatum of $(\GSp_{2g},\Hscr_g^\pm)$.

The moduli interpretation of Shimura subvarieties associated to $(\Gbf^F,X^F)$ is similar to
the case of usual Hilbert modular varieties, namely they classify abelian varieties with
endomorphism by $F$ up to isogeny (plus suitable level structures and polarization constraints).
\end{definition}
\begin{definition}[Shimura curves of type I]\label{deftypeI}
In Shimura subvarieties of $\Acal_g$ defined by the subdatum $(\Gbf^F,X^F)$ in
Definition \ref{Hilbert modular variety}(2),
we have Shimura curves embedded diagonally. In fact the diagonal embedding
$\GL_{2.\Rbb}\ra\prod_{\sigma}\GL_{2,\sigma}$ descends to $\GL_{2,\Qbb}\mono\Res_{F/\Qbb}\GL_2$,
which has image in $\Gbf^F$; the homomorphism $h:\Sbb\ra\Gbf_{\Rbb}^F$ used there factors through it,
which gives the chain of subdata $(\GL_2,\Hscr_1^\pm)\mono(\Gbf^F,X^F)\mono(\GSp_{2g},\Hscr_g^\pm)$.
Such Shimura curves are called Shimura curves of type I.
\end{definition}

Note that the Shimura curves given by different embeddings
$(\GL_2,\Hscr_1^\pm)\mono(\Gbf^F,X^F)\mono(\GSp_{2g},\Hscr_g^\pm)$
only differ from each other by Hecke translation using $\GSp_{2g}(\Qbb)$.
For example, if $E$ and $F$ are two totally real \'etale $\Qbb$-algebra of dimension $g$ giving rise to
$(\Gbf^E,X^E)$ and $(\Gbf^F,X^F)$ as above, then their embeddings into $(\GSp_{2g},\Hscr_g^\pm)$
are the same as the choice of $E$-structure (resp. $F$-structure) on the $\Qbb$-Lagrangian subspace
of the underlying symplectic space.
The restriction  to $(\GL_2,\Hscr_1^\pm)$ simply treats $E$ (resp. $F$) as a $\Qbb$-vector space of dimension $g$,
hence the embedding $(\GL_2,\Hscr_1^\pm)\mono(\GSp_{2g},\Hscr_g^\pm)$ always factors through some $(\Gbf^L,X^L)$
with $L=\Qbb^g$ (direct product $\Qbb$-algebra) given by the choice of a basis for
a $\Qbb$-Lagrangian subspace of the symplectic space. Since different $\Qbb$-Lagrangians are conjugate
under $\GSp_{2g}(\Qbb)$, we see that these embeddings of $(\GL_2,\Hscr_1^\pm)$
are permuted to each other by Hecke translation.

We mention some facts about Shimura curves of type I:

\begin{lemma}\label{algebraic interpretation of Shimura curves of type I}
{\rm (1)} Let $A$ be a principally polarized abelian variety over $\Cbb$ with
$\End^\circ(A):=\End(A)\otimes_\Zbb\Qbb$.
Then $\End^\circ(A)$ contains the matrix algebra $\Mat_g(\Qbb)$ if and only if
$A$ is isogenous to a $g$-fold self-product of some elliptic curve.

{\rm (2)} Let $A$ be an abelian variety as in {\rm (1)}.
Then the point $x_A$ on $\Acal_g$ parameterizing $A$ falls in some Hecke translate of the Shimura curve
defined by the diagonal embedding $(\GL_2,\Hscr_1^\pm)\mono(\Gbf^L,X^L)\mono(\GSp_{2g},\Hscr_g^\pm)$,
using the trivial real $\Qbb$-algebra $L=\Qbb^g$.
\end{lemma}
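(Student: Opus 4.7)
The plan is to treat the two parts as essentially independent: part (1) by a standard idempotent-decomposition argument in the semisimple $\Qbb$-algebra $\End^\circ(A)$, and part (2) by translating an isogeny $E^g\to A$ into a Hecke correspondence on $\Acal_g$.

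\textbf{Part (1).} The ``if'' direction is immediate: if $A\sim E^g$ then $\End^\circ(A)\isom\End^\circ(E^g)\isom\Mat_g(\End^\circ(E))$, which contains $\Mat_g(\Qbb)$ as the scalar matrices over $\Qbb\subseteq\End^\circ(E)$. For the ``only if'' direction, suppose $\Mat_g(\Qbb)\mono\End^\circ(A)$ and exploit the standard presentation of $\Mat_g(\Qbb)$ by a complete system of orthogonal rank-one idempotents $e_1,\dots,e_g$ with $\sum_i e_i=1$, together with matrix units $u_{ij}$ satisfying $u_{ii}=e_i$ and $u_{ij}u_{jk}=u_{ik}$. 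By Poincar\'e reducibility, each $e_i\in\End^\circ(A)$ cuts out (after clearing denominators) an abelian subvariety $A_i=e_i(A)$ well-defined up to isogeny, and $A\sim\prod_{i=1}^gA_i$. The matrix units $u_{ij}:A_j\to A_i$ are mutually inverse quasi-isogenies (their compositions recover the $e_i$), so all $A_i$ are isogenous to a common abelian variety $B$. Comparing dimensions gives $g=\dim A=g\dim B$, hence $\dim B=1$, i.e., $B$ is an elliptic curve $E$ and $A\sim E^g$.

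\textbf{Part (2).} Unwinding Definition \ref{Hilbert modular variety} and Definition \ref{deftypeI} with $L=\Qbb^g$, the Shimura subvariety attached to $(\Gbf^L,X^L)$ parameterizes polarized abelian varieties of the form $\prod_{i=1}^gE_i$ with the standard product polarization (and a compatible level structure), and the diagonal subdatum $(\GL_2,\Hscr_1^\pm)\mono(\Gbf^L,X^L)$ cuts out the locus where all factors coincide: it is the moduli of pairs $(E^g,\lambda_0)$ as $E$ varies, so its generic point is exactly $x_{E^g}$ for a generic elliptic curve $E$. Given $A$ as in (1), choose an elliptic curve $E$ and an isogeny $\phi:E^g\to A$. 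The induced isomorphism of rational Hodge structures $H_1(\phi,\Qbb):H_1(E^g,\Qbb)\ot{\isom}\to H_1(A,\Qbb)$ carries the standard product symplectic form on the left to a $\Qbb$-multiple of the principal polarization of $A$ on the right. After rescaling and choosing compatible symplectic $\Qbb$-bases, there is an element $a\in\GSp_{2g}(\Qbb)_+$ sending the Hodge datum $h_{E^g}\in X^+$ to $h_A\in X^+$. By Definition \ref{Hecke translation}, $x_A=\ubf_\Gamma(a\cdot h_{E^g})$ is the image of $x_{E^g}$ under the Hecke correspondence attached to $a$; since $x_{E^g}$ lies on the type~I Shimura curve, $x_A$ lies on its Hecke translate by $a$.

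\textbf{Main obstacle.} The delicate point is in part (2): neither the principal polarization nor the level-$n$ structure is preserved under an arbitrary isogeny $\phi:E^g\to A$, so one must verify that the change of lattice and polarization induced by $\phi$ is genuinely implemented by a rational symplectic similitude $a\in\GSp_{2g}(\Qbb)_+$. Once the two symplectic $\Qbb$-Hodge structures on $H_1(E^g,\Qbb)$ and $H_1(A,\Qbb)$ are rigidly identified through $\phi$ and a scalar rescaling, Witt-type transitivity of $\GSp_{2g}(\Qbb)$ on symplectic bases of $(\Qbb^{2g},\psi)$ supplies such an $a$, and the mismatch of level-$n$ structures is absorbed into the finite indeterminacy of the Hecke correspondence $q_a\circ q^{-1}$ in Definition \ref{Hecke translation}.
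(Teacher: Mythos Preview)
Your argument for part~(2) is essentially the paper's: both pass from the isogeny $E^g\to A$ to the induced $\Qbb$-linear isomorphism on $H_1$, identify it with an element $a\in\GSp_{2g}(\Qbb)_+$ after choosing symplectic $\Qbb$-bases, and read off the Hecke translation from Definition~\ref{Hecke translation}. Your ``main obstacle'' paragraph is in fact more explicit than the paper about the polarization compatibility; the paper simply asserts that $f_*$ is an isomorphism of \emph{polarized} rational Hodge structures.

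For part~(1) you take a genuinely different route. The paper first invokes the isogeny decomposition $A\sim\prod_iA_i^{m_i}$ into powers of pairwise non-isogenous simple factors, computes $\End^\circ(A)\isom\prod_i\Mat_{m_i}(D_i)$ with each $D_i$ a division algebra, and identifies the maximal split semisimple $\Qbb$-subalgebra as $\prod_i\Mat_{m_i}(\Qbb)$; since $\Mat_g(\Qbb)$ is simple it must sit inside a single factor $\Mat_{m_i}(\Qbb)$, forcing $g\leq m_i$, and then $\sum_i m_i\dim A_i=g$ gives $r=1$, $m_1=g$, $\dim A_1=1$. Your idempotent argument bypasses the simple decomposition entirely: the orthogonal rank-one idempotents $e_i$ of the given copy of $\Mat_g(\Qbb)$ split $A$ directly into $g$ pieces, the matrix units $u_{ij}$ exhibit them as mutually isogenous, and a dimension count finishes. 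Your approach is more elementary and self-contained (no appeal to the structure of the maximal split subalgebra of $\Mat_m(D)$); the paper's approach yields the slightly stronger intermediate statement about that maximal split subalgebra, but this is not used elsewhere in the paper.
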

\begin{proof}
(1) A $g$-dimensional principally polarized abelian variety $A$ (over $\Cbb$) admits a decomposition up to
isogeny $A\sim\prod\limits_{i=1}^rA_i^{m_i}$ where the $A_i$'s are simple abelian varieties non-isomorphic to
each other and $m_i>0$ are integers such that $\sum\limits_{i=1}^rm_i=g$.
Hence the algebra of endomorphisms up to isogeny of $A$ is
$$\End^\circ(A)\isom\prod_{i=1}^r\Mat_{m_i}(D_i),$$
where $D_i=\End^\circ(A_i)$ is a division algebra of finite dimension over $\Qbb$.
The maximal semi-simple split $\Qbb$-algebra (i.e. a finite product of matrix algebras $\Mat_d(\Qbb)$'s)
of $\End^\circ(A)$ is equal to $\prod\limits_{i=1}^r\Mat_{m_i}(\Qbb)$.

If $\End^\circ(A)$ contains a split simple $\Qbb$-algebra of the form $\Mat_g(\Qbb)$,
then one must have $r=1$ and $m_1=g$, which means $A$ is isogenous to
a $g$-fold self-product of a single elliptic curve.
Conversely, if $A$ is isogenous to $E^g$ with $E$ some elliptic curve,
then $\End^\circ(A)=\Mat_g\big(\End^\circ(E)\big)$ contains $\Mat_g(\Qbb)$.

(2)  Let $x\in\Hscr_g^+$ be a point giving the Hodge structure $h:\Sbb\ra\GSp_{2g,\Rbb}\ra\GL_{V,\Rbb}$ on $V$,
which defines a point $\xbar=\Gamma x$ on $\Gamma\bsh \Hscr_g^+$ for some congruence subgroup
$\Gamma\subset\Sp_{2g}(\Qbb)$ (say $\Gamma=\Gamma(n)$ principal for some $n$).
Take $a\in\GSp_{2g}(\Qbb)_+$ and consider the Hecke correspondence
$$\Gamma\bsh \Hscr^+_g\ot{q}\la \Gamma_a\bsh \Hscr_g^+\ot{q_a}\ra\Gamma\bsh \Hscr_g^+$$
then by the Definition \ref{Hecke translation} we see that the point $\Gamma ax$ lies in $q_a(q^\inv \{\xbar\})$.

Applying this to the isogeny $f:E^g\ra A$,
we get $a=f_*:H_1(E^g,\Qbb)\isom H_1(A,\Qbb)$.
$H_1(E^g)$ and $H_1(A,\Qbb)$ correspond to two rational Hodge structure on the $\Qbb$-vector space $V=\Qbb^{2g}$,
namely two points $x_E$ and $x_A$ in $\Hscr_g^{+}$.
$a=f_*$ is an isomorphism of
polarized rational Hodge structure, which gives an element in $\GSp_{2g}(\Qbb)$,
still denoted as $a$. We may choose suitable symplectic bases of $H_1(E^g,\Qbb)$ and  $H_1(A,\Qbb)$
such that $a\in\GSp_{2g}(\Qbb)_+$. The element $a$ transports $x_E$ to $x_A$,
namely the conjugation of $x_E$ by $a$ equals $x_A$,
hence $\Gamma x_A$ is a Hecke translate of $\Gamma x_E$ by $a\in\GSp_{2g}(\Qbb)_+$.
\end{proof}

The second class of Shimura curves are the Shimura curves of Mumford type
constructed from corestrictions of quaternion algebras,
the idea of which goes back to \cite{mumford69}. We recall briefly the construction given in \cite{vz04}.

Let $F$ be a totally real field of degree $d$ over $\Qbb$, with $d$ distinct real embeddings
$\sigma_1,\cdots,\sigma_d$, and we use $\sigma=\sigma_1$ to identify $F$ as a subfield of $\Rbb$.
Let $A$ be a quaternion algebra over $F$, equipped with isomorphisms
$$\rho_1:A\otimes_{\sigma_1}\Rbb\isom\Mat_2(\Rbb),\qquad \rho_i:A\otimes_{\sigma_i}\Rbb\isom\Hbb,\ (i=2,\cdots,d),$$
with $\Hbb$ Hamilton's quaternion algebra over $\Rbb$.
The corestriction $D=\Cores_{F/\Qbb}A$ is a central simple algebra over $\Qbb$,
which is isomorphic to either
\begin{itemize}
\item $\Mat_{2^d}(\Qbb)$ and $d$ is odd; or
\item $\Mat_{2^d}(L)$, for some quadratic extension $L=\Qbb(\sqrt{b})$ over $\Qbb$;
      $L$ is imaginary if and only if $d$ is even.
\end{itemize}
In both cases we have an embedding $D=\Cores_{F/\Qbb}A\mono \Mat_{2^{d+\epsilon}}(\Qbb)$ for $\epsilon\in{0,1}$,
and we simply write it as $D\subseteq\Mat_{2^m}(\Qbb)$ with $m$ minimal.

Write $A^1$ for the kernel of the reduced norm ${\rm Nrd}:A^\times\ra F^\times$.
The $\Qbb$-group $\Gbf'$ associated to $A^1$ is connected and semi-simple,
with $\Gbf'_\Rbb\isom \SL_{2,\Rbb}\times\SU_2(\Rbb)^{d-1}$.
From \cite{vz04} we know that the homomorphism $A^\times\ra D\mono\Mat_{2^m}(\Qbb)$
defines a representation of $A^1$ which preserves a symplectic form on $V=\Qbb^{2^{m}}$.

We enlarge $\Gbf'$ to a connected reductive $\Qbb$-group $\Gbf$ which only differs from $\Gbf'$
by the split center $\Gbb_{\mrm,\Qbb}$. It is the $\Qbb$-group associated to
$$A^*:=\{a\in A:{\rm Nrd}(a)\in\Qbb^\times\subset F^\times\}.$$
Similar to the case of the affine modular curve $Y(d)$ discussed above,
we have the Shimura datum $(\Gbf,X)$,
where $X$ is the $\Gbf(\Rbb)$-conjugacy class of the following homomorphism
$h:\Sbb\ra \Gbf_\Rbb$ given by
$$z=\rho\exp(\sqrt{-1} \theta)\mapsto \bigg(\left[ \begin{array}{cc}
 \rho\cos\theta & \rho\sin\theta\\
-\rho\sin\theta & \rho\cos\theta
\end{array} \right],\,  I_2,\cdots,\, I_2 \bigg).$$
$X$ is isomorphic to $\Hscr_1^\pm$, and $(\Gbf,X)$ is a subdatum of
$(\GSp_{2^m},\Hscr^\pm_{2^{m-1}})$ by the representation of $\Gbf$ induced by $A\ra \Mat_{2^m}(\Qbb)$.

Write $C_A$ for the connected Shimura curve defined by the datum $(\Gbf,X)$
above using suitable level structure and the component $\Hscr_1^+$,
with $\eta$ the generic point of $C_A$. The defining symplectic representation of $\Gbf$
gives a universal family of abelian varieties $X_A\ra C_A$,
and its generic fiber $X_\eta$ is an abelian variety.
The endomorphism algebra of $X_\eta$ has been classified in \cite{vz04},
and one of the two following cases holds:\begin{enumerate}
\item[(1)] $m=d>1$, $\dim X_\eta=2^{d-1}$ and $\End(X_\eta)\otimes_\Zbb\Qbb=\Qbb$;

\item[(2)] $m=d+1$, $\dim X_\eta=2^d$, and \begin{enumerate}
\item[a.] for $d$ odd, $\End(X_\eta)\otimes_\Zbb\Qbb$ is a totally definite quaternion algebra over $\Qbb$;

\item[b.] for $d$ even, $\End(X_\eta)\otimes_\Zbb\Qbb$ is a totally definite quaternion algebra over $\Qbb$.
\end{enumerate}
\end{enumerate}
We remark that for $d=1$ or $2$,
there are only two Shimura curves of Mumford type for the given quaternion algebra $A$,
and both of them are of PEL type.
The curve  classifies abelian surfaces resp. abelian fourfolds $X$
with $\End(X)\otimes_\Zbb\Qbb$ a totally indefinite resp. totally definite quaternion algebra over $\Qbb$.

\subsection{Logarithmic Higgs bundles on  curves in $\Acal_g$}\label{sectionHiggsonC}
Let  $\cala_g=\mathcal A_{g,[n]}$ ($n\geq 3$) be the moduli space of principal polarized abelian varieties
with level-$n$ structure and
$\ol {\mathcal A}_{g}\supseteq \mathcal A_{g}$ a smooth toroidal compactification
with $\Delta:=\ol {\mathcal A}_{g}\setminus \mathcal A_{g}$.
Note that $\mathcal A_{g}$  carries a universal family of abelian varieties (cf. \cite{popp77})
$$\mathfrak h:~ \calx_g \lra  \mathcal A_{g}.$$
The relative de Rham bundle $ \Big(H^1_{dR}\big(\calx_g/\mathcal A_{g}\big),~\nabla\Big) $,
together with a polarization  and the Hodge filtration
$\mathfrak h_*\big(\Omega^1_{\calx_0/\mathcal A_{g,[n]}}\big)\subseteq\,
H^1_{dR}\big(\calx_g/\mathcal A_{g}\big)$,
forms a polarized variation of Hodge structure (PVHS).

Consider  the underlying universal locally constant sheaf
$\mathbb V=R^1\mathfrak h_*\mathbb Q_{\calx_0}.$
If $n$ is large enough, then $\mathbb V$  has unipotent local monodromy
around all components of the boundary $\Delta$ by  \cite[\S\,4]{mumford77}.
We will always assume that $\mathbb V$ has the property.

The above PVHS has a unique  extension
over $\ol {\mathcal A}_{g,[n]}$, and the extended Gauss-Manin connection $\nabla$ has logarithmic poles along  $\Delta$,
cf. \cite[\S\,11.1, \S\,11.2]{peterssteenbrink}.
By taking the grading of the extended Hodge filtration, one obtains a logarithmic system of Hodge bundles
$\left(E^{1,0}\oplus E^{0,1},\,\theta\right)$,
where the Higgs field
$$ \theta:~ E^{1,0}\lra E^{0,1}\otimes\Omega^1_{ \ol {\mathcal A}_{g}}(\log\Delta)$$
is an extension of  the following Kodaira-Spencer map on the Hodge bundles
$$\theta:~ \mathfrak h_*\big(\Omega^1_{\calx_g/\mathcal A_{g}}\big)\lra
R^1\mathfrak h_*\mathcal O_{\calx_g}\otimes\Omega^1_{\mathcal A_{g}}.$$

Consider a (smooth) projective curve contained in $\ol\cala_{g,[n]}$:
$$\phi:~ \ol C \hookrightarrow \ol {\mathcal A}_{g,[n]},\qquad \text{with $\Delta_{\ol C}:=\phi^{-1}(\Delta)$ a divisor.}
$$
Then by pull-back $C:=\ol C\setminus \Delta_{\ol C}$ carries a universal family of abelian varieties
$$h:\,X\to C$$
and a PVHS with underlying local system $\mathbb V_{C}:= R^1h_*\mathbb Z_{X} =\phi^*\mathbb V,$
 which has unipotent local monodromy around $\Delta_{\ol C}$ by assumption.
Because of the compatibility of the Deligne's canonical extension with pullback under a morphism, we obtain
$$ \left(E^{1,0}_{\ol C}\oplus E^{0,1}_{\ol C},~\theta_{\ol C}\right)
   =\phi^*\left(E^{1,0}\oplus  E^{0,1},~\theta\right).$$
In particular, for a Shimura curve $C$, one obtains an associated logarithmic Higgs bundle on
the smooth completion $\ol C\supseteq C$.

\subsection{Families of semi-stable curves}\label{sectionfamilyofcurve}
Our main technique will be built on the theory of (one-dimensional) families of semi-stable curves.
In the subsection, we would like to review some basic facts and fix the notations,
which will be used freely in this paper, cf. \cite{bhpv,ch88,harrismorrison}.

Recall that a semi-stable  (resp. stable) curve is  a complete connected reduced nodal  curve such that
each   rational component intersects with the other components at $\geq 2$  (resp. 3) points.
A semi-stable (resp. stable)  family of curves is a
flat projective morphism  $\bar f:\,\ol S\to \ol B$
from a projective surface $\ol S$ to a smooth projective curve $\ol B$
with connected fibres such that all the singular fibres of $\bar f$ are
semi-stable (resp. stable) curves. Moreover, $\bar f$ is said to be
\begin{itemize}
\item a hyperelliptic family if a general fibre of $\bar f$ is a hyperelliptic curve;
\item isotrivial     if  all its smooth fibres are isomorphic to each other;
\item relatively minimal if no singular
fiber of $\bar f$ has  any  $(-1)$-component.
\end{itemize}
Note that  if $\bar f$ is
semi-stable, then $\bar f$ is relatively minimal. From now on, we  assume
that $\bar f:\,\ol S \to \ol B$ is a semi-stable family  of curves of genus
$g\geq 2$ with singular fibres $\Upsilon\to\Delta$ and     $\ol S$ is
smooth.

Denote by $\omega_{\ol S/\ol B}=\omega_{\ol S}\otimes \bar f^*\omega_{\ol B}^{\vee}$ the
relative canonical sheaf of $\bar f$. Let $b=g(\ol B)$,
$p_g=h^0(\ol S,\,\omega_{\ol S})$, $q=h^0(\ol S,\,\Omega_{\ol S}^1)$, $\chi(\mathcal
O_{\ol S})=p_g-q+1$, and $\chit(\cdot)$ be the topological Euler
characteristic. Consider the following relative invariants:
\begin{equation}\label{defofrelativeinv}
\left\{
\begin{aligned}
&\omega_{\ol S/\ol B}^2=\omega_{\ol S}^2-8(g-1)(b-1),\\
&\delta_{\bar f}=\chit(\ol S)-4(g-1)(b-1)=\sum_{F\in \Upsilon}\delta(F),\\
&\deg \bar f_*\omega_{\ol S/\ol B}=\chi(\mathcal O_{\ol S})-(g-1)(b-1),
\end{aligned}\right.
\end{equation}
where $\delta(F)$ is the number of nodes of $F$.
All the invariants in \eqref{defofrelativeinv} are nonnegative and
satisfy the Noether's formula:
\begin{equation}\label{formulanoether}
12\deg \bar f_*\omega_{\ol S/\ol B}=\omega_{\ol S/\ol B}^2+\delta_{\bar f}.
\end{equation}
And $\deg \bar f_*\omega_{\ol S/\ol B}=0$  ( or equivalently, $\omega_{\ol S/\ol B}^2=0$)
if and only if $\bar f$ is smooth and isotrivial.
Since $\bar f$ is semi-stable, we also have
\begin{equation}\label{eqnomega=Omega}
\bar f_*\omega_{\ol S/\ol B}=\bar f_*\Omega^1_{\ol S/\ol B}(\log\Upsilon),
\end{equation}
where $\bar f_*\Omega^1_{\ol S/\ol B}(\log\Upsilon)$ is defined by the following exact sequence
$$0\lra \bar f^*\Omega^1_{\ol B}(\log\Delta)\lra \Omega^1_{\ol S}(\log\Upsilon)
\lra \Omega^1_{\ol S/\ol B}(\log\Upsilon) \lra 0.$$

By contracting all $(-2)$-curves contained in singular fibres, one gets a stable family
$\bar f^{\#}:\,\ol S^{\#} \to \ol B$ and a commutative diagram as below:
$$\xymatrix{
  \ol S \ar[rr] \ar[dr]_-{\bar f}
                &  &    \ol S^{\#}  \ar[dl]^-{\bar f^{\#}}   \\
                & \ol B                 }$$
$\ol S^{\#}$ is not necessarily smooth. For every singular point $q$ of
$\ol S^{\#}$, $(\ol S^{\#},\,q)$ is a rational double point of type
$A_{\lambda_q}$ (cf. \cite{bhpv}) with $\lambda_q$ the number of $(-2)$-curves in
$\ol S$ over $q$.

We are going to define  invariants $\{\delta_i(F)| 0\leq i \leq
[g/2]\}$ for a singular fibre $F$ of  $\bar f$.  First, we say a singular
point $q$ of $F$ to be  of type $i\in [1,  g/2]$ (resp. 0) if the
partial normalization of $F$ at $q$ consists of two connected
components of arithmetic genera $i$ and $g-i$ (resp. is connected).
Then we define $\delta_i(F)$ to be the number of singular points of
type $i$ in $F$. Or alternatively we define $\delta_i(F)$ in terms
of the stable model $F^{\#}\subseteq \ol S^{\#}$. Recall that a singular
point $q\in F^{\#}$ is said to have multiplicity $m$ if    $\ol S^{\#}$
around $q$ is locally of the form $xy=t^{m}$, where $t$ is a local
coordinate of $\ol B$. Then $\delta_i(F)$ is defined to be the number of
singular points of type $i$ counting multiplicity  in $F^{\#}$. We
remark that $(\ol S^{\#},\,q)$ is a rational double point of type
$A_{m_q-1}$, if $m_q>1$ is the multiplicity of $q$.

Denote always by $\Upsilon_{ct}\to \Delta_{ct}$ (resp.
$\Upsilon_{nc}\triangleq \Upsilon\setminus \Upsilon_{ct} \to
\Delta_{nc}\triangleq\Delta\setminus\Delta_{ct}$ ) the singular
fibres with compact (resp.  non-compact) Jacobian.  Define
$\delta_h(F)=\sum\limits_{i=2}^{[g/2]} \delta_i(F)$, and
\begin{equation}\label{formulaofdelta_i}
\left\{
\begin{aligned}
&\delta_i(\Upsilon)=\sum_{F\in \Upsilon} \delta_i(F),\quad
\delta_i(\Upsilon_{ct})=\sum_{F\in \Upsilon_{ct}} \delta_i(F),\quad
\delta_i(\Upsilon_{nc})=\sum_{F\in \Upsilon_{nc}} \delta_i(F).\\
&\delta_h(\Upsilon)=\sum_{i=2}^{[g/2]} \delta_i(\Upsilon),\quad
\delta_h(\Upsilon_{ct})=\sum_{i=2}^{[g/2]} \delta_i(\Upsilon_{ct}).
\end{aligned}
\right.
\end{equation}
Then
\begin{equation}\label{formulaofdelta_f}
\left\{
\begin{aligned}
\delta(F)&=\sum_{i=0}^{[g/2]}\delta_i(F)=\delta_0(F)+\delta_1(F)+\delta_h(F),\\
\delta_{\bar f}&=\sum_{i=0}^{[g/2]}\delta_i(\Upsilon)=\delta_0(\Upsilon)+\delta_1(\Upsilon)+\delta_h(\Upsilon).
\end{aligned}
\right.
\end{equation}

When $F\in \Upsilon_{ct}$, each irreducible component of $F$ is smooth.
So one can define
\begin{equation}\label{eqndefofl_i(F)}
l_i(F)=\#\big\{D\subseteq F~\big|~ g(F)=i\big\},\qquad l_h(F)=\sum_{i\geq 2}l_i(F).
\end{equation}
Note that the dual graph of $F$ is a tree for $F\in \Upsilon_{ct}$.
Hence
\begin{equation}\label{eqnFcompJac}
\delta_0(F)=0,\quad
\sum_{j}\delta_j(F)=\sum_{i}l_i(F)\, -1,\quad
\sum_{i}i\cdot l_i(F)=g,\qquad \forall~F\in \Upsilon_{ct}.
\end{equation}

We also want to remark that these invariants $\delta_i(\Upsilon)$'s have the following moduli meanings.
Let $\ol \calm_g$ be the moduli space of complex stable  curves of
genus $g$. By \cite{dm69}, the boundary $\ol \calm_g \setminus
\calm_g$ is of codimension one and has $[g/2]+1$ irreducible
components  $\Delta_0,\,\Delta_1,\,\cdots,\,\Delta_{[g/2]}$, which
define divisor classes in $\Pic(\ol \calm_g)\otimes\mathbb Q$. Note
that a general point of $\Delta_0$ represents an irreducible stable
curve with one node, while a general point of $\Delta_i$ ($i>0$)
corresponds to a stable curve consisting of two components
  of arithmetic genera  $i$ and  $g-i$ respectively and intersecting
at one point.  There is also a natural class $\lambda \in \Pic(\ol
\calm_g)\otimes\mathbb Q$ called the Hodge class with the following
property (cf. \cite{dm69}): for every non-isotrivial semi-stable
family $\bar f:\, \ol S \to \ol B$ with the associated moduli morphism
$\varphi:\, \ol B \to \ol M_g$, then
\begin{equation}\label{modulidelta}
\deg \varphi^*(\lambda)=\deg \bar f_*\omega_{\ol S/\ol B},\qquad
\delta_i(\Upsilon)=\deg \varphi^*(\Delta_i).
\end{equation}

We now assume that  $\bar f:\, \ol S \to \ol B$ is  a semi-stable family of hyperelliptic
curves of genus $g\geq2$ till the end of this subsection. We are going to define
invariants
$$\xi_j(\Upsilon)=\sum_{F\in\Upsilon}\xi_j(F),\qquad \forall~0\leq j\leq [(g-1)/2].$$
It suffices to define  $\xi_j(F)$ for   singular
fibers $F$ of $\bar f$.

{ \leftmargini=4mm
\begin{itemize}
\item  First we define the index of a singular point
$p$ of  a stable $(2g+2)$-pointed nodal curve $\Gamma$ of arithmetic
genus zero. Note that $\Gamma\backslash\{p\}$ consists of two
connected components $\Gamma'$ and $\Gamma''$, which respectively
contain  $\alpha_1$ and $\alpha_2$   marked points. Clearly  $\alpha_1+\alpha_2=2g+2$.
We call $\min(\alpha_1,  \alpha_2)$ to be the index of
$p\in\Gamma$.

\item Next we describe singular points of a semi-stable
hyperelliptic curve $\wt F$ when $\wt F$ can be viewed as an admissible
double $\psi:\, \wt F\to \Gamma$ over a stable curve $\Gamma$ as above
(cf. \cite{ch88} or \cite{harrismumford82}).  If $p\in\Gamma$ has
odd index $2k+1$, then $\psi$  is branched at $p$   and the
unique  point $q\in \wt F$ lying above $p$ is a singular point of type
$k$.  If $p\in\Gamma$ has even index $2k+2$, then $\psi$ is
unbranched at $p$ and   two points $q', q''\in \wt F$ lying above $p$
are of type $0$. Define invariants
\begin{align*}
 &\xi_0(\wt F):=2 \cdot \#\{\text{singular points in }  \Gamma  \text{ of index }
 2\},\\
& \xi_j(\wt F) :=  \#\{\text{singular points in } \Gamma \text{ of index
} 2j+2\}, \quad 1\leq j\leq [(g-1)/2].
\end{align*}
\item Finally we define $\xi_j(F)$ for any singular fiber $F$ of $\bar f$. Let
$\tilde f:\,\wt S \to \wt B$ be the semi-stable family corresponding
to the base change of $\bar f$ with respect to a finite morphism $\pi:\wt
B\to \ol B$ of degree $d$. When $d>>0$, the pre-image $\wt F$ of $F$ is
an admissible double cover of a stable $(2g+2)$-pointed nodal curve
$\wt\Gamma$ of arithmetic genus zero. Then define
$$ \xi_j(F)=\frac{\xi_j(\wt F)}{d},\qquad \forall~0\leq j\leq [(g-1)/2].$$\end{itemize}}

Clearly the definition of  $\xi_j(F)$ is independent of the choice
of $\pi$. In particular we have
$$\delta_0(F)=\xi_0(F)+2\sum_{j=1}^{[(g-1)/2]}\xi_j(F).$$

Let $\calh_g\subseteq \calm_g$ (resp. $\ol \calh_g\subseteq \ol
\calm_g$) be the moduli space of smooth (resp. stable) hyperelliptic
complex curves of genus $g$. By \cite{ch88}, $\Delta_i\cap \ol
\calh_g$ is an irreducible divisor of $\ol \calm_g$, also denoted by
$\Delta_i$; $\Delta_0\cap \ol \calh_g$ is not irreducible, actually
$$\Delta_0\cap \ol \calh_g=\Xi_0\cup \Xi_1\cup\cdots\cup \Xi_{[(g-1)/2]}\,,$$
where  $\Xi_0$ consists of  irreducible stable hyperelliptic curves
with a unique node, and for $1\leq j\leq [(g-1)/2]$, a general point
of $\Xi_j$  represents a stable curve consisting of two
hyperelliptic curves  intersecting at two points and respectively of
genera $j$ and $g-j-1$.  As divisors (cf. \cite{ch88}),
$$h^*\left(\Delta_0\right)=\Xi_0+2\sum_{j=1}^{[(g-1)/2]}\Xi_j\,,\qquad
\text{where~$h:\,\ol \calh_g\hookrightarrow \ol \calm_g$~is the embedding.}$$

Assume that   $\bar f:\,\ol S\to \ol B$ is a non-isotrivial semi-stable family of
hyperelliptic curves and $\varphi:\, \ol B \to \ol \calh_g$ is the
induced map, then
\begin{equation}\label{relationdeltaxi}
\left\{
\begin{aligned}
\xi_j(\Upsilon)   &=\deg \varphi^*(\Xi_j),&&\forall~0\leq j\leq [(g-1)/2];\\
\delta_0(\Upsilon)&=\deg \varphi^*(\Xi_0)+2\sum_{j=1}^{[(g-1)/2]}\deg \varphi^*(\Xi_j);&&\\
\delta_i(\Upsilon)&=\deg \varphi^*(\Delta_i),&\quad&\forall~1\leq i\leq [g/2].
\end{aligned}\right.
\end{equation}

\section{Family of semi-stable curves representing a curve in $\calt_g$}\label{sectionrepresent}
Given a smooth closed curve $C\Subset \calt_g$ with a suitable smooth compactification $\ol C$
by joining a finite set of cusps $\Delta_{\ol C}$,
we would like to construct a family $\bar f:\,\ol S \to \ol B$ of semi-stable curves representing $C$ in the section.
We also investigate the exact relation between the associated logarithmic Higgs bundles over $\ol B$ and $\ol C$.

\vspace{0.15cm}
Fix an integer $n$,
let $\calm^{ct}_g=\calm^{ct}_{g,[n]}\supseteq \calm_g=\calm_{g,[n]}$ be the partial compactification of
the moduli space of smooth projective genus-$g$ curves  with level-$n$ structure by adding
  stable curves with  compact Jacobians.
When $n\geq 3$, it carries a universal family of stable curves with compact Jacobians (cf. \cite{popp77})
\begin{equation}\label{eqnuniverseonM}
\mathfrak f:~ \cals_g^{ct} \lra {\mathcal M}^{ct}_{g}.
\end{equation}
The Torelli morphism $j^{\rm o}$ can be naturally extended to $\calm^{ct}_{g}$:
$$ j:~\calm^{ct}_{g} \lra \cala_{g}, \qquad\text{with~}
\calt_{g}=j\big(\calm^{ct}_{g}\big).$$
The morphism $j^{\rm o}$ is 2:1 and ramified exactly on the locus of hyperelliptic curves (cf. \cite{os79}).
However the relative dimension of $j$ is positive along the boundary $\calt_g\setminus \calt^{\rm o}_g$.

Let $B$ be the normalization of the strict inverse image $j^{-1}(C)$ of $C$,
and denote by $j_B:\,B \to C$ the induced morphism.
If $B$ is reducible, then replace $B$ by one   irreducible component.
By pulling back the universal family $\mathfrak f:\,\cals_g^{ct} \to \calm^{ct}_{g}$
to $B$ and resolving singularities,
one gets a family  $f: S\to B$ of semi-stable curves that extends uniquely to a family
$ \bar f:\ol S\to \ol B$ of semi-stable curves
over the smooth completion $\ol B\supseteq B$.
\begin{definition}\label{defrepresenting}
The family $\bar f:\,\ol S \to \ol B$ is called the family of semi-stable curves representing $C\subseteq \calt_g$
via the Torelli morphism.
\end{definition}
Let $h: X \to C$ be the universal family in Section \ref{sectionHiggsonC}.
By the construction of  the Torelli morphism, we obtain the following
\begin{proposition}
Let $jac(f):\,Jac(S/B)\lra B$  denote the relative Jacobian of the family $f: S\to B$
and $j_B:\,B \to C$ the induced morphism as above. Then
$$ \Big(jac(f):~Jac(S/B)\lra B \Big) = j_B^*\big(h: X\lra C\big).$$
In particular,
$\mathbb V_{B}:=R^1jac(f)_*\mathbb Q_{Jac(S/B)}=j_B^*\mathbb V_{C}.$
\end{proposition}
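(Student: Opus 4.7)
The plan is to deduce the claimed isomorphism from the defining property of the Torelli morphism combined with the functoriality of the Jacobian under base change, then to upgrade it to the statement on local systems via proper-smooth base change.

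First, I would set up the commutative triangle of moduli spaces that is implicit in the construction of $\bar f$. By the way $B$ is defined, the map $B\to C$ factors through the partial compactification $\calm_g^{ct}$ via a canonical morphism $\tilde j: B\to\calm_g^{ct}$ landing in the strict preimage $j^{-1}(C)$, and one has the identity $\phi\circ j_B=j\circ\tilde j$, where $\phi:C\hookrightarrow\Acal_g$ is the inclusion. By construction, the family $f:S\to B$ is obtained, up to resolution of the singularities of the total space, by pulling back the universal family of stable curves $\mathfrak f:\cals_g^{ct}\to\calm_g^{ct}$ along $\tilde j$; since only singularities of the total space over $\Delta$ are being resolved, the smooth locus of $f$ is unaffected, and so the relative Jacobian (which is an abelian scheme on the locus of smooth fibers and agrees with the Néron model over the semistable base) is the same as the pullback under $\tilde j$ of $Jac(\mathfrak f)$.

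Next, I would invoke the defining property of the Torelli morphism: $j:\calm_g^{ct}\to\Acal_g$ classifies the Jacobian of a stable curve with compact Jacobian together with its canonical principal polarization and level structure. This precisely means that the universal abelian scheme $\mathfrak h:\calx_g\to\Acal_g$ pulls back to the relative Jacobian of $\mathfrak f$:
$$j^*\bigl(\mathfrak h:\calx_g\to\Acal_g\bigr)\;\cong\;\bigl(Jac(\mathfrak f):Jac(\cals_g^{ct}/\calm_g^{ct})\to\calm_g^{ct}\bigr).$$
Applying $\tilde j^*$ and using the previous step, we get
$$Jac(S/B)\;\cong\;\tilde j^*j^*\mathfrak h\;=\;(j\circ\tilde j)^*\mathfrak h\;=\;(\phi\circ j_B)^*\mathfrak h\;=\;j_B^*\bigl(\phi^*\mathfrak h\bigr)\;=\;j_B^*\bigl(h:X\to C\bigr),$$
which is the first assertion.

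Finally, for the ``in particular'' claim, since $h:X\to C$ and therefore $jac(f)=j_B^*h:Jac(S/B)\to B$ are smooth proper morphisms with abelian variety fibers, the proper-smooth base change theorem yields the canonical identification of higher direct images with constant coefficients:
$$R^1 jac(f)_*\Qbb_{Jac(S/B)}\;\cong\;j_B^*R^1h_*\Qbb_X,$$
i.e.\ $\Vbb_B=j_B^*\Vbb_C$. The only point requiring a little care is to check that resolving the singularities of the total space of the pulled-back family does not alter the relative Jacobian, but this is standard since resolution only affects the singular fibers of a semistable family and the Jacobian construction is insensitive to birational modifications of the total space that are isomorphisms over the smooth locus of the base; I expect this to be the only mildly technical point in an otherwise formal argument.
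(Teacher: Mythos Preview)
Your proposal is correct and is essentially a detailed unpacking of the paper's one-line justification ``by the construction of the Torelli morphism''. The paper gives no further argument, so your explicit use of the commutative triangle, the defining property of $j$, and proper-smooth base change is exactly the content being invoked; the only extra care you take (that resolving singularities of the pulled-back total space does not affect the relative Jacobian over $B$) is a reasonable precision the paper leaves implicit.
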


It is well-known that the logarithmic Higgs bundle associated to $\mathbb V_{B}$ has the form
\begin{equation}\label{eqnpfmainHiigsB}
\left(E_{\ol B}^{1,0}\oplus E_{\ol B}^{0,1},~\theta_{\ol B}\right)
=\left(\bar f_*\omega_{\ol S/\ol B}\oplus R^1\bar f_*\mathcal O_{\ol S},~\theta_{\ol B}\right),
\end{equation}
and it admits a  decomposition of Higgs bundles similarly to \eqref{decompC}:
\begin{equation}\label{decompB}
\left(E_{\ol B}^{1,0}\oplus E_{\ol B}^{0,1},\theta_{\ol B}\right)
=\left(A_{\ol B}^{1,0}\oplus A_{\ol B}^{0,1},~\theta_{\ol B}\big|_{A_{\ol B}^{1,0}}\right)\oplus \left(F_{\ol B}^{1,0}\oplus F_{\ol B}^{0,1},~ 0\right).
\end{equation}

Since both $\ol B$ and $\ol C$ are smooth projective curves, the morphism $j_B: B\to C$
extends to a morphism $\bar j_B: \ol B\to \ol C$ such that $\Delta_{nc}:=\ol B\setminus B =\bar j_B^{-1}(\Delta_{\ol C})$
and $j_B^* (\mathbb V_{C})=\mathbb V_{B}.$ Hence
\begin{equation*}
\bar j_B^* \left(E^{1,0}_{\ol C}\oplus E^{0,1}_{\ol C},~\theta_{\ol C}\right)=\left(E^{1,0}_{\ol B}\oplus E^{0,1}_{\ol B},~\theta_{\ol B}\right).
\end{equation*}
In particular,
\begin{equation}\label{xinpullback}
\bar j_B^*\left(E^{1,0}_{\ol C}\right)=E^{1,0}_{\ol B}=\bar f_*\omega_{\ol S/\ol B},
\qquad \bar j_B^*\left(A^{1,0}_{\ol C}\right)=A^{1,0}_{\ol B}.
\end{equation}

\begin{definition}\label{defmaximalB}
Let $\bar f:\ol S\to \ol B$ be any family of semi-stable curves of genus $g\geq 2$
and $\Upsilon_{nc} \to \Delta_{nc}$ the singular fibres with non-compact Jacobian.
Then $\bar f$ is said to be with {\it maximal} Higgs field if
$$\theta_{\ol B}\big|_{A_{\ol B}^{1,0}}:\,A_{\ol B}^{1,0} \lra A_{\ol B}^{0,1}\otimes \Omega^1_{\ol B}(\log \Delta_{nc})$$
is an isomorphism, and to be with  {\it strictly maximal}  Higgs field if furthermore $F_{\ol B}^{1,0}\oplus F_{\ol B}^{0,1}=0$.
By \cite{vz03}, $\bar f$ has strictly maximal (resp. maximal) Higgs field if and only if
\begin{equation}\label{arakelovB}
\begin{aligned}
\deg \bar f_*\omega_{\ol S/\ol B}&={g\over 2}\cdot\deg\Omega^1_{\ol B}(\log \Delta_{nc}),\\
\Big(\text{resp.}\quad\deg \bar f_*\omega_{\ol S/\ol B}&={\rank A^{1,0}_{\ol B}\over 2}\cdot\deg \Omega^1_{\ol B}(\log\Delta_{nc})~\Big).
\end{aligned}
\end{equation}
\end{definition}

\v
For a smooth closed curve $C\Subset{\mathcal T}_g$, we have  given two definitions respectively regarding
the (strict) maximality of the Higgs fields $\theta_{\ol C}$ and $\theta_{\ol B}$.
To understand the relation between them, we start with the following:
\begin{proposition}\label{proprelationHiggs}
Let $\bar f:\ol S\to \ol B$ be the family of semi-stable curves representing a smooth closed curve $C\Subset \calt_g \subseteq \cala_g$
as above. Then
$\rank A^{1,0}_{\ol B}=\rank A^{1,0}_{\ol C}$, and

{\rm (i).} if $C\Subset \calth_g$, then
\begin{equation}\label{eqnhiggshyper}
\deg \bar f_*\omega_{\ol S/\ol B} = \deg E^{1,0}_{\ol C},\quad
\deg \Omega_{\ol B}^1(\log\Delta_{nc})=\deg\Omega_{\ol C}^1(\log\Delta_{\ol C});
\end{equation}

{\rm (ii).} if $C\nsubseteq \calth_g$, then
\begin{equation}\label{eqnhiggsnonhyper}
\deg \bar f_*\omega_{\ol S/\ol B} = 2\deg E^{1,0}_{\ol C},\quad
\deg \Omega_{\ol B}^1(\log\Delta_{nc})=2\deg\Omega_{\ol C}^1(\log\Delta_{\ol C})+|\Lambda|,
\end{equation}
where $\Lambda$ is the ramification locus of the induced cover $j_B:\,B \to C$.
\end{proposition}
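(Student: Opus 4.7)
The plan is to analyse the finite morphism $\bar j_B:\ol B\to \ol C$ and pull back all the Higgs-theoretic data through it. The crucial classical input is that the Torelli morphism $j^{\circ}:\calm_g\to\cala_g$ is $2{:}1$ onto its image and ramified exactly along the hyperelliptic locus (cf.\ \cite{os79}). In case (i) the image of the Torelli map over $C$ lies in this ramification divisor, so after normalization $\bar j_B$ has degree $1$ and is therefore an isomorphism of smooth projective curves. In case (ii), instead, $\bar j_B$ has degree $2$, \'etale away from the preimages in $B$ of the finitely many hyperelliptic points on $C$, which together constitute the branch divisor $\Lambda$.

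Granting this, the identity for $\deg\bar f_*\omega_{\ol S/\ol B}$ follows at once from the pull-back identity $E^{1,0}_{\ol B}=\bar f_*\omega_{\ol S/\ol B}=\bar j_B^*E^{1,0}_{\ol C}$ recorded in \eqref{eqnpfmainHiigsB} and \eqref{xinpullback}: taking degrees gives $\deg\bar f_*\omega_{\ol S/\ol B}=(\deg\bar j_B)\cdot\deg E^{1,0}_{\ol C}$, which yields the first equalities in (i) and (ii) upon substituting $\deg\bar j_B=1$ or $2$. Similarly, $\rank A^{1,0}_{\ol B}=\rank A^{1,0}_{\ol C}$ follows immediately from $\bar j_B^*A^{1,0}_{\ol C}=A^{1,0}_{\ol B}$, since pull-back by a finite surjective morphism preserves the rank of a vector bundle.

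For the comparison of log canonical degrees, one observes that $\Delta_{nc}=\bar j_B^{-1}(\Delta_{\ol C})$ set-theoretically and that $\bar j_B$ is \'etale over $\Delta_{\ol C}$: the cusps $\Delta_{\ol C}$ correspond to semi-abelian degenerations of $\mathfrak h$, whereas the hyperelliptic locus (where Torelli ramifies) lies in the compact-Jacobian interior $\calm^{ct}_g$. Hence $|\Delta_{nc}|=(\deg\bar j_B)\cdot|\Delta_{\ol C}|$, and Riemann--Hurwitz gives $2g(\ol B)-2=(\deg\bar j_B)(2g(\ol C)-2)+|\Lambda|$. Adding the two identities,
\[
\deg\Omega^1_{\ol B}(\log\Delta_{nc})=(\deg\bar j_B)\cdot\deg\Omega^1_{\ol C}(\log\Delta_{\ol C})+|\Lambda|,
\]
which reduces to the stated identities in (i) (where $\Lambda=\emptyset$) and (ii). The main obstacle is verifying that $\Lambda$ is disjoint from $\Delta_{nc}$; this rests on the fact that the ramification of $j^{\circ}$ is a purely interior hyperelliptic phenomenon, and can be checked locally using the unipotent monodromy of $\mathbb V_C$ around cusps, which pulls back to unipotent monodromy at each preimage in $\ol B$, forcing the number of preimages over $c\in\Delta_{\ol C}$ to equal $\deg\bar j_B$.
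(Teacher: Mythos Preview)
Your approach is essentially the same as the paper's: both use the pull-back identity \eqref{xinpullback} for the rank and the first degree equality, and both invoke the classical fact that the Torelli map is $2{:}1$ ramified along the hyperelliptic locus. Two points, however, deserve correction.

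First, in case~(ii) you assert without qualification that $\bar j_B$ has degree~$2$. The paper is more careful: when $C\nsubseteq\calth_g$, the preimage $j^{-1}(C)$ may well be \emph{reducible} (this happens precisely when $C$ misses the hyperelliptic locus entirely, so $\Lambda=\emptyset$), and then by construction $B$ is the normalization of a single component, so $\bar j_B$ is an isomorphism. You should treat this sub-case separately, as the paper does.

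Second, and more substantively, your derivation of the log-canonical degree identity relies on the claim that $\bar j_B$ is \'etale over $\Delta_{\ol C}$, and your justification via unipotent monodromy does not work: if $\bar j_B$ were ramified of index~$e$ at some $b\in\Delta_{nc}$, the local monodromy at $b$ would be the $e$-th power of the monodromy at $\bar j_B(b)$, which is still unipotent. Unipotence therefore places no constraint on the number of preimages. The paper sidesteps this issue entirely by applying the Hurwitz formula for \emph{logarithmic} differentials: for a finite morphism of smooth curves with $\Delta_{nc}=\bar j_B^{-1}(\Delta_{\ol C})$ as reduced divisors, one has
\[
\Omega^1_{\ol B}(\log\Delta_{nc})\;\cong\;\bar j_B^*\Omega^1_{\ol C}(\log\Delta_{\ol C})\otimes\calo_{\ol B}(\Lambda),
\]
where $\Lambda$ is the ramification divisor \emph{away from} $\Delta_{nc}$. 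Taking degrees gives the stated identity directly, with no need to analyse the behaviour at the cusps.
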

\begin{proof}
By \eqref{xinpullback}, it is clear that $\rank A^{1,0}_{\ol B}=\rank A^{1,0}_{\ol C}$.
Note that the Torelli morphism $j^\circ:\calm_g\ra \cala_g$ is a 2-to-1
morphism ramified exactly on the hyperelliptic locus $\mathcal H_{g}$.
Hence if $C\Subset \calth_g$, then $j_B:\,B \to C$ is an isomorphism, and so is
$\bar j_B:\, \ol B \to \ol C$. Thus \eqref{eqnhiggshyper} follows from \eqref{xinpullback}.

Suppose $C\nsubseteq \calth_g$; then $j^{-1}(C) \to C$ is a 2-to-1 morphism. If
$j^{-1}(C)$ is reducible, then $B$ is the normalization of one
of irreducible components of $j^{-1}(C)$. So $B \cong C$,
$\ol B\cong \ol C$, and \eqref{eqnhiggsnonhyper} follows from \eqref{xinpullback}.
If $j^{-1}(C)$ is irreducible, then   $\bar j_B:\,\ol B \to \ol C$ is a double cover.
So by \eqref{xinpullback}, $\deg \bar f_*\omega_{\ol S/\ol B} = 2 \deg E^{1,0}_{\ol C}$;
and
Since $\Delta_{nc}=\bar j_B^{-1}(\Delta_{\ol C}),$ by Hurwitz
formula for sheaves of logarithmic 1-forms, one has
$$\deg \Omega_{\ol B}^1(\log\Delta_{nc})=2\deg\Omega_{\ol C}^1(\log\Delta_{\ol C})+|\Lambda|.$$
This completes the proof.
\end{proof}

\begin{corollary}[Hyperelliptic locus]\label{prophyper}
If $C\Subset \calth_g$, then $\theta_{\ol C}$ is strictly maximal {\rm(}resp. maximal{\rm)}
if and only if $\theta_{\ol B}$ is strictly maximal {\rm(}resp. maximal{\rm)}.
\end{corollary}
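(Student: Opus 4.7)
The plan is to reduce the corollary to a direct numerical comparison using Proposition~\ref{proprelationHiggs}(i) together with the Arakelov equality characterizations of (strict) maximality recorded in Definitions \ref{defmaximalC} and \ref{defmaximalB}. The crucial point, already used in the hyperelliptic case of Proposition~\ref{proprelationHiggs}, is that the Torelli morphism $j^\circ\colon\calm_g\to\cala_g$ is ramified exactly along the hyperelliptic locus, so when $C\Subset\calth_g$ the induced map $j_B\colon B\to C$ is an isomorphism and extends to an isomorphism $\bar j_B\colon \ol B\xrightarrow{\sim}\ol C$ identifying $\Delta_{nc}$ with $\Delta_{\ol C}$.

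First I would record the three identifications from Proposition~\ref{proprelationHiggs}(i) in the hyperelliptic case, namely
\[
\deg\bar f_*\omega_{\ol S/\ol B}=\deg E^{1,0}_{\ol C},\qquad
\deg\Omega^1_{\ol B}(\log\Delta_{nc})=\deg\Omega^1_{\ol C}(\log\Delta_{\ol C}),\qquad
\rank A^{1,0}_{\ol B}=\rank A^{1,0}_{\ol C}.
\]
Then I would invoke the characterization of strict maximality: by Definition \ref{defmaximalC}, $\theta_{\ol C}$ is strictly maximal iff the Arakelov equality
\[
\deg E^{1,0}_{\ol C}=\tfrac{g}{2}\cdot\deg\Omega^1_{\ol C}(\log\Delta_{\ol C})
\]
holds, while by Definition \ref{defmaximalB}, $\theta_{\ol B}$ is strictly maximal iff
\[
\deg\bar f_*\omega_{\ol S/\ol B}=\tfrac{g}{2}\cdot\deg\Omega^1_{\ol B}(\log\Delta_{nc}).
\]
Substituting the first two identifications shows the two equalities are literally the same, so the two strict maximality statements are equivalent.

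For the maximality (non-strict) case I would argue identically: both Arakelov equalities read
\[
\deg E^{1,0}_{\ol C}=\tfrac{\rank A^{1,0}_{\ol C}}{2}\cdot\deg\Omega^1_{\ol C}(\log\Delta_{\ol C})
\quad\Longleftrightarrow\quad
\deg\bar f_*\omega_{\ol S/\ol B}=\tfrac{\rank A^{1,0}_{\ol B}}{2}\cdot\deg\Omega^1_{\ol B}(\log\Delta_{nc}),
\]
and the three identifications above make them equivalent. There is essentially no obstacle here; the only point requiring a small remark is to confirm that $\bar j_B$ really is an isomorphism (not merely birational) in the hyperelliptic setting, which follows from the fact that $j^\circ$ is unramified and $2$-to-$1$ off $\calh_g$ while $C$ lies generically in $\calth_g$, so the strict transform $j^{-1}(C)$ has a single sheet over $C$ after normalization. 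Once this is noted, the corollary is immediate.
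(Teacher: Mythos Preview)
Your proposal is correct and matches the paper's intended argument: the paper states the corollary without proof, as it follows immediately from Proposition~\ref{proprelationHiggs}(i) together with the Arakelov-equality characterizations in Definitions~\ref{defmaximalC} and~\ref{defmaximalB}, which is exactly the deduction you spell out. Your remark that $\bar j_B\colon\ol B\to\ol C$ is an isomorphism in the hyperelliptic case is also exactly what is established inside the proof of Proposition~\ref{proprelationHiggs}.
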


\begin{corollary}[Non-hyperelliptic locus]\label{propnonhyper}
Suppose $C\Subset\mathcal T_{g, [n]}$, but $C\nsubseteq \calth_g.$
Then

{\rm(i).} If $\theta_{\ol B}$ is strictly maximal {\rm(}resp.
maximal{\rm)}, then $\theta_{\ol C}$ is strictly maximal {\rm(}resp.
maximal{\rm)};

{\rm(ii).} Conversely, if $\theta_{\ol C}$ strictly maximal {\rm(}resp.
maximal{\rm)}, then
$$\begin{aligned}
\deg \bar f_*\omega_{\ol S/\ol B}&
={g\over 2}\cdot\deg\Omega^1_{\ol B}\left(\log\Delta_{nc}\right)-{g\over 2}\cdot|\Lambda|,\\
\Big(\text{resp.}\quad\deg \bar f_*\omega_{\ol S/\ol B}&
={\rank A^{1,0}_{\ol B}\over 2}\cdot\deg\Omega^1_{\ol B}(\log\Delta_{nc})
-{\rank A^{1,0}_{\ol B}\over 2}\cdot|\Lambda|~\Big),
\end{aligned}$$
\end{corollary}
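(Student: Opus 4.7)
My plan is to deduce both parts directly from the numerical relations collected in Proposition \ref{proprelationHiggs}, which in the non-hyperelliptic case give
$$\deg\bar f_*\omega_{\ol S/\ol B}=2\deg E^{1,0}_{\ol C},\qquad \deg\Omega_{\ol B}^1(\log\Delta_{nc})=2\deg\Omega_{\ol C}^1(\log\Delta_{\ol C})+|\Lambda|,$$
together with $\rank A^{1,0}_{\ol B}=\rank A^{1,0}_{\ol C}$. Since the strict maximality (resp. maximality) of $\theta_{\ol B}$ and $\theta_{\ol C}$ is characterised purely numerically by the Arakelov equalities in Definitions \ref{defmaximalC} and \ref{defmaximalB}, the whole statement reduces to substituting one set of invariants into the other.

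For part (ii) I would begin with the hypothesis $\deg E^{1,0}_{\ol C}=\tfrac{g}{2}\deg\Omega^1_{\ol C}(\log\Delta_{\ol C})$ (strictly maximal case), multiply by $2$ and substitute the identities above to get exactly the claimed
$$\deg\bar f_*\omega_{\ol S/\ol B}=\frac{g}{2}\deg\Omega^1_{\ol B}(\log\Delta_{nc})-\frac{g}{2}|\Lambda|.$$
The maximal case is obtained verbatim by replacing $g$ throughout with $\rank A^{1,0}_{\ol B}=\rank A^{1,0}_{\ol C}$. No additional ingredient is needed.

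For part (i) the substitution runs in the reverse direction. Starting from the Arakelov equality on $\ol B$ and translating via Proposition \ref{proprelationHiggs} rearranges to
$$\deg E^{1,0}_{\ol C}=\frac{g}{2}\deg\Omega^1_{\ol C}(\log\Delta_{\ol C})+\frac{g}{4}|\Lambda|\ \geq\ \frac{g}{2}\deg\Omega^1_{\ol C}(\log\Delta_{\ol C}).$$
Since $C\subseteq\cala_g$ is a smooth curve, the classical Arakelov inequality for weight-one variations of Hodge structure applies unconditionally to the Higgs bundle on $\ol C$ and gives the reverse inequality $\deg E^{1,0}_{\ol C}\leq\tfrac{g}{2}\deg\Omega^1_{\ol C}(\log\Delta_{\ol C})$. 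Comparing, one is forced to have both $|\Lambda|=0$ and the Arakelov equality on $\ol C$, so $\theta_{\ol C}$ is strictly maximal by Definition \ref{defmaximalC}. The maximal case is handled by the same squeeze argument after replacing $g$ by $\rank A^{1,0}$.

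The only subtle point, and the one I would flag as the potential obstacle, is the appeal in (i) to the Arakelov inequality on $\ol C$ as an \emph{a priori} upper bound, independent of the hypothesis on $\ol B$; this is what forces $|\Lambda|=0$ rather than merely yielding the inequality $\deg E^{1,0}_{\ol C}\geq\tfrac{g}{2}\deg\Omega^1_{\ol C}(\log\Delta_{\ol C})$. Once this standard Arakelov bound is invoked, both parts of the corollary reduce to a short line of arithmetic bookkeeping built on Proposition \ref{proprelationHiggs}.
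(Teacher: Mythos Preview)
Your proof is correct and follows exactly the approach the paper intends: the corollary is stated without proof as an immediate consequence of Proposition~\ref{proprelationHiggs}, and your argument spells out precisely the arithmetic substitution implicit there, together with the classical Arakelov inequality \eqref{eqnpfarak11} on $\ol C$ for the squeeze in part~(i). Your observation that the argument in~(i) forces $|\Lambda|=0$ is a correct extra consequence, though the paper does not state it explicitly at this point.
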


\section{The strict Arakelov inequalities and proofs of the main results}\label{sectionconclusion}
In the section, we study the strict Arakelov inequalities, i.e. Theorem \ref{theoremstrictarak}.
The main Theorems \ref{mainthm1}, \ref{mainthm2} and \ref{mainthm3}
are immediate consequences of these inequalities and the numerical characterization of Shimura curves
and totally geodesic curves (cf. Theorem \ref{theoremnumchar}).

We first recall the Miyaoka-Yau type inequality and sharp slope inequality
for a family of semi-stable curves in Section \ref{sectiontwotypeineq},
from which we deduce the strict Arakelov inequalities in Section \ref{sectionpfofArak}.
The proofs of these two types of inequalities are postponed to
Sections \ref{sectionupper} and \ref{sectionlower} respectively.

\subsection{Two types of  inequalities for a family of semi-stable curves}\label{sectiontwotypeineq}
We state the two types of inequalities as the following theorems.

\begin{theorem}[Miyaoka-Yau type inequality I, cf. Section\,\ref{sectionpfofupper}]\label{thmupper2}
Let $\bar f:\,\ol S \to \ol B$ be a non-isotrivial
family of semi-stable curves of genus $g\geq2$. Then
\begin{equation}\label{eqnupper2}
\omega_{\ol S/\ol B}^2 \leq (2g-2)\cdot \deg\left(\Omega^1_{\ol B}(\log\Delta_{nc})\right)
+2\delta_1(\Upsilon_{ct})+3\delta_h(\Upsilon_{ct}).
\end{equation}
Moreover, if $\Delta_{nc}\neq \emptyset$ or $\Delta=\emptyset$, then the above inequality is strict.
\end{theorem}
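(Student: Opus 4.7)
The plan is to derive \eqref{eqnupper2} from a logarithmic Miyaoka--Yau inequality applied to a suitable pair $(\ol S, D)$. As a first approximation take $D = \Upsilon_{nc,\mathrm{red}}$, an SNC divisor on $\ol S$ whose components are the irreducible components of the non-compact-Jacobian singular fibres, meeting transversally at the nodes of those fibres. Non-isotriviality of $\bar f$ together with $g\ge 2$ makes $K_{\ol S}+D$ big, which allows me to invoke the log Miyaoka--Yau inequality
$$(K_{\ol S}+D)^2 \;\le\; 3\,c_2\bigl(\Omega^1_{\ol S}(\log D)\bigr).$$

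Expanding both sides is routine. Since $K_{\ol S}\cdot F = 2g-2$ and $F^2=0$ for every fibre $F$ (so $D^2=0$), the left-hand side equals $\omega_{\ol S}^2 + (4g-4)s_{nc}$ where $s_{nc}=|\Delta_{nc}|$. For the right-hand side, I would combine the identity $c_2(\Omega^1_{\ol S}(\log D)) = \chit(\ol S)-\chit(\Upsilon_{nc})$ with $\chit(F) = 2-2g+\delta(F)$ for a semi-stable nodal fibre, obtaining $c_2(\Omega^1_{\ol S}(\log D)) = 4(g-1)(b-1)+(2g-2)s_{nc}+\delta(\Upsilon_{ct})$. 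Substituting $\omega_{\ol S/\ol B}^2 = \omega_{\ol S}^2-8(g-1)(b-1)$ yields the preliminary bound
$$\omega_{\ol S/\ol B}^2 \;\le\; (2g-2)\deg\Omega^1_{\ol B}(\log\Delta_{nc}) + 3\delta_1(\Upsilon_{ct}) + 3\delta_h(\Upsilon_{ct}),$$
which differs from \eqref{eqnupper2} only in the coefficient of $\delta_1(\Upsilon_{ct})$.

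The hard part is to sharpen this coefficient from $3$ down to $2$. The idea is to exploit the geometry of elliptic tails: each type-$1$ node $q$ of a compact-Jacobian fibre $F$ determines an arithmetic-genus-one sub-cycle $E_q\subset F$ with $E_q^2 = -1$ and $K_{\ol S}\cdot E_q = 1$ by adjunction, and a combinatorial check on the tree-shaped dual graph of $F$ shows that distinct $E_p, E_q$ arising from the same fibre satisfy $E_p\cdot E_q = 0$. Enlarging the boundary to $D + \tilde E$ with $\tilde E := \sum_q E_q$ therefore increases $(K_{\ol S}+D)^2$ by exactly $\delta_1(\Upsilon_{ct})$, while the Euler characteristic $\chit(D+\tilde E)$ remains equal to $\chit(D)$ since $\chit(E_q)=0$ and the components involved are disjoint from the rest of the boundary after a suitable modification. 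Re-applying log Miyaoka--Yau to the enlarged pair, with extra care to handle reducible $E_q$ (say by passing to the stable model $\ol S^{\#}$ and invoking an orbifold refinement of Miyaoka--Yau at its RDP singularities), delivers the saving of $\delta_1(\Upsilon_{ct})$ needed to arrive at \eqref{eqnupper2}. I expect this to be the most delicate step in the proof. Finally, the strict inequality under the hypothesis $\Delta_{nc}\ne\emptyset$ or $\Delta=\emptyset$ comes from the strict form of (log) Miyaoka--Yau: in either situation the non-isotrivial fibration by curves of genus $g\ge 2$ prevents $\ol S$ (or the log pair) from being a (log) ball quotient, the only equality case.
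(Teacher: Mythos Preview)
Your overall strategy---apply a log Miyaoka--Yau inequality to a carefully chosen pair---is the right idea and is close in spirit to the paper's proof, but there are two genuine gaps.

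\textbf{Positivity.} You assert that non-isotriviality and $g\ge 2$ make $K_{\ol S}+D$ big, but with $D=\Upsilon_{nc}$ one has $K_{\ol S}+D\equiv\omega_{\ol S/\ol B}+\bar f^*\Omega^1_{\ol B}(\log\Delta_{nc})$, which fails to be nef (and can fail to be big) when $g(\ol B)=0$ and $|\Delta_{nc}|\le 1$. The paper does not work on $\ol S$ directly: it pulls back along a cover $\phi:\wt B\to\ol B$ branched with index $e_{nc}$ over $\Delta_{nc}$ (and index $e_{ct,b}$ over a further stratum), so that $g(\wt B)\ge 1$ and the resulting surface is minimal of general type, and then applies Miyaoka's orbifold inequality (Theorem~\ref{theoreminpfupper2}) upstairs. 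Letting $e_{nc}\to\infty$ is also what yields the strictness when $\Delta_{nc}\neq\emptyset$; the case $\Delta=\emptyset$ is handled separately by citing Liu's strict inequality for Kodaira fibrations.

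\textbf{The sharpening from $3$ to $2$.} Your enlarged boundary $\tilde E=\sum_q E_q$ is in general not reduced, because elliptic tails can nest. For instance, with $g=3$ and a compact-type fibre $F=E\cup R\cup C$ (elliptic, rational, genus two, in a chain), both nodes are of type~$1$, with tails $E_{q_1}=E$ and $E_{q_2}=E\cup R$, so $\tilde E=2E+R$; the intersection identities $E_{q_i}\cdot E_{q_j}=0$ you checked are correct but do not rescue the argument, since log MY requires a reduced boundary. Passing to the stable model does not immediately resolve this (both nodes collapse to a single $A_1$-point, but $\delta_1(F)=2$ still has to be accounted for). The paper's mechanism for the coefficient~$2$ is different: for a generic compact-type fibre $F_p$ it takes as boundary $D_p=\{$all components of $F_p$ of genus $\le 1\}$, which is automatically reduced and normal crossing, computes
\[
3\delta(F_p)-\bigl(3\chit(D_p)+2\omega_{\ol S}\cdot D_p+D_p^2\bigr)\;\le\; 3l_h(F_p)+2l_1(F_p)-3,
\]
and then uses the combinatorial inequalities $l_h-1\le\delta_h$ and $l_1+l_h-1\le\delta_1+\delta_h$ (valid for trees) to reach $2\delta_1+3\delta_h$. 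For the remaining strata of $\Delta_{ct}$ (hyperelliptic fibres with a special configuration, or fibres with only genus-$\le 1$ components) the paper instead contracts the $(-2)$-curves and exploits the quotient-singularity contribution $v(x)$ in Miyaoka's inequality, together with a further base-change parameter.
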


The proof of Theorem \ref{thmupper2} is based on a theorem of Miyaoka (cf. \cite{miyaoka84})
for the bound on the number of quotient singularities in a surface plus base change technique.
Recently Peters (cf. \cite{peters14}) has informed us that he has a simplified proof
by using Cheng-Yau's theorem for a log surface instead of Miyaoka's.

\begin{theorem}[Moriwaki's Sharp slope inequality, cf. \cite{moriwaki98} and
Section\,\ref{sectionlower2}]\label{thmlower2}
Let $\bar f:\,\ol S \to \ol B$ be the same as in Theorem {\rm\ref{thmupper2}}.
Then
\begin{equation}\label{eqnlower2}
\omega_{\ol S/\ol B}^2\geq \frac{4(g-1)}{g}\cdot\deg \bar f_*\omega_{\ol S/\ol B}
+\frac{3g-4}{g}\delta_1(\Upsilon)+\frac{7g-16}{g}\delta_h(\Upsilon).
\end{equation}
\end{theorem}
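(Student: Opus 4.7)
The plan is to deduce this sharp slope inequality from Moriwaki's positivity theorem on the moduli space $\overline{\mathcal{M}}_g$ combined with Noether's formula \eqref{formulanoether}. First I would reduce to the stable case: if $\bar f$ is only semi-stable, contract the $(-2)$-curves in singular fibres to obtain the stable model $\bar f^\#:\ol S^\# \to \ol B$. This contraction leaves $\omega_{\ol S/\ol B}^2$, $\deg \bar f_*\omega_{\ol S/\ol B}$, and each $\delta_i(\Upsilon)$ unchanged, so we may assume $\bar f$ is stable and comes equipped with a moduli map $\varphi:\ol B \to \overline{\mathcal{M}}_g$.

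Next I would invoke Moriwaki's main theorem from \cite{moriwaki98}, which asserts that the $\mathbb{Q}$-divisor class
\[(8g+4)\lambda - g\delta_0 - \sum_{i=1}^{[g/2]} 4i(g-i)\delta_i\]
on $\overline{\mathcal{M}}_g$ has non-negative degree on the image of any non-isotrivial family of stable curves. Pulling back along $\varphi$ and applying \eqref{modulidelta} then yields
\[(8g+4)\deg\bar f_*\omega_{\ol S/\ol B} \geq g\delta_0(\Upsilon) + 4(g-1)\delta_1(\Upsilon) + \sum_{i=2}^{[g/2]} 4i(g-i)\delta_i(\Upsilon).\]
Since $i(g-i)$ is concave on $[2,g/2]$ and attains its minimum at $i=2$, one has $4i(g-i) \geq 8(g-2)$ for every $i \geq 2$; hence the last sum is bounded below by $8(g-2)\delta_h(\Upsilon)$, uniformly across all higher-genus boundary components.

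To conclude, I would eliminate $\delta_0(\Upsilon)$ via Noether's formula, namely $\delta_0(\Upsilon) = 12\deg\bar f_*\omega_{\ol S/\ol B} - \omega_{\ol S/\ol B}^2 - \delta_1(\Upsilon) - \delta_h(\Upsilon)$, substitute into the previous estimate, collect terms, and divide by $g$; the coefficients $-g+4(g-1)=3g-4$ and $-g+8(g-2)=7g-16$ then appear in front of $\delta_1$ and $\delta_h$ respectively, giving exactly \eqref{eqnlower2}. The hard part is of course Moriwaki's positivity statement itself. Its proof in \cite{moriwaki98} combines the Cornalba--Harris--Xiao slope method with a relative Bogomolov-type instability statement for the Hodge bundle $\bar f_*\omega_{\ol S/\ol B}$, together with a delicate boundary analysis on $\overline{\mathcal{M}}_g$ describing how Hodge sub-sheaves interact with each $\delta_i$. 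Reproducing this from scratch, rather than quoting it, would require tracking a Harder--Narasimhan-type filtration of the Hodge bundle across singular fibres of every boundary type, and this is the essential technical difficulty.
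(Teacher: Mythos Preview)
Your proposal is correct and follows essentially the same approach as the paper: quote Moriwaki's inequality $(8g+4)\deg\bar f_*\omega_{\ol S/\ol B}\geq g\delta_0(\Upsilon)+\sum_{i\geq 1}4i(g-i)\delta_i(\Upsilon)$, eliminate $\delta_0(\Upsilon)$ via Noether's formula \eqref{formulanoether} and \eqref{formulaofdelta_f}, and bound $4i(g-i)$ from below for $i\geq 2$. One minor quibble: the justification ``$i(g-i)$ is concave'' is not what gives the minimum at $i=2$; rather, $i\mapsto i(g-i)$ is increasing on $[2,g/2]$, which is the relevant fact.
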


\begin{theorem}[Sharp slope inequality I, cf. Section\,\ref{sectionlower3}]\label{thmlower3}
Let $\bar f:\,\ol S \to \ol B$ be the same  as in Theorem {\rm\ref{thmupper2}} and $q_{\bar f}=q(\ol S)-g(\ol B)$ the relative irregularity.
If $\bar f$ is hyperelliptic, then
\begin{eqnarray}
\omega_{\ol S/\ol B}^2&\geq&\frac{4(g-1)}{g-q_{\bar f}}
\cdot\deg \bar f_*\omega_{\ol S/\ol B}+\label{eqnlower3}\\[0.15cm]
&&
\left\{
\begin{aligned}
&\begin{aligned}
&\frac{3g^2-(8q_{\bar f}+1)g+10q_{\bar f}-4}{(g+1)(g-q_{\bar f})} \delta_1(\Upsilon)\\
&\quad+\frac{7g^2-(16q_{\bar f}+9)g+34q_{\bar f}-16}{(g+1)(g-q_{\bar f})} \delta_h(\Upsilon),
\end{aligned}&&\text{if~} \Delta_{nc}\neq \emptyset;\\[0.2cm]
&\sum_{i=1}^{[g/2]} \left(\frac{4(2g+1-3q_{\bar f})i(g-i)}{(2g+1)(g-q_{\bar f})}-1\right) \delta_i(\Upsilon), &\qquad&
\text{if~}\Delta_{nc}= \emptyset.
\end{aligned}\right.\nonumber
\end{eqnarray}
Moreover, if $\Delta_{nc}=\emptyset$ and $q_{\bar f}\geq 2$, then
\begin{equation}\label{eqnlower3'}
\sum_{i=q_{\bar f}}^{[g/2]} \frac{(2i+1)(2g+1-2i)}{g+1} \cdot \delta_i(\Upsilon)\geq
\sum_{i=1}^{q_{\bar f}-1} 4i(2i+1) \cdot \delta_i(\Upsilon).
\end{equation}
\end{theorem}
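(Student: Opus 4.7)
The plan is to adapt Xiao's slope machinery to the hyperelliptic setting while incorporating the Fujita decomposition that governs the relative irregularity $q_{\bar f}$. Let
$$0=E_0\subset E_1\subset\cdots\subset E_n=\bar f_*\omega_{\ol S/\ol B}$$
be the Harder--Narasimhan filtration, with semistable graded pieces of strictly decreasing slopes $\mu_1>\cdots>\mu_n\ge 0$. By the Fujita decomposition, the flat summand $F^{1,0}_{\ol B}$ in \eqref{decompB} has rank precisely $q_{\bar f}$ (equivalently $\rank A^{1,0}_{\ol B}=g-q_{\bar f}$), and it appears as the last graded piece of the filtration with $\mu_n=0$. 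This is the mechanism that will replace the effective genus $g$ by $g-q_{\bar f}$ in the denominator of the slope.

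The first technical step is to convert each $E_i$ into a sub-linear-system of $\omega_{\ol S/\ol B}\otimes \bar f^*L_i$ for a suitable $L_i$ on $\ol B$, write its image as $M_i+Z_i$ with $M_i$ movable and $Z_i$ vertical, and then bound $M_i\cdot \omega_{\ol S/\ol B}$ via the Hodge index theorem and $M_i^2$ via the base-point free pencil trick. The hyperelliptic involution $\sigma$ enters decisively: on each smooth fibre $F_b$ one has $\omega_{F_b}\cong\pi_b^*\Ocal_{\mathbb P^1}(g-1)$ for the double cover $\pi_b\colon F_b\to\mathbb P^1$, so the image of $E_i|_b$ in $H^0(\omega_{F_b})$ is a subspace of the pullback polynomial space, which forces
$$M_i\cdot F\ \ge\ 2\,(\rank E_i-1),\qquad 1\le i\le n-1,$$
the hyperelliptic sharpening of Xiao's bound. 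Summing Xiao's telescoping inequality $\omega_{\ol S/\ol B}^2\ge\sum_{i=1}^{n-1}(\mu_i-\mu_{i+1})M_i^2$ and using $\deg\bar f_*\omega_{\ol S/\ol B}=\sum_i\mu_i\rank(E_i/E_{i-1})$, together with the fact that the flat piece $E_n/E_{n-1}$ of rank $q_{\bar f}$ contributes nothing to the degree, yields the leading term $\tfrac{4(g-1)}{g-q_{\bar f}}\deg\bar f_*\omega_{\ol S/\ol B}$ of \eqref{eqnlower3}.

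The correction terms in $\delta_1(\Upsilon)$, $\delta_h(\Upsilon)$ (or in all $\delta_i(\Upsilon)$ when $\Delta_{nc}=\emptyset$) are extracted from the local contributions of $M_i\cdot Z_i$ together with the intersection of $M_i$ with components of singular fibres, by the same local computations used for \eqref{eqnlower2}, refined via the admissible-cover description of Section \ref{sectionfamilyofcurve}. The dichotomy between the two cases in \eqref{eqnlower3} reflects whether $M_i$ can have support in non-compact-type singular fibres: with $\Delta_{nc}\neq\emptyset$ only a coarse $\delta_1+\delta_h$ split is needed, whereas with $\Delta_{nc}=\emptyset$ every $\delta_i$ must be accounted for individually, and the weight $4i(g-i)/(2g+1)$ emerges from the Cornalba--Harris computation on compact-type nodes. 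The auxiliary inequality \eqref{eqnlower3'} then arises by combining \eqref{eqnlower3} (with $\Delta_{nc}=\emptyset$, $q_{\bar f}\ge 2$) with the Moriwaki-type inequality \eqref{eqnlower2} and the positivity $\omega_{\ol S/\ol B}^2\ge 0$; the coefficients $(2i+1)(2g+1-2i)/(g+1)$ and $4i(2i+1)$ fall out once the $\delta_i$-invariants on hyperelliptic singular fibres are re-expressed through the $\xi_j$-invariants via \eqref{relationdeltaxi}.

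The main obstacle will be the second step for those graded pieces that ``straddle'' the boundary with the flat summand: a naive application of the hyperelliptic bound $M_i\cdot F\ge 2(\rank E_i-1)$ does not automatically exploit the fact that the last $q_{\bar f}$ units of rank are flat, and so one must argue that the flat part of $\bar f_*\omega_{\ol S/\ol B}$ is compatible with the $\sigma$-eigenspace decomposition of $R^1\bar f_*\mathbb Q$ in order to recover the sharper denominator $g-q_{\bar f}$ rather than $g$. The other delicate point is the translation from $\xi_j$ to $\delta_i$ in the $\Delta_{nc}=\emptyset$ regime, where one must use the precise combinatorics of admissible double covers (\cite{ch88,harrismumford82}) to pin down each coefficient; once those local identities are in place, the global slope inequalities follow by summing and, where relevant, invoking the relative Noether formula \eqref{formulanoether}.
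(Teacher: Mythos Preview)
Your plan diverges sharply from the paper's proof, and the most serious issue is that your derivation of \eqref{eqnlower3'} cannot work. When $\Delta_{nc}=\emptyset$ all the $\xi_j(\Upsilon)$ vanish, and the Cornalba--Harris identity \eqref{formulaofdegomega} then determines both $\deg\bar f_*\omega_{\ol S/\ol B}$ and $\omega_{\ol S/\ol B}^2$ as explicit linear combinations of the $\delta_i(\Upsilon)$. A short computation shows that under this identity \eqref{eqnlower3} is in fact an \emph{equality}, and Moriwaki's \eqref{eqnlower2} likewise collapses to an equality (both sides equal $\sum 4i(g-i)\delta_i(\Upsilon)=(8g+4)\deg\bar f_*\omega_{\ol S/\ol B}$). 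Hence combining \eqref{eqnlower3}, \eqref{eqnlower2} and $\omega_{\ol S/\ol B}^2\ge 0$ yields nothing beyond Cornalba--Harris, and certainly not the genuinely new constraint \eqref{eqnlower3'}. The inequality \eqref{eqnlower3'} encodes information that is invisible to any slope-type argument; it requires an independent geometric input.

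That input, which your proposal is missing entirely, is Proposition~\ref{boundofxi_0prop}: when $q_{\bar f}>0$, Khashin's theorem forces the smooth double cover $\wt S\to\wt Y$ (from the hyperelliptic involution) to be \emph{fibred}, i.e.\ there is a second fibration $\wt Y\to\bbp^1$ whose fibres contain the branch locus. Analyzing how the sections $\sigma_i$ and the branch divisor $\wt R$ sit relative to this second ruling, and tracking indices of nodes under successive blow-downs, produces a numerical relation among the $\delta_i$ and $\xi_j$ which specializes (setting $\xi_j=0$) to \eqref{eqnlower3'}. The same proposition, combined with \eqref{formulaofdegomega} and \eqref{formulaofomega^2}, is what gives the $q_{\bar f}$-dependent correction terms in \eqref{eqnlower3} for $\Delta_{nc}\neq\emptyset$; Xiao's Harder--Narasimhan machinery is never used. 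A secondary issue: you assert that the flat summand has rank exactly $q_{\bar f}$, but in general only $\rank F^{1,0}_{\ol B}\ge q_{\bar f}$ is automatic; equality (after \'etale base change) is itself a theorem here (Theorem~\ref{thmFtrivial}), and is not needed for the present proof.
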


While Theorem \ref{thmlower2} is a direct consequence of Moriwaki's theorem (cf. \cite{moriwaki98}),
Theorem \ref{thmlower3} is proved  based on formulas given by Cornalba and Harris (cf. \cite{ch88}).
The observation that the smooth double cover
induced by the hyperelliptic involution is fibred when  $q_{\bar f}>0$ plays a crucial role.

To get the strict Arakelov inequality for a smooth closed curve $C\Subset \calt_{g,[n]}$,
we need to deal with the family  $\bar f:\,\ol S \to \ol B$   of semi-stable curves representing $C$;
in this case, the existence of  the ramification locus  $\Lambda$ of the  Torelli  morphism $j_B: B\to C$
is the main difficulty and we need a modified version of the above two types of inequalities.

\begin{theorem}[Miyaoka-Yau type inequality II, cf. Section\,\ref{sectionpfofupper}]\label{thmupper1}
Let $\bar f:\, \ol S \to \ol B$ be the family of semi-stable genus-$g$ curves representing
a smooth closed curve $C\Subset\calt_{g}$  such that $C \nsubseteq \calth_g$.
For any $p\in\ol B$, let $F_p=f^{-1}(p)$. If $g\geq 7$, then
\begin{equation}\label{eqnupper1}
\begin{aligned}
\hspace{-0.3cm}\omega_{\ol S/\ol B}^2 ~\leq&~ (2g-2)\cdot \deg\Omega^1_{\ol B}(\log\Delta_{nc})+\\[0.1cm]
&\hspace{-0.3cm}\sum_{p\in \Delta_{ct}\,\cap\,\Lambda}\frac32\big(l_h(F_p)+l_1(F_p)-1\big)
+\sum_{p\in \Delta_{ct} \setminus \Lambda}\big(3l_h(F_p)+2l_1(F_p)-3\big).
\end{aligned}
\end{equation}
Moreover, if $\Delta_{nc}\neq \emptyset$ or $\Delta=\emptyset$, then the above inequality is strict.
\end{theorem}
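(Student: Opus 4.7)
The plan is to follow the Miyaoka-Yau-type strategy used for Theorem \ref{thmupper2}, refined so that contributions from singular fibers over $\Delta_{ct}$ are expressed in terms of the component counts $l_1(F_p), l_h(F_p)$ and are further reduced at points $p\in\Lambda$.

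First I would pass to the stable model $\bar f^{\#}:\ol S^{\#}\to \ol B$ obtained by contracting the $(-2)$-curves in singular fibers of $\bar f$; this yields a surface whose singularities are rational double points of type $A_{\lambda_q-1}$ sitting over $\Delta_{ct}$ (the fibers over $\Delta_{nc}$ are already stable in the relevant sense). I then apply Miyaoka's bound on weighted contributions of quotient singularities to the log-Chern numbers of $\ol S^{\#}$ relative to the divisor attached to $\Upsilon_{nc}^{\#}$ (or equivalently, Cheng--Yau on the corresponding log surface, as suggested by Peters). Combined with Noether's formula \eqref{formulanoether}, the identification \eqref{eqnomega=Omega}, and the vanishing $\delta_0(F)=0$ for $F\in\Upsilon_{ct}$, this produces an inequality of the shape
\begin{equation*}
\omega_{\ol S/\ol B}^2 \leq (2g-2)\deg\Omega^1_{\ol B}(\log\Delta_{nc}) + \sum_{p\in\Delta_{ct}}\Phi(F_p),
\end{equation*}
where $\Phi(F_p)$ is a combinatorial weight attached to each compact-Jacobian fiber.

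The second step is to estimate $\Phi(F_p)$ using the fact that the dual graph of $F_p\in\Upsilon_{ct}$ is a tree, cf.\ \eqref{eqnFcompJac}. For $p\in\Delta_{ct}\setminus\Lambda$, a case analysis over the rational, elliptic and higher-genus components incident to nodes of types $1$ and $h$, together with the tree identities $\sum_i i\cdot l_i(F_p)=g$ and $\sum_j\delta_j(F_p)=\sum_i l_i(F_p)-1$, should yield $\Phi(F_p)\leq 3l_h(F_p)+2l_1(F_p)-3$. The hypothesis $g\geq 7$ is used here to exclude low-genus degenerate configurations (small numbers of components with forced genus constraints) where the combinatorial estimate would otherwise fail.

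The key new input, and the main obstacle, concerns points $p\in\Lambda\cap\Delta_{ct}$. At such a $p$ the Torelli cover $\bar j_B:\ol B\to\ol C$ is branched of ramification index $2$, so locally around $p$ the family $\bar f:\ol S\to\ol B$ is the pullback of a family on $\ol C$ under a degree-$2$ base change. Hence $F_p^{\#}$ carries a canonical involution $\iota$ compatible with this base change, and a neighborhood of $F_p^{\#}$ in $\ol S^{\#}$ is the double cover of the corresponding neighborhood downstairs. This extra symmetry effectively halves the singularity contribution at $p$, yielding the improved estimate $\Phi(F_p)\leq \tfrac32(l_h(F_p)+l_1(F_p)-1)$. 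Producing the involution $\iota$ explicitly, matching the $\iota$-fixed components and nodes against the $l_i(F_p)$'s, and tracking the rational double points of the quotient to compare with Miyaoka's bound on the cover — this equivariant base-change analysis is where the real work lies. Finally, the strict form of the inequality when $\Delta_{nc}\neq\emptyset$ or $\Delta=\emptyset$ is inherited from the strict form of Miyaoka's (or Cheng--Yau's) inequality used at the first step, exactly as in Theorem \ref{thmupper2}.
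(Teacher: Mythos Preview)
Your overall scaffold is right---Miyaoka on the contracted surface plus combinatorics on the compact-type fibers---but the heart of the argument, the improved bound at $p\in\Lambda$, is not the mechanism you describe. You claim the double-cover structure ``effectively halves the singularity contribution,'' presumably by descending Miyaoka's inequality to $\ol C$. That does not work directly: the Miyaoka weights $v(x)=(k+1)-\tfrac{1}{k+1}$ of the $A_k$ points do not transform by a simple factor under a ramified degree-$2$ base change, and there is no family over $\ol C$ at the branch point to which you could apply Miyaoka in the first place.

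What the paper actually does is geometric. The involution on $(\ol S,F_p)$ coming from the Torelli ramification is the \emph{hyperelliptic} involution on $F_p$; a local-coordinate computation at any node where two irrational components meet forces the induced involution $\tau$ on $\ol B$ to be the identity near $p$, contradicting $p\in\Lambda$. Hence for $p\in\Lambda$ the fiber $F_p$ is hyperelliptic and no two irrational components intersect. Combined with the fact that a stable hyperelliptic curve of compact type has no rational component, this means every node of $F_p^{\#}$ sits on a chain of $(-2)$-curves, i.e.\ $\lambda_q\ge 1$. The Miyaoka contribution at each such node is then $\tfrac{1}{\lambda_q+1}\le\tfrac12$, and summing over the $l_h(F_p)+l_1(F_p)-1$ nodes of $F_p^{\#}$ gives exactly the $\tfrac32(l_h+l_1-1)$ term. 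So the $\tfrac12$ comes from $\lambda_q\ge1$, not from a quotient.

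Two further corrections. First, $g\ge7$ is not used to rule out sporadic configurations in the generic $\Delta_{ct}\setminus\Lambda$ estimate; it is needed only for fibers $p\in\Delta_{ct,b}$ (all components of genus $\le1$), where the bound from the base-change refinement is $2g-2-\tfrac{g-1}{6}$ and one needs this $\le 2g-3=3l_h+2l_1-3$. Second, you will not get the sharp constants without the base-change trick in the proof of the underlying inequality: one takes covers of $\ol B$ ramified to high order over $\Delta_{nc}$ (and to order $6$ over $\Delta_{ct,b}$), applies Miyaoka upstairs, and lets the ramification index over $\Delta_{nc}$ go to infinity. This is also what produces strictness when $\Delta_{nc}\ne\emptyset$.
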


\begin{theorem}[Sharp slope inequality II, cf. Section\,\ref{sectionlower1}]\label{thmlower1}
Let $\bar f:\,\ol S \to \ol B$ be the same as in Theorem {\rm\ref{thmupper1}}.
If $g\geq 3$ and $\bar f_*\omega_{\ol S/\ol B}$ is a semi-stable vector bundle, then
\begin{equation}\label{eqnlower1}
\begin{aligned}
\hspace{-0.3cm}\omega_{\ol S/\ol B}^2 ~\geq&~ \frac{5g-6}{g}\deg \bar f_*\omega_{\ol S/\ol B}
+2(g-2)\cdot |\Lambda|+\\[0.1cm]
&\hspace{-0.3cm}\sum_{p\in \Delta_{ct}\,\cap\,\Lambda} 2\big(l_h(F_p)+l_1(F_p)-1\big)
+\sum_{p\in \Delta_{ct} \setminus \Lambda}\big(3l_h(F_p)+2l_1(F_p)-3\big).
\end{aligned}
\end{equation}
\end{theorem}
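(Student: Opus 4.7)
The plan is to strengthen the standard non-hyperelliptic slope inequality $\omega^2_{\bar f}\geq \tfrac{5g-6}{g}\deg\bar f_*\omega_{\ol S/\ol B}$ (of Konno--Moriwaki type) by accumulating additional positive contributions from the fibres on which the relative canonical map fails to behave as a generic non-hyperelliptic canonical embedding. The condition $C\not\subseteq\calth_g$ forces the generic fibre of $\bar f$ to be non-hyperelliptic of genus $g\geq 3$, so these exceptional fibres are exactly the smooth hyperelliptic fibres above $\Lambda$ (whose total weight will be $2(g-2)|\Lambda|$) and the reducible compact-type fibres $F_p$ for $p\in\Delta_{ct}$ (whose weights are the combinatorial sums appearing in \eqref{eqnlower1}).

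The principal tool is the multiplication map of direct images
$$\psi:\ \mathrm{Sym}^2E\longrightarrow \bar f_*\omega_{\ol S/\ol B}^{\otimes 2},\qquad E:=\bar f_*\omega_{\ol S/\ol B},$$
which by Max Noether's theorem is surjective on every smooth non-hyperelliptic fibre, hence generically surjective over $\ol B$. A rank count gives $\mathrm{rk}\,\ker\psi=\tbinom{g+1}{2}-(3g-3)=\tfrac{(g-2)(g-3)}{2}$, while $\mathrm{coker}\,\psi$ is a torsion sheaf supported on the exceptional fibres. The hypothesis that $E$ is semistable propagates (in characteristic zero) to semistability of $\mathrm{Sym}^2E$, whose slope is $\mu(\mathrm{Sym}^2E)=2\mu(E)=\tfrac{2\deg E}{g}$; hence every subsheaf, in particular $\ker\psi$, satisfies $\deg\ker\psi\leq \tfrac{(g-2)(g-3)}{g}\deg E$.

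Next, the degree of $\bar f_*\omega_{\ol S/\ol B}^{\otimes 2}$ can be expressed via Mumford's formula for semistable families as $\deg\bar f_*\omega_{\ol S/\ol B}^{\otimes 2}=\omega^2_{\bar f}+\deg E$. Plugging the bound on $\deg\ker\psi$ into the four-term exact sequence
$$0\to\ker\psi\to\mathrm{Sym}^2E\to\bar f_*\omega_{\ol S/\ol B}^{\otimes 2}\to\mathrm{coker}\,\psi\to 0$$
and carrying out the resulting degree count yields
$$\omega^2_{\bar f}\,\geq\,\tfrac{5g-6}{g}\deg E+\deg(\mathrm{coker}\,\psi).$$
It then remains to bound $\deg(\mathrm{coker}\,\psi)$ from below in terms of the combinatorics of the exceptional fibres. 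A local analysis at a smooth hyperelliptic fibre above $p\in B\cap\Lambda$, exploiting the fact that $j_B:B\to C$ is simply ramified at $p$, shows that the stalk-length of $\mathrm{coker}\,\psi$ at $p$ is $2(g-2)$ (not merely the Noether deficit $g-2$, because the ramification of $j_B$ deepens the drop of $\psi$ by a factor of two). A component-wise analysis at a reducible compact-type fibre produces the combinatorial sums in \eqref{eqnlower1}: rational components contribute nothing, elliptic components (counted by $l_1(F_p)$) and higher-genus components (counted by $l_h(F_p)$) contribute fixed amounts depending on whether $F_p\in\Upsilon_{ct}$ is hyperelliptic or not, and the identities of Section~\ref{sectionfamilyofcurve} convert these into the stated expressions.

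The principal obstacle is precisely this last step, namely the sharp local length computation of $\mathrm{coker}\,\psi$ at the exceptional fibres. Two regimes must be distinguished: $p\in\Lambda$ (where $F_p$ degenerates a hyperelliptic curve, giving coefficient $2(l_h+l_1-1)$) versus $p\notin\Lambda$ (giving $3l_h+2l_1-3$). In each regime one must show that an elliptic sub-component of $F_p$ contributes exactly $2$ and a genus-${\geq 2}$ sub-component exactly $3$ to the local length, with the global subtraction coming from a dimension count on the dualising sheaf of the nodal curve $F_p$. This requires a careful Serre-duality analysis component by component on $F_p$, together with control of how the pull-back-and-resolution construction of $\bar f$ from the universal family $\mathfrak f:\cals_g^{ct}\to\calm_g^{ct}$ interacts with the hyperelliptic involution in a neighbourhood of $\Lambda$, and is the source of the $g\geq 3$ hypothesis and of the semistability assumption on $E$.
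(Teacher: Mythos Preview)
Your proposal is correct and follows essentially the same approach as the paper: the multiplication map $\varrho:S^2(\bar f_*\omega_{\ol S/\ol B})\to\bar f_*\omega_{\ol S/\ol B}^{\otimes 2}$, the semistability bound $\deg\ker\varrho\leq\frac{(g-2)(g-3)}{g}\deg E$, the identity $\deg\bar f_*\omega_{\ol S/\ol B}^{\otimes 2}=\omega_{\ol S/\ol B}^2+\deg E$, and the reduction to a local length estimate on $\mathrm{coker}\,\varrho$ are exactly what the paper does. The paper organises the final step into two lemmas: one computing the fibrewise Noether deficit $\dim\mathrm{coker}(\nu_p)$ (yielding $3(l_h-1)+2l_1$ generically and $(g-2)+(l_h+l_1-1)$ when $F_p$ is hyperelliptic), and one showing $\mathrm{length}\,\cals_p\geq\dim\mathrm{coker}(\nu_p)$ with a factor of $2$ at $p\in\Lambda$ because $\bar f_*\omega_{\ol S/\ol B}=\bar j_B^*E^{1,0}_{\ol C}$ is locally generated by sections in $t^2$ there --- precisely your ``ramification deepens the drop by two'' observation. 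One small imprecision in your sketch: in the hyperelliptic regime the per-component contributions are not $2$ and $3$ but rather $1$ and $1$ (from $\dim\nu_i(S^2H^0(\omega_{C_i}))=2g(C_i)-1$), which after doubling gives the coefficient $2(l_h+l_1-1)$; the $2(g-2)$ term comes separately from the global hyperelliptic Noether deficit, not from the components.
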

The proof of Theorem \ref{thmupper1} is the same as that of Theorem \ref{thmupper2},
while Theorem \ref{thmlower1} is proved relying on the derivative of the Torelli morphism,
i.e. the second multiplication map
$\varrho:~ S^2\left(\bar f_*\omega_{\ol S/\ol B}\right) \ra \bar f_*\big(\omega_{\ol S/\ol B}^{\otimes2}\big)$.

\subsection{The strict Arakelov inequalities}\label{sectionpfofArak}
In this subsection, we prove Theorem\,\,\ref{theoremstrictarak}.

As explained at the end of Section \ref{sectionmainidea},
the main technique is the theory of fibred surfaces.
Given a smooth closed curve $C \Subset \calt_g\subseteq \cala_g$,
we have constructed a family $\bar f:\,\ol S\to \ol B$ of semi-stable curves
representing $C$ in Section \ref{sectionrepresent}.
We have also established in Proposition \ref{proprelationHiggs}
the relation between the associated Higgs bundles over $\ol B$ and $\ol C$,
where $\ol C$ is a suitable smooth compactification of $C$ by joining a finite set of cusps $\Delta_{\ol C}$.
Hence one can first prove a strict Arakelov inequality for the family $\bar f$,
and then derive the inequality on $C$ by using Proposition \ref{proprelationHiggs}.

However, it turns out that the proof of \eqref{eqnstrictarak1} will be more complicated
when the ramification locus $\Lambda$ of the double cover $j_B:\,B \to C$ is not empty.
To illustrate the idea, we consider first the easier case $\Lambda=\emptyset$,
in which case the invariants involved on $\ol B$ and $\ol C$ are all proportional by Proposition \ref{proprelationHiggs}.
Hence \eqref{eqnstrictarak1} follows easily from the following strict Arakelov inequality
for a family of semi-stable curves.

\begin{theorem}\label{theoremstrictarakfamily}
Let $\bar f:\,\ol S \to \ol B$ be a non-isotrivial family of semi-stable curves of genus $g>4$. Then
\begin{equation}\label{eqnstrictarak3}
\deg \bar f_*\omega_{\ol S/\ol B} < {g\over 2}\cdot\deg\Omega^1_{\ol B}(\log\Delta_{nc}).
\end{equation}
\end{theorem}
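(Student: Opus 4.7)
The plan is to combine the two structural inequalities for semi-stable families of curves stated in the preceding subsection, namely the Miyaoka--Yau type inequality (Theorem~\ref{thmupper2}) and Moriwaki's sharp slope inequality (Theorem~\ref{thmlower2}), and then eliminate $\omega_{\ol S/\ol B}^2$ between them. Concretely, I would sandwich $\omega_{\ol S/\ol B}^2$ by writing
$$\frac{4(g-1)}{g}\deg \bar f_*\omega_{\ol S/\ol B}+\frac{3g-4}{g}\delta_1(\Upsilon)+\frac{7g-16}{g}\delta_h(\Upsilon)\;\leq\;\omega_{\ol S/\ol B}^2\;\leq\;(2g-2)\deg\Omega^1_{\ol B}(\log\Delta_{nc})+2\delta_1(\Upsilon_{ct})+3\delta_h(\Upsilon_{ct}),$$
and then solve for $\deg \bar f_*\omega_{\ol S/\ol B}$. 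Using the identity $\frac{g(2g-2)}{4(g-1)}=\frac{g}{2}$ together with the splittings $\delta_1(\Upsilon)=\delta_1(\Upsilon_{ct})+\delta_1(\Upsilon_{nc})$ and $\delta_h(\Upsilon)=\delta_h(\Upsilon_{ct})+\delta_h(\Upsilon_{nc})$, this rearranges to
$$\deg \bar f_*\omega_{\ol S/\ol B}\leq \frac{g}{2}\deg\Omega^1_{\ol B}(\log\Delta_{nc})+\frac{1}{4(g-1)}\Bigl[(4-g)\delta_1(\Upsilon_{ct})+(16-4g)\delta_h(\Upsilon_{ct})-(3g-4)\delta_1(\Upsilon_{nc})-(7g-16)\delta_h(\Upsilon_{nc})\Bigr].$$

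The key numerical observation is that for $g>4$ each of the four coefficients $4-g$, $16-4g$, $-(3g-4)$, $-(7g-16)$ is strictly negative, so the bracket on the right-hand side is automatically $\leq 0$. This already yields the weak version of \eqref{eqnstrictarak3}. To upgrade to the strict inequality I would split into two cases. When $\Delta=\emptyset$ or $\Delta_{nc}\neq\emptyset$, the Miyaoka--Yau inequality of Theorem~\ref{thmupper2} is strict by its own statement, so the chain above is strict as well. When instead $\Delta_{nc}=\emptyset$ but $\Delta_{ct}\neq\emptyset$, every singular fibre $F\in\Upsilon_{ct}$ has $\delta_0(F)=0$, and being singular it must satisfy $\delta_1(F)+\delta_h(F)>0$; consequently $\delta_1(\Upsilon_{ct})+\delta_h(\Upsilon_{ct})>0$ while $\delta_1(\Upsilon_{nc})=\delta_h(\Upsilon_{nc})=0$, so the bracket is strictly negative. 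In either case \eqref{eqnstrictarak3} follows.

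Non-isotriviality enters only to guarantee that both sides of the combined inequality are meaningful (recall $\deg \bar f_*\omega_{\ol S/\ol B}=0$ iff $\bar f$ is smooth and isotrivial), and the hypothesis $g>4$ is precisely what makes the four coefficients in the bracket have the right sign. There is no real obstacle here beyond having the two sharp inequalities of Theorems~\ref{thmupper2} and \ref{thmlower2} in hand; the argument reduces to linear algebra on the boundary contributions, with the only subtlety being the disjunction on whether the strictness comes from the Miyaoka--Yau side (when non-compact boundary is present or absent altogether) or from the negative $\delta_{ct}$-contribution (when only compact-type degenerations occur).
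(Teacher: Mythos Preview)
Your proof is correct and follows essentially the same approach as the paper: combine the Miyaoka--Yau type inequality (Theorem~\ref{thmupper2}) with Moriwaki's sharp slope inequality (Theorem~\ref{thmlower2}), eliminate $\omega_{\ol S/\ol B}^2$, and then run the same dichotomy (strictness from Theorem~\ref{thmupper2} when $\Delta_{nc}\neq\emptyset$ or $\Delta=\emptyset$, versus strictness from a positive boundary contribution when $\Delta=\Delta_{ct}\neq\emptyset$). The only cosmetic difference is that the paper immediately replaces $\delta_i(\Upsilon_{ct})$ by the larger $\delta_i(\Upsilon)$ before combining, arriving at the single-term bound $\deg \bar f_*\omega_{\ol S/\ol B}\leq \tfrac{g}{2}\deg\Omega^1_{\ol B}(\log\Delta_{nc})-\tfrac{g-4}{4(g-1)}\bigl(\delta_1(\Upsilon)+4\delta_h(\Upsilon)\bigr)$, whereas you keep the $ct$/$nc$ split explicit; both routes yield the same case analysis and the same strictness.
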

\begin{proof}
Note that $0\leq \delta_1(\Upsilon_{ct}) \leq \delta_1(\Upsilon)$ and
$0\leq \delta_h(\Upsilon_{ct}) \leq \delta_h(\Upsilon)$.
Hence by \eqref{eqnupper2} and \eqref{eqnlower2},
one gets
\begin{equation}\label{eqnpfarak3}
\deg \bar f_*\omega_{\ol S/\ol B} \leq {g\over 2}\cdot\deg\Omega^1_{\ol B}(\log\Delta_{nc})
-\frac{g-4}{g}\cdot\big(\delta_1(\Upsilon)+4\delta_h(\Upsilon)\big)
\end{equation}
Thus \eqref{eqnstrictarak3} follows if either $\delta_1(\Upsilon)>0$ or $\delta_h(\Upsilon)>0$.
Suppose $\delta_1(\Upsilon)=\delta_h(\Upsilon)=0$. Then either $\Delta_{nc}\neq\emptyset$ or $\Delta=\emptyset$.
Therefore \eqref{eqnpfarak3} is strict since \eqref{eqnupper2} is so, which implies that
\eqref{eqnstrictarak3} also holds in this case.
\end{proof}

Now we are going to prove the strict Arakelov inequalities in general case.
The proof of  \eqref{eqnstrictarak1} requires a strong version of Arakelov inequality
for the family $\bar f$ by taking the ramification locus $\Lambda$ into account; and the proof of \eqref{eqnstrictarak2} relies highly on the strong slope inequality \eqref{eqnlower3}
with positive relative irregularity.

\begin{proof}[Proof of Theorem {\rm \ref{theoremstrictarak}}]
(i). We prove \eqref{eqnstrictarak1} by contradiction.
Suppose \eqref{eqnstrictarak1} does not hold.
According to \cite{faltings83}, one has the following Arakelov inequality:
\begin{equation}\label{eqnpfarak11}
\deg E_{\ol C}^{1,0} \leq \frac{g}{2}\cdot \deg \Omega_{\ol C}^1(\log \Delta_{\ol C}).
\end{equation}
Hence we may assume that the equality holds in \eqref{eqnpfarak11}.
By Corollary \ref{propnonhyper}, in order to derive a contradiction,
it is necessary and sufficient to prove the following stronger version of Arakelov inequality for the family $\bar f$.
\begin{equation}\label{eqnpfarak12}
\deg \bar f_*\omega_{\ol S/\ol B} < \frac g2 \cdot\Big(\deg\Omega^1_{\ol B}\left(\log\Delta_{nc}\right)-|\Lambda|\Big).
\end{equation}

The equality of \eqref{eqnpfarak11} means that
the associated Higgs bundle $\left(E_{\ol C}^{1,0}\oplus E_{\ol C}^{0,1},\,\theta_{\ol C}\right)$
has strictly maximal Higgs field (cf. \eqref{arakelovC}).
Hence by \cite[Proposition\,1.2]{vz04}, $E_{\ol C}^{1,0}$ is poly-stable;
in particular, it is semi-stable.
According to \eqref{xinpullback} and \cite[Lemma\,6.4.12]{lazarsfeld},
$\bar f_*\omega_{\ol S/\ol B}$ is also semi-stable.
Hence one can apply Theorems \ref{thmupper1} and \ref{thmlower1} to the family $\bar f$, and obtains
\begin{equation}\label{pfarak11}
\deg \bar f_*\omega_{\ol S/\ol B} \leq
\frac{2(g-1)g}{5g-6}\cdot\Big(\deg\Omega^1_{\ol B}\left(\log\Delta_{nc}\right)-|\Lambda|\Big)
+\frac{2g}{5g-6}\cdot|\Lambda|.
\end{equation}
Since $\delta_{\bar f}\geq 0$, according to \eqref{formulanoether} and \eqref{eqnlower1}, one gets
\begin{equation}\label{pfarak12}
 |\Lambda| \leq \frac{7g+6}{2(g-2)g} \cdot\deg \bar f_*\omega_{\ol S/\ol B}.
\end{equation}
Combing \eqref{pfarak11} with \eqref{pfarak12}, one obtains
\begin{eqnarray*}
\deg \bar f_*\omega_{\ol S/\ol B} &\leq&
\frac{2(g-1)(g-2)g}{5g^2-23g+6}\cdot\Big(\deg\Omega^1_{\ol B}\left(\log\Delta_{nc}\right)-|\Lambda|\Big)\\
&=&\left(\frac g2-\frac{(g^2-11g+2)g}{2(5g^2-23g+6)}\right)
      \cdot\Big(\deg\Omega^1_{\ol B}\left(\log\Delta_{nc}\right)-|\Lambda|\Big).
\end{eqnarray*}
Since $g>11$, \eqref{eqnpfarak12} follows.
The proof is complete.

\vspace{0.2cm}
(ii).
To start the proof, one needs a statement   below to compare $\rank F_{\ol B}^{1,0}$ with $q_{\bar f}$,
whose proof is postponed to Section \ref{sectionflathyper}.
\begin{theorem}\label{thmFtrivial}
Let $\bar f:\,\ol S \to \ol B$ be a non-isotrivial family of semi-stable hyperelliptic curves of genus $g\geq 2$.
Then after passing to a finite \'etale base change, one has
\begin{equation}\label{eqnFtrivial}
\rank F_{\ol B}^{1,0}=q_{\bar f}.
\end{equation}
\end{theorem}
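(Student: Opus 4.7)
The plan is to identify $q_{\bar f}$ with $h^{0}$ of the flat part of $E_{\ol B}^{0,1}$ and then to trivialize that flat part via a finite étale base change, using the general principle that a polarizable $\Zbb$-sub-VHS of weight one with trivial Higgs field has finite monodromy. The hyperelliptic hypothesis will enter mainly through the explicit identification of $\mathbb{V}_B$ with $R^{1}jac(f)_{\ast}\Zbb$ and its integral structure.

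First I would use the Leray spectral sequence for $\bar f$. Since $\bar f_{\ast}\mathcal{O}_{\ol S}=\mathcal{O}_{\ol B}$, one obtains the short exact sequence
$$0\lra H^{1}(\ol B,\mathcal{O}_{\ol B})\lra H^{1}(\ol S,\mathcal{O}_{\ol S})\lra H^{0}(\ol B,R^{1}\bar f_{\ast}\mathcal{O}_{\ol S})\lra 0,$$
which yields $q_{\bar f}=h^{0}(\ol B,E_{\ol B}^{0,1})$, since $E_{\ol B}^{0,1}=R^{1}\bar f_{\ast}\mathcal{O}_{\ol S}$. By relative Grothendieck-Serre duality for the semi-stable family $\bar f$, one has $E_{\ol B}^{0,1}\cong(E_{\ol B}^{1,0})^{\vee}$, and this duality is compatible with the decomposition \eqref{decompB}, so $A_{\ol B}^{0,1}\cong(A_{\ol B}^{1,0})^{\vee}$ and $F_{\ol B}^{0,1}\cong(F_{\ol B}^{1,0})^{\vee}$. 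Since $A_{\ol B}^{1,0}$ is ample, its dual has no global sections, so $h^{0}(\ol B,A_{\ol B}^{0,1})=0$ and hence
$$q_{\bar f}=h^{0}(\ol B,F_{\ol B}^{0,1})\leq \rank F_{\ol B}^{0,1}=\rank F_{\ol B}^{1,0}.$$

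The crucial step is to upgrade this inequality to an equality after a finite étale base change, which amounts to making $F_{\ol B}^{0,1}$ a trivial bundle. Consider the unitary sub-local system $\mathbb{F}_B=\mathbb{F}^{1,0}\oplus\mathbb{F}^{0,1}\subseteq\mathbb{V}_B$ underlying the flat summand. It is a polarizable $\Zbb$-sub-VHS of weight one, with $\Zbb$-structure induced from $R^{1}jac(f)_{\ast}\Zbb$ and polarization induced from the principal polarization on $jac(f)$. Vanishing of its Higgs field means that the Hodge filtration is horizontal for the Gauss-Manin connection, so parallel transport preserves the Hodge decomposition on each fiber. The monodromy representation $\rho:\pi_{1}(B)\to \GL(\mathbb{F}_{B,\Zbb})$ therefore factors through the automorphism group of a polarized $\Zbb$-Hodge structure of weight one, which lies inside the intersection $\Sp(2r,\Zbb)\cap\Urm(r)$ (with $r=\rank F_{\ol B}^{1,0}$). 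As a discrete subgroup of the compact group $\Urm(r)$, this is finite.

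Taking $\pi:\ol B'\to\ol B$ the finite étale cover corresponding to $\ker\rho$ and $\bar f':\ol S'\to\ol B'$ the resulting semi-stable family, $\pi^{\ast}\mathbb{F}_B$ becomes the trivial local system, so $F_{\ol B'}^{0,1}=\pi^{\ast}F_{\ol B}^{0,1}\cong\mathcal{O}_{\ol B'}^{\,r}$. The Fujita-Kollár decomposition is preserved by finite étale pullback, and $A_{\ol B'}^{1,0}=\pi^{\ast}A_{\ol B}^{1,0}$ remains ample, so applying the first two steps to $\bar f'$ gives
$$q_{\bar f'}=h^{0}(\ol B',F_{\ol B'}^{0,1})=r=\rank F_{\ol B'}^{1,0},$$
which is the required equality. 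The main obstacle is the third step: one must ensure that the flat sub-VHS $\mathbb{F}_B$ actually admits a monodromy-invariant $\Zbb$-lattice compatible with the polarization, so that its monodromy lies inside the \emph{discrete} group $\Sp(2r,\Zbb)\cap\Urm(r)$ and not merely in the compact group $\Urm(r)$. This is where the realization of $\mathbb{V}_B$ as $R^{1}jac(f)_{\ast}\Zbb$ becomes essential, and where the hyperelliptic involution, which acts as $-1$ on every fiber of $\mathbb{V}_B$, can be exploited to pin down the integral structure on $\mathbb{F}_B$ and to construct the base change $\pi$ explicitly.
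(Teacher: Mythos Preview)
Your first two steps are correct and give the easy inequality $q_{\bar f}\le\rank F_{\ol B}^{1,0}$. The gap is exactly where you locate it in your last paragraph, but your suggestions for closing it do not work. The Fujita--Koll\'ar splitting is a splitting of holomorphic vector bundles; the sub-local system $\mathbb{F}_B\subset\mathbb{V}_{B,\Cbb}$ underlying the flat summand is a priori only defined over $\Cbb$, not over $\Qbb$ or $\Zbb$. An irreducible $\Qbb$-summand of $\mathbb{V}_{B,\Qbb}$ can, after tensoring with $\Cbb$, decompose into a unitary piece and a non-unitary piece, and then $\mathbb{F}_B\cap\mathbb{V}_{B,\Zbb}$ need not be a lattice of full rank in $\mathbb{F}_B$. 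In that situation your monodromy lands only in the compact group $\Urm(r)$, not in anything discrete, and the argument ``discrete $\cap$ compact is finite'' collapses. Your appeal to the hyperelliptic involution cannot help: it acts as $-1$ on \emph{all} of $\mathbb{V}_B$, so it does not single out $\mathbb{F}_B$. In fact there are (Catanese--Dettweiler) non-hyperelliptic fibrations for which the monodromy of the Fujita flat part is infinite, so the hyperelliptic hypothesis must enter substantively, not just cosmetically.

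The paper's route is entirely different and genuinely consumes the hyperelliptic structure via the double cover $\tilde\pi:\wt S\to\wt Y$ onto a blown-up ruled surface. For each irreducible unitary summand $\mathcal U\subset F_{\ol B}^{1,0}$ one first lifts the inclusion to a morphism $\bar f^{*}\mathcal U\to\Omega^{1}_{\ol S}$ (this uses a global-invariant-cycle theorem with unitary coefficients), pushes it down to $\wt Y$, and takes the $(-1)$-eigenpart for the covering involution. Bogomolov's lemma on invertible subsheaves of $\Omega^{1}_{\wt Y}(\log\wt R)$ forces the image to be a nef line bundle $M$ with $M^{2}=0$ and $M\cdot D=0$ for every component $D$ of the branch divisor. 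Restricting to one of the $2g+2$ sections $D\cong\ol B$ inside $\wt R$ produces a degree-zero line-bundle quotient of $\mathcal U$; polystability of $\mathcal U$ then forces $\rank\mathcal U=1$. Each such rank-one unitary summand is torsion in $\Pic^{0}(\ol B)$, so $F_{\ol B}^{1,0}$ trivializes after a finite base change unramified over $\ol B\setminus\Delta$; since the local monodromy around $\Delta$ is unipotent (hence trivial on the unitary part), the representation already factors through $\pi_{1}(\ol B)$ and the base change can be taken \'etale over all of $\ol B$. Only then does Fujita's decomposition give $\rank F_{\ol B}^{1,0}=q_{\bar f}$.
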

By the above theorem, we may assume $q_{\bar f}=\rank F_{\ol B}^{1,0}=g-\rank A_{\ol B}^{1,0}$.
Combining \eqref{eqnupper2} with \eqref{eqnlower3}, we obtain that
if $\Delta_{nc}\neq \emptyset$, then
\begin{equation}\label{eqnpfarak21}
\deg \bar f_*\omega_{\ol S/\ol B}<\frac{\rank A^{1,0}_{\ol B}}{2}\cdot \deg \Omega_{\ol B}^1(\log\Delta_{nc})
-\big(\alpha_1\delta_1(\Upsilon)+\alpha_h\delta_h(\Upsilon)\big),
\end{equation}
and if $\Delta_{nc}= \emptyset$, then
\begin{equation}\label{eqnpfarak22}
\deg \bar f_*\omega_{\ol S/\ol B}\leq \frac{\rank A^{1,0}_{\ol B}}{2}\cdot \deg \Omega_{\ol B}^1(\log\Delta_{nc})
-\sum_{i=1}^{[g/2]}\beta_i\delta_i(\Upsilon),
\end{equation}
where
\begin{eqnarray*}
\alpha_1&=& \frac{g^2-(6q_{\bar f}+3)g+12q_{\bar f}-4}{4(g+1)(g-1)},\\
\alpha_h&=& \frac{4g^2-(13q_{\bar f}+12)g+37q_{\bar f}-16}{4(g+1)(g-1)},\\
\beta_1&=& \frac{2g+1-3q_{\bar f}}{2g+1}-\frac{3(g-q_{\bar f})}{4(g-1)},\\
\beta_i&=& \frac{(2g+1-3q_{\bar f})i(g-i)}{(2g+1)(g-1)}-\frac{g-q_{\bar f}}{g-1},\qquad \forall~2\leq i\leq [g/2].
\end{eqnarray*}
Since $\bar f$ is hyperelliptic, one gets (cf. \cite[Proposition 4.7]{ch88})
\begin{equation}\label{bothnot=0}
\text{
$\delta_i(\Upsilon)$'s are non-negative, and one of them is positive if  $\Delta_{nc} =\emptyset$.}
\end{equation}
Hence it is reasonable to imagine that the strict Arakelov inequality
\eqref{eqnstrictarak2} holds when  $g$ is large enough.
The detailed proof is divided into the two following cases:\vspace{0.15cm}

{\noindent\bf Case I.~} $\Delta_{nc}\neq\emptyset$.~~~
We prove by contradiction in the case.

Assume that the strict Arakelov inequality \eqref{eqnstrictarak2} does not hold.
Then by \eqref{eqnhiggshyper} and the classical Arakelov inequality (cf. \cite{faltings83}),
we must have
$$\deg \bar f_*\omega_{\ol S/\ol B}=\deg E_{\ol C}^{1,0}=\frac{\rank A^{1,0}_{\ol B}}{2}\cdot \deg \Omega_{\ol B}^1(\log\Delta_{nc})
=\frac{g-q_{\bar f}}{2}\cdot \deg \Omega_{\ol B}^1(\log\Delta_{nc}).$$
Combing this with \cite[Corollary\,1.7]{ltyz13} and the discussion after it,
one gets that $g(F)=q_{\bar f}$ for every fibre $F$ over $\Delta_{nc} (\neq \emptyset)$,
where $g(F)$ is the geometrical genus of $F$.
Hence by Proposition \ref{q_f>0fibred}, proved later, we obtain that $q_{\bar f}\leq 1$,
from which it follows that the coefficients $\alpha_1$ and $\alpha_h$ in \eqref{eqnpfarak21} are positive since $g>7$.
By \eqref{bothnot=0}, this is a contradiction.

\vspace{0.15cm}
{\noindent\bf Case II.~} $\Delta_{nc}=\emptyset$.\quad
First, we claim that $q_{\bar f} \leq \frac{g-1}{2}$ in this case.
Indeed, by \cite[Theorem\,1]{xiao92-0} or Proposition \ref{q_f>0fibred},
for any hyperelliptic family, one has $q_{\bar f}\leq \frac{g+1}{2}$,
and if the equality holds then $\bar f$ is isotrivial.
Hence $q_{\bar f} \leq g/2$. However, if $q_{\bar f}=g/2$,
then $\omega_{\ol S/\ol B}^2=\frac{8(g-1)}{g}\cdot\deg \bar f_*\omega_{\ol S/\ol B}$
by \cite[Theorem\,2(a)]{xiao92-0}.
Combining this with \eqref{eqnlower3}, one obtains that $\delta_i(\Upsilon)=0$ for all $i\geq 1$,
which contradicts \eqref{bothnot=0}.

If $q_{\bar f}< \frac{(g-4)(2g+1)}{3(2g-5)}$, then
it is easy to show that $\beta_i > \beta_1 >0$ for any $2\leq i\leq [g/2]$.
Hence \eqref{eqnstrictarak2} follows from \eqref{eqnpfarak22} together with \eqref{bothnot=0}.

Thus we may assume $q_{\bar f} \geq \frac{(g-4)(2g+1)}{3(2g-5)}$, from which it follows that
$\beta_1\leq 0$ and $q_{\bar f}\geq 2$ since $g>7$.
So according to \eqref{eqnlower3'} and \eqref{eqnpfarak22}, we obtain
\begin{equation}\label{eqnpfarak23}
\deg \bar f_*\omega_{\ol S/\ol B}\leq \frac{\rank A^{1,0}_{\ol B}}{2}\cdot \deg \Omega_{\ol B}^1(\log\Delta_{nc})
-\left(\sum_{i=2}^{q_{\bar f}-1} \xi_i\delta_i(\Upsilon)
       +\sum_{i=q_{\bar f}}^{[g/2]} \eta_i\delta_i(\Upsilon)\right),
\end{equation}
where
$$\begin{aligned}
\xi_i&\,=\,-\frac{i(2i+1)}{3}\cdot\beta_1+\beta_i,&\quad&\forall~2\leq i \leq q_{\bar f}-1;\\
\eta_i&\,=\,\frac{(2i+1)(2g+1-2i)}{12(g+1)}\cdot\beta_1+\beta_i,&&\forall~q_{\bar f}\leq i \leq [g/2].
\end{aligned}
$$
Combining \eqref{eqnpfarak23} with \eqref{eqnlower3'} and \eqref{bothnot=0},
it suffices to prove that $\xi_i$'s and $\eta_i$'s are all positive.

It is easy to see that $\xi_i>0$, since $\beta_1\leq 0$, and $\beta_i\geq \beta_2 >0$ for $2\leq i \leq q_{\bar f}-1$.
Let $\Theta=(g-4)(2g+1)-3(2g-5)q_{\bar f}$.
Then for $q_{\bar f}\leq i\leq [g/2]$, one has
$$\begin{aligned}
\eta_i\,&\,=\,\frac{1}{(g-1)(2g+1)}\cdot\left((2g+1-3q_{\bar f})+\frac{\Theta}{12(g+1)}\right) \cdot i(g-i)\\
      &\qquad\quad   -\frac{g-q_{\bar f}}{2g+1}+\frac{\Theta\cdot(2g+1)}{48(g-1)(2g+1)(g+1)}\\
\geq\,\eta_{q_{\bar f}}
      &\,=\, \frac{g-q_{\bar f}}{48(g-1)(g+1)}\cdot\Bigg(4q_{\bar f}(13g-21q_{\bar f}+8)-50g-51\\
&\hspace{4cm}+\frac{4\big((q_{\bar f}-1)+(g-2q_{\bar f})(g-1)\big)}{g-q_{\bar f}}\Bigg)\\
&\,\geq\, \frac{g-q_{\bar f}}{48(g-1)(g+1)}\cdot\Big( 4q_{\bar f}(13g-21q_{\bar f}+8)-50g-51\Big).
\end{aligned}
$$
Note that for $2\leq q_{\bar f} \leq (g-1)/2$, we have
$$\begin{aligned}
&4q_{\bar f}(13g-21q_{\bar f}+8)-50g-51\\
\geq~& \min\left\{4\cdot 2\cdot(13g-21\cdot 2+8),~
4\cdot\frac{g-1}{2}\cdot\Big(13g-21\cdot\frac{g-1}{2}+8\Big)\right\}-\,50g-51\\
>~&0,\hspace{2cm}\text{since~}g>7.
\end{aligned}$$
Hence for $q_{\bar f}\leq i \leq [g/2]$, $\eta_i>0$.
This completes the proof.
\end{proof}

\subsection{Proofs of the main results}\label{sectionpfofmainre}
Theorems \ref{mainthm1} and \ref{mainthm3} are immediate consequence of Theorems
\ref{theoremnumchar} and \ref{theoremstrictarak};
and Theorem \ref{mainthm2} follows from the definition (cf. Definition \ref{defmaximalB})
and Theorem \ref{theoremstrictarakfamily}.
Corollary \ref{corE-S} follows from
Theorem \ref{mainthm1} together with Lemma \ref{algebraic interpretation of Shimura curves of type I}.

Finally, we prove Corollary \ref{real multiplication} as follows.
Let $Z$ be as in Corollary \ref{real multiplication}.
Note that any Hecke translate of a Shimura curve of type I is still a Shimura curve of type I.
Hence by Remark \ref{remarkheckedense} and Definition \ref{deftypeI}, $Z$ contains a Shimura curve $C$ of type I.
If moreover $Z\Subset \calt_g$, then one can even assume $C\Subset \calt_g$ (cf. Remark \ref{remarkheckedense}).
Therefore our corollary follows from Theorem \ref{mainthm1}.

\section{Miyaoka-Yau type inequalities for a family of semi-stable curves}\label{sectionupper}
The section is aimed to prove a Miyaoka-Yau type inequality for families of semi-stable curves;
as a consequence, we will complete the proofs of Theorems \ref{thmupper2} and \ref{thmupper1}.
\begin{theorem}\label{theoremupperlinshi}
Let $\bar f:\, \ol S \to \ol B$ be a family of semi-stable curves of genus $g\geq 2$.
For any $p\in\ol B$, let $F_p=f^{-1}(p)$ and
$$\begin{aligned}
\Lambda'\,=\,&\left\{p\in \ol B ~\Bigg|
\begin{aligned}
&F_p\text{~is a hyperelliptic curve with compact Jacobian, and any}\\
&\text{two irreducible irrational components of $F_p$ do not intersect}
\end{aligned}
~\right\};\\[0.1cm]
\Delta_{ct,b}\,=\,&\left\{p\in \Delta_{ct}\setminus \Lambda' ~\big|~
\text{each irreducible component of $F_p$ is of genus $0$ or $1$}\right\};\\[0.1cm]
\Delta_{ct,ub}\,=\,&\Delta_{ct}\setminus\left\{\Lambda'\cup\Delta_{ct,b}\right\}.
\end{aligned}$$
Then
\begin{equation}\label{eqnupperlinshi}
\begin{aligned}
\omega_{\ol S/\ol B}^2 ~\leq~& (2g-2)\cdot \deg\left(\Omega^1_{\ol B}(\log\Delta_{nc})\right)
+\sum_{p\in \Delta_{ct} \cap \Lambda' }\frac32\cdot\big(l_h(F_p)+l_1(F_p)-1\big)\\
&+\left(2g-2-\frac{g-1}{6}\right)\cdot |\Delta_{ct,b}|
+\sum_{p\in \Delta_{ct,ub}}\big(3l_h(F_p)+2l_1(F_p)-3\big).
\end{aligned}
\end{equation}
Moreover, if $\Delta_{nc}\neq \emptyset$ or $\Delta=\emptyset$, then the above inequality is strict.
\end{theorem}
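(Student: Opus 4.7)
The plan is to pass to the stable model $\bar f^{\#}:\ol S^{\#} \to \ol B$, apply a Miyaoka-type inequality \cite{miyaoka84} bounding the weighted number of quotient singularities of a surface of general type, and then translate the resulting estimate back into the relative invariants of $\bar f$. A cyclic base change $\pi:\wt B \to \ol B$ ramified along $\Delta_{nc}$, and along $\Lambda'$ when appropriate, will be used to regularise the $A$-type singularities of $\ol S^{\#}$ and to treat the non-compact-Jacobian fibres uniformly.

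First I would set up the comparison between $\ol S$ and $\ol S^{\#}$. Since contracting the $(-2)$-curves in singular fibres produces only rational double points of type $A_{\lambda_q}$, and $\omega_{\ol S}$ is trivial along each $(-2)$-curve, one has $\omega_{\ol S/\ol B}^2 = (K_{\ol S^{\#}/\ol B})^2$ as an intersection of $\Qbb$-Cartier divisors, and $\chit(\ol S) = \chit(\ol S^{\#}) + \sum_q \lambda_q$. I would then work with the log pair $(\ol S^{\#}, D^{\#})$ where $D^{\#}$ is the reduced preimage of $\Delta_{nc}$, and perform the base change $\pi$ so that the pulled-back family $\wt f^{\#}:\wt S^{\#} \to \wt B$ is semi-stable with smooth fibres over the preimage of $\Delta_{nc}$.

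Second I would apply Miyaoka's weighted Bogomolov-Miyaoka-Yau type bound to $\wt S^{\#}$ and descend the estimate to $\bar f$ by dividing by $\deg\pi$. Rewriting $c_1^2$ and $c_2$ via Noether's formula \eqref{formulanoether} and the Leray decomposition for $\wt f^{\#}$ yields the leading term $(2g-2)\cdot\deg\Omega^1_{\ol B}(\log\Delta_{nc})$. The remaining contributions are localised at $\Delta_{ct}$; for each $p \in \Delta_{ct}$ the dual graph of $F_p$ is a tree by \eqref{eqnFcompJac}, and each irreducible component of geometric genus $i \geq 1$ together with each node of type $\geq 1$ contributes a specific Miyaoka weight which a direct local calculation turns into the coefficients $l_h(F_p)$ and $l_1(F_p)$ of \eqref{eqnupperlinshi}. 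For $p \in \Lambda'$ the hyperelliptic involution on the general fibre extends, after a further degree-two base change, to an involution of $\wt S^{\#}$ over $\wt B$; because the irrational components of $F_p$ do not meet each other, this involution acts freely on a neighbourhood of them, and passing to the quotient produces the factor $\tfrac12$ giving the coefficient $\tfrac{3}{2}(l_h(F_p) + l_1(F_p) - 1)$. For $p \in \Delta_{ct,ub}$ no such symmetry is available and the full coefficient $3l_h(F_p) + 2l_1(F_p) - 3$ appears. For $p \in \Delta_{ct,b}$ the fibre consists only of rational and elliptic components, so $l_h(F_p) = 0$, and the Miyaoka bound collapses to the single term $(2g - 2 - \tfrac{g-1}{6})\,|\Delta_{ct,b}|$. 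Strictness in the cases $\Delta_{nc}\neq\emptyset$ or $\Delta = \emptyset$ follows because equality in Miyaoka's inequality forces $\wt S^{\#}$ to be a ball quotient with prescribed cusp structure, which is incompatible with the existence of the semi-stable fibration $\bar f$ in those situations.

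The main obstacle I anticipate is the local bookkeeping in the third step: extracting the precise coefficients $\tfrac{3}{2}(l_h + l_1 - 1)$, $3l_h + 2l_1 - 3$ and $2g - 2 - \tfrac{g-1}{6}$ requires tracking exactly how each $A_{\lambda_q}$-singularity of $\ol S^{\#}$, together with the combinatorics of the dual graph of $F_p$, contributes to Miyaoka's weighted count after the base change $\pi$, and then verifying that the contributions cancel correctly upon dividing by $\deg\pi$. In particular, pinning down the condition that the irrational components of $F_p$ do not intersect as precisely what is needed for the hyperelliptic involution to act freely near them, and thereby separating $\Lambda'$ from $\Delta_{ct,ub}$, is the most delicate point of the argument.
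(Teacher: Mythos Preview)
Your overall architecture is right: contract the $(-2)$-curves to get $\ol S^{\#}$, perform a base change to regularise the $A$-type singularities, and apply Miyaoka's bound \cite{miyaoka84}. But the mechanism you propose for extracting the coefficient $\tfrac{3}{2}$ at points of $\Lambda'$ is not the one that works, and it is the heart of the argument.

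The paper does \emph{not} use the hyperelliptic involution or any quotient to halve the contribution at $p\in\Lambda'$. What actually happens is combinatorial. For such $p$, the fiber $F_p$ is hyperelliptic with compact Jacobian, so by Lemma~\ref{norational} its stable model $F_p^{\#}$ has no rational components; thus $l(F_p^{\#})=l_h(F_p)+l_1(F_p)$. The hypothesis that no two irrational components of $F_p$ meet then forces every node of $F_p^{\#}$ to lie under a chain of at least one $(-2)$-curve in $F_p$, i.e.\ $\lambda_q\ge 1$ for every node $q$ of $F_p^{\#}$. Hence $\sum_{q}\frac{1}{\lambda_q+1}\le \tfrac12\,v(F_p^{\#})=\tfrac12(l_h(F_p)+l_1(F_p)-1)$, and Miyaoka's weight contributes $3\cdot\tfrac12(\cdots)=\tfrac32(\cdots)$. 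Your proposed route through a degree-two base change and a quotient by the hyperelliptic involution does not produce this factor: the involution acts fiberwise with quotient $\bbp^1$, so quotienting the surface lands you on a ruled surface, where the Miyaoka inequality says something entirely different. The condition ``irrational components do not meet'' is used purely to force $\lambda_q\ge 1$, not to make any involution act freely.

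Two smaller points. First, the base change in the paper is ramified over $\Delta_{nc}$ and over $\Delta_{ct,b}$ (not over $\Lambda'$), with independent ramification indices $e_{nc}$ and $e_{ct,b}$; the coefficient $2g-2-\tfrac{g-1}{6}$ arises precisely by setting $e_{ct,b}=6$, after which one sends $e_{nc}\to\infty$. Second, for the treatment of $\Delta_{ct,ub}$ one does not just read off node weights: one includes the boundary divisor $D=\sum_{p\in\Delta_{ct,ub}}D_p$ (the genus-$0$ and genus-$1$ components of those fibers) in Miyaoka's log inequality and computes $3\chit(D_p)+2\omega_{\ol S}\cdot D_p+D_p^2$ directly. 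Finally, strictness when $\Delta=\emptyset$ is not obtained from a ball-quotient obstruction but by citing Liu's strict inequality \cite{liukefeng96} for Kodaira fibrations.
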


We  first prove in Section \ref{sectionpfofupper} Theorems \ref{thmupper2} and \ref{thmupper1}
based on Theorem \ref{theoremupperlinshi}, which will be proved in Section \ref{sectionpfofupperlinshi}.

\subsection{Proof of Theorems \ref{thmupper2} and \ref{thmupper1}
based on Theorem \ref{theoremupperlinshi}}\label{sectionpfofupper}
\begin{proof}[Proof of Theorem {\rm\ref{thmupper2}}]
By Theorem \ref{theoremupperlinshi}, it suffices to prove
\begin{eqnarray}
\frac32\cdot\big(l_h(F_p)+l_1(F_p)-1\big) &<&
    2\delta_1(F_p)+3\delta_h(F_p),\qquad \forall~p\in \Delta_{ct} \cap \Lambda';\label{eqnpfupper21}\\[0.1cm]
2g-2-\frac{g-1}{6} &<&
    2\delta_1(F_p)+3\delta_h(F_p),\qquad \forall~p\in \Delta_{ct,b};\label{eqnpfupper22}\\[0.1cm]
3l_h(F_p)+2l_1(F_p)-3 &\leq&
    2\delta_1(F_p)+3\delta_h(F_p),\qquad \forall~p\in \Delta_{ct,ub}.\label{eqnpfupper23}
\end{eqnarray}

Actually, \eqref{eqnpfupper21} follows directly from \eqref{eqnFcompJac}.
Similarly for \eqref{eqnpfupper22} if one notes $l_1(F_p)=g$ and $l_i(F_p)=0$ for $i>1$ when $p\in \Delta_{ct,b}$.
In order to prove \eqref{eqnpfupper23},
we first claim that for any $p\in \Delta_{ct,ub}$, we have
\begin{equation}\label{eqnpfupper24}
l_h(F_p)-1\leq \delta_h(F_p).
\end{equation}
Indeed, let
$$\cals_p=\left\{q \in F_p~\Bigg|~
\begin{aligned}
&\text{$q$ is a node of $F_p$, and each of the two connected components}\\
&\text{of $F_p\setminus q$ contains an irreducible component of genus $\geq 2$}
\end{aligned}\right\}.$$
Since $F_p$ is connected, we get $l_h(F_p)-1 \leq |\cals_p|$.
It is clear that $|\cals_p| \leq \delta_h(F_p)$.
Hence \eqref{eqnpfupper24} follows.
Now again by \eqref{eqnFcompJac}, one gets for any $p\in \Delta_{ct,ub}$,
\begin{equation*}
\begin{aligned}
3l_h(F_p)+2l_1(F_p)-3&=\big(l_h(F_p)-1\big)+2\big(l_1(F_p)+l_h(F_p)-1\big)\\[0.1cm]
&\leq \delta_h(F_p)+2\big(\delta_1(F_p)+\delta_h(F_p)\big)\\[0.1cm]
&=2\delta_1(F_p)+3\delta_h(F_p).
\end{aligned}
\end{equation*}
This completes the proof.
\end{proof}

\begin{proof}[Proof of Theorem {\rm\ref{thmupper1}}]
Let $\bar f$ be as in Theorem \ref{thmupper1}.
Then by the definition, it is easy to show that
$$\begin{aligned}
&2g-2-\frac{g-1}{6}  \leq 2g-3=3l_h(F_p)+2l_1(F_p)-3, &~~& \text{if~}p\in \Delta_{ct,b} \text{~and~}g\geq 7;\\
&\frac32\cdot\big(l_h(F_p)+l_1(F_p)-1\big) \leq
3l_h(F_p)+2l_1(F_p)-3,&&\text{if~}p\in \Delta_{ct} \cap \Lambda'\text{~and~}g\geq 3.
\end{aligned}$$
Thus by Theorem \ref{theoremupperlinshi}, it suffices to prove $\Lambda \subseteq \Lambda'$.
For this, it is enough to prove
\begin{lemma}\label{lemmapfofupper1}
For  $p\in \Lambda$, the fibre $F_p$ is hyperelliptic; if moreover $F_p$ is singular, then any two
irreducible irrational components of $F_p$ do not intersect.
\end{lemma}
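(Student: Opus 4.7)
The plan is to split the argument into two cases depending on whether $F_p$ is smooth or singular, using the Oort--Steenbrink ramification description of $j^\circ$ in the smooth case, and an analysis of the extended Torelli morphism $j:\calm_g^{ct}\to\cala_g$ combined with the semi-stable reduction in the singular case. If $F_p$ is smooth, then $p\in B\setminus\Delta$ and $j_B$ coincides near $p$ with the restriction of $j^\circ$; by \cite{os79} the morphism $j^\circ$ is $2{:}1$ onto its image and ramified exactly on the smooth hyperelliptic locus $\calh_g$, so the hypothesis $p\in\Lambda$ immediately forces $F_p$ to be a smooth hyperelliptic curve, and the second assertion is vacuous.

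Next, suppose $F_p$ is singular, so $p\in\Delta_{ct}$ and the stable model $F_p^{\#}$ is of compact type with smooth components $C_1,\ldots,C_k$, giving $J(F_p^{\#})=\prod_i J(C_i)$ with product principal polarization. I would then invoke the Torelli theorem for stable curves of compact type: the fiber of the extended $j$ through $[F_p^{\#}]$ is positive-dimensional, parameterizing all trees built from the components $C_i$ (or their hyperelliptic partners, when hyperelliptic) sharing the same $\prod_i J(C_i)$. The key point is that ramification of the degree-$2$ cover $j_B$ at $p$ forces the two Torelli sheets above $C$ to merge at $[F_p^{\#}]$, which translates into the existence of an involution $\iota$ on the local $1$-parameter family of smooth curves degenerating to $F_p^{\#}$ that preserves the polarized Jacobian. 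Restricting $\iota$ to each positive-genus $C_i$ yields an involution acting as $-\mathrm{id}$ on $J(C_i)$, i.e.\ the hyperelliptic involution; hence every such $C_i$ is hyperelliptic and $F_p^{\#}$ (and therefore $F_p$) is a hyperelliptic stable curve.

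For the final claim, I would analyze the semi-stable reduction that produces $F_p$ from $F_p^{\#}$. Near each node $q$ of $F_p^{\#}$ the pullback of the universal stable family from $\calm_g^{ct}$ to $\ol B$ has local model $xy=s^{m_q}$, where $s$ is a local parameter on $\ol B$ at $p$ and $m_q$ is the intersection multiplicity of $B$ with the boundary divisor of $\calm_g^{ct}$ along which $q$ persists; when $m_q\geq 2$ the total space has an $A_{m_q-1}$ singularity whose minimal resolution inserts an $(m_q-1)$-chain of rational $(-2)$-curves separating the two components meeting at $q$ in $F_p$. The plan is then to show that when $p\in\Lambda$, every node $q$ of $F_p^{\#}$ joining two positive-genus components satisfies $m_q\geq 2$: the global hyperelliptic involution constructed in the previous paragraph, together with the ramification of $j_B$ at $p$, forces $B$ to be tangent to the boundary component of $\calm_g^{ct}$ carrying $q$, yielding the required multiplicity bound. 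Consequently such nodes are separated by $(-2)$-chains in $F_p$, and no two irrational components of $F_p$ intersect directly. \textbf{The main obstacle} is making both the involution argument and the multiplicity argument rigorous in the presence of the positive-dimensional fibers of $j$ at $[F_p^{\#}]$; this requires extending the Oort--Steenbrink local analysis to the boundary strata of $\calm_g^{ct}$, most naturally via the Torelli theorem for stable curves of compact type.
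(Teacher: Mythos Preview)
Your outline for the first assertion is essentially right in spirit, but the paper handles both the smooth and singular cases uniformly and more cleanly. Rather than constructing an involution on a local $1$-parameter family by analyzing how Torelli sheets merge, the paper invokes a global involution $\tau_g$ on $\calm_g^{ct}$ given by $[C,\alpha]\mapsto[C,-\alpha]$, together with a compatible involution $\sigma_g$ on the universal family $\cals_g$ (this is Lemma~\ref{existprop} in the appendix). The fixed locus of $\tau_g$ is exactly $\calh_g^{ct}$, and $\sigma_g$ restricts to the hyperelliptic involution on each fibre over that locus. Pulling back to $B$ and $S$ gives involutions $\tau,\sigma$ with $f\circ\sigma=\tau\circ f$ and $\mathrm{Fix}(\tau)=\Lambda$; the hyperellipticity of $F_p$ for $p\in\Lambda$ is then immediate, with no separate boundary analysis needed. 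Your difficulty with the positive-dimensional fibres of $j$ simply disappears, because the involution lives on $\calm_g^{ct}$ rather than being reconstructed from $j$.

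For the second assertion your plan has a genuine gap, and the paper's route is quite different. You aim to show $m_q\geq 2$ for every node of $F_p^{\#}$ joining two positive-genus components, by a tangency argument between $B$ and the boundary of $\calm_g^{ct}$. Making this precise requires knowing how $\tau_g$ acts on a local equation $u$ of the relevant boundary branch near $[F_p^{\#}]$: only if $\tau_g^*(u)=u$ (rather than $-u$) does equivariance force $u\circ\mu$ to be an even function of $t$ and hence $m_q$ even. You never establish this sign, and it is exactly the content you are missing. The paper bypasses the moduli-theoretic multiplicity entirely and argues directly on the semi-stable total space $\ol S$. Suppose two irrational components $D_1,D_2$ of $F_p$ meet at a node $q$; since $\ol S$ is smooth and $\bar f$ semi-stable, one has local coordinates with $D_i=\{x_i=0\}$ and $t=xy$. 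Because $\sigma|_{F_p}$ is the hyperelliptic involution, each irrational $D_i$ is $\sigma$-invariant with $\sigma|_{D_i}$ nontrivial; a short normalization of coordinates then shows $\sigma$ acts by $(x,y)\mapsto(-x,-y)$ near $q$. Hence $\sigma^*(xy)=xy$, so $\tau^*(t)=t$ and $\tau$ is the identity near $p$, contradicting that $p$ is an isolated fixed point of a nontrivial involution. This local computation is both shorter and fully rigorous; I recommend replacing your multiplicity strategy with it.
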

To prove the above lemma, we may assume that $\Lambda\neq\emptyset$.
Consider the following Cartesian diagram:
$$\xymatrix@M=0.15cm{
B\ar[rr]^-{normalization}&&j^{-1}(C)\ar[rr]\ar@{^(->}[d] && C \ar@{^(->}[d]^{\Subset} \\
&&\Mcal_g^{ct}\ar[rr]^{j}  && \calt_g
}$$
Note that the family $f:\,S \to B$ is obtained by pulling back the universe family over $\calm_g^{ct}$
and resolving the singularities.
Hence by Lemma \ref{existprop}, there exists an involution $\sigma$ (resp. $\tau$) of $S$ (resp. $B$),
such that $f\circ\sigma=\tau\circ f$ and the fixed locus of $\tau$ is ${\rm Fix}(\tau)=\Lambda$.
Furthermore, for  $p\in \Lambda$, the fibre $F_p$ is a
hyperelliptic curve and the restricted involution
$$\sigma|_{F_p}:~F_p \to F_p$$
is the hyperelliptic involution of $F_p$.
It remains to prove the last statement.

Assume that there are two irrational components $D_1$ and $D_2$ of $F_p$ with an intersection point $q$.
Locally, we may assume that $D_1$ (resp. $D_2$) is defined by $x=0$ (resp. $y=0$),
and $f$ is given by $t=f(x,y)=xy$,
where $t$ \big(resp. $(x,y)$\big) is a local coordinate of $B$ (resp. $S$) around $p$ (resp. $q$).

Since the involution $\sigma|_{F_p}$ is the hyperelliptic involution of $F_p$,
$F_p\big/\langle\sigma|_{F_p}\rangle$ is a (may be singular) rational curve.
In particular, $\sigma$ keeps both $D_1$ and $D_2$ invariant, and $\sigma|_{D_1}$ (resp. $\sigma|_{D_2}$)
is not the identity on $D_1$ (resp. $D_2$),
since both $D_1$ and $D_2$ are irrational curves.
The first implies that there exist non-zero functions $\xi(x,y)$ and $\eta(x,y)$ such that
$$\sigma^*(x)=\xi x,\quad \sigma^*(y)=\eta y, \qquad
\text{with $\xi_0:=\xi(0,0)\neq 0$ and $\eta_0:=\eta(0,0)\neq 0$.}$$
Since $\sigma$ is an involution, we get that
$\sigma^*\big(\sigma^*(x)\big)=x$ and $\sigma^*\big(\sigma^*(y)\big)=y$,
from which it follows that $\xi_0^2=\eta_0^2=1$.
Let
$$\tilde x= x+\frac{\xi}{\xi_0}\cdot x,\qquad \tilde y= y+\frac{\eta}{\eta_0}\cdot y,
\quad\text{~and~}\quad \tilde t = \frac{(\xi_0+\xi)(\eta_0+\eta)}{\xi_0\eta_0}\cdot t.$$
It is easy to see that $\tilde t$ \big(resp. $(\tilde x,\tilde y)$\big)
can be viewed as a local coordinate of $B$ (resp. $S$) around $p$ (resp. $q$).
Moreover, $D_1$ (resp. $D_2$) is locally defined by $\tilde x=0$ (resp. $\tilde y=0$), $\bar f$ is given by
$\tilde t=\tilde x\tilde y$, and
$$\begin{aligned}
&\sigma^*(\tilde x)=\sigma^*\left(x+\frac{1}{\xi_0}\sigma^*(x)\right)
=\sigma^*(x)+\frac{1}{\xi_0}x=\xi_0\cdot \tilde x,\\
&\sigma^*(\tilde y)=\sigma^*\left(y+\frac{1}{\eta_0}\sigma^*(y)\right)
=\sigma^*(y)+\frac{1}{\eta_0}y=\eta_0\cdot \tilde y
\end{aligned}$$
As $\sigma|_{D_1}$ (resp. $\sigma|_{D_2}$) is not the identity on $D_1$ (resp. $D_2$),
one gets that $\eta_0\neq 1$ (resp. $\xi_0\neq 1$),
since $y|_{D_1}$ \big(resp. $x|_{D_2}$\big) is a local coordinate of $D_1$ (resp. $D_2$).
Hence $\xi_0=\eta_0=-1$.

Note that $\tau\circ f=f\circ \sigma$.
So
$$f^*\tau^*(\tilde t)=\sigma^*f^*(\tilde t)=\sigma^*(\tilde x\tilde y)
=\tilde \xi_0\tilde \eta_0\cdot \tilde x\tilde y=\tilde x\tilde y=f^*(\tilde t).$$
Since $\bar f$ is surjective, one gets $\tau^*(\tilde t)=\tilde t$,
which implies that $\tau$ is the identity map of $B$ around $p$.
It is a contradiction. This completes the proof of Lemma \ref{lemmapfofupper1}
and hence also Theorem \ref{thmupper1}.
\end{proof}

\subsection{Proof of Theorem \ref{theoremupperlinshi}}\label{sectionpfofupperlinshi}
Our proof of Theorem\,\ref{theoremupperlinshi} is
based on a generalized Miyaoka-Yau's inequality (cf. Theorem \ref{theoreminpfupper1}),
by choosing a suitable base change and  suitable components contained in singular fibres
(but not the entire singular fibres).

Recall from \cite{miyaoka84}  the generalized Miyaoka-Yau's theorem.
Let $X_x$ be the germ of a quotient singularity of $(\mathbb C^2/G_x)_0$ (in the analytic sense),
where $G_x$ is a finite subgroup of ${\rm GL}(2,\mathbb C)$ with the origin $0$ being its unique fixed point.
Let $X_E$ be the minimal resolution of $X_x$ and $E$ the exceptional divisor
(= the inverse image of $x$). Let
\begin{equation}\label{eqninpfupper1}
v(x)\triangleq \chit(E)-\frac{1}{|G_x|}.
\end{equation}
\begin{theorem}[Miyaoka\,\,{\cite[Corollary\,1.3]{miyaoka84}}]\label{theoreminpfupper1}
Let $X^{\#}$ be a projective surface with only rational double singularities, and $\mathcal I$ the
singular locus of $X^{\#}$. Let $D$ be a reduced normal crossing curve
which lies on the smooth part of $X^{\#}$. Let $X$
be the minimal resolution of $X^{\#}$.
Assume that $\mathcal O_{X}\big(K_{X}\big)$ is numerically effective.
Then
\begin{equation}\label{miyaokayuan}
\sum_{x\in \mathcal I}v(x) \leq \chit(X)-\chit(D)- \frac13(\omega_{X}+ D)^2.
\end{equation}
\end{theorem}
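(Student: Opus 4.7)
The plan is to derive \eqref{miyaokayuan} as an orbifold version of the Bogomolov--Miyaoka--Yau inequality applied to the log pair $(X^{\#},D)$, then translate the orbifold data into information on the minimal resolution $X$. First I would record the relevant properties of the resolution $\pi:X\to X^{\#}$. Since $X^{\#}$ has only rational double point singularities, each exceptional divisor $E_{x}=\pi^{-1}(x)$ is a tree of smooth rational $(-2)$-curves, and RDPs are canonical, so $K_{X}=\pi^{*}K_{X^{\#}}$ as $\Qbb$-Cartier divisors. Because $D$ lies in the smooth locus of $X^{\#}$, one has $\pi^{*}D=D$ and $D\cdot E_{x}=0$ for every $x\in\mathcal I$; consequently
\begin{equation*}
(\omega_{X}+D)^{2}=(K_{X^{\#}}+D)^{2}.
\end{equation*}
The nefness of $\omega_{X}$ then propagates to $K_{X^{\#}}+D$ (which is nef as a $\Qbb$-Cartier divisor) since $D\geq 0$ meets no exceptional curve and the exceptional curves contract to points.

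Next I would invoke the orbifold BMY inequality: for a log surface $(X^{\#},D)$ with only quotient singularities and with $K_{X^{\#}}+D$ nef,
\begin{equation*}
(K_{X^{\#}}+D)^{2}\leq 3\,\chit^{\mathrm{orb}}\big(X^{\#}\setminus D\big),
\end{equation*}
where $\chit^{\mathrm{orb}}$ is the orbifold Euler characteristic that assigns mass $1/|G_{x}|$ to each quotient singularity instead of mass $1$. This is the deep analytic ingredient and can be established either by Yau's method (existence of a K\"ahler--Einstein orbifold metric of negative Ricci curvature) or by Miyaoka's algebraic approach via semistability of an appropriate logarithmic cotangent sheaf; for rational double points this reduces to the statement already in \cite{miyaoka84}.

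The remaining step is a bookkeeping comparison. Stratifying $X$ over $X^{\#}$ gives
\begin{equation*}
\chit(X)=\chit(X^{\#})+\sum_{x\in\mathcal I}\big(\chit(E_{x})-1\big),
\end{equation*}
and the orbifold correction at each quotient singularity yields
\begin{equation*}
\chit^{\mathrm{orb}}(X^{\#})=\chit(X^{\#})-\sum_{x\in\mathcal I}\Big(1-\tfrac{1}{|G_{x}|}\Big).
\end{equation*}
Since $D$ is disjoint from $\mathcal I$, combining these two identities gives
\begin{equation*}
\chit^{\mathrm{orb}}\big(X^{\#}\setminus D\big)=\chit(X)-\chit(D)-\sum_{x\in\mathcal I}\Big(\chit(E_{x})-\tfrac{1}{|G_{x}|}\Big)=\chit(X)-\chit(D)-\sum_{x\in\mathcal I}v(x),
\end{equation*}
using the definition \eqref{eqninpfupper1} of $v(x)$. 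Substituting back into the orbifold BMY inequality and rearranging produces \eqref{miyaokayuan}.

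The main obstacle is the orbifold BMY inequality itself; once one has that, the translation between orbifold invariants of $(X^{\#},D)$ and topological invariants of the resolution $(X,D+\sum E_{x})$ is routine, provided one carefully uses the canonical-singularity property of RDPs to keep $K_{X}=\pi^{*}K_{X^{\#}}$ and the disjointness of $D$ from the exceptional locus to ensure $(\omega_{X}+D)^{2}=(K_{X^{\#}}+D)^{2}$.
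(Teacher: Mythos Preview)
The paper does not prove this theorem; it is quoted as \cite[Corollary\,1.3]{miyaoka84} and used as a black box. Your sketch therefore supplies more than the paper does, and the orbifold--BMY route you outline is indeed the standard way to understand Miyaoka's inequality. The bookkeeping identities you write down are correct: crepancy of RDPs gives $K_X=\pi^*K_{X^{\#}}$, disjointness of $D$ from the exceptional locus gives $(\omega_X+D)^2=(K_{X^{\#}}+D)^2$, and the Euler-characteristic comparison yields $\chit^{\mathrm{orb}}(X^{\#}\setminus D)=\chit(X)-\chit(D)-\sum_{x}v(x)$ exactly as you compute.

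There is, however, a genuine gap in your nefness claim. From $K_X$ nef you correctly deduce that $K_{X^{\#}}$ is nef, but the assertion that $K_{X^{\#}}+D$ is then nef ``since $D\ge 0$'' is false in general: adding an effective divisor to a nef divisor need not preserve nefness (e.g.\ a component $C\subseteq D$ with $K_{X^{\#}}\cdot C=0$ and $D\cdot C<0$ would violate it). The versions of the log/orbifold BMY inequality most commonly stated do assume $K+D$ nef, so as written your invocation is not justified by the hypothesis $K_X$ nef alone. Miyaoka's original argument in \cite{miyaoka84} avoids this: it proceeds via Bogomolov-type instability for $\Omega^1_X(\log D)$ and uses only the nefness of $K_X$, which is precisely the hypothesis in the statement. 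If you want to keep the orbifold--BMY packaging, you should either cite a version valid under the weaker hypothesis $K_X$ nef (such results exist, e.g.\ in work of Langer), or follow Miyaoka's algebraic proof directly rather than passing through a K\"ahler--Einstein existence statement that would require $K+D$ nef.
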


Note that for a singularity $x$ of type $A_k$,
the invariant $v(x)$ defined in \eqref{eqninpfupper1} is equal to $(k+1)-\frac{1}{k+1}$.
Therefore we get
\begin{theorem}\label{theoreminpfupper2}
Let conditions be the same as that of Theorem $\ref{theoreminpfupper1}$.
Assume that each point $x\in \mathcal I$
is a quotient singularities of type $A_{k_x}$,
$X$ is minimal and of general type.
Then
\begin{equation*}
\sum_{x\in \mathcal I}\left(3(k_x+1)-\frac{3}{k_x+1}\right)
\leq 3\left(\chit(X)-\chit(D)\right)-(\omega_{X}+ D)^2.
\end{equation*}
\end{theorem}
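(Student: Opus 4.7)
The plan is to specialize Theorem \ref{theoreminpfupper1} by explicitly evaluating the local invariant $v(x)$ at a singularity of type $A_{k_x}$, and then multiply the resulting inequality through by $3$. First I would verify that the hypothesis ``$\mathcal O_X(K_X)$ is numerically effective'' of Theorem \ref{theoreminpfupper1} is in force here: this is automatic, since a minimal smooth projective surface of general type has nef canonical bundle by the standard surface classification. Hence inequality \eqref{miyaokayuan} applies without further work.

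The only real computation is the evaluation of $v(x)=\chit(E)-1/|G_x|$ at an $A_k$-point. I would recall that a rational double point of type $A_k$ is analytically the quotient $(\mathbb C^2/G)_0$ by a cyclic group $G$ of order $k+1$, so $|G_x|=k+1$, and that its minimal resolution has exceptional divisor $E$ equal to a chain of $k$ smooth rational curves meeting transversally (each intersection being a single node between adjacent components). A short inclusion--exclusion count on this chain then gives
$$\chit(E)=2k-(k-1)=k+1,$$
so that $v(x)=(k+1)-1/(k+1)$.

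Substituting this value into \eqref{miyaokayuan} and multiplying by $3$ delivers
$$\sum_{x\in\mathcal I}\Bigl(3(k_x+1)-\frac{3}{k_x+1}\Bigr)\le 3\bigl(\chit(X)-\chit(D)\bigr)-(\omega_X+D)^2,$$
which is precisely the stated inequality. There is essentially no obstacle: both ingredients (nefness of $K_X$ and the resolution graph of $A_k$) are classical. The only minor point of care is bookkeeping, namely checking that the index convention ``$A_{k_x}$ has $k_x$ exceptional $(-2)$-curves'' used in Theorem \ref{theoreminpfupper2} matches the one implicit earlier in the paper (where $\lambda_q$ counts $(-2)$-curves over $q$), so that $|G_x|=k_x+1$ is indeed the correct order; this is the standard convention and produces no ambiguity.
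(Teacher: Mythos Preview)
Your proposal is correct and follows exactly the approach of the paper: the paper simply notes that for an $A_k$ singularity one has $v(x)=(k+1)-\tfrac{1}{k+1}$ and reads off the inequality from Theorem~\ref{theoreminpfupper1}. You have merely added the (correct) justifications that $K_X$ is nef because $X$ is minimal of general type and the explicit Euler-characteristic computation for the $A_k$ exceptional chain, details the paper leaves implicit.
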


\v

\begin{proof}[Proof of Theorem {\rm\ref{theoremupperlinshi}}]
Let
\begin{eqnarray}
\hspace{-1cm}E_p&\hspace{-0.2cm}=&\hspace{-0.2cm}
\left\{\sum_{j} E_{p,j}~\,\Big|~ E_{p,j}\subseteq F_p \text{~is a~}
(-2)\text{-curve }\right\},\qquad\qquad \forall~p\in\Delta \setminus \Delta_{ct,ub};\label{MIYAOK13}\\[.1cm]
\hspace{-1cm}D_p&\hspace{-0.2cm}=&\hspace{-0.2cm}\left\{\sum_{j} D_{p,j}~\Big|~
D_{p,j}\subseteq F_p \text{~with~genus~} g\big(D_{p,j}\big)=0\text{~or~}1
\right\}, ~~\forall~p\in\Delta_{ct,ub}.\label{MIYAOK14}
\end{eqnarray}
Let $\ol S \to \ol S^{\#}$ be the contraction of
$$\sum\limits_{p\,\in\,\Delta \setminus \Delta_{ct,ub}} E_p\subseteq \ol S,$$
and $f^{\#}:\,\ol S^{\#} \to \ol B$ the induced morphism.
It is clear that for any $p\in\Delta_{ct,ub}$, the image of $D_p$ on $\ol S^{\#}$ lies on
the smooth part of ${\ol S^{\#}}$,
which we still denote by $D_p$.
For any singular point $q$ of $\ol S^{\#}$,
$(\ol S^{\#},\,q)$ is a rational double point of type $A_{\lambda_q}$,
here $\lambda_q$ is the number of $(-2)$-curves in $\ol S$ over $q$.
For convenience, we also denote by $q$ the singular point of the fibres on the smooth
part of $\ol S^{\#}$, in which case, $\lambda_q=0$.
So a singular point $(\ol S^{\#},\,q)$ of type $A_0$ is understood as
a node of the fibres but a smooth point of $\ol S^{\#}$.
For $p\in \ol B$, let $F^{\#}_p$ be the image of $F_p$ on $\ol S^{\#}$.
Then it is clear that
\begin{equation}\label{MIYAOK11}
\delta(F_p)=\sum_{q\in F^{\#}_p}(\lambda_q+1), \qquad \forall~p\,\in\,\Delta \setminus \Delta_{ct,ub}.
\end{equation}

Let $\phi:\,\wt B\to \ol B$ be a cover of $\ol B$ such that
$\phi$ is branched uniformly over $\Delta_{nc}$ (resp. $\Delta_{ct,b}$)
with ramification index equaling to $e_{nc}$ (resp. $e_{ct,b}$).
Such a cover exists. Indeed, by the Kodaira-Parshin construction (cf. \cite{vojta88} and \cite{tan95}),
one can first construct a cover $\phi':\,B'\to \ol B$ branched uniformly over $\Delta_{nc}$
with ramification index equaling to $e_{nc}$;
and then take a cover $\wt B \to B'$ branched uniformly over $(\phi')^{-1}(\Delta_{ct,b})$
with ramification index equaling to $e_{ct,b}$.
Then the composition $\phi:\,\wt B \to \ol B$ satisfies our requirements.
Let $\deg \phi=d$.
Then according to Hurwitz formula, one gets
\begin{equation}\label{MIYAOK2}
2\left(g(\wt B)-1\right)=d\cdot
\left(2\big(g(\ol B)-1\big)+\frac{e_{nc}-1}{e_{nc}}\cdot \left|\Delta_{nc}\right|
+\frac{e_{ct,b}-1}{e_{ct,b}}\cdot \left|\Delta_{ct,b}\right|\right).
\end{equation}

Let $\wt S^{\#}=\wt B \times_{\ol B} \ol S^{\#}$ be the fibre-product,
and $\wt S \to \wt S^{\#}$ the minimal resolution of singularities.
We have the following commutative diagram:
\begin{center}\mbox{}
\xymatrix{
\wt S \ar@/_7mm/"3,1"_{\tilde f} \ar[d] \ar[rr]^{\ol\Phi} && \ol S \ar[d] \ar@/^7mm/"3,3"^f \\
\wt S^{\#} \ar[d] \ar[rr]^{\Phi^{\#}} && \ol S^{\#} \ar[d]\\
\wt B \ar[rr]^{\phi} && \ol B}
\end{center}

For $p\in\Delta_{ct}\cap\Lambda'$ (resp. $p\in\Delta_{nc}$,~ resp. $p\in \Delta_{ct,b}$),
the inverse image of a singular point $(\ol S^{\#},\,q)$ of type $A_{\lambda_q}$ with $q\in F^{\#}_p$
is $d$ (resp. $\frac{d}{e_{nc}}$,~ resp. $\frac{d}{e_{ct,b}}$) singular points of type
$A_{\lambda_q}$ (resp. $A_{(\lambda_q+1)\cdot e_{nc}-1}$,~ resp. $A_{(\lambda_q+1)\cdot e_{ct,b}-1}$) in $\wt S^{\#}$.
Let $$D=\sum\limits_{p\in\Delta_{ct,ub}}\big(\Phi^{\#}\big)^{-1}(D_p).$$
Since $\phi$ is unbranched over $\Delta_{ct,ub}$,
$D$ lies on the smooth part of $\wt S^{\#}$ and
\begin{equation}\label{MIYAOK4}
3\chit(D)+2\,\omega_{\wt S}\cdot D+D^2=
d\cdot\sum_{p\in\Delta_{c,ub}} \left(3\chit(D_p)+2\omega_{\ol S}\cdot D_p+D_p^2\right).
\end{equation}
Because $\bar f$ is semi-stable, $\tilde f:\,\wt S \to \wt B$ is also semi-stable, and
\begin{equation}\label{MIYAOK5}
\delta_{\tilde f}=d\cdot \delta_{\bar f},\qquad \omega_{\wt S/\wt B}^2=d\cdot \omega_{\ol S/\ol B}^2.
\end{equation}
It is not difficult to see that $\wt S$ is minimal and of general type if $g(\wt B) \geq 1$,
which is satisfied when $d$ is large enough.
Hence applying Theorem \ref{theoreminpfupper2} to the case
by setting $X^{\#}=\wt S^{\#}$, $X=\wt S$, and $D$ as above,
we get
\begin{eqnarray}
&&\hspace{-0.3cm}3d\cdot\sum_{q\in F^{\#}_p~~  \atop p\in\Delta_{ct}\,\cap\,\Lambda'}
\left((\lambda_q+1)-\frac{1}{\lambda_q+1}\right)+
\frac{3d}{e_{nc}}\cdot \sum_{q\in F^{\#}_p~  \atop p\in\Delta_{nc}}
\left((\lambda_q+1) e_{nc}-\frac{1}{(\lambda_q+1) e_{nc}}\right)\nonumber\\[0.1cm]
&&\hspace{-0.3cm}+\frac{3d}{e_{ct,b}}\cdot \sum_{q\in F^{\#}_p~  \atop p\in\Delta_{ct,b}}
\left((\lambda_q+1)\cdot e_{ct,b}-\frac{1}{(\lambda_q+1)\cdot e_{ct,b}}\right)\nonumber\\[0.1cm]
&\leq& 3\left(\chit(\wt S)-\chit(D)\right)-\left(\omega_{\wt S}+D\right)^2\label{MIYAOK3}\\[0.1cm]
&=& d\cdot \left(3\delta_{\bar f}-\omega_{\ol S/\ol B}^2\right)-d\cdot\sum_{p\in\Delta_{ct,ub}}
\left(3\chit(D_p)+2\omega_{\ol S}\cdot D_p+D_p^2\right)\nonumber\\
&&\hspace{0.3cm}+d\cdot (2g-2)
\left(2\big(g(\ol B)-1\big)+\frac{e_{nc}-1}{e_{nc}}\cdot \left|\Delta_{nc}\right|
+\frac{e_{c,b}-1}{e_{ct,b}}\cdot \left|\Delta_{ct,b}\right|\right).\nonumber
\end{eqnarray}
We use \eqref{defofrelativeinv}, \eqref{MIYAOK2},
\eqref{MIYAOK4} and \eqref{MIYAOK5} in the last step above.
By \eqref{formulaofdelta_f} and \eqref{MIYAOK11}, we have
\begin{equation}\label{MIYAOK8}
\delta_{\bar f}
=\sum_{q\in F^{\#}_p~~  \atop p\in\Delta_{ct}\,\cap\,\Lambda'}(\lambda_q+1)+
\sum_{q\in F^{\#}_p~  \atop p\in\Delta_{nc}} (\lambda_q+1)+
\sum_{q\in F^{\#}_p~  \atop p\in\Delta_{ct,b}} (\lambda_q+1)+
\sum_{p\in\Delta_{ct,ub}} \delta(F_p).
\end{equation}
Combining \eqref{MIYAOK3} with \eqref{MIYAOK8}, one gets
\begin{eqnarray}
\omega_{\ol S/\ol B}^2 &\leq& (2g-2)\cdot\left(2\big(g(\ol B)-1\big)+ \left|\Delta_{nc}\right|
+ \left|\Delta_{ct,b}\right|\right)
+\sum_{q\in F^{\#}_p~~  \atop p\in\Delta_{ct}\,\cap\,\Lambda'}\frac{3}{\lambda_q+1}\qquad\quad \label{MIYAOK15}\\[-0.1cm]
&&+\left(\sum_{q\in F^{\#}_p~  \atop p\in\Delta_{nc}} \frac{1}{(\lambda_q+1)}\right)\cdot\frac{3}{ e_{nc}^2}
- \frac{(2g-2)\cdot\left|\Delta_{nc}\right|}{e_{nc}} \nonumber\\[.1cm]
&&+\sum_{p\in\Delta_{ct,b}}\left(\sum_{q\in F^{\#}_p} \frac{1}{(\lambda_q+1)}\cdot\frac{3}{ e_{ct,b}^2}
- \frac{2g-2}{e_{ct,b}} \right)\nonumber\\[.1cm]
&&+\sum_{p\in\Delta_{ct,ub}}\big(3\delta(F_p)-(3\chit(D_p)+2\omega_{\ol S}\cdot D_p+D_p^2)\big)\nonumber
\end{eqnarray}

We have the following claim, whose proof are given at the end of the section.
\begin{claim}\label{eDgeqdelta}
\begin{enumerate}
\item[(i).] For each $p\in \Delta_{ct}\,\cap\,\Lambda'$,
\begin{equation}\label{MIYAOK11}
\sum_{q\in F^{\#}_p}\frac{1}{\lambda_q+1}\leq\frac12\big(l_h(F_p)+l_1(F_p)-1\big).
\end{equation}
\item[(ii).] For each $p\in \Delta_{ct,b}$,
\begin{equation}\label{MIYAOK9}
\sum_{q\in F^{\#}_p}\frac{1}{\lambda_q+1}< 2g-2.
\end{equation}
\item[(iii).]
For each $p\in \Delta_{ct,ub}$,
\begin{equation}\label{MIYAOK10}
3\delta(F_p)-\big(3\chit(D_p)+2\omega_{\ol S}\cdot D_p+D_p^2\big)\leq 3l_h(F_p)+2l_1(F_p)-3.
\end{equation}
\end{enumerate}
\end{claim}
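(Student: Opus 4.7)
The plan is to prove the three inequalities by a combinatorial analysis of the dual graph of $F_p$ (a tree since $F_p$ has compact Jacobian), combined with the adjunction formula and the specific constraints imposed by the three subloci.

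For part (iii), since $F_p$ has compact Jacobian every component of $F_p$ is smooth, and I would apply adjunction on the smooth surface $\ol S$ to each component $D_{p,j}$ of $D_p$: $\omega_{\ol S}\cdot D_{p,j}+D_{p,j}^2=2g(D_{p,j})-2$ and $-D_{p,j}^2$ equals the number of nodes of $F_p$ lying on $D_{p,j}$. A straightforward expansion then yields the identity
\[
3\chit(D_p)+2\omega_{\ol S}\cdot D_p+D_p^2 \,=\, \chit(D_p)-\sum_j D_{p,j}^2.
\]
After substitution, the desired inequality \eqref{MIYAOK10} becomes equivalent to $n_{hh}(F_p)\leq l_h(F_p)-1$, where $n_{hh}(F_p)$ counts the nodes of $F_p$ whose two adjacent components both have genus $\geq 2$. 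Since $l_h(F_p)\geq 1$ for $p\in \Delta_{ct,ub}$ and the induced subgraph on the $l_h$ higher-genus vertices of the dual tree carries at most $l_h-1$ edges, this is immediate.

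For part (ii), I would bound $\sum_q \tfrac{1}{\lambda_q+1}$ above by the total number of singular points of the stable model $F_p^{\#}$. Since $p\in \Delta_{ct,b}$ forces $l_h(F_p)=0$, and $\sum_i i\cdot l_i(F_p)=g$ gives $l_1(F_p)=g$, the dual graph of $F_p^{\#}$ is a tree on $l_0^{\rm s}+g$ vertices, where $l_0^{\rm s}$ denotes the number of rational components of $F_p^{\#}$. A degree count on this tree using the stability hypothesis (rational vertices have degree $\geq 3$, elliptic vertices degree $\geq 1$) yields $3 l_0^{\rm s}+g\leq 2(l_0^{\rm s}+g-1)$, whence $l_0^{\rm s}\leq g-2$. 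Therefore
\[
\sum_q \frac{1}{\lambda_q+1} \,\leq\, |\mathrm{Sing}(F_p^{\#})| \,=\, l_0^{\rm s}+g-1 \,\leq\, 2g-3 \,<\, 2g-2.
\]

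Part (i) is the most delicate because it genuinely uses the hyperelliptic hypothesis. My plan is to establish the structural statement that for $p\in \Lambda'$, no rational stable component survives in $F_p^{\#}$ (that is, $l_0^{\rm s}(F_p)=0$). Granting this, every singular point of $F_p^{\#}$ joins two irrational components of $F_p^{\#}$, and since irrational components of $F_p$ are pairwise disjoint by hypothesis, any such join is realised via a non-trivial chain of $(-2)$-curves in $F_p$; hence $\lambda_q\geq 1$ at every singular point, and
\[
\sum_q \frac{1}{\lambda_q+1} \,\leq\, \tfrac{1}{2}|\mathrm{Sing}(F_p^{\#})| \,=\, \tfrac{1}{2}\bigl(l_1(F_p)+l_h(F_p)-1\bigr).
\]
To prove the structural statement, I would exploit the admissible double cover $\psi\colon F_p\to \Gamma$ realising the hyperelliptic structure, where $\Gamma$ is a $(2g+2)$-pointed rational tree: a rational stable component $R$ of $F_p^{\#}$ (with $\geq 3$ attaching nodes) is either self-fixed or twinned under the hyperelliptic involution $\iota$, and in either case tracing the action of $\iota$ on the nodes of $R$ through the dual tree of $F_p$ forces either the appearance of an irrational twinned component (impossible since twinned components must be rational, as they quotient to a single component of the rational tree $\Gamma$) or the closure of a cycle in the dual graph (impossible by the compact Jacobian hypothesis). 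The main obstacle will be to carry out this case analysis cleanly, as the interplay between $\iota$, the tree structure, and the admissibility conditions on $\psi$ is intricate.
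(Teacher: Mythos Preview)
Your proposal is correct and follows essentially the same strategy as the paper. For (ii) and (iii), your arguments are minor reorganizations of the paper's: in (iii) the paper works connected-component-by-connected-component using $C_p^2\le -1$ for each connected piece $C_p\subseteq D_p$, which unwinds to an inequality algebraically equivalent to your global bound $n_{hh}\le l_h-1$ (indeed, in a tree with vertex bipartition $H\cup L$ one has $n_{\le 1,h}-k = l_h-1-n_{hh}$, so the two slacks coincide); in (ii) the paper bounds $l(F_p^{\#})\le 2g-2$ via $\omega_{\ol S}\cdot C\ge 1$ for each stable component rather than your valence count on the dual tree, but both yield $|\mathrm{Sing}(F_p^{\#})|\le 2g-3$.

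For (i), your reduction to the structural statement ``$F_p^{\#}$ has no rational component'' and the subsequent deduction of $\lambda_q\ge 1$ match the paper exactly. The difference lies in how that structural statement is proved (the paper's Lemma~\ref{norational}): the paper observes that in the admissible double cover $\psi\colon F\to\Gamma$, every node of $\Gamma$ must have \emph{odd} index---an even-index node would lift to two type-$0$ nodes upstairs, contradicting compact Jacobian---hence every node of $\Gamma$ is a branch point of $\psi$. Then each component $D'\subseteq\Gamma$ carries $\ge 3$ branch points (stability of the pointed curve $\Gamma$), so Riemann--Hurwitz on the irreducible double cover $D\to D'$ gives $g(D)\ge 1$. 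This two-line argument is considerably cleaner than the self-fixed/twinned case analysis you sketch; once you have the odd-index observation, the dichotomy on $\iota$-orbits of components becomes unnecessary.
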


By taking $e_{ct,b}=6$ and $e_{nc} \to \infty$ in \eqref{MIYAOK15}, we get
the required inequality \eqref{eqnupperlinshi}.
If $\Delta_{nc}\neq \emptyset$,
then letting $e_{nc}$ be large enough, one has
$$\left(\sum_{q\in F^{\#}_p~  \atop p\in\Delta_{nc}} \frac{1}{(\lambda_q+1)}\right)\cdot\frac{3}{ e_{nc}^2}
- \frac{(2g-2)\cdot\left|\Delta_{nc}\right|}{e_{nc}}<0.$$
Hence if $\Delta_{nc}\neq \emptyset$, then the inequality \eqref{eqnupperlinshi} is strict by letting $e_{ct,b}=6$
and $e_{nc}$ be large enough.
Finally, if $\Delta=\emptyset$, then $\bar f$ is a Kodaira family, and
$$\deg\left(\Omega^1_{\ol B}(\log\Delta_{nc})\right)=2g(\ol B)-2.$$
So by \cite[Corollary\,0.6]{liukefeng96}, \eqref{eqnupperlinshi} is also strict in the case.
The proof is complete.
\end{proof}

\begin{remarks}\label{miyaoka12}
(i). If \eqref{eqnupper2} is indeed an equality, i.e.,
\begin{equation}\label{miyaoka9}
\omega_{\ol S/\ol B}^2 = (2g-2)\cdot \deg\left(\Omega^1_{\ol B}(\log\Delta_{nc})\right)
+2\delta_1(\Upsilon_{ct})+3\delta_h(\Upsilon_{ct}),
\end{equation}
then $\Delta_{nc}=\emptyset$; $\Delta_{ct}\cap\Lambda'=\emptyset$ by \eqref{eqnpfupper21};
$\Delta_{ct,b}=\emptyset$ by \eqref{eqnpfupper22};
and $l_0(F_p)=0$ for $p\in \Delta_{ct,ub}$ by \eqref{eqnpfupper23} and its proof.
In particular, $D_p$ contains at most elliptic curves for $p\in \Delta_{ct}=\Delta_{ct,ub}$.
Hence \eqref{miyaoka9} is equivalent to
$$c_1^2\left(\Omega_{\ol S}^1\bigg(\log \Big(\sum\limits_{p\in\Delta_{ct}}D_p\Big)\bigg)\right)
=3c_2\left(\Omega_{\ol S}^1\bigg(\log \Big(\sum\limits_{p\in\Delta_{ct}}D_p\Big)\bigg)\right).$$
It follows that $\ol S\setminus\left(\bigcup\limits_{p\in\Delta_{ct}}D_{p}\right)$
is a ball quotient by \cite{kobayashi} or \cite{mok12}. See  Example \ref{exshimurag=4} for such an example.
\\[0.08cm]
(ii). If one applies Theorem \ref{theoreminpfupper2} directly on the surface $\ol S$ without using base change technique,
then one gets
\begin{equation}\label{eqnupperlinshi1}
\begin{aligned}
\omega_{\ol S/\ol B}^2 ~\leq~& (2g-2)\cdot \deg\left(\Omega^1_{\ol B}(\log\Delta_{nc})\right)
+\sum_{p\in \Delta_{ct} \cap \Lambda' }\frac32\cdot\big(l_h(F_p)+l_1(F_p)-1\big)\\
&+\left(2g-2\right)\cdot |\Delta_{ct,b}|
+\sum_{p\in \Delta_{ct,ub}}\big(3l_h(F_p)+2l_1(F_p)-3\big).
\end{aligned}
\end{equation}
This is enough to imply \eqref{eqnupper2}.
However, we do not know when \eqref{eqnupper2} becomes strict;
and we cannot derive \eqref{eqnupper1} from \eqref{eqnupperlinshi1} due to the possible existence of $\Delta_{ct,b}$.
Recently, Peters has gotten in \cite{peters14} a simplified proof of
the strictness of \eqref{eqnupper2} if $\Delta_{nc}\neq \emptyset$
by using Cheng-Yau's technique instead of the base change technique.
\end{remarks}\vspace{0.1cm}

In the rest part of the section, we prove Claim \ref{eDgeqdelta}.
First we prove an easy lemma.
\begin{lemma}\label{norational}
Assume that $F^{\#}$ is a stable hyperelliptic curve with compact
Jacobian. Then $F^{\#}$ has no rational component.
\end{lemma}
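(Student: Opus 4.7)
The plan is to exploit the admissible double cover description of a stable hyperelliptic curve $F^\#$ (cf.\ the discussion preceding \eqref{relationdeltaxi}) and then to apply Riemann--Hurwitz on each component. Write $\psi\colon F^\#\to\Gamma$ for the admissible double cover, where $\Gamma$ is a stable $(2g+2)$-pointed nodal curve of arithmetic genus $0$, i.e.\ a tree of $\bbp^1$'s carrying the $2g+2$ Weierstrass marked points. Under this correspondence the hyperelliptic involution of $F^\#$ is the deck transformation of $\psi$, and the components and nodes of $F^\#$ are recorded by combinatorial data on $\Gamma$.

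First I would translate the compact Jacobian hypothesis. Since $\Jac(F^\#)$ is an abelian variety, the toric part of the generalized Jacobian vanishes, which is equivalent to the dual graph of $F^\#$ being a tree, so every node of $F^\#$ is separating. Under the admissible cover dictionary, a node of $\Gamma$ of odd index $2k+1$ is branched under $\psi$ and produces a single, separating node of $F^\#$, while a node of even index is unbranched and produces two non-separating nodes. Consequently every node of $\Gamma$ must have odd index, i.e.\ $\psi$ is branched at every singular point of $\Gamma$.

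With this in hand, for each irreducible component $\Gamma_j\cong\bbp^1$ of $\Gamma$ the preimage $D_j:=\psi^{-1}(\Gamma_j)$ is a single irreducible component of $F^\#$ which double covers $\Gamma_j$ totally ramified at every special point of $\Gamma_j$. Letting $k_j$ and $m_j$ denote respectively the number of nodes and of marked points on $\Gamma_j$, Riemann--Hurwitz yields $g(D_j)=(k_j+m_j)/2-1$; moreover the odd-index nodes of $\Gamma_j$ are in bijection with the nodes of $D_j$ where $D_j$ meets the other components of $F^\#$, so $D_j$ meets $\ol{F^\#\setminus D_j}$ in exactly $k_j$ points.

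Finally I would derive the contradiction: if some $D_j$ were rational, then $k_j+m_j=2$, and in particular $k_j\leq 2$; but then $D_j$ is a rational component of the stable curve $F^\#$ meeting the rest of $F^\#$ in at most $2$ points, violating the stability of $F^\#$ (which forces $\geq 3$ intersection points). The main subtlety lies in setting up cleanly the admissible cover correspondence under the compact-Jacobian hypothesis, ensuring in particular that no component of $F^\#$ arises as a second (unbranched) sheet over a component of $\Gamma$; once every node of $\Gamma$ is known to be branched, this correspondence is forced to be one-to-one and the rest is a direct Riemann--Hurwitz count.
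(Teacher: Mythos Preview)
Your proof is correct and follows essentially the same route as the paper: pass to the admissible double cover $\psi\colon F\to\Gamma$, use the compact-Jacobian hypothesis to force every node of $\Gamma$ to have odd index (hence to be branched), deduce that each component of $\Gamma$ has irreducible preimage, and finish with Riemann--Hurwitz. The only cosmetic differences are that the paper works with the semi-stable model $F$ rather than $F^\#$ directly (and then observes $F=F^\#$ a posteriori), and that in the last step the paper reads off $g(D_j)\geq 1$ from the stability of the \emph{pointed} curve $\Gamma$ (which gives $k_j+m_j\geq 3$), whereas you reach the same conclusion by contraposition via the stability of $F^\#$.
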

\begin{proof}By \cite[p.\,467]{ch88},  $F^{\#}$ has a semi-stable model $F$ which is an
admissible double cover  $\psi:\,F\to \Gamma$ over a stable
$(2g+2)$-pointed nodal curve $\Gamma$ of arithmetic genus zero.

We claim that the index  of every singular point is odd. Otherwise,
assume that there exists a singular point  $p\in \Gamma$ with even
index $\alpha$. Then $p$ is not a branched point of $\psi$ and its
inverse image consists of two singular points of type 0, which
contradicts with the fact that $F$ has compact Jacobian. As a direct
consequence of all indices being odd, we obtain that all singular
points of $\Gamma$ are branched, and hence the pre-image of any
irreducible component of $\Gamma$ in $F$ is still irreducible.

Let $D\subseteq F$ be an irreducible component and
$D'=\psi(D)\subseteq \Gamma$. Set
$$\Sigma'=\big\{x\in D'~\big|~\text{$x$ is a marked or singular point of $\Gamma$}\big\}.$$
As $\Gamma$ is stable, $|\Sigma'|\geq 3$. From the above discussion,
the restricted map
$$\psi|_{D}:~D \lra D'$$
is a double cover branched exactly over $\Sigma'$. Hence
$$2g(D)-2 = 2\big(2g(D')-2\big)+|\Sigma'| \geq -1,$$
which implies that $g(D)\geq 1$.  Therefore,  $F$ and thus $F^{\#}$
 contain no rational component.
\end{proof}

\begin{proof}[Proof of Claim {\rm \ref{eDgeqdelta}}]~
Let $l(F_p^{\#})$ (resp. $v(F_p^{\#})$) be the number of irreducible components
(resp. nodes) of $F_p^{\#}$.
Then it is clear that $$v(F_p^{\#})=l(F_p^{\#})-1, \qquad \forall~p\in \Delta_{ct}.$$

(i).
Since $p\in \Lambda'$,
$F_p$ is a hyperelliptic curve. Hence by Lemma \ref{norational}, $F_p^{\#}$ contains no rational components.
So $$l(F_p^{\#})=l_h(F_p)+l_1(F_p).$$
Now by the definition of $\Delta_{ct}\,\cap\,\Lambda'$,
each node of $F_p^{\#}$ is a singular point of $\ol S^{\#}$, i.e.,
$$\lambda_q\geq 1,\qquad\qquad \text{for any node~}q\in F_p^{\#}\text{~and~}p\in \Delta_{ct}\,\cap\,\Lambda'.$$
Thus
$$\sum_{q\in F^{\#}_p}\frac{1}{\lambda_q+1}\leq\frac12v(F_p^{\#})=\frac12\big(l_h(F_p)+l_1(F_p)-1\big),
\qquad \forall~p\in \Delta_{ct}\,\cap\,\Lambda'.$$

(ii).
Note that $F_p^{\#}$ is the stable model of $F_p$ for $\in\Delta_{ct,b}$.
Hence the inverse image $C\subseteq F_p$ of any component $C^{\#}\subseteq F_p^{\#}$ has positive intersection
with $\omega_{\ol S}$, i.e., $\omega_{\ol S} \cdot C\geq 1$.
Since $\omega_{\ol S} \cdot F_p=2g-2$,
one has $$l(F^{\#})\leq 2g-2.$$
Therefore
$$\sum_{q\in F^{\#}_p}\frac{1}{\lambda_q+1}\leq  v(F_p^{\#})=l(F_p^{\#})-1<2g-2.$$

(iii).
As $F_p$ has a compact Jacobian for $p\in \Delta_{ct,ub}\subseteq \Delta_{ct}$,
$$3\delta(F_p)=3\big(l_0(F_p)+l_1(F_p)+l_h(F_p)-1\big).$$
So it suffices to prove that
\begin{equation}\label{MIYAOK16}
3\chit(D_p)+2\omega_{\ol S}\cdot D_p+D_p^2 \geq 3l_0(F_p)+l_1(F_p)
\end{equation}

Let $l_0(D_p)$ and $l_1(D_p)$ be the number of irreducible components contained in $D_p$
of genus zero and one respectively.
Since $p\in \Delta_{ct,ub}$,
by the definition of $D_p$ (cf. \eqref{MIYAOK14}), we get
\begin{equation}\label{ctlaim01}
l_0(D_p)=l_0(F_p),\qquad l_1(D_p)=l_1(F_p).
\end{equation}

Let $C_p \subseteq D_p$ be a connected component, and
$l_0(C_p)$ and $l_1(C_p)$ be the number of irreducible components contained in $C_p$
of genus zero and one respectively.
By \eqref{ctlaim01}, in order to prove \eqref{MIYAOK16}, it suffices to prove for each connected component $C_p$,
\begin{equation*}
3\chit(C_p)+2\omega_{\ol S}\cdot C_p+C_p^2 \geq 3l_0(C_p)+l_1(C_p).
\end{equation*}

For this purpose, note that $C_p^2 <0$,
since $C_p \subseteq D_p \subsetneqq F_p$ by the definition of $\Delta_{ct,ub}$.
So
$$3\chit(C_p)+2\omega_{\ol S}\cdot C_p+C_p^2=3l_0(C_p)+l_1(C_p)-1-C_p^2 \geq 3l_0(C_p)+l_1(C_p).$$
The proof is complete.
\end{proof}

\section{Sharp slope inequalities fora  family of semi-stable curves}\label{sectionlower}
\subsection{Proof of Theorem \ref{thmlower2}}\label{sectionlower2}
The inequality \eqref{eqnlower2}
 follows from Moriwaki's sharp slope inequality (cf. \cite[Theorem D]{moriwaki98}), which together with  \eqref{formulanoether} and \eqref{formulaofdelta_f} implies that
$$\begin{aligned}
(8g+4)\deg \bar f_*\omega_{\ol S/\ol B}&\,\geq\,
g\delta_0(\Upsilon)+\sum_{i=1}^{[g/2]}4i(g-i)\delta_i(\Upsilon)\\
&=\,g\left(12\deg \bar f_*\omega_{\ol S/\ol B}-\omega_{\ol S/\ol B}^2\right)
+\sum_{i=1}^{[g/2]}\big(4i(g-i)-g\big)\delta_i(\Upsilon)\\
&\geq\,g\left(12\deg \bar f_*\omega_{\ol S/\ol B}-\omega_{\ol S/\ol B}^2\right)
+(3g-4)\delta_1(\Upsilon)+(7g-16)\delta_h(\Upsilon).
\end{aligned}$$
By rearrangement,  we obtain \eqref{eqnlower2}. \qed

\subsection{Proof of Theorem \ref{thmlower3}}\label{sectionlower3}
By assumption, $\bar f:\,\ol S \to \ol B$ is a non-isotrivial semi-stable family of
hyperelliptic curves of genus $g\geq2$ with relative irregularity $q_{\bar f}=q(\ol S)-g(\ol B)$.
Recall from  \cite[Proposition 4.7]{ch88} a useful formula  given by Cornalba-Harris:
\begin{equation}\label{formulaofdegomega}
\begin{aligned}
\deg \bar f_*\omega_{\ol S/\ol B}\,=&~\frac{g}{4(2g+1)}\xi_0(\Upsilon)\\
&~+\sum_{i=1}^{[g/2]}\frac{i(g-i)}{2g+1}\delta_i(\Upsilon)
+\sum_{j=1}^{[(g-1)/2]}\frac{(j+1)(g-j)}{2(2g+1)}\xi_j(\Upsilon).
\end{aligned}
\end{equation}

 The proof  of Theorem \ref{thmlower3} is given in Section \ref{subsectpfthmlower}.
If $q_{\bar f}=0$, it  follows directly from Noether's formula and  \eqref{formulaofdegomega}.
If $q_{\bar f}>0$,  we first prove a relation among the invariants $\delta_i(\Upsilon)$'s and $\xi_j(\Upsilon)$'s
in Proposition\,\ref{boundofxi_0prop},  based on   the observation that the double cover induced by
the hyperelliptic involution is fibred and the technique of Cornalba-Harris \cite{ch88}.
Then together with \eqref{formulaofdegomega}, we complete the proof.

\subsubsection{Hyperelliptic family with positive relative irregularity}
The main purpose of this subsection is to prove the following
technical proposition.
\begin{proposition}\label{boundofxi_0prop}
Let $\bar f:\,\ol S\to \ol B$ be the same as in Theorem {\rm \ref{thmlower3}}.
If $q_{\bar f}>0$, then
\begin{equation}\label{boundofxi_0eqn}
\begin{aligned}
&\sum_{i= q_{\bar f}}^{[g/2]}\frac{(2i+1)(2g+1-2i)}{g+1}\delta_i(\Upsilon)
+\sum_{j= q_{\bar f}}^{[(g-1)/2]}\frac{2(j+1)(g-j)}{g+1}\xi_j(\Upsilon)\\
\geq~&~ \xi_0(\Upsilon)+\sum_{i= 1}^{q_{\bar f}-1}4i(2i+1)\delta_i(\Upsilon)
+\sum_{j= 1}^{q_{\bar f}-1}2(j+1)(2j+1)\xi_j(\Upsilon).
\end{aligned}
\end{equation}
\end{proposition}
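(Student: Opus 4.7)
The plan is to combine the classical Cornalba--Harris GIT formula recalled in \eqref{formulaofdegomega} with a refined version applied to the sub-linear system determined by the $q_{\bar f}$ anti-invariant holomorphic $1$-forms on $\ol S$. First I would pass to a finite \'etale base change $\wt B \to \ol B$ to globalize the hyperelliptic involution (under which every $\delta_i$ and every $\xi_j$ scales by the same degree), obtaining a semi-stable family $\tilde f \colon \wt S \to \wt B$ with involution $\sigma$, quotient $T := \wt S/\sigma$ a family of stable $(2g+2)$-pointed arithmetic-genus-zero nodal curves over $\wt B$, and an admissible double cover $\pi \colon \wt S \to T$. In this picture the $\delta_i(\Upsilon)$'s come from nodes of $T$ of odd index $2i+1$ (ramified for $\pi$) and the $\xi_j(\Upsilon)$'s from nodes of even index $2j+2$ (unramified), as recalled in Section~\ref{sectionfamilyofcurve}.

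Next I would exploit the splitting of $H^0(\wt S,\Omega^1)$ into $\sigma$-eigenspaces. The invariant part equals $H^0(\wt B,\Omega^1_{\wt B})$, so the anti-invariant part $V^{-}$ has dimension exactly $q_{\bar f}$, and these anti-invariant forms assemble into a rank-$q_{\bar f}$ sub-bundle $\calf \subseteq \tilde f_*\omega_{\wt S / \wt B}$. The observation that the smooth double cover induced by $\sigma$ is fibred when $q_{\bar f}>0$ is the key structural input: combined with the explicit identification $H^0(F_b, \omega_{F_b}) \cong \{P(x)\,dx/y : \deg P \leq g-1\}$ on a smooth fiber $F_b$, it forces $\calf|_b$ to coincide (after a suitable choice of $\bbp^1$-coordinate on $T_b$) with the subspace of polynomials of degree $\leq q_{\bar f}-1$. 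Hence the morphism $F_b \to \bbp(\calf|_b^\vee)$ factors through the hyperelliptic map followed by the Veronese embedding of degree $q_{\bar f}-1$, and its image is a rational normal curve of degree $q_{\bar f}-1$ in $\bbp^{q_{\bar f}-1}$ carrying the images of the $2g+2$ branch points.

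I would then apply the Cornalba--Harris GIT argument \cite{ch88} to this sub-configuration: Hilbert/Chow semi-stability of $2g+2$ points on a rational normal curve of degree $q_{\bar f}-1$ yields an inequality of the shape
\begin{equation*}
\bigl(4(q_{\bar f}-1)+2\bigr)\,\deg \calf \;\geq\; \sum_i \alpha_i(q_{\bar f})\,\delta_i(\Upsilon) + \sum_j \beta_j(q_{\bar f})\,\xi_j(\Upsilon) + \gamma(q_{\bar f})\,\xi_0(\Upsilon),
\end{equation*}
where the coefficients $\alpha_i(q_{\bar f})$, $\beta_j(q_{\bar f})$, $\gamma(q_{\bar f})$ come from the minimal one-parameter subgroup weight attached to each node type. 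A suitable linear combination of this estimate with the full Cornalba--Harris formula \eqref{formulaofdegomega}, chosen to eliminate $\deg \calf$ and $\deg \tilde f_*\omega_{\wt S / \wt B}$ simultaneously, is expected to reproduce \eqref{boundofxi_0eqn} after rearrangement.

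The main obstacle is the GIT weight computation for the sub-system: the contribution of a node of index $2i+1$ (respectively $2j+2$) to the minimal weight of the marked-point configuration on the degree-$(q_{\bar f}-1)$ rational normal curve depends on whether $i \geq q_{\bar f}$ or $i < q_{\bar f}$ (respectively whether $j \geq q_{\bar f}$ or $j < q_{\bar f}$), because a Veronese embedding of degree $q_{\bar f}-1$ behaves differently once the multiplicity of colliding points crosses the threshold $q_{\bar f}$. Tracking this bifurcation carefully, which is what produces the two separated ranges of summation in \eqref{boundofxi_0eqn}, is the technically delicate part of the argument; a secondary but nontrivial issue is verifying that the rigidity of $\calf|_b$ as polynomials of degree $\leq q_{\bar f}-1$ persists across the admissible-cover boundary so that the GIT inequality remains valid globally on $\wt B$.
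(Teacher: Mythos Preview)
Your proposal contains a genuine gap at the step where you claim that the fibred-double-cover property forces $\calf|_b$ to coincide with the polynomials of degree $\leq q_{\bar f}-1$. The fibering (Proposition~\ref{fibredprop}) gives maps $\tilde f':\wt S\to\ol B'$ and $\tilde h':\wt Y\to\ol D'\cong\bbp^1$ with $\wt R$ contained in fibres of $\tilde h'$. The anti-invariant forms on $\wt S$ are pullbacks from $\ol B'$, so on a smooth fibre $F_b$ the subspace $\calf|_b$ is the image of $H^0(\ol B',\omega_{\ol B'})$ under pullback along $F_b\to\ol B'$. Since $\ol B'\to\ol D'$ is a double cover of $\bbp^1$, the canonical map of $\ol B'$ factors through $\ol D'$ followed by the Veronese; hence $F_b\to\bbp(\calf|_b^\vee)$ factors as $F_b\to T_b\to\ol D'\to\bbp^{q_{\bar f}-1}$, where the middle map $T_b\to\ol D'$ is the restriction of $\tilde h'$ to the fibre $T_b$ of $\tilde h$. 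This map has degree equal to the intersection number of $T_b$ with a fibre of $\tilde h'$, which is in general strictly greater than $1$. Only when that degree equals $1$ would your identification with the low-degree polynomial subspace hold; otherwise $\calf|_b$ is spanned by forms of the shape $\phi(x)^j\phi'(x)\cdot(\text{unit})\,dx/y$ for a degree-$d'$ rational function $\phi$, which is not the Veronese subspace. Consequently the GIT weight computation you outline, which hinges on a degree-$(q_{\bar f}-1)$ rational normal curve carrying the $2g+2$ marked points, does not apply.

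The paper's argument is quite different and avoids GIT for this proposition entirely. It works on the ruled surface $\wt Y$: after reducing to an admissible cover, one contracts vertical $(-1)$-curves as far as possible while retaining the second fibration $\hat h':\hat Y\to\ol D'$. The key geometric input is Claim~\ref{CRgeq2q_f+2}, which says any remaining vertical $(-1)$-curve meets the (image of the) horizontal branch divisor $\hat R_{nv}$ in at least $2q_{\bar f}+1$ points; this is exactly what produces the threshold separating the two summation ranges in \eqref{boundofxi_0eqn}. One then contracts further to a $\bbp^1$-bundle, tracks how $R_{nv}^2$ changes under the blow-ups (each creating a node of a definite index), and uses the single numerical constraint $\hat R_{nv}^2\leq 0$, valid because $\hat R_{nv}$ lies in fibres of $\hat h'$. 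Combining this with the Cornalba--Harris computation of $\wt R_{nv}^2$ in terms of node indices (\cite[Lemma~4.8]{ch88}) yields \eqref{boundofxi_0eqn} directly. No sub-linear-system stability analysis is needed.
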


As mentioned before, the key observation is  that
a double cover  $\tilde\pi:\,\wt S \to \wt Y$   of smooth surfaces  is fibred, where $\tilde \pi$ is obtained by
resolving the singular points of a  double cover $\pi:\,\ol S\to \ol Y$ over a ruled surface $\ol Y$,
while $\pi$ is induced by the hyperelliptic involution.
Let $\wt R \subseteq \wt Y$ be the smooth branched divisor of $\tilde \pi$,
$\tilde f:\,\wt S \to \ol B$ and $\tilde h:\, \wt Y \to \ol B$ be composite morphisms fitting into the following diagram.
\setcounter{figure}{0}
\renewcommand{\thefigure}{\arabic{section}.\arabic{subsection}-\arabic{figure}}
\begin{figure}[H]
\begin{center}\mbox{}
\xymatrix{
 \wt S \ar@/_7mm/"3,1"_{\tilde f} \ar@{->}[rr]^-{\tilde\pi} \ar[d] && \wt Y  \ar@{->}[d] \ar@/^7mm/"3,3"^{\tilde h} \\
 \ol S \ar@{->}[rr]^-{\pi} \ar[d]^-{\bar f}   && \ol Y\ar[d]_-{\bar h}\\
 \ol B\ar@{=}[rr]&&\ol B
}\vspace{-0.3cm}
\end{center}
\caption{Hyperelliptic involution.\label{hyperellipticdiagram}}
\end{figure}
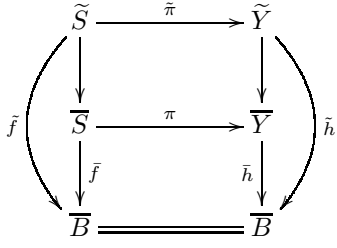

\begin{definition}[\cite{khashin83}]\label{definitonfibred}
A double cover $\pi:\, X\to X'$ of smooth projective surfaces with branched divisor
$R' \subseteq X'$ is called fibred if there exists a double cover $\pi':\, D \to D'$ of smooth projective
curves,  morphisms $p:\, X\to D $ and $p':\, X' \to D'$
with connected fibres, such that the diagram
\begin{center}\mbox{}
\xymatrix{
 X \ar@{->}[rr]^-{\pi}\ar@{->}[d]_-{p} && X' \ar@{->}[d]^-{p'}\\
 D \ar@{->}[rr]^-{\pi'}                  && D'
}
\end{center}
is commutative, $R'$ is contained in the fibres of $p'$, and
$$q(X)-q(X'):=\dim H^0(X,\,\Omega^1_X)-\dim H^0(X',\,\Omega^1_{X'})\,=\,g(D)-g(D').$$
\end{definition}

The next theorem is proven in \cite{khashin83}.
For readers' convenience, we reprove it here.
\begin{theorem}[{\cite[Theorem 1]{khashin83}}]\label{qX>qYfibred}
Let $\pi: X\to X'$ be a double cover between smooth surfaces with smooth branched divisor $R' \subseteq X'$.
Assume that   $p_g(X'):=\dim H^0(X',\,\Omega^2_{X'})= 0$ and $q(X) > q(X')$. Then
$\pi: X\to X'$   is fibred.
\end{theorem}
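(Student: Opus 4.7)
The plan is to exploit the $\sigma^*$-eigenspace decomposition of global holomorphic forms on $X$, where $\sigma$ is the involution of the double cover $\pi$, and then to produce the required fibration via the Prym subvariety of $\mathrm{Alb}(X)$. Locally, $\sigma^*$-invariant forms on $X$ are pullbacks from $X'$, so $H^0(X, \Omega^1_X)^+ = \pi^* H^0(X', \Omega^1_{X'})$ and the anti-invariant subspace $V := H^0(X, \Omega^1_X)^-$ satisfies $\dim V = q(X) - q(X') > 0$. The analogous identification for 2-forms gives $H^0(X, \Omega^2_X)^+ \cong H^0(X', \Omega^2_{X'}) = 0$ by the assumption $p_g(X') = 0$, so every $\sigma$-invariant global holomorphic 2-form on $X$ is zero.

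Next I would introduce the Albanese $a_X \colon X \to \mathrm{Alb}(X)$ and the Prym abelian variety $P$, defined as the identity component of the kernel of $\mathrm{Alb}(\pi) \colon \mathrm{Alb}(X) \to \mathrm{Alb}(X')$. Then $\dim P = \dim V$, the involution $\sigma$ acts on $P$ as $[-1]$, and the induced morphism $\tilde a \colon X \to P$ satisfies $\tilde a^* H^0(P, \Omega^1_P) = V$ together with $\tilde a \circ \sigma = [-1] \circ \tilde a$. The crucial point is $\dim \tilde a(X) \leq 1$. If $\dim V \geq 2$ and the image were two-dimensional, one could choose $\omega_1, \omega_2 \in V$ with $\tilde a^*(\omega_1 \wedge \omega_2)$ a nonzero holomorphic 2-form on $X$; this form is $\sigma^*$-invariant since $(-1)^2 = 1$ on $\Omega^2_P$, contradicting the vanishing just established. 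If $\dim V = 1$ then $P$ is an elliptic curve and the bound is automatic. Since $V \neq 0$, $\tilde a$ is nonconstant, hence $\bar D := \tilde a(X)$ is a curve in $P$, and the equivariance forces $\bar D$ to be $[-1]$-stable.

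Taking the Stein factorization $X \xrightarrow{\,p\,} D \to \bar D$ produces a smooth projective curve $D$ with $p_* \mathcal O_X = \mathcal O_D$ and a finite map $D \to \bar D \subseteq P$. The universal property of the Stein factorization allows $\sigma$ to descend to an involution $\tau$ on $D$ lifting $[-1]|_{\bar D}$; setting $D' := D/\langle \tau \rangle$ and $\pi' \colon D \to D'$, the quotient by $\sigma$ produces $p' \colon X' \to D'$ fitting into the commutative square
\begin{equation*}
\begin{CD}
X @>{p}>> D \\
@V{\pi}VV @VV{\pi'}V \\
X' @>{p'}>> D'.
\end{CD}
\end{equation*}
The ramification $R = \pi^{-1}(R')$ is pointwise fixed by $\sigma$, so $p(R) \subseteq \mathrm{Fix}(\tau)$ is finite, which places $R'$ in finitely many fibers of $p'$. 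For the dimension equality, since $\tilde a$ factors through $p$ one has $V \subseteq p^* H^0(D, \Omega^1_D)^-$, and conversely any $\tau$-anti-invariant form on $D$ pulls back via $p^*$ to a $\sigma$-anti-invariant form on $X$, so $p^* H^0(D, \Omega^1_D)^- \subseteq V$; combining these and using injectivity of $p^*$ (from $p_*\mathcal O_X = \mathcal O_D$) gives $\dim V = g(D) - g(D')$, i.e.\ $q(X) - q(X') = g(D) - g(D')$. The main obstacle I expect is the case $\dim V = 1$, where the classical Castelnuovo-de Franchis pencil argument does not directly apply; the Prym-Albanese reformulation adopted here handles both cases uniformly once the vanishing of $\sigma$-invariant 2-forms is established, and the descent of $\sigma$ through the Stein factorization is the other subtle step, requiring careful invocation of the universal property.
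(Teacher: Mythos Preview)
Your proof is correct and follows essentially the same approach as the paper: the $\sigma^*$-eigenspace decomposition, the vanishing of $\sigma$-invariant holomorphic $2$-forms from $p_g(X')=0$, the wedge argument forcing the image in the Albanese to be a curve, Stein factorization, and the descent of the involution are exactly the ingredients the paper uses. The only difference is cosmetic: the paper splits into the cases $k=1$ (handled via an isogeny $\mathrm{Alb}(X)\sim \mathrm{Alb}(X')\times D$ with $D$ elliptic) and $k\geq 2$ (handled via Castelnuovo--de~Franchis), whereas your Prym--Albanese map $\tilde a\colon X\to P$ treats both cases uniformly, which is a mild streamlining.
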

\begin{proof}
Note that the Galois group ${\rm Gal}(X/X')\cong \mathbb Z_2$ has a natural action on
$H^0(X,\Omega^1_X)$.
Let $$H^0(X,\Omega^1_X)=H^0(X,\Omega^1_X)_1\oplus H^0(X,\Omega^1_X)_{-1}$$
be the eigenspace decomposition.
Then
$$H^0(X,\Omega^1_X)_1=\pi^*H^0(X',\Omega^1_{X'}),\qquad
k\triangleq\dim H^0(X,\Omega^1_X)_{-1}=q(X)-q(X')>0.$$

We show there exists a morphism $p:\,X \to D$ to a curve $D$ with connected fibres
and a subspace $W_{D} \subseteq H^0\big(D,\Omega^1_{D}\big)$, such that
\begin{equation}\label{omega=pullalpha}
H^0(X,\Omega^1_X)_{-1}=p^* \big(W_{D}\big).
\end{equation}
If $k=1$, let $A_X$ (resp. $A_{X'}$) be the Albanese variety of $X$ (resp. $X'$).
Then we have a surjective homomorphism
$A_X \to A_{X'}$ with $\dim A_X-\dim A_{X'}=k=1.$
Hence there exists  an elliptic curve $D$ with an isogeny (cf. \cite{mumford74})
$A_X \to A_X' \times D$. By  construction, %the following composition
$$p_0:~X \lra A_X \lra A_X' \times D \lra D$$
is surjective. Then the desired morphism  $p $ with connected fibres follows from
the Stein factorization of $p_0 $ (cf. \cite[\S\,III-11]{hartshorne}), so does   \eqref{omega=pullalpha}. \\[0.1cm]
If $k\geq 2$, let $\omega, \omega' \in H^0(X,\Omega^1_X)_{-1}$, then
$$\omega\wedge\omega' \in \wedge^2 H^0(X,\Omega^1_X)_{-1} \subseteq  H^0(X,\Omega^2_X)$$
is invariant under the action of  ${\rm Gal}(X/X')$ and thus belongs to
$\pi^*\left(H^0(X',\Omega^2_{X'})\right),$ which is zero by our assumption.
By \cite[Proposition\,5.1]{bhpv}, there exists a morphism $p:\,X \to D$ with connected fibres
such that \eqref{omega=pullalpha} holds.\vspace{0.1cm}

Note that  \eqref{omega=pullalpha} implies that  $p:\,X \to D$ is unique.
In particular,  the  Galois action ${\rm Gal}(X/X')$ on $X$ induces a group action $\mathbb Z_2 $ on $D$.
Let $D'=D/\mathbb Z_2$, and $\pi':\,D \to D'$ be the natural morphism.
Then by construction, there exists a morphism
$p':\,X' \to D'$ such that $p'\circ \pi=\pi'\circ p$.

Let $1\neq \sigma \in  {\rm Gal}(X/X')$.
Then the fixed locus Fix$(\sigma)$ of $\sigma$ is clearly contained in the fibres of $p$.
So $R'=\pi\big(\text{Fix}(\sigma)\big)$ is contained in the fibres of $p'$.
By \eqref{omega=pullalpha}, one sees that
the eigenspace decomposition of $H^0(D,\Omega_{D}^1)$ with respect to the action of $\mathbb Z_2$ is
$$H^0\big(D,\Omega_{D}^1\big)= (\pi')^*H^0\big(D',\Omega_{D'}^1\big)\oplus H^0\big(D,\Omega_{D}^1\big)_{-1},$$
with
$$H^0\big(X,\Omega_X^1\big)_{-1}=p^*H^0\big(D,\Omega_{D}^1\big)_{-1}.$$
So $q(X)-q(X')=k= g(D)-g(D').$ The proof is complete.
\end{proof}

Come back to our case $\tilde\pi:\,\wt S \to \wt Y$.
Note that $q(\wt S)=q(\ol S)$ and $q(\wt Y)=g(\ol B)$.
If $q_{\bar f}=q(\ol S)-g(\ol B)>0$,
it follows that $q(\wt S)>q(\wt Y)$.
As $\wt Y$ is a ruled surface, the geometric genus $p_g(\wt Y)=0$.
Hence by Theorem \ref{qX>qYfibred} above, we get
\begin{proposition}\label{fibredprop}
The double cover $\tilde\pi:\,\wt S \to \wt Y$ is fibred, i.e.,
there exist a double cover $\pi': \ol B' \to \ol D'$ of smooth projective
curves, and morphisms $\tilde f':\, \wt S\to \ol B' $ and $\tilde h':\, \wt Y \to \ol D'$
with connected fibres, such that the diagram
\begin{center}\mbox{}
\xymatrix{
 \wt S \ar@{->}[rr]^-{\pi}\ar@{->}[d]_-{\tilde f'} && \wt Y \ar@{->}[d]^-{\tilde h'}\\
 \ol B' \ar@{->}[rr]^-{\pi'}                  && \ol D'
}
\end{center}
is commutative, $\wt R$ is contained in the fibres of $\tilde h'$ and
\begin{equation}\label{q_f=gB'-gZ}
q_{\bar f}=q(\wt S)-q(\wt Y) = g(\ol B')-g(\ol D').
\end{equation}
\end{proposition}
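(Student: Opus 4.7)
The plan is to apply Theorem \ref{qX>qYfibred} directly to the double cover $\tilde\pi: \wt S \to \wt Y$, since all its hypotheses are essentially present in the setup of Proposition \ref{fibredprop}.

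First I would verify the two hypotheses of Theorem \ref{qX>qYfibred}. Since $\wt Y$ is a smooth projective surface obtained from the ruled surface $\ol Y$ (the quotient of $\ol S$ by the hyperelliptic involution, fibred via $\bar h$ over $\ol B$) by a sequence of blow-ups chosen so that the branch divisor $\wt R$ becomes smooth, the birational invariance of $p_g$ and $q$ for smooth projective surfaces yields $p_g(\wt Y)=p_g(\ol Y)=0$ and $q(\wt Y)=q(\ol Y)=g(\ol B)$. Likewise $q(\wt S)=q(\ol S)$, because $\wt S$ is obtained from $\ol S$ by a further sequence of blow-ups at smooth points. The standing hypothesis $q_{\bar f}=q(\ol S)-g(\ol B)>0$ then gives the strict inequality $q(\wt S)>q(\wt Y)$ required by Theorem \ref{qX>qYfibred}.

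With both hypotheses verified, Theorem \ref{qX>qYfibred} produces a double cover $\pi':D\to D'$ of smooth projective curves together with morphisms $p:\wt S\to D$ and $p':\wt Y\to D'$, both with connected fibres, such that the corresponding square commutes and the (smooth) branch divisor $\wt R$ is contained in the fibres of $p'$. Relabeling $\ol B':=D$, $\ol D':=D'$, $\tilde f':=p$, and $\tilde h':=p'$ reproduces exactly the diagram required by Proposition \ref{fibredprop}. The identity \eqref{q_f=gB'-gZ} is then immediate: the last step in the proof of Theorem \ref{qX>qYfibred} establishes $q(\wt S)-q(\wt Y)=g(D)-g(D')$, and combined with our computation $q(\wt S)-q(\wt Y)=q_{\bar f}$, this yields $q_{\bar f}=g(\ol B')-g(\ol D')$.

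The entire substance of the argument is absorbed into Theorem \ref{qX>qYfibred}, so there is no real obstacle in the derivation; the proof amounts to bookkeeping plus invocation. The only point that deserves care is the birational invariance step, which relies on the fact that both $\wt S\to\ol S$ and $\wt Y\to\ol Y$ are compositions of blow-ups at smooth points (so that $q$ and $p_g$ are genuinely preserved, rather than merely semicontinuous).
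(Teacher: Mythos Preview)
Your proof is correct and follows essentially the same approach as the paper: verify $p_g(\wt Y)=0$ and $q(\wt S)>q(\wt Y)$ via birational invariance (using $q(\wt S)=q(\ol S)$, $q(\wt Y)=g(\ol B)$, and the standing hypothesis $q_{\bar f}>0$), then invoke Theorem~\ref{qX>qYfibred}. The paper's argument is identical in substance, just more terse.
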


Since $\wt Y$ is a ruled surface, it is easy to see that $\ol D' \cong \bbp^1$.
So by \eqref{q_f=gB'-gZ}, $g(\ol B')=q_{\bar f}>0$,
which implies that $\tilde f'$ factors through $\ol S$ as
\begin{equation}\label{eqntildef'factorS}
\tilde f':~\wt S \lra \ol S \overset{\bar f'}\lra B'.
\end{equation}

\begin{proposition}\label{q_f>0fibred}
Let $F$ (resp. $F'$) be any fibre of $\bar f$ (resp. $\bar f'$), and $d=F\cdot F'$.
Then $d\geq 2$; and $2g(F)-2 \geq 2d\cdot (q_{\bar f}-1)$, where $g(F)$ is the geometric genus of $F$.
In particular,
\begin{equation}\label{boundofq_f}
q_{\bar f} \leq \frac{g-1}{d}+1.
\end{equation}
\end{proposition}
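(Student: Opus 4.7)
The plan is to work with the two fibrations $\bar f:\ol S\to\ol B$ and $\bar f':\ol S\to\ol B'$ simultaneously and exploit the hyperelliptic involution as the bridge between them. From Proposition \ref{fibredprop} and \eqref{eqntildef'factorS}, $\bar f'$ has connected fibres with $g(\ol B')=q_{\bar f}\ge 1$ and $\ol D'\cong\bbp^1$, so the deck involution $\iota:\ol B'\to\ol B'$ of $\pi'$ is non-trivial. Since $\bar f$ is semi-stable (hence relatively minimal), the hyperelliptic involution on each smooth fibre extends to a biregular automorphism $\sigma:\ol S\to\ol S$ with $\bar f\circ\sigma=\bar f$. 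Unwinding the eigenspace construction in the proof of Theorem \ref{qX>qYfibred} shows that the Galois action of the double cover $\tilde\pi$ intertwines with $\iota$ through $\tilde f'$, and descending to $\ol S$ yields the crucial identity $\bar f'\circ\sigma=\iota\circ\bar f'$.

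Next I rule out $d\le 1$. If $d=0$ then every fibre of $\bar f$ is contracted by $\bar f'$, so the rigidity lemma gives $\bar f'=\phi\circ\bar f$ for some $\phi:\ol B\to\ol B'$; combined with the intertwining this forces $\iota\circ\bar f'=\bar f'\circ\sigma=\phi\circ\bar f=\bar f'$, and surjectivity of $\bar f'$ collapses $\iota$ to the identity, a contradiction. If $d=1$, then for a general smooth fibre $F$ of $\bar f$ the restriction $\bar f'|_F:F\to\ol B'$ is a degree-one map of smooth projective curves, hence an isomorphism; this makes $(\bar f,\bar f'):\ol S\to\ol B\times\ol B'$ a birational morphism of smooth projective surfaces, i.e., a composition of blow-downs. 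But any blow-up of $\ol B\times\ol B'$ at a point $(b,b')$ introduces a $(-1)$-curve in a fibre of $\mathrm{pr}_1$, hence in a fibre of $\bar f$, which is excluded by semi-stability. Therefore $\ol S\cong\ol B\times\ol B'$, and $\bar f$ is the first projection, which is smooth isotrivial, contradicting the non-isotriviality of $\bar f$.

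For the Hurwitz inequality, let $F$ be any fibre of $\bar f$, write $F=\sum_iF_i$ for its irreducible decomposition, set $I=\{i:\bar f'(F_i)=\ol B'\}$ and $d_i=\deg(\bar f'|_{F_i})$, so that $d=F\cdot F'=\sum_{i\in I}d_i$; the bound $d\ge 2$ just established ensures $|I|\ge 1$. Applying Riemann--Hurwitz on each normalization $\tilde F_i\to\ol B'$ for $i\in I$ and summing gives $\sum_{i\in I}(2g(F_i)-2)\ge 2d(q_{\bar f}-1)$, and therefore
\[2g(F)-2\;\ge\;2\sum_{i\in I}g(F_i)-2\;\ge\;2d(q_{\bar f}-1)+2(|I|-1)\;\ge\;2d(q_{\bar f}-1),\]
using $g(F)=\sum_ig(F_i)$ and $g(F_j)\ge 0$ for $j\notin I$. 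Specialising to a smooth general fibre, where $g(F)=g$, yields $q_{\bar f}\le\frac{g-1}{d}+1$, as claimed.

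The main obstacle is the elimination of the case $d=1$: one really must use semi-stability to conclude $\ol S=\ol B\times\ol B'$ on the nose and thereby force isotriviality of $\bar f$. The tempting alternative of comparing the $2g+2$ Weierstrass-type fixed points of the hyperelliptic involution $\sigma|_F$ with the $2q_{\bar f}+2$ fixed points of $\iota$ on $\ol B'$ only produces the equality $g=q_{\bar f}$, from which a contradiction would in turn require the very bound \eqref{boundofq_f} we are trying to prove; the structural blow-up argument sidesteps this circularity.
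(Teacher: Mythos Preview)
Your proof is correct and follows the same two-step strategy as the paper: rule out $d\le 1$, then apply Riemann--Hurwitz to $\bar f'|_F$. Your treatment is in fact more careful than the paper's in two places: you explicitly dispose of $d=0$ (which the paper passes over in silence), and for a possibly singular fibre you decompose $F$ into components and sum the Hurwitz estimates, whereas the paper simply writes ``$\bar f'|_F:F\to\ol B'$ is a finite morphism of degree $d$'' without comment---a statement that is literally correct only when no component of $F$ is contracted by $\bar f'$.

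One point of over-elaboration deserves correction. For $d=1$ you invoke semi-stability to force $\ol S\cong\ol B\times\ol B'$ on the nose, and then assert that ``one really must use semi-stability'' here. That is not so: once $(\bar f,\bar f'):\ol S\to\ol B\times\ol B'$ is a birational \emph{morphism} of smooth projective surfaces, its exceptional locus maps to finitely many points of the product, so for $b$ outside a finite subset of $\ol B$ the fibre $F_b$ maps isomorphically onto $\{b\}\times\ol B'$. Hence $F_b\cong\ol B'$ for general $b$ and $\bar f$ is isotrivial, with no appeal to relative minimality needed. This is exactly the paper's (terse) argument. Your detour through the intertwining identity $\bar f'\circ\sigma=\iota\circ\bar f'$ is valid and yields a clean treatment of $d=0$, but it is machinery the paper neither sets up nor requires.
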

\begin{proof}
We first prove $d\geq 2$.
If $d=1$, then it follows that
$(\bar f,\bar f'):\,\ol S \to \ol B \times \ol B'$ is birational,
which is a contradiction to the non-isotriviality of $\bar f$.

Now consider the restriction map
\begin{equation*}
\bar f'\big|_{F}:~F \lra \ol B',
\end{equation*}
which is a finite morphism of degree $d$.
Since $q_{\bar f}=g(B')$, according to Hurwitz formula, we get \eqref{boundofq_f}.
\end{proof}

\begin{remark}\label{boundofq_fremark}
Xiao (\cite{xiao92-0}) has proved that if $q_{\bar f}=(g-1)/d+1,$ then $\bar f$ is isotrivial.
\end{remark}\vspace{0.001cm}

\begin{proof}[Proof of Proposition {\rm\ref{boundofxi_0prop}}]~\label{proofofpropboundxi}
In order to prove \eqref{boundofxi_0eqn},
we may limit ourselves to the family $\bar f:\,\ol S \to \ol B$
coming from an admissible double cover (cf. \cite{ch88} or \cite{harrismumford82}):
possibly contracting some $(-2)$-curves in fibres,
the family $\bar f$ is a double cover of a family $\bar h:\,\ol Y \to \ol B$ of stable $(2g+2)$-pointed noded curves of
arithmetic genus zero, branched along the $2g+2$ disjoint sections $\sigma_i$ of $\bar h$
and possibly at some of the nodes of fibres of $\bar h$.
Actually, we can get a family of admissible covers from a given $\bar f$
by base change unbranched over $\ol B\setminus\Delta$
and blow-ups of singular points in the fibres.
These operations have the effect of multiplying all the invariants $\delta_i(\Upsilon)$'s and
$\xi_j(\Upsilon)$'s by the same constant,
and the relative irregularity $q_{\bar f}$ is non-decreasing under these operation.

Same as in Figure~\ref{hyperellipticdiagram}, let  $\tilde \pi:\,\wt S \to \wt Y$  be  the resolution of $\pi$
with  branched divisor $\wt R$. By pullback,  the disjoint sections $\sigma_i$'s of $\bar h$
become disjoint sections of $\tilde h$, still denoted by  $\sigma_i$'s.
And $\wt R$ is  a union of $\wt R_{nv}:=\sum\limits_{i=1}^{2g+2} \sigma_i$  and some disjoint $(-2)$-curves contained
in fibres of $\tilde h$.

Let ${\mathfrak S}=\{p_i \in \ol Y\}$ be the set of   nodes of fibres of $\bar h$,
and $\alpha_i$ (resp. $m_i$) the index (resp. multiplicity) of $p_i\in {\mathfrak S}$ (cf. Section \ref{sectionprelim}).
Let $\wt {\mathfrak S}=\{q_l \in \wt Y\}$ be the set of  nodes of fibres of $\tilde h$.
Define the index  $\beta_l$  of $q_l\in \wt {\mathfrak S}$ to be the index  of its image in  ${\mathfrak S}$.
Note that a node $p_i \in {\mathfrak S}$ of index $\alpha_i$ with multiplicity $m_i$
would introduce $m_i$ nodes in $\wt {\mathfrak S}$ with the same indices $\alpha_i$.

Let $\tilde h':\, \wt Y \to \ol D'\cong \mathbb P^1$ be the morphism given in Proposition \ref{fibredprop}.
Let $\tilde\rho:\,\wt Y \to \hat Y$ be the largest contraction of `vertical' $(-1)$-curves
such that we still have an induced morphism $\hat h':\, \hat Y \to \ol D'$,
where `vertical' means such a curve is mapped to a point on $\ol B$.
\begin{center}\mbox{}
\xymatrix@R=0.5cm{
 &\ol D'\cong \mathbb P^1&\\
 \wt Y \ar@{->}[rr]^-{\tilde\rho}\ar@{->}[dr]_-{\tilde h} \ar@{->}[ur]^-{\tilde h'}
 && \hat Y \ar@{->}[dl]^-{\hat h} \ar@{->}[ul]_-{\hat h'}\\
 &\ol B&
}\end{center}

\begin{claim}\label{CRgeq2q_f+2}
Let $\hat R=\tilde\rho(\wt R)$ and $\hat R_{nv}=\tilde\rho(\wt R_{nv})$. Then
$$E\cdot\hat R \geq 2q_{\bar f}+2,\quad E\cdot\hat R_{nv} \geq 2q_{\bar f}+1, \qquad
\text{for any `vertical' $(-1)$-curve $E\subseteq \hat Y$}.$$
\end{claim}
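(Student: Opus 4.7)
The plan is to exploit the fibred structure of the resolved double cover $\tilde\pi\colon\wt S\to\wt Y$ established in Proposition~\ref{fibredprop}, and deduce both inequalities from a Hurwitz-type computation on the preimage of $E$ in $\wt S$.

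First I would argue that $E$ cannot be vertical for $\hat h'$ in addition to being vertical for $\hat h$. Indeed, both $\tilde h$ and $\tilde h'$ endow $\wt Y$ with a ruled structure (over $\ol B$ and over $\ol D'\cong\mathbb P^1$, respectively). A general $\mathbb P^1$-fibre of $\tilde h$ must map non-trivially to $\ol D'$ under $\tilde h'$, for otherwise $\tilde h'$ would factor through $\bar h$ and force $g(\ol B')=0$, contradicting $g(\ol B')=q_{\bar f}\geq 1$. Hence $(\tilde h,\tilde h')\colon\wt Y\to\ol B\times\ol D'$ is generically finite, the same holds on $\hat Y$, and the restriction $\hat h'|_E\colon E\to\ol D'$ therefore has positive degree $d\geq 1$.

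Next let $\tilde E\subset\wt Y$ be the strict transform of $E$ under $\tilde\rho$ and set $\wt E:=\tilde\pi^{-1}(\tilde E)\subseteq\wt S$. Then $\wt E\to\tilde E\cong\mathbb P^1$ is a double cover branched at $\tilde E\cdot\wt R$ points. A projection-formula comparison $\tilde\rho^*\hat R=\wt R+(\text{effective exceptional})$ yields $E\cdot\hat R\geq\tilde E\cdot\wt R$, so it suffices to bound $\tilde E\cdot\wt R$ from below. The curve $\wt E$ must be irreducible: a splitting into two copies of $\mathbb P^1$ would force each component to map constantly to $\ol B'$ via $\tilde f'$ (since $g(\ol B')=q_{\bar f}\geq 1$), and then $\hat h'|_E$ would be constant, contradicting $d\geq 1$. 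Hence Hurwitz on $\wt E\to\tilde E$ gives $p_a(\wt E)=(\tilde E\cdot\wt R-2)/2$. The restriction $\tilde f'|_{\wt E}\colon\wt E\to\ol B'$ is finite of degree $d$ by the commutative square in Proposition~\ref{fibredprop} together with $\deg\pi'=\deg\tilde\pi=2$, and a second application of Hurwitz yields
$$
2p_a(\wt E)-2\;\geq\;d\,(2q_{\bar f}-2)\;\geq\;2q_{\bar f}-2.
$$
Combining with the genus formula produces $\tilde E\cdot\wt R\geq 2q_{\bar f}+2$, whence $E\cdot\hat R\geq 2q_{\bar f}+2$, establishing the first inequality.

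For the sharper bound $\hat R_{nv}\cdot E\geq 2q_{\bar f}+1$, I would refine the above by noting that $\tilde\rho$ does not contract any section $\sigma_i$ (each being horizontal for $\tilde h$), so $\hat R_{nv}\cdot E=\tilde E\cdot\wt R_{nv}$. The difference $\tilde E\cdot(\wt R-\wt R_{nv})$ counts intersections of $\tilde E$ with those vertical $(-2)$-components of $\wt R$ sitting over the odd-index branched nodes of $\bar h$. A local analysis on the dual tree of rational components of the fibre of $\tilde h$ containing $\tilde E$ will show $\tilde E\cdot(\wt R-\wt R_{nv})\leq 1$, giving $\hat R_{nv}\cdot E\geq\tilde E\cdot\wt R-1\geq 2q_{\bar f}+1$. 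The main obstacle will be precisely this combinatorial bookkeeping around the components contracted by $\tilde\rho$; the Hurwitz portion of the argument is clean once Proposition~\ref{fibredprop} is in hand.
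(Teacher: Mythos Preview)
Your Hurwitz-based argument for the first inequality is essentially the paper's: pass to the strict transforms $\wt E\subset\wt Y$ and $\wt E'\subset\wt S$, use that $\wt E'\to\wt E\cong\mathbb P^1$ is a double cover branched along at most $\wt E\cap\wt R$, and combine with $g(\wt E')\geq g(\ol B')=q_{\bar f}$ coming from the surjection $\wt E'\to\ol B'$. That part is fine.

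There is, however, a genuine gap in your first paragraph. Generic finiteness of $(\hat h,\hat h')\colon\hat Y\to\ol B\times\ol D'$ does \emph{not} force an individual vertical curve $E$ to map non-constantly to $\ol D'$; a generically finite morphism can still contract finitely many curves. The correct reason why $\hat h'|_E$ is non-constant is the very definition of $\tilde\rho$ as the \emph{largest} contraction of vertical $(-1)$-curves through which $\tilde h'$ still factors: if $E\subset\hat Y$ were contracted by $\hat h'$, one could contract $E$ further and still have a morphism to $\ol D'$, contradicting maximality. This is what the paper invokes (``by the construction''), and it is what you should use.

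For the second inequality your plan is unnecessarily complicated, and the claimed equality $\hat R_{nv}\cdot E=\tilde E\cdot\wt R_{nv}$ is not justified (the blown-down centres may lie on the sections). The paper avoids this entirely by working on $\hat Y$: since $E\nsubseteq\hat R$ (because $\hat R$ lies in fibres of $\hat h'$ while $E$ does not), and since $\hat R-\hat R_{nv}$ is contained in the fibre $\hat\Gamma$ of $\hat h$ through $E$ but does not contain $E$, one has
\[
E\cdot(\hat R-\hat R_{nv})\ \leq\ E\cdot(\hat\Gamma-E)\ =\ -E^2\ =\ 1,
\]
so $E\cdot\hat R_{nv}\geq E\cdot\hat R-1\geq 2q_{\bar f}+1$. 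No combinatorics on $\wt Y$ is needed.
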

\begin{proof}[Proof of the claim]
By the construction,
any `vertical' $(-1)$-curve $E\subseteq \hat Y$ is mapped surjectively onto $\ol D'$ by $\hat h'$.
Since $\wt R$ is contained in fibres of $\tilde h'$ by Proposition \ref{fibredprop},
$\hat R$ is contained in fibres of $\hat h'$.
Hence $E\nsubseteq \hat R$.

Note that $\hat R$ is the union of $\hat R_{nv}$ and some curves in fibres of $\hat h$.
Let $\hat\Gamma \subseteq \hat Y$ be the fibre of $\hat h$ containing $E$. Then
$$E\cdot(\hat R-\hat R_{nv})\leq E\cdot \big(\hat \Gamma-E\big) = -E^2=1.$$
Therefore it suffices to prove $E\cdot\hat R \geq 2q_{\bar f}+2$.

Let $\wt E'\subseteq \wt S$ and $\wt E\subseteq \wt Y$ be the strict inverse image of $E$
in $\wt S$ and $\wt Y$ respectively.
Then by construction, $\wt E'$ (resp. $\wt E$) is mapped surjectively onto $\ol B'$ (resp. $\ol D'$)
by $\tilde f'$ (resp. $\tilde h'$), and
$E\cdot\hat R\geq \wt E \cdot \wt R.$
Applying Hurwitz formula to the double cover $\wt E'\to \wt E\cong \bbp^1$,
whose branched locus is at most $\wt E \cap \wt R$, one gets
\begin{equation*}
2g(\wt E')-2\leq -4+\left|\wt E \cap \wt R\right|.
\end{equation*}
As $\wt E'$ is mapped surjectively onto $\ol B'$,
$g(\wt E')\geq g(\ol B')=q_{\bar f}$.
Hence
$$E\cdot\hat R\geq \wt E \cdot \wt R\geq \left|\wt E \cap \wt R\right|
\geq 2g(\wt E')+2\geq 2q_{\bar f}+2.$$
The proof is complete.
\end{proof}

Now we contract $\hat\rho:\,\hat Y \to \ol Y$ to a $\bbp^1$-bundle $\bar h:\,\ol Y\to \ol B$   such  that
the order of every singularity of $\ol R_{nv}=\hat\rho(\hat R_{nv})$ is at most $g+1$.
It is easy to see that such a contraction exists.
\begin{center}\mbox{}
\xymatrix{
 \wt Y \ar@{->}[rr]^-{\tilde\rho}\ar@{->}[drr]_-{\tilde h} && \hat Y\ar[rr]^-{\hat\rho} \ar@{->}[d]^-{\hat h}
 &&\ol Y\ar[dll]^-{\bar h}\\
 &&\ol B&&
}\end{center}
Let $\rho=\hat\rho\circ\tilde\rho$.
Then $\rho$ can be viewed as a sequence of blow-ups
$\rho_l:\,Y_l \to Y_{l-1}$ centered at $y_{l-1}\in Y_{l-1}$ with $Y_{t+s}=\wt Y$,
$Y_s=\hat Y$, and $Y_0=\ol Y$.
Let $R_{nv,l}\subseteq Y_l$ be the image of $\wt R_{nv}$ and
$y_{l-1}$ be a singularity of $R_{nv,l-1}$ of order $n_{l-1}$.
Then one sees that each blow-up $\rho_l$
creates a node $q\in \wt{\mathfrak S}$ with index
$\beta=n_{l-1}$. Hence
\begin{equation}\label{wtR=R}
\wt R_{nv}^2=\ol R_{nv}^2-\sum_{q_l\in \wt{\mathfrak S}} \beta_l^2\,.
\end{equation}

By Claim \ref{CRgeq2q_f+2}, for $1\leq l\leq s$,
every  blow-up $\rho_l:\,Y_l\to Y_{l-1}$ is centered at a point $y_{l-1}$ with $n_{l-1}\geq 2q_{\bar f}+1$.
In other words, for $1\leq l\leq s$,
each $\rho_l$ creates a node $q\in \wt{\mathfrak S}$ with index at least $2q_{\bar f}+1$.
 The set  $\wt {\mathfrak S}$ can be divided into two subsets  $\wt {\mathfrak S}_{\hat\rho}$ and $\wt {\mathfrak S}_{\tilde\rho}$, where the first is created by blow-ups contained in $\hat \rho$ and the second  by blow-ups contained in $\tilde \rho$.
Then
\begin{eqnarray}
\beta_l&\geq&2q_{\bar f}+1,\qquad\quad\forall~q_l\in \wt {\mathfrak S}_{\hat\rho};\label{q_lgeq2q_f+1}\\
\hat R_{nv}^2&=&\ol R_{nv}^2-\sum_{q_l\in \wt {\mathfrak S}_{\hat\rho}} \beta_l^2\,.\label{hatR=R}
\end{eqnarray}
Note that $\wt R_{nv}$ consists of $2g+2$ disjoint sections $\sigma_i$'s.
According to \cite[Lemma 4.8]{ch88},
\begin{equation}\label{wtR=bych88}
\wt R_{nv}^2=\sum_{i=1}^{2g+2}\sigma_i\cdot \sigma_i
=-\sum_{p_i\in {\mathfrak S}}\frac{m_i\alpha_i(2g+2-\alpha_i)}{2g+1}
=-\sum_{q_l\in \wt {\mathfrak S}}\frac{\beta_l(2g+2-\beta_l)}{2g+1}.
\end{equation}
Combining \eqref{wtR=R}, \eqref{hatR=R} and \eqref{wtR=bych88}, one gets
\begin{equation}\label{hatR=wtR}
\hat R_{nv}^2=\wt R_{nv}^2+\sum_{q_l\in \wt {\mathfrak S}_{\tilde\rho}} \beta_l^2=
\sum_{q_l\in \wt {\mathfrak S}_{\tilde\rho}} \frac{(2g+2)\beta_l(\beta_l-1)}{2g+1}
-\sum_{q_l\in \wt {\mathfrak S}_{\hat\rho}}\frac{\beta_l(2g+2-\beta_l)}{2g+1}.
\end{equation}
Now according to Proposition \ref{fibredprop},
$\wt R_{nv}\subseteq \wt R$ is contained in the fibres of $\tilde h'$.
So $\hat R_{nv}=\tilde\rho(\wt R_{nv})$ is contained in the fibres of $\hat h'$.
In particular, $\hat R_{nv}^2 \leq 0$. Hence by \eqref{hatR=wtR}, we obtain
\begin{equation}\label{xi11}
\sum_{q_l\in \wt {\mathfrak S}_{\tilde\rho}} \beta_l(\beta_l-1)
\leq \sum_{q_l\in \wt {\mathfrak S}_{\hat\rho}}\frac{\beta_l(2g+2-\beta_l)}{2g+2}.
\end{equation}

Let $\epsilon_k$ (resp. $\nu_k$) be the number of points  in $\wt {\mathfrak S}$
of index $2k+1$ (resp. $2k+2$),
which is also the number of points in $  {\mathfrak S}$ of index $2k+1$ (resp. $2k+2$),
accounted with multiplicity.
Hence (cf. \cite[(4.10)]{ch88}),
\begin{equation}\label{invrelationdouble}
\xi_0(\Upsilon)=2\nu_0;\,~\,
\delta_i(\Upsilon)=\epsilon_i/2,~\forall\,1\leq i\leq [g/2];\,~\,
\xi_j(\Upsilon)=\nu_j,~\forall\,1\leq j\leq [(g-1)/2].
\end{equation}
Therefore,
$$\begin{aligned}
&\sum_{i= q_{\bar f}}^{[g/2]}\frac{(2i+1)(2g+1-2i)}{g+1}\delta_i(\Upsilon)
+\sum_{j= q_{\bar f}}^{[(g-1)/2]}\frac{2(j+1)(g-j)}{g+1}\xi_j(\Upsilon)\\
=&\sum_{i= q_{\bar f}}^{[g/2]}\frac{(2i+1)\big((2g+2)-(2i+1)\big)}{2g+2}\epsilon_i
+\sum_{j= q_{\bar f}}^{[(g-1)/2]}\frac{(2j+2)\big((2g+2)-(2j+2)\big)}{2g+2}\nu_j\\
=&\sum\frac{\beta_l(2g+2-\beta_l)}{2g+2}, \qquad
\text{the sum is taken over all $q_l\in \wt {\mathfrak S}$ with index $\beta_l\geq 2q_{\bar f}+1$,}\\
\geq& \sum_{q_l\in \wt {\mathfrak S}_{\hat\rho}}\frac{\beta_l(2g+2-\beta_l)}{2g+2},\quad\,~
\text{since any point $q_l\in \wt {\mathfrak S}_{\hat\rho}$ is of index $\beta_l\geq 2q_{\bar f}+1$ by \eqref{q_lgeq2q_f+1},}
\end{aligned}$$
\vspace{-0.3cm}
$$\begin{aligned}
\geq& \sum_{q_l\in \wt {\mathfrak S}_{\tilde\rho}} \beta_l(\beta_l-1), \qquad\quad \text{by~}\eqref{xi11},\\
\geq&\sum \beta_l(\beta_l-1),\qquad\qquad~~
\begin{aligned}
&\text{the sum is taken over all $q_l\in \wt {\mathfrak S}$ with index $\beta_l< 2q_{\bar f}+1$,}\\
&\text{and such points are all contained in $\wt {\mathfrak S}_{\tilde\rho}$ by \eqref{q_lgeq2q_f+1},}
\end{aligned}\qquad\,
\\[0.1cm]
=&2\nu_0+\sum_{i= 1}^{q_{\bar f}-1}2i(2i+1)\epsilon_i
+\sum_{j= 1}^{q_{\bar f}-1}2(j+1)(2j+1)\nu_j\\
=& \xi_0(\Upsilon)+\sum_{i= 1}^{q_{\bar f}-1}4i(2i+1)\delta_i(\Upsilon)
+\sum_{j= 1}^{q_{\bar f}-1}2(j+1)(2j+1)\xi_j(\Upsilon).
\end{aligned}$$
This completes the proof.
\end{proof}

\subsubsection{Proof of Theorem {\rm\ref{thmlower3}}}\label{subsectpfthmlower}
First  we consider the case that $q_{\bar f}=0$.
By \eqref{formulaofdelta_f} and \eqref{relationdeltaxi},
\begin{equation}\label{formulaofdelta_f''}
\delta_{\bar f}=\xi_0(\Upsilon)+\sum_{i=1}^{[g/2]}\delta_i(\Upsilon)
+2\sum_{j=1}^{[(g-1)/2]}\xi_j(\Upsilon).
\end{equation}
From the above equation together with \eqref{formulanoether} and \eqref{formulaofdegomega}, it follows that
\begin{equation}\label{formulaofomega^2}
\begin{aligned}
\hspace{-0.3cm}\omega^2_{\ol S/\ol B}\,=&~\frac{g-1}{2g+1}\xi_0(\Upsilon)+\\
&\hspace{-0.2cm}\sum_{i=1}^{[g/2]}\left(\frac{12i(g-i)}{2g+1}-1\right)\delta_i(\Upsilon)
+\sum_{j=1}^{[(g-1)/2]}\left(\frac{6(j+1)(g-j)}{2g+1}-2\right)\xi_j(\Upsilon).
\end{aligned}
\end{equation}
Hence,   \eqref{eqnlower3}   in the case $q_{\bar f}=0$ is obtained as below:
\begin{eqnarray*}
&&     \omega_{\ol S/\ol B}^2-\frac{4(g-1)}{g}
       \cdot\deg \bar f_*\omega_{\ol S/\ol B}\\
& =  & \sum_{i=1}^{[g/2]}\frac{4i(g-i)-g}{g}\delta_i(\Upsilon)
       +\sum_{j=1}^{[(g-1)/2]}\frac{2(j+1)(g-j)-2g}{g}\xi_j(\Upsilon)\\
&\geq&
\left\{
\begin{aligned}
&\frac{3g-4}{g} \delta_1(\Upsilon)+\frac{7g-16}{g} \delta_h(\Upsilon), &\qquad&
\text{if~} \Delta_{nc}\neq \emptyset;\\
&\sum_{i=1}^{[g/2]}\frac{4i(g-i)-g}{g}\delta_i(\Upsilon), &&
\text{if~}\Delta_{nc}= \emptyset.
\end{aligned}\right.
\end{eqnarray*}

Next we consider the case $q_{\bar f}>0$.
It is based on  \eqref{formulaofdegomega}
and \eqref{boundofxi_0eqn}.
Assume that $\Delta_{nc}\neq \emptyset$.
By \eqref{formulaofdegomega} and \eqref{formulaofomega^2}, one gets
$$\begin{aligned}
\omega_{\ol S/\ol B}^2-\frac{4(g-1)}{g-q_{\bar f}}\deg \bar f_*\omega_{\ol S/\ol B}=&
-\frac{(g-1)q_{\bar f}}{(2g+1)(g-q_{\bar f})}\xi_0(\Upsilon)\\
&\hspace{-0.2cm}+\sum_{i=1}^{[g/2]}\left(\frac{4(2g-3q_{\bar f}+1)i(g-i)}{(2g+1)(g-q_{\bar f})}-1\right)\delta_i(\Upsilon)\\
&\hspace{-0.4cm}+\sum_{j=1}^{[(g-1)/2]}\left(\frac{2(2g-3q_{\bar f}+1)(j+1)(g-j)}{(2g+1)(g-q_{\bar f})}-2\right)\xi_j(\Upsilon).
\end{aligned}$$
Combining this with \eqref{boundofxi_0eqn}, one gets
\begin{eqnarray*}
&&\omega_{\ol S/\ol B}^2-\frac{4(g-1)}{g-q_{\bar f}}\deg \bar f_*\omega_{\ol S/\ol B}\\
&\geq& \sum_{i= 1}^{q_{\bar f}-1} a_i \delta_i(\Upsilon)+
\sum_{i= q_{\bar f}}^{[g/2]} b_i \delta_i(\Upsilon)+
\sum_{j= 1}^{q_{\bar f}-1} c_j \xi_j(\Upsilon)+
\sum_{j= q_{\bar f}}^{[(g-1)/2]} d_j \xi_j(\Upsilon),
\end{eqnarray*}
where
$$\left\{
\begin{aligned}
&a_i=\left(\frac{4(2g-3q_{\bar f}+1)i(g-i)}{(2g+1)(g-q_{\bar f})}-1\right)
     +\frac{(g-1)q_{\bar f}}{(2g+1)(g-q_{\bar f})} \cdot 4i(2i+1),\\
&b_i=\left(\frac{4(2g-3q_{\bar f}+1)i(g-i)}{(2g+1)(g-q_{\bar f})}-1\right)
     -\frac{(g-1)q_{\bar f}}{(2g+1)(g-q_{\bar f})} \cdot \frac{(2i+1)(2g+1-2i)}{g+1},\\
&c_j=\left(\frac{2(2g-3q_{\bar f}+1)(j+1)(g-j)}{(2g+1)(g-q_{\bar f})}-2\right)
     +\frac{(g-1)q_{\bar f}}{(2g+1)(g-q_{\bar f})} \cdot 2(j+1)(2j+1),\\
&d_j=\left(\frac{2(2g-3q_{\bar f}+1)(j+1)(g-j)}{(2g+1)(g-q_{\bar f})}-2\right)
     -\frac{(g-1)q_{\bar f}}{(2g+1)(g-q_{\bar f})} \cdot \frac{2(j+1)(g-j)}{g+1}.
\end{aligned}\right.
$$
If $q_{\bar f}=1$, then
$$
\begin{aligned}
b_1&=\frac{3g-6}{g+1};&\quad&\\
b_i&=\frac{4i(g-i)-g-2}{g+1}\geq \frac{7g-18}{g+1}, && \forall~2\leq i\leq [g/2];\\
d_j&=\frac{2\big((j+1)(g-j)-(g+1)\big)}{g+1}\geq 0,&& \forall~1\leq j\leq [(g-1)/2].
\end{aligned}
$$
If $q_{\bar f}\geq 2$, then
$$
\begin{aligned}
a_1&\geq \frac{3g^2-(8q_{\bar f}+1)g+10q_{\bar f}-4}{(g+1)(g-q_{\bar f})};&\quad&\\
a_i&\geq \frac{7g^2-(16q_{\bar f}+9)g+34q_{\bar f}-16}{(g+1)(g-q_{\bar f})},&\quad&\forall~2\leq i\leq q_{\bar f}-1;\\
b_i&\geq \frac{7g^2-(16q_{\bar f}+9)g+34q_{\bar f}-16}{(g+1)(g-q_{\bar f})}, && \forall~q_{\bar f}\leq i\leq [g/2];\\
c_j&\geq 0,&& \forall~1\leq j\leq q_{\bar f}-1;\\
d_j&\geq 0,&& \forall~q_{\bar f}\leq j\leq [(g-1)/2].
\end{aligned}
$$
Hence \eqref{eqnlower3} holds for $\Delta_{nc}\neq\emptyset$.
\vspace{0.2cm}

Now we consider the case that $\Delta_{nc}=\emptyset$.
Note that in this case,
\begin{equation}\label{xi=0}
\xi_j(\Upsilon)=0, \qquad\qquad \forall~ 0 \leq j \leq [(g-1)/2].
\end{equation}
Hence by \eqref{formulaofdegomega} and \eqref{formulaofomega^2}, we get
$$\omega_{\ol S/\ol B}^2-\frac{4(g-1)}{g-q_{\bar f}}\deg \bar f_*\omega_{\ol S/\ol B}=
\sum_{i=1}^{[g/2]} \left(\frac{4(2g+1-3q_{\bar f})i(g-i)}{(2g+1)(g-q_{\bar f})}-1\right) \delta_i(\Upsilon).$$
Hence \eqref{eqnlower3} holds.
If moreover $q_{\bar f}\geq 2$, then according to \eqref{boundofxi_0eqn} and \eqref{xi=0},
we get
$$\sum_{i=q_{\bar f}}^{[g/2]} \frac{(2i+1)(2g+1-2i)}{g+1} \cdot \delta_i(\Upsilon)\geq
\sum_{i=1}^{q_{\bar f}-1} 4i(2i+1) \cdot \delta_i(\Upsilon).$$
So \eqref{eqnlower3'} is proved.
\qed

\subsection{Proof of Theorem \ref{thmlower1}}\label{sectionlower1}
It is based on analyzing the following  natural multiplication
\begin{equation}\label{multiplication}
\varrho:\,S^2\left(\bar f_*\omega_{\ol S/\ol B}\right) \lra \bar f_*\big(\omega_{\ol S/\ol B}^{\otimes2}\big),
\end{equation}
where $S^2\left(\bar f_*\omega_{\ol S/\ol B}\right)$ is the symmetric power of $\bar f_*\omega_{\ol S/\ol B}$.

\begin{proof}[Proof of Theorem {\rm\ref{thmlower1}}]
  As $\bar f$ is non-hyperelliptic,
the morphism $\varrho$  in \eqref{multiplication} is generically surjective.
One gets an exact sequence as below:
\begin{equation*} %\label{exactseq}
0 \lra \calr \lra S^2\left(\bar f_*\omega_{\ol S/\ol B}\right) \overset{\varrho}\lra
\bar f_*\big(\omega_{\ol S/\ol B}^{\otimes2}\big) \lra \cals \lra 0,
\end{equation*}
where  $\calr$ and $\cals$ are the kernel and cokernel of  $\varrho$, and   $\cals$ is a torsion module.
So
\begin{equation} \label{deg00}
\deg \calr+\deg \bar f_*\big(\omega_{\ol S/\ol B}^{\otimes2}\big)=\deg S^2\left(\bar f_*\omega_{\ol S/\ol B}\right) +\deg \cals.
\end{equation}
It is not difficult to show that
\begin{eqnarray}
\deg S^2\left(\bar f_*\omega_{\ol S/\ol B}\right)&=&(g+1)\deg \bar f_*\omega_{\ol S/\ol B},\label{deg0}\\
\deg \bar f_*\big(\omega_{\ol S/\ol B}^{\otimes2}\big)&=& \omega_{\ol S/\ol B}^2+\deg \bar f_*\omega_{\ol S/\ol B}. \label{exactdeg}
\end{eqnarray}
Since $\bar f_*\omega_{\ol S/\ol B}$ is semi-stable,
so is $S^2\left(\bar f_*\omega_{\ol S/\ol B}\right)$ of slope $\mu_2=\frac{2\deg \bar f_*\omega_{\ol S/\ol B}}{g}$.
Note that
$$\rank \calr = \rank S^2\left(\bar f_*\omega_{\ol S/\ol B}\right) -\rank \bar f_*\big(\omega_{\ol S/\ol B}^{\otimes2}\big)=\frac{(g-2)(g-3)}{2}.$$
Hence for the subsheaf $\calr\subseteq S^2\left(\bar f_*\omega_{\ol S/\ol B}\right)$, we have
\begin{equation}\label{deg1}
\deg \calr \leq \rank \calr \cdot \mu_2 = \frac{(g-2)(g-3)}{g}\cdot \deg \bar f_*\omega_{\ol S/\ol B}.
\end{equation}
Since $\cals$ is a  torsion module,
\begin{equation}\label{deg2}
\deg \cals=\sum_{p} {\rm length\,} \cals_p.
\end{equation}
Therefore, by \eqref{deg00}, \eqref{deg0}, \eqref{exactdeg}, \eqref{deg1} and \eqref{deg2},
it suffices to prove that
\begin{equation}\label{lengthsq}
{\rm length\,}\cals_p \geq\left\{
\begin{aligned}
&3\big(l_h(F_p)-1\big)+2l_1(F_p), && \forall\, p \in \Delta_{ct} \setminus \Lambda,\\[0.1cm]
&2(g-2)+2\big(l_h(F_p)+l_1(F_p)-1\big), &\qquad& \forall\, p \in \Lambda,
\end{aligned}\right.
\end{equation}
which follows from  Lemmas \ref{lowlinshi1} and \ref{lowlinshi2}.
Indeed, if $p\in \Delta_{ct}\setminus \Lambda$, then \eqref{lengthsq} follows from
\eqref{cokernelofnu_p} and \eqref{S_pcokernelnu_p};
if $p\in \Lambda$, then $F_p$ is hyperelliptic, and so \eqref{lengthsq} follows from
\eqref{cokernelofnu_phyper} and \eqref{S_pcokernelnu_phyper}.
This completes the proof.
\end{proof}
\begin{lemma}\label{lowlinshi1}
For every $p\in \ol B\setminus \Delta_{nc}$, let
\begin{equation}\label{defofnu_p}
\nu_p:\, S^2H^0(F_p, \omega_{F_p}) \to H^0\big(F_p, \omega_{F_p}^{\otimes2}\big)
\end{equation}
the natural multiplication map on $F_p$, and denote by ${\rm coker}(\nu_p)$ the cokernel of $\nu_p$. Then
\begin{equation}\label{cokernelofnu_p}
\dim {\rm coker}(\nu_p) \geq 3\big(l_h(F_p)-1\big)+2l_1(F_p).
\end{equation}
If $F_p$ is hyperelliptic, then
\begin{equation}\label{cokernelofnu_phyper}
\dim {\rm coker}(\nu_p) \geq (g-2)+\big(l_h(F_p)+l_1(F_p)-1\big).
\end{equation}
\end{lemma}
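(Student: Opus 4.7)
Since $p\in\ol B\setminus\Delta_{nc}$, the fibre $F:=F_p$ is of compact type: every component $C_i$ is smooth of genus $g_i$, the dual graph is a tree, and $g=\sum_i g_i$. The plan is to reduce $\dim{\rm coker}(\nu_p)$ to the component-wise multiplication maps on the smooth curves $C_i$.

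The first step establishes two extension-by-zero identifications:
$$H^0(F,\omega_F)=\bigoplus_{g_i\geq 1} H^0(C_i,\omega_{C_i}),\qquad\bigoplus_i H^0(C_i,\omega_{C_i}^{\otimes 2})\hookrightarrow H^0(F,\omega_F^{\otimes 2}).$$
For the first, a regular differential on $C_i$ extends to a section of $\omega_F$ because all residues vanish and the compact-type residue-matching at each node is trivially satisfied; both sides have dimension $g$, so equality holds. For the second, a regular quadratic differential on $C_i$ has zero double-pole coefficient at each node, making the extension a well-defined section of $\omega_F^{\otimes 2}$. An elementary count using $h^0(\omega_F^{\otimes 2})=3g-3$ together with $h^0(\omega_{C_i}^{\otimes 2})\in\{0,1,3g_i-3\}$ according as $g_i\in\{0,1,\geq 2\}$ shows the codimension of the second inclusion to be exactly $3(l_h(F_p)-1)+2l_1(F_p)$.

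For the second step, note that for $\eta_i\in V_i:=H^0(\omega_{C_i})$ and $\eta_j\in V_j$ with $i\neq j$, the product $\eta_i\cdot\eta_j\in H^0(F,\omega_F^{\otimes 2})$ restricts to zero on every component (at least one of the factors vanishes on each component), so it is the zero section. Hence all cross terms in $S^2V$ die and ${\rm image}(\nu_p)=\bigoplus_i{\rm image}(\nu_{C_i})$, where $\nu_{C_i}$ is the usual multiplication on $C_i$. Together with Step 1, this yields
$$\dim{\rm coker}(\nu_p)=3(l_h(F_p)-1)+2l_1(F_p)+\sum_i\dim{\rm coker}(\nu_{C_i}),$$
from which the general bound \eqref{cokernelofnu_p} is immediate since the last sum is non-negative. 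For the hyperelliptic refinement \eqref{cokernelofnu_phyper}, a hyperelliptic stable compact-type $F$ admits an admissible double cover of a tree of $\mathbb{P}^1$'s whose Galois involution restricts non-trivially with rational quotient to each component; hence every $C_i$ with $g_i\geq 2$ is itself smooth hyperelliptic, and the classical Noether-Petri theorem for hyperelliptic curves gives $\dim{\rm coker}(\nu_{C_i})=g_i-2$. Summing, $\sum_i\dim{\rm coker}(\nu_{C_i})=g-l_1(F_p)-2l_h(F_p)$, and the total cokernel equals exactly $(g-2)+(l_h(F_p)+l_1(F_p)-1)$.

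The main obstacle is the first identification $H^0(F,\omega_F)=\bigoplus_{g_i\geq 1} H^0(C_i,\omega_{C_i})$: one must rule out \textit{exotic} sections coming from polar parts at the nodes or from genus-zero bridge components. The crucial ingredient is the Riemann-Roch equality $h^0(\omega_{C_i}(p))=h^0(\omega_{C_i})$ for a positive-genus $C_i$ and a point $p$, which forces residues to vanish at every node on a positive-genus component and then propagates through the compact-type tree to kill any contribution from rational bridge components.
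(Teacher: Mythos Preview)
Your proof is correct and follows essentially the same route as the paper: decompose $H^0(F_p,\omega_{F_p})$ as $\bigoplus_i H^0(C_i,\omega_{C_i})$, observe that cross-products vanish so that the image of $\nu_p$ is the direct sum of the images of the component maps $\nu_{C_i}$, and then bound each $\dim\mathrm{image}(\nu_{C_i})$ by $3g_i-3$, $1$, or $0$ (respectively $2g_i-1$ in the hyperelliptic case). The only cosmetic difference is that you organize the count via the explicit inclusion $\bigoplus_i H^0(C_i,\omega_{C_i}^{\otimes 2})\hookrightarrow H^0(F_p,\omega_{F_p}^{\otimes 2})$ and obtain an exact formula $\dim\mathrm{coker}(\nu_p)=3(l_h-1)+2l_1+\sum_i\dim\mathrm{coker}(\nu_{C_i})$, whereas the paper simply bounds $\dim\mathrm{image}(\nu_p)$ from above and subtracts from $3g-3$; your version is a harmless refinement of the same computation.
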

\begin{proof}
Assume $F_p=\sum\limits_{i} C_i$.
Let ${\rm pr}_i:\, F_p \to C_i$ be the natural contraction map  to $C_i$.
By pulling-back, one may view $H^0(C_i,\omega_{C_i})$ as a subspace of $H^0(F_p,\omega_{F_p})$,
and
$$H^0(F_p,\omega_{F_p})\cong \bigoplus_{i} H^0(C_i,\omega_{C_i}).$$
Note that for   $\omega \in H^0(F_p,\omega_{F_p})$ which lies in $H^0(C_i,\omega_{C_i})$
in the above decomposition, we have $\omega|_{C_j}=0$ for all $j\neq i$.
Thus the map $\nu_p$ factors through
$$
\xymatrix{
S^2H^0(F_p, \omega_{F_p})  \ar[rr]^-{\nu_p}\ar[d]
&& H^0(F_p, \omega_{F_p}^{\otimes2})\\
\bigoplus\limits_{i} S^2H^0(C_i,\omega_{C_i}) \ar[rr]^-{\bigoplus\nu_i}
&&\bigoplus\limits_{i} H^0(C_i,\omega_{C_i}^{\otimes2})\ar[u]_-{\bigoplus {\rm pr}_i^*},
}$$
where $\nu_i:\,S^2H^0(C_i,\omega_{C_i}) \to H^0(C_i,\omega_{C_i}^{\otimes2})$ is
the natural multiplication map.
Note that for a smooth closed curve $C_i$ of genus $g(C_i)$,
\begin{equation*}
\dim \nu_i\big(S^2H^0(C_i,\omega_{C_i})\big) \leq\left\{
\begin{aligned}
&3g(C_i)-3, &\qquad& \text{if~} g(C_i) \geq 2;\\
&3g(C_i)-2=1, && \text{if~} g(C_i) =1;\\
&3g(C_i)=0, && \text{if~} g(C_i) =0.
\end{aligned}\right.
\end{equation*}
Since $p\in \ol B\setminus \Delta_{nc}$, $F_p$ has compact Jacobian,  then $\sum\limits_i g(C_i)=g$.
Hence
$$\dim \nu_p\big(S^2H^0(F_p,\omega_{F_p})\big)\leq \sum_i 3g(C_i)-3l_h(F_p)-2l_1(F_p)
=3g-3l_h(F_p)-2l_1(F_p).$$
Since $\dim H^0\big(F_p, \omega_{F_p}^{\otimes2}\big)=3g-3$,
\eqref{cokernelofnu_p} follows immediately.

If $F_p$ is hyperelliptic, then each component $C_i \subseteq F_p$
with $g(C_i)\geq 2$ must be hyperelliptic.
Thus it follows that
\begin{equation*}
\dim \nu_i\big(S^2H^0(C_i,\omega_{C_i})\big) =\left\{
\begin{aligned}
&2g(C_i)-1, &\qquad& \text{if~} g(C_i) \geq 1;\\
&2g(C_i)=0, && \text{if~} g(C_i) =0.
\end{aligned}\right.
\end{equation*}
So
$$\dim \nu_p\big(S^2H^0(F_p,\omega_{F_p})\big)= \sum_i 2g(C_i)-l_h(F_p)-l_1(F_p)
=2g-l_h(F_p)-l_1(F_p).$$
Hence \eqref{cokernelofnu_phyper} follows.
\end{proof}

\begin{lemma}\label{lemmapropertyoff}
For  $p\in \Lambda$, there exists a neighborhood
$p\in W\subseteq \ol B$ with local coordinate $t$ and local sections
$s_1,\cdots,s_g$ of $\bar f_*\omega_{\ol S/\ol B}$ such that every $s_i$ is a
function of $t^2$ and $H^0(F_p,\omega_{F_p})$ is generated by
$\{\varphi(s_1),\cdots,\varphi(s_g)\}$, where $\varphi$ is defined
as below:
\begin{equation}\label{linshi1}
\varphi:~H^0\big(W,\bar f_*\omega_{\ol S/\ol B}\big) \overset{\varphi_1}
\lra H^0\big(\bar f^{-1}(W),\omega_{\ol S/\ol B}\big)
\overset{\varphi_2}\lra H^0(F_q,\omega_{F_q}).
\end{equation}
\end{lemma}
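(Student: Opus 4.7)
The plan is to lift the hyperelliptic involution on $F_p$ (available because $p\in\Lambda$ forces $F_p$ hyperelliptic by Lemma \ref{lemmapfofupper1}) to an involution on the Hodge bundle $E:=\bar f_*\omega_{\ol S/\ol B}$ in a neighbourhood of $p$, and then to linearise it. Since $p\in\Lambda=\mathrm{Fix}(\tau)$ and $\tau$ is holomorphic, after shrinking there is a $\tau$-invariant neighbourhood $W\ni p$ and a local coordinate $t$ on $W$ with $\tau^*t=-t$. The identity $\bar f\circ\sigma=\tau\circ\bar f$ together with the canonical isomorphism $\sigma^*\omega_{\ol S/\ol B}\cong\omega_{\ol S/\ol B}$ yields, by flat base change, an involution $\tilde\sigma\colon E|_W\to E|_W$ covering $\tau|_W$. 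By cohomology and base change $E_p\cong H^0(F_p,\omega_{F_p})$, and on this fibre $\tilde\sigma$ acts via the hyperelliptic involution on $H^0$, hence as multiplication by $-1$.

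The technical heart is an equivariant linearisation of $\tilde\sigma$. In any holomorphic trivialisation $E|_W\simeq W\times\mathbb{C}^g$, the involution takes the form $(t,v)\mapsto(-t,A(t)v)$ with $A(-t)A(t)=I$ and $A(0)=-I$. Setting $P(t):=I-A(-t)$ gives $P(0)=2I$ invertible, so $P(t)\in\mathrm{GL}_g(\mathbb{C})$ on a shrunken $W$, and the direct computation $A(t)P(t)=A(t)-A(t)A(-t)=A(t)-I=-P(-t)$ shows that in the new trivialisation $v\mapsto P(t)^{-1}v$ the involution becomes simply $(t,v)\mapsto(-t,-v)$.

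Finally, take $s_1,\dots,s_g$ to be the constant sections corresponding to the standard basis $e_1,\dots,e_g$ of $\mathbb{C}^g$ in the linearised trivialisation. Each $s_i$ is constant and hence trivially a function of $t^2$, while $\varphi(s_i)$ is identified with $e_i$ under $E_p\simeq\mathbb{C}^g$, so $\{\varphi(s_1),\dots,\varphi(s_g)\}$ is a basis of $H^0(F_p,\omega_{F_p})$. The main obstacle is the equivariant linearisation step; the explicit choice $P(t)=I-A(-t)$ succeeds precisely because the $-1$ eigenvalue of $A(0)$ makes $I-A(0)$ invertible, which bypasses any averaging argument.
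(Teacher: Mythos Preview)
Your proof is correct, but it takes a genuinely different route from the paper's.

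The paper's argument is essentially one line: by \eqref{xinpullback} one has $\bar f_*\omega_{\ol S/\ol B}=\bar j_B^*E^{1,0}_{\ol C}$, and since $\bar j_B$ is ramified at $p\in\Lambda$ the map is locally $t\mapsto s=t^2$; pulling back any local frame of $E^{1,0}_{\ol C}$ therefore yields a frame of $\bar f_*\omega_{\ol S/\ol B}$ whose components are functions of $t^2$, and the surjectivity of $\varphi$ is automatic.

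Your approach instead lifts the geometric involution $\sigma$ on $\ol S$ to an involution $\tilde\sigma$ on the Hodge bundle and linearises it explicitly via the change of frame $P(t)=I-A(-t)$, using that the hyperelliptic involution acts as $-1$ on $H^0(F_p,\omega_{F_p})$ so that $P(0)=2I$ is invertible. The computation $A(t)P(t)=-P(-t)$ is clean and the conclusion follows.

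What each buys: the paper's proof is shorter and plugs directly into the Torelli picture already set up in Section~\ref{sectionrepresent}; it also makes the downstream use in Lemma~\ref{lowlinshi2} transparent, since the entire multiplication map $\varrho$ and its target descend from $\ol C$. Your argument is more self-contained (it never invokes the identification with $E^{1,0}_{\ol C}$) and gives an explicit normal form for the involution; to carry it through to Lemma~\ref{lowlinshi2} one would analogously linearise the induced involution on $\bar f_*\omega_{\ol S/\ol B}^{\otimes 2}$ (which acts as $+1$ on the fibre at $p$) and use that $\varrho$ is equivariant, so its matrix in the two linearised frames is an even function of $t$. Both routes arrive at the same place.
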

\begin{proof}
Recall the logarithmic Higgs bundle on $\ol B$ (resp. $\ol C$)
$$\left(E^{1,0}_{\ol B}\oplus E^{0,1}_{\ol B},~\theta_{\ol B}\right) \qquad
\text{resp.~} \left(E^{1,0}_{\ol C}\oplus E^{0,1}_{\ol C},~\theta_{\ol C}\right).$$
 By \eqref{xinpullback},
$$E^{1,0}_{\ol B}=\bar f_*\omega_{\ol S/\ol B}=\bar j_B^* E^{1,0}_{\ol C}.$$
Moreover, as $\Lambda$ is the ramification locus of $j_B=(\bar j_B)|_{B}:\,B \to C$, it
follows that  around each $p\in \Lambda$, $\bar f_*\omega_{\ol S/\ol B}$ can be
locally generated by sections which are functions of $t^2$, where
$t$ is a suitable local coordinate of $\ol B$. The map $\varphi$ is
obviously surjective.
\end{proof}

\begin{lemma}\label{lowlinshi2}
For every $p\in \ol B\setminus \Delta_{nc}$, we have
\begin{equation}\label{S_pcokernelnu_p}
{\rm length\,} \cals_p \geq \dim {\rm coker}(\nu_p).
\end{equation}
If $p\in\Lambda$, then
\begin{equation}\label{S_pcokernelnu_phyper}
{\rm length\,} \cals_p \geq 2 \dim {\rm coker}(\nu_p).
\end{equation}
\end{lemma}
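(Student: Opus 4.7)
Since $p \notin \Delta_{nc}$, the fibre $F_p$ has compact Jacobian; combined with the flatness of $\bar f$ and the vanishing $h^1(F_p,\omega_{F_p}^{\otimes 2}) = 0$ (Serre duality), the sheaves $\bar f_*\omega_{\ol S/\ol B}$ and $\bar f_*\omega_{\ol S/\ol B}^{\otimes 2}$ are locally free near $p$ of ranks $g$ and $3g-3$, and Grauert's theorem provides base-change isomorphisms $\bar f_*\omega_{\ol S/\ol B}^{\otimes k}\otimes k(p) \cong H^0(F_p,\omega_{F_p}^{\otimes k})$ for $k = 1,2$, under which $\varrho \otimes k(p) = \nu_p$. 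The first idea is to apply elementary-divisor theory: letting $R = \calo_{\ol B,p}$ with uniformizer $t$ and killing the locally free kernel $\calr$, $\varrho$ becomes an injection $\bar\varrho$ of free $R$-modules of common rank $3g-3$. By Smith normal form over the DVR $R$, there are bases $\{e_i\}, \{f_i\}$ and $a_1,\dots,a_{3g-3} \geq 0$ with $\bar\varrho(e_i) = t^{a_i}f_i$, so that
\[
{\rm length}\,\cals_p \,=\, \sum_{i=1}^{3g-3} a_i, \qquad \dim\,{\rm coker}(\nu_p) \,=\, \#\{i:a_i \geq 1\},
\]
and inequality \eqref{S_pcokernelnu_p} is immediate.

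For the refined inequality \eqref{S_pcokernelnu_phyper} when $p \in \Lambda$, the plan is to exploit the hyperelliptic involution to force a parity constraint on the $a_i$. The involution $\sigma$ on $\ol S$ (constructed in the proof of Lemma \ref{lemmapfofupper1}) covers an involution $\tau$ on $\ol B$ with ${\rm Fix}(\tau) = \Lambda$, and induces $\tau$-equivariant structures on $\bar f_*\omega_{\ol S/\ol B}$ and on $\bar f_*\omega_{\ol S/\ol B}^{\otimes 2}$ whose fibre actions at $p$ are respectively $-1$ (the hyperelliptic involution acts by $-1$ on holomorphic $1$-forms) and an involution with $\pm 1$ eigenspaces of dimensions $2g-1$ and $g-2$ on $H^0(F_p,\omega_{F_p}^{\otimes 2})$. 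Setting $R_0 = R^\tau = \Cbb[[t^2]]$, a local diagonalization of the $R$-semilinear involution on $\bar f_*\omega_{\ol S/\ol B}^{\otimes 2}$ near $p$ yields an $R_0$-lattice $M^+$ of $\tau$-invariant sections, free of rank $3g-3$ with $R \otimes_{R_0} M^+ \cong (\bar f_*\omega_{\ol S/\ol B}^{\otimes 2})_p$. The local frame $s_1,\dots,s_g$ of Lemma \ref{lemmapropertyoff}, being pulled back from $\ol C$ via $\bar j_B$, is $\tau$-anti-invariant in the $\sigma$-structure on $\bar f_*\omega_{\ol S/\ol B}$ (the $\bar j_B^*$- and $\sigma$-structures differ by a sign, since the first acts trivially and the second by $-1$ on the fibre at $p$); hence the products $s_is_j$ are $\tau$-invariant in $S^2\bar f_*\omega_{\ol S/\ol B}$, and by $\sigma$-equivariance of $\varrho$ the generators $\varrho(s_is_j)$ of $N := {\rm Image}(\bar\varrho)$ all lie in $M^+$. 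Applying Smith normal form to $N_0 := R_0 \cdot \{\varrho(s_is_j)\} \subseteq M^+$ over the DVR $R_0$ (uniformizer $t^2$) and then extending scalars along $R_0 \hookrightarrow R$ shows that the elementary divisors of $N$ inside $(\bar f_*\omega_{\ol S/\ol B}^{\otimes 2})_p$ are all even powers of $t$. Therefore every $a_i$ is even, so $a_i \geq 1$ forces $a_i \geq 2$, yielding \eqref{S_pcokernelnu_phyper}.

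The main obstacle is executing the parity argument rigorously: one must verify that the two $\tau$-equivariant structures on $\bar f_*\omega_{\ol S/\ol B}$ (from $\bar j_B$-pullback and from $\sigma$) indeed differ by a sign in a neighborhood of $p$, construct the $R_0$-lattice $M^+$ of full rank $3g-3$ via diagonalization of the $R$-semilinear $\sigma$-action (despite the fibre involution not being a scalar), and confirm the $\sigma$-equivariance of the natural multiplication $\varrho$. All three are essentially formal consequences of naturality of pushforward along $\bar f$ and of tensor/symmetric powers, but require careful bookkeeping of the equivariant structures.
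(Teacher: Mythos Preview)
Your approach is correct, and the underlying idea matches the paper's, but the paper's execution is considerably more economical. For \eqref{S_pcokernelnu_p}, the paper simply observes that $\cals_p/\mathfrak m_p\cals_p \cong {\rm coker}(\nu_p)$ (base change) and bounds the length from below by this fibre dimension; your Smith normal form computation is an equivalent repackaging. For \eqref{S_pcokernelnu_phyper}, the paper appeals directly to Lemma~\ref{lemmapropertyoff}: since the local frame $s_1,\dots,s_g$ consists of ``functions of $t^2$'' (i.e.\ is pulled back from $\ol C$ via $\bar j_B$), so are the generators $\varrho(s_is_j)$ of the image, and this forces $\dim(\cals_p/\mathfrak m_p^2\cals_p)=2\dim(\cals_p/\mathfrak m_p\cals_p)$, whence the inequality.

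Your route via the involution $\sigma$, the $R_0$-lattice $M^+$, and the parity of elementary divisors is the same mechanism made explicit: the phrase ``functions of $t^2$'' for sections of $\bar f_*\omega_{\ol S/\ol B}^{\otimes 2}$ only acquires meaning relative to an $R_0$-lattice in the target, and your $M^+=M^{\sigma_*}$ is exactly that lattice (via Galois descent along $R_0\hookrightarrow R$). What you gain is rigour at a point where the paper is terse---the paper never says why the target admits such a lattice---at the cost of the bookkeeping you flag in your final paragraph. What the paper's phrasing buys is brevity: once one accepts that ``image generated by functions of $t^2$'' implies the $\mathfrak m_p^2$-quotient doubles in dimension, the proof is two lines. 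You might consider presenting the short quotient argument as the main line and relegating the construction of $M^+$ (via $\sigma$) to a remark justifying why ``functions of $t^2$'' makes sense in $\bar f_*\omega_{\ol S/\ol B}^{\otimes 2}$.
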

\begin{proof}
Let $\mathfrak m_p$ be the ideal of $p\in \ol B$.
Then we have the following natural isomorphism
\begin{equation*}
\varrho\left(S^2(\bar f_*\omega_{\ol S/\ol B})\right)\Big/
\left(\mathfrak m_p \cdot\varrho\big(S^2(\bar f_*\omega_{\ol S/\ol B})\big)\right)
\cong \nu_p\left(S^2H^0(F_p, \omega_{F_p})\right).
\end{equation*}
It follows that
$\cals_p\big/\mathfrak m_p\cdot \cals_p \cong {\rm coker}(\nu_p).$
Hence
$${\rm length\,} \cals_p \geq \dim
\left(\cals_p\big/\mathfrak m_p\cdot \cals_p\right) = \dim {\rm coker}(\nu_p).$$

If $p\in \Lambda$, then according to Lemma \ref{lemmapropertyoff} and its proof,
$\bar f_*\omega_{\ol S/\ol B}$ is generated by local sections which are functions in $t^2$ around $p$,
where $t$ is a local coordinate of $\ol B$ around $p$.
Hence the image of $\varrho$
is also generated by local sections which are functions in $t^2$ around $p$.
This implies in particular that
$$\dim \left(\cals_p\big/\mathfrak m_p^2\cdot \cals_p\right) =
   2\dim \left(\cals_p\big/\mathfrak m_p\cdot \cals_p\right).$$
Hence
$${\rm length\,} \cals_p \geq \dim \left(\cals_p\big/\mathfrak m_p^2\cdot \cals_p\right) =
   2\dim \left(\cals_p\big/\mathfrak m_p\cdot \cals_p\right) = 2\dim {\rm coker}(\nu_p).$$
\end{proof}

\section{Flat part of $R^1\bar f_*\mathbb Q$ for a family of hyperelliptic semi-stable curves}\label{sectionflathyper}
We are going to prove  Theorem \ref{thmFtrivial} based on  Lemma \ref{lemmapeters} regarding the global invariant
cycle with unitary locally constant coefficient
and Bogomolov's lemma (cf. \cite[Lemma 7.5]{sakai80}) concerning the Kodaira dimension of an invertible subsheaf
of the sheaf of logarithmic differential forms on a smooth projective surface.  Lemma \ref{lemmapeters}
comes from  a discussion with Chris Peters and is obtained by generalizing Deligne's original theorem
with the constant coefficient.
For technical reasons, we consider $\mathbb C$-local systems instead of $\mathbb Q$-local systems.

\begin{lemma}\label{lemmapeters}
Let $f: X^0\to \ol B\setminus \Delta$ be a smooth projective morphism.
Let $ X\supseteq X^0$ be a smooth compactification of $X^0$
and $\mathbb U$ be a locally constant sheaf on $ X$
coming from a representation of  $\pi_1(X)$ into a unitary group.
Then the following canonical homomorphism is surjective:
  $$ H^k( X,\mathbb U)\lra H^0(\ol B\setminus \Delta, R^kf_*\mathbb U).$$
\end{lemma}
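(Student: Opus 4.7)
The plan is to generalize Deligne's global invariant cycle theorem from trivial to unitary coefficients, exploiting that a unitary local system $\mathbb{U}$ underlies a polarizable variation of Hodge structure of weight zero, so that Saito's theory of polarizable Hodge modules (or, equivalently in our one-dimensional base setting, Deligne--Zucker $L^2$-Hodge theory) applies. After possibly blowing up, I would assume $D = X \setminus X^0$ is a simple normal crossings divisor. The first step is to establish degeneration at $E_2$ of the Leray spectral sequence
\[
E_2^{p,q} = H^p(\ol B \setminus \Delta,\, R^q f_* \mathbb{U}) \Rightarrow H^{p+q}(X^0, \mathbb{U})
\]
for the smooth projective morphism $f \colon X^0 \to \ol B \setminus \Delta$; this is Deligne's $E_2$-degeneration theorem, which extends to polarizable VHS coefficients via the relative hard Lefschetz theorem and the decomposition theorem for smooth projective morphisms. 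It yields the surjectivity of the edge map $H^k(X^0, \mathbb{U}) \twoheadrightarrow H^0(\ol B \setminus \Delta,\, R^k f_* \mathbb{U})$.

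The second step is to compare $H^k(X, \mathbb{U})$ with $H^k(X^0, \mathbb{U})$ by Hodge theory. Since $X$ is smooth projective and $\mathbb{U}$ is a polarizable VHS of weight zero on all of $X$, $H^k(X, \mathbb{U})$ is pure of weight $k$ by Saito's theorem. On the other hand, $H^k(X^0, \mathbb{U})$ carries a canonical mixed Hodge structure with weight filtration $W_\bullet$, and strictness of MHS morphisms identifies the image of the restriction $H^k(X, \mathbb{U}) \to H^k(X^0, \mathbb{U})$ with the lowest-weight part $W_k H^k(X^0, \mathbb{U})$. Meanwhile, $R^k f_* \mathbb{U}|_{\ol B \setminus \Delta}$ is a polarizable VHS of weight $k$, so $H^0(\ol B \setminus \Delta,\, R^k f_* \mathbb{U})$ forms a polarizable pure sub-Hodge structure of weight $k$, and its lift through the Leray edge necessarily lands in $W_k H^k(X^0, \mathbb{U})$, again by strictness. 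The lemma then follows: the surjection from Step 1 factors through the subspace $W_k H^k(X^0, \mathbb{U}) = \mathrm{image}\bigl(H^k(X, \mathbb{U}) \to H^k(X^0, \mathbb{U})\bigr)$, which forces $H^k(X, \mathbb{U}) \twoheadrightarrow H^0(\ol B \setminus \Delta,\, R^k f_* \mathbb{U})$.

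The main technical obstacle is providing the Hodge-theoretic inputs for unitary rather than constant coefficients: the $E_2$-degeneration of Leray, the purity of $H^k(X, \mathbb{U})$, the canonical MHS on $H^k(X^0, \mathbb{U})$ with the expected weight filtration, and the polarizable VHS structure on the higher direct image. All of these are available through Saito's machinery applied to the weight-zero polarizable VHS underlying $\mathbb{U}$, so once that black box is invoked, Deligne's original argument for trivial coefficients passes through verbatim.
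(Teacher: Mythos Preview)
Your proposal is correct and follows essentially the same route as the paper's proof: both observe that a unitary local system underlies a weight-zero polarizable VHS, invoke Saito's theory for the required Hodge structures, deduce surjectivity of the Leray edge map from relative hard Lefschetz, and then use the weight argument to pass from $H^k(X^0,\mathbb U)$ back to $H^k(X,\mathbb U)$. The only cosmetic difference is that the paper cites Peters--Saito explicitly for the identification $W_kH^k(X^0,\mathbb U)=\mathrm{image}\bigl(H^k(X,\mathbb U)\to H^k(X^0,\mathbb U)\bigr)$, whereas you attribute this to ``strictness''; strictness alone only gives containment of the image in $W_k$, so you should cite the relevant result (e.g.\ Peters--Saito) for the reverse inclusion.
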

\begin{proof}
We will follow
Deligne's  proof for the case
that $\mathbb U=\mathbb Q$ (cf. \cite[\S\,4.1]{deligne71})  verbatim.

The unitary locally constant sheaf $\mathbb U$ on $X$ naturally underlies a polarized variation
of Hodge structure, say, of pure type $(0,0)$. Hence
it follows from M. Saito's theory of polarizable Hodge modules
that there is an induced pure  Hodge structure of weight $k$ on
$H^k(X, \mathbb U)$ as well
as on $H^k(X_b,\mathbb U|_{X_b})$ where $X_b$ is any (smooth projective) fibre of $f:X^0\to \ol B\setminus \Delta$.

We first show  the ``edge-homomorphism"
\begin{equation}\label{eqnedgehom}
p_e:~H^k(X^0, \mathbb U)\lra H^0(\ol B\setminus \Delta,R^kf_*\mathbb U)
\end{equation}
is surjective by the following argument from the proof of \cite[Proposition 1.38]{peterssteenbrink}.

Indeed, if we take $h\in H^2(X^0, \mathbb{Q})$ to be the restriction of a hyperplane class of $X$,
then it suffices to show that the cup-products satisfy the hard Lefschetz property,
i.e., the following homomorphism is an isomorphism
for any $0\leq k\leq m$, where $m$ is the dimension of a general fibre of $f$:
$$
[\cup \,h]^k: ~R^{m-k}f_*\mathbb{U}\longrightarrow R^{m+k}f_*\mathbb{U}.
$$
Note that the hard Lefschetz property can be verified fiber-by-fiber.
On each fiber the natural locally constant metric on $\mathbb{U}$
induces a Hodge decomposition of the cohomology with coefficients in $\mathbb{U}$,
hence the hard Lefschetz property holds.
So $p_e$ in \eqref{eqnedgehom} is surjective.

Since the restriction homomorphism (as monodromy invariant)
$$ H^0(\ol B\setminus \Delta,R^kf_*\mathbb U)\to H^k(X_b,\mathbb U|_{X_b})$$
is injective and $H^k(X_b,\mathbb U|_{X_b})$ carries a pure Hodge structure of weight-$k$,
one gets that $H^0(\ol B\setminus\Delta, R^kf_*\mathbb U)$ carries a pure Hodge structure of weight-$k.$
The surjectivity of $p_e$ in \eqref{eqnedgehom} also induces surjective morphisms
between the weight-filtrations  of the both cohomologies. In particular, we have a surjective homomorphism
\begin{equation}\label{eqnpeters1}
W_k\big(H^k(X^0,\mathbb U)\big)\twoheadrightarrow
W_k\big(H^0(\ol B\setminus\Delta, R^kf_*\mathbb U)\big)=H^0(\ol B\setminus\Delta, R^kf_*\mathbb U).
\end{equation}
By \cite{peterssaito12},
$W_k \big(H^k(X^0,\mathbb U)\big)$ is nothing but the image of the restriction homomorphism
\begin{equation*}
H^k( X,\mathbb U)\to H^k(X^0,\mathbb U).
\end{equation*}
Combining this with \eqref{eqnpeters1}, one gets required surjective homomorphism.
\end{proof}

\begin{corollary}\label{corollaylifting}
Let $\bar f:\ol S\to \ol B$ be a semi-stable family of projective curves
(not necessarily hyperelliptic) over a smooth projective curve $\ol B,$
with semi-stable singular fibres $\Upsilon\to \Delta.$
Given any  vector subbundle
$\mathcal U\hookrightarrow \bar f_*\omega_{\ol S/\ol B}=\bar f_*\Omega^1_{\ol S/\ol B}(\log\Upsilon),$
which { is a flat subbundle and is induced by} locally constant subsheaf
$\mathbb U\hookrightarrow \mathbb V_{\ol B\setminus \Delta}:= R^1\bar f_*\mathbb C_{\bar f^{-1}(\ol B\setminus \Delta)},$
it lifts to a morphism
\begin{equation}\label{eqnliftofU}
\bar f^*\mathcal U\lra \Omega^1_{\ol S},
\end{equation}
such that the induced canonical morphism
\begin{equation}\label{eqncanmapforS}
\mathcal U\to \bar f_*\Omega^1_{\ol S}\to \bar f_*\Omega^1_{\ol S}(\log\Upsilon)
\to \bar f_*\Omega^1_{\ol S/\ol B}(\log\Upsilon)
\end{equation}
coincides with the inclusion $\mathcal U\hookrightarrow  \bar f_*\Omega^1_{\ol S/\ol B}(\log\Upsilon).$
\end{corollary}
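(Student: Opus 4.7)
The strategy is a two-stage construction. First I would obtain a lift valued in the logarithmic differentials $\Omega^1_{\ol S}(\log\Upsilon)$ by using the vanishing of the Higgs field on $\mathcal U$; then I would eliminate the spurious logarithmic poles by invoking Lemma~\ref{lemmapeters} together with the purity of the Hodge structure on the cohomology of $\ol S$ with unitary coefficients.

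\emph{Step 1 (Log-lift via the vanishing Higgs field).} Push forward the standard sequence
$$0\lra \bar f^*\Omega^1_{\ol B}(\log \Delta)\lra \Omega^1_{\ol S}(\log\Upsilon)\lra \Omega^1_{\ol S/\ol B}(\log\Upsilon)\lra 0,$$
and apply the projection formula. The connecting map
$$\bar f_*\Omega^1_{\ol S/\ol B}(\log\Upsilon)\lra R^1\bar f_*\mathcal O_{\ol S}\otimes \Omega^1_{\ol B}(\log\Delta)$$
is, by construction, the Higgs field $\theta_{\ol B}$. The hypothesis that $\mathcal U$ is the flat subbundle induced by $\mathbb U$ places it inside the flat piece $F^{1,0}_{\ol B}$ of the decomposition \eqref{decompB}, on which $\theta_{\ol B}$ vanishes. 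Hence $\mathcal U$ lifts to $\bar f_*\Omega^1_{\ol S}(\log\Upsilon)$, and by adjunction one obtains $\bar f^*\mathcal U\to \Omega^1_{\ol S}(\log\Upsilon)$ whose further projection to $\Omega^1_{\ol S/\ol B}(\log\Upsilon)$ recovers the given inclusion.

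\emph{Step 2 (Removing the log poles).} Since $\bar f$ is semi-stable, the local monodromies of $\mathbb V_{\ol B\setminus\Delta}$ around each $p\in \Delta$ are unipotent; the subsystem $\mathbb U$ being unitary has semisimple monodromies, so these are at once semisimple and unipotent, hence trivial. Consequently $\mathbb U$ extends to a unitary local system $\widetilde{\mathbb U}$ on $\ol B$, and $\bar f^*\widetilde{\mathbb U}$ is a unitary local system on $\ol S$. Apply Lemma~\ref{lemmapeters} with $X=\ol S$, $X^0=\bar f^{-1}(\ol B\setminus\Delta)$ and coefficient sheaf $\bar f^*\widetilde{\mathbb U}$. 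Since $\bar f^*\widetilde{\mathbb U}$ is unitary, $H^1(\ol S,\bar f^*\widetilde{\mathbb U})$ carries a pure Hodge structure of weight one, whose $F^1$-piece is $H^0\big(\ol S,\,\Omega^1_{\ol S}\otimes (\bar f^*\widetilde{\mathbb U}\otimes \mathcal O_{\ol S})\big)$~--- notably without logarithmic poles along $\Upsilon$. Using the projection-formula identification $R^1f_*\bar f^*\mathbb U\simeq \mathbb U\otimes \mathbb V_{\ol B\setminus\Delta}$ and the strictness of the edge morphism with respect to the Hodge filtration, the sections coming from the $(1,0)$-typed subsheaf $\mathcal U$ are represented by honest holomorphic 1-forms on $\ol S$. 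Therefore the lift of Step 1 factors through $\bar f_*\Omega^1_{\ol S}\subseteq \bar f_*\Omega^1_{\ol S}(\log\Upsilon)$, producing the desired morphism $\bar f^*\mathcal U\to \Omega^1_{\ol S}$ with the compatibility \eqref{eqncanmapforS} built in.

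\emph{Main obstacle.} The delicate point is Step 2: carefully matching the Hodge-theoretic surjection of Lemma~\ref{lemmapeters} with the sheaf-theoretic lift from Step 1, so that the two candidate lifts agree up to a term in $\bar f^*\Omega^1_{\ol B}(\log \Delta)$ (which is harmless), and thereby extracting from purity the vanishing of the residues along $\Upsilon$. Once this identification is in place, the rest of the argument is formal.
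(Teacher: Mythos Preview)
Your proposal is essentially correct but takes an unnecessary detour. Step~1 is not needed at all, and the ``delicate point'' you identify as the main obstacle is an artifact of that detour. The paper's argument is precisely your Step~2, applied directly: represent the inclusion $\mathbb U\hookrightarrow \mathbb V_{\ol B\setminus\Delta}$ as a section $\eta\in H^0(\ol B\setminus\Delta,\,R^1\bar f_*(\bar f^*\mathbb U^\vee))$, lift it via Lemma~\ref{lemmapeters} to a class $\tilde\eta\in H^1(\ol S,\bar f^*\mathbb U^\vee)$, and then observe that the lifting map is a morphism of pure weight-one Hodge structures, so by strictness one may take $\tilde\eta$ of type $(1,0)$, i.e.\ $\tilde\eta\in H^0(\ol S,\Omega^1_{\ol S}\otimes\bar f^*\mathcal U^\vee)$. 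This \emph{is} the desired morphism $\bar f^*\mathcal U\to\Omega^1_{\ol S}$, and its compatibility with the original inclusion is automatic since $\tilde\eta$ maps back to $\eta$. There is nothing to match.

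Two minor corrections to your write-up. First, the coefficient sheaf must be the \emph{dual} $\bar f^*\mathbb U^\vee$, not $\bar f^*\mathbb U$: the inclusion $\mathbb U\hookrightarrow\mathbb V$ is a section of $\mathbb U^\vee\otimes\mathbb V$, and the morphism you want at the end lies in $H^0(\ol S,\Omega^1_{\ol S}\otimes\bar f^*\mathcal U^\vee)=\mathrm{Hom}(\bar f^*\mathcal U,\Omega^1_{\ol S})$. Since $\mathbb U$ is unitary this does no real harm, but it should be fixed. Second, if you insist on keeping Step~1, note that the two lifts (Higgs-theoretic vs.\ Hodge-theoretic) differ by a section of $\mathcal U^\vee\otimes\Omega^1_{\ol B}(\log\Delta)$, which pulled back lands in $\bar f^*\Omega^1_{\ol B}(\log\Delta)\subset\Omega^1_{\ol S}(\log\Upsilon)$ and need \emph{not} lie in $\Omega^1_{\ol S}$; so you cannot conclude that the Step~1 lift itself is holomorphic, only that the Step~2 lift is --- which is all you need anyway.
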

\begin{proof}
Since the local monodromy of $\mathbb V_{\ol B\setminus \Delta}$ around $\Delta$ is unipotent and
the local monodromy of the subsheaf $\mathbb U$ around $\Delta$ is semisimple, $\mathbb U$
extends on $\ol B$ as a locally constant sheaf.  The inclusion
$ \mathbb U\hookrightarrow \mathbb V_{\ol B\setminus \Delta}$ corresponds to
a  section
$$\eta\in H^0(\ol B\setminus\Delta,~\mathbb V_{\ol B\setminus \Delta}\otimes\mathbb U^\vee)
=H^0(\ol B\setminus\Delta, ~R^1\bar f_*\big(\mathbb C_{\bar f^{-1}(\ol B\setminus \Delta)}\otimes \bar f^*\mathbb U^\vee)\big).$$
By Lemma \ref{lemmapeters}, $\eta$ lifts to a class $\tilde \eta\in H^1(\ol S, \bar f^*\mathbb U^\vee)$
under the canonical morphism
$$ H^1(\ol S, \bar f^*\mathbb U^\vee)\lra H^0(\ol B\setminus \Delta, ~
R^1\bar f_*\big(\mathbb C_{\bar f^{-1}(\ol B\setminus \Delta)}\otimes \bar f^*\mathbb U^\vee)\big).$$
Note that this canonical morphism is a morphism between  pure Hodge structures of weight one,
and by the construction
$\eta$ is of type (1,0), so $\tilde \eta$ is of type (1,0), i.e.,
$$\tilde \eta\in H^0(\ol S,\Omega^1_{\ol S}\otimes \bar f^*\mathcal U^\vee),$$
which corresponds to a morphism
$\bar f^*\mathcal U\to \Omega^1_{\ol S}$,
such that under the canonical morphism \eqref{eqncanmapforS} it goes back to the inclusion
$\mathcal U\hookrightarrow \bar f_*\Omega^1_{\ol S/\ol B}(\log\Upsilon).$
\end{proof}

Let $\tilde\pi:\,\wt S \to \wt Y$ be the smooth double cover described in Figure \ref{hyperellipticdiagram},
and $\vartheta:\,\wt S \to \ol S$ be the blow-ups.
Given any  vector subbundle
$\mathcal U\hookrightarrow \bar f_*\Omega^1_{\ol S/\ol B}(\log\Upsilon)$ as in Corollary \ref{corollaylifting},
by pulling back of \eqref{eqnliftofU}, we obtain a sheaf morphism
$$\tilde f^*\mathcal U=\vartheta^*\bar f^*\mathcal U \lra \Omega^1_{\wt S},
\qquad\qquad\text{where~}\tilde f=\bar f\circ \vartheta,$$
which corresponds to an element
$$\tilde\eta\in H^0(\wt S, \Omega^1_{\wt S}\otimes \tilde f^*\mathcal U^{\vee}).$$
By pushing-out, we also obtain an element (where $\tilde h:\,\wt Y \to \ol B$ is the induced morphism)
$$\tilde\pi_*(\tilde\eta)\in H^0\left(\wt Y, \tilde\pi_*\big(\Omega^1_{\wt S}\otimes
\tilde f^*\mathcal U^{\vee}\big)\right)
=H^0\left(\wt Y, \tilde\pi_*\Omega^1_{\wt S}\otimes \tilde h^*\mathcal U^{\vee}\right).$$
So one gets a  morphism of sheaves
$\tilde h^*\mathcal U \lra \tilde\pi_*\Omega^1_{\wt S}.$
The Galois group ${\rm Gal}(\wt S/\wt Y)\cong \mathbb Z_2$ acts on
$\tilde\pi_*\Omega^1_{\wt S}.$
Hence one obtains the eigenspace decomposition
$$ \tilde h^*(\mathcal U)\lra \left(\tilde\pi_*\Omega^1_{\wt S}\right)_1,
\qquad \tilde h^*(\mathcal U)\lra \left(\tilde\pi_*\Omega^1_{\wt S}\right)_{-1}.$$

\begin{lemma}\label{LR=0}
The image of the map
$$\varrho:~\tilde h^*(\mathcal U)\lra \left(\tilde\pi_*\Omega^1_{\wt S}\right)_{-1}$$
is an invertible subsheaf $M$
such that $M$ is numerically effective (nef),  $M^2=0$,
and $M\cdot D=0$ for any component $D$ of the branch divisor $\wt R\subseteq \wt Y$
of the double cover $\tilde\pi: \wt S\to \wt Y$.
\end{lemma}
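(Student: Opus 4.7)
The plan is to exploit the Galois $\mathbb Z/2$-action on $\tilde\pi:\wt S\to \wt Y$, the ruled structure $\tilde h:\wt Y\to \ol B$, the flatness of $\mathcal U$, and Bogomolov's lemma.

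First, I would identify $(\tilde\pi_*\Omega^1_{\wt S})_{-1}\cong \Omega^1_{\wt Y}(\log \wt R)\otimes \mathcal L^{-1}$, where $\mathcal L$ is the line bundle on $\wt Y$ with $\mathcal L^{\otimes 2}\cong \mathcal O_{\wt Y}(\wt R)$; the relative cotangent sequence of $\tilde h$ yields
\begin{equation*}
0 \to \tilde h^*\Omega^1_{\ol B}(\log \Delta_{nc})\otimes \mathcal L^{-1} \to (\tilde\pi_*\Omega^1_{\wt S})_{-1} \to \mathcal M' \to 0,
\end{equation*}
where $\mathcal M':=\Omega^1_{\wt Y/\ol B}(\log \wt R_{\mathrm{rel}})\otimes \mathcal L^{-1}$ is an invertible sheaf on $\wt Y$ that restricts on a general fibre $F\cong\bbp^1$ of $\tilde h$ to $\mathcal O_{\bbp^1}(g-1)$ (since $F\cdot \wt R=2g+2$ forces $\deg \mathcal L|_F=g+1$). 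To see $M$ is invertible, I would show $\varrho$ factors through $\mathcal M'$: on each such fibre the horizontal sub-bundle restricts to $\mathcal O_{\bbp^1}(-g-1)$ while $\tilde h^*\mathcal U|_F$ is trivial, so $\mathrm{Hom}(\mathcal O_{\bbp^1},\mathcal O_{\bbp^1}(-g-1))=0$ forces the intersection of the image of $\varrho|_F$ with the horizontal summand to be zero; by torsion-freeness this upgrades to the statement that the image of $\varrho$ globally intersects the horizontal sub-sheaf trivially, yielding $M\hookrightarrow \mathcal M'$, invertible after saturation.

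For the nefness of $M$: since $\mathcal U$ is the vector bundle associated to a unitary local system, it is polystable of degree $0$ on $\ol B$, and by Mehta--Ramanathan the pullback $\tilde h^*\mathcal U$ is polystable of degree $0$ on $\wt Y$; on any complete irreducible curve $C\subseteq \wt Y$, the line-bundle quotient $M|_C$ of $\tilde h^*\mathcal U|_C$ has $\deg(M|_C)\geq 0$. For $M^2=0$: the natural map $\tilde\pi^*M\to \Omega^1_{\wt S}$ given by the counit $\tilde\pi^*(\tilde\pi_*\Omega^1_{\wt S})_{-1}\to \Omega^1_{\wt S}$ is injective (it is generically an isomorphism on the $(-1)$-eigenspace), so $\tilde\pi^*M$ is an invertible subsheaf of $\Omega^1_{\wt S}$; Bogomolov's lemma then gives $\kappa(\tilde\pi^*M)\leq 1$, hence $\kappa(M)\leq 1$, and a nef line bundle on a surface with Kodaira dimension at most $1$ must have $M^2=0$.

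Finally, for $M\cdot D=0$ on each branch component $D\subseteq \wt R$: if $\mathcal U=0$ the lemma is trivial, so assume $\mathcal U\neq 0$. By Corollary~\ref{corollaylifting}, a finite \'etale base change allows us to lift $\rank \mathcal U$ independent flat sections of $\mathcal U$ to $H^0(\ol S,\Omega^1_{\ol S})$, producing new $1$-forms; hence $q_{\bar f}>0$, and Proposition~\ref{fibredprop} provides the fibration $\tilde h':\wt Y\to \ol D'\cong \bbp^1$ with $\wt R$ contained in its fibres. An analysis of $\tilde\pi^*M\subseteq \tilde f'^*\Omega^1_{\ol B'}$ on $\wt S$ identifies the numerical fibration defined by $M$ on $\wt Y$ with $\tilde h'$, so every component of $\wt R$ lies in a fibre of this fibration, giving $M\cdot D=0$. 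The main obstacle will be making rigorous the rank-$1$ factorization in Step~2 and the identification of $M$'s numerical fibration with $\tilde h'$, both of which hinge on a careful analysis of log-poles along $\wt R$ and Chern-class data on the ruled surface $\wt Y$.
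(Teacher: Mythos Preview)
Your argument has a genuine gap in the last step, and a smaller one in the invertibility claim.

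For $M\cdot D=0$ you invoke Proposition~\ref{fibredprop}, which requires $q_{\bar f}>0$, and you justify $q_{\bar f}>0$ by asserting that Corollary~\ref{corollaylifting} together with a finite \'etale base change lifts flat sections of $\mathcal U$ to $H^0(\ol S,\Omega^1_{\ol S})$. But Corollary~\ref{corollaylifting} only produces a class in $H^0(\ol S,\Omega^1_{\ol S}\otimes\bar f^*\mathcal U^\vee)$; to obtain honest holomorphic $1$-forms you would need $\mathcal U$ to be trivial, and a unitary local system need not trivialize on any finite \'etale cover. In fact that trivialization is exactly the content of Proposition~\ref{theoremq_f=rankF10}, whose proof \emph{uses} the present lemma --- so your route is circular. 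The paper avoids this entirely: once $M$ is known to be a nef invertible subsheaf, it uses only the inclusion $M\otimes\mathcal O_{\wt Y}(\wt L)\hookrightarrow\Omega^1_{\wt Y}(\log\wt R)$ and Bogomolov's lemma to conclude both $M^2=0$ and $M\cdot D=0$, with no reference to the second fibration $\tilde h'$.

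Two smaller points. First, you write ``invertible after saturation'', but the lemma asserts that the image itself is invertible; the paper handles this by a blow-up argument (if the image were $M\otimes I_Z$ with $Z\neq\emptyset$, semi-positivity of the pulled-back quotient would force $M^2>0$, hence $\kappa(\wt L+M)=2$, contradicting Bogomolov). Second, your rank-one argument via the relative cotangent sequence is more delicate than necessary: the paper simply observes that if the image had rank two, then $\wedge^2\varrho$ would yield a nonzero semi-positive invertible subsheaf of $\wedge^2(\tilde\pi_*\Omega^1_{\wt S})_{-1}=\omega_{\wt Y}$, which is impossible since $\wt Y$ is ruled.
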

\begin{proof}
First of all, we show that $\varrho\neq 0$. It is known that
\begin{equation}\label{pi_*fenjie}
\left(\tilde\pi_*\Omega^1_{\wt S}\right)_1=\Omega^1_{\wt Y},\qquad
\left(\tilde\pi_*\Omega^1_{\wt S}\right)_{-1}=\Omega^1_{\wt Y}\left(\log (\wt R)\right)(-\wt L),
\end{equation}
where $\wt R\equiv2\wt L$ ($\equiv$ stands for linearly equivalent)
is the defining data of the double cover $\tilde\pi:\, \wt S \to \wt Y$.
Note that by Corollary \ref{corollaylifting}, the induced map
\begin{eqnarray}
\mathcal U=\tilde h_*\tilde h^*\mathcal U
&\lra& \tilde h_*\bigg(\left(\tilde\pi_*\Omega^1_{\wt S}\right)_1\bigoplus
\left(\tilde\pi_*\Omega^1_{\wt S}\right)_{-1}\bigg)\nonumber\\
&&\hspace{-0.4cm}=\tilde h_*\bigg(\Omega^1_{\wt Y}\left(\log (\wt R)\right)(-\wt L)\bigg)\hookrightarrow
\bar f_*\Omega^1_{\ol S/\ol B}(\log\Upsilon)\label{Ninclusion}
\end{eqnarray}
is just the inclusion $\mathcal U\hookrightarrow \bar f_*\Omega^1_{\ol S/\ol B}(\log\Upsilon)$.
Hence in particular, $\varrho\neq0$.

Second, we prove that the image of $\varrho$ is a subsheaf of rank one. Otherwise,
it is of rank two,
and so the second wedge product
$$\wedge^2 \tilde h^* \mathcal U\overset{\wedge^2\varrho}\lra
\wedge^2\left( \tilde\pi_*\Omega^1_{\wt S}\right)_{-1}=\omega_{\wt Y}$$
is a non-zero map. Note that the image of the map
is a quotient sheaf of  $\wedge^2 \tilde h^*\mathcal U$ coming from a unitary local system, so the image sheaf
is semi-positive. But this is impossible, because $\omega_{\wt Y}$ can not contain
any non-zero semi-positive subsheaf.

Finally, we show that the image of $\varrho$ is an invertible sheaf.
Suppose on the contrary the image of $\varrho$ is of the form $M\otimes I_{Z}$,
where $M$ is an invertible subsheaf and $\dim Z=0$ with $Z\neq\emptyset$.
By a suitable blow-up $\rho:\,X\to \wt Y$, we may assume the image of
$\rho^*\tilde h^*\mathcal U$ is $\rho^*(M)\otimes \calo_X(-E)$,
where $E>0$ is a combination of the exceptional curves.
As $\mathcal U$ comes from a unitary local system, we get that $\rho^*(M)\otimes \calo_X(-E)$ is semi-positive and so
$$0\leq \big(\rho^*(M)-E\big)^2=M^2+E^2.$$
Hence $M$ is also semi-positive and $M^2 \geq -E^2>0$, which implies that the Kodaira dimension of $M$ is two.
However, by \eqref{pi_*fenjie}, we get the following inclusion of sheaves,
\begin{equation}\label{Linclusion}
\mathcal O_{\wt Y}(\wt L)\otimes M\hookrightarrow \Omega^1_{\wt Y}\left(\log (\wt R)\right).
\end{equation}
As $2\wt L\equiv \wt R$ is effective,
the Kodaira dimension of $\wt L\otimes M$ is also two, which
is impossible by Bogomolov's lemma (cf. \cite[Lemma 7.5]{sakai80}).

Hence the image of $\varrho$ is an invertible subsheaf $M$,
which is semi-positive since it is a quotient sheaf of a vector bundle coming from a unitary local system.
And we still have the inclusion \eqref{Linclusion}.
So again by Bogomolov's lemma, we get $M^2=0,\text{~and~} M\cdot D=0.$
\end{proof}

\begin{proposition}\label{theoremq_f=rankF10}
After a suitable base change unbranched over $\ol B\setminus \Delta$,
$F_{\ol B}^{1,0}$ is trivial, i.e.,
\begin{equation}\label{eqnpfofthmFtrivial1}
F_{\ol B}^{1,0}=\bigoplus\limits_{i=1}^{r} \mathcal O_{\ol B},\qquad\qquad {\rm where~} r=\rank F^{1,0}_{\ol B}.
\end{equation}
\end{proposition}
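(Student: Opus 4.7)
The strategy is to apply Lemma~\ref{LR=0} iteratively to rank-one flat quotients of $F^{1,0}_{\ol B}$, use the resulting numerical constraints on the associated line bundles on the ruled surface $\wt Y$ to show each is numerically trivial, and then invoke Bogomolov's lemma to deduce that the corresponding characters of $\pi_1(\ol B\setminus\Delta)$ have finite order.

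Since $F^{1,0}_{\ol B}$ arises from a unitary local subsystem of $\mathbb V_{\ol B\setminus\Delta}$, it is polystable of degree zero. The plan is to extract rank-one flat quotients iteratively: apply Lemma~\ref{LR=0} to $\mathcal U=F^{1,0}_{\ol B}$ to obtain an invertible image sheaf $M_1$ on $\wt Y$ satisfying $M_1^2=0$, $M_1$ nef, and $M_1\cdot D=0$ for every component $D$ of $\wt R$. This gives a rank-one flat quotient of $F^{1,0}_{\ol B}$; the kernel is again a flat subbundle coming from a unitary local subsystem, so the procedure can be repeated. Combined with a finite \'etale base change $\ol B'\to\ol B$ unramified over $\ol B\setminus\Delta$ chosen to diagonalize the unitary monodromy (invoking semisimplicity of unitary representations), this yields a decomposition $F^{1,0}_{\ol B'}=L_1\oplus\cdots\oplus L_r$ into flat line bundles, each $L_i\in\mathrm{Pic}^0(\ol B')$ associated to a unitary character $\chi_i:\pi_1(\ol B'\setminus\Delta')\to\mathrm U(1)$.

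For each $L_i$, Lemma~\ref{LR=0} produces an invertible subsheaf $M_i\hookrightarrow\Omega^1_{\wt Y}(\log\wt R)(-\wt L)$ with the image of $\tilde h^*L_i$ equal to $M_i$. Since $L_i$ has degree zero, $\tilde h^*L_i$ is numerically trivial; writing $M_i=\tilde h^*L_i+E_i$ with $E_i\geq 0$ effective, the relations $E_i^2=0$, $E_i$ nef, and $E_i\cdot\sigma_j=0$ for each of the $2g+2$ sections $\sigma_j\subset\wt R$—together with the ruled structure of $\tilde h:\wt Y\to\ol B'$ and the fact that each section meets every fiber of $\tilde h$ transversally in exactly one point—force $E_i=0$. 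Hence $M_i=\tilde h^*L_i$ numerically, and the inclusion rewrites as $\tilde h^*L_i\otimes\mathcal O_{\wt Y}(\wt L)\hookrightarrow\Omega^1_{\wt Y}(\log\wt R)$. Bogomolov's lemma then bounds $\kappa\bigl(\tilde h^*L_i\otimes\mathcal O_{\wt Y}(\wt L)\bigr)\leq 1$, and since $\wt L\cdot F=g+1>0$ for a general fiber $F$, a careful analysis of sections of sub-line bundles of $\Omega^1_{\wt Y}(\log\wt R)$ with this numerical class—keeping track of how the inclusion varies with $L_i\in\mathrm{Pic}^0(\ol B')$—forces each $L_i$ to be torsion. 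A further finite \'etale base change trivializing all $L_i$'s simultaneously then gives $F^{1,0}_{\ol B}\cong\mathcal O^{\oplus r}$.

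The main obstacle is extracting the torsion conclusion from the Bogomolov bound in the last step: translating $\kappa(\tilde h^*L_i\otimes\mathcal O_{\wt Y}(\wt L))\leq 1$ into finite order of the monodromy character defined by $L_i$. This requires controlling how the family of inclusions $\tilde h^*L_i\otimes\mathcal O_{\wt Y}(\wt L)\hookrightarrow\Omega^1_{\wt Y}(\log\wt R)$ can deform as $L_i$ varies in $\mathrm{Pic}^0(\ol B')$, ruling out non-torsion deformations by a dimension count against the finite-dimensional space of sections of a fixed twist of $\Omega^1_{\wt Y}(\log\wt R)$.
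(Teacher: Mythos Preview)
Your overall shape is right---split $F^{1,0}_{\ol B}$ into flat line bundles and show each is torsion---but the execution diverges from the paper in two places, and the second is a genuine gap.

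First, the paper's route to rank-one pieces is shorter. Rather than iterating Lemma~\ref{LR=0} and analyzing the effective divisor $E_i$ on the ruled surface, the paper takes an irreducible summand $\mathcal U_i$ of $F^{1,0}_{\ol B}$, applies Lemma~\ref{LR=0} once to get $M$ with $M\cdot D=0$ for a section $D\subset\wt R$, and then simply \emph{restricts} $M$ to $D\cong\ol B$. This restriction $\mathcal F=\mathcal O_D(M)$ is a degree-zero line bundle on $\ol B$ that is a quotient of $\mathcal U_i$; polystability of $\mathcal U_i$ then forces $\mathcal U_i=\mathcal F\oplus\mathcal U'$, contradicting irreducibility unless $\rank\mathcal U_i=1$. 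Your argument that $E_i=0$ from $E_i^2=0$, $E_i$ nef, $E_i\cdot\sigma_j=0$ is plausible but requires care (you must rule out partial fiber components of $\wt Y\to\ol B$ not meeting all $2g+2$ sections), and in any case it is unnecessary once you use restriction to a section.

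Second, and more seriously, your final step---deducing that each $L_i$ is torsion from Bogomolov's lemma and a ``dimension count''---is not a proof, and you yourself flag it as the main obstacle. The bound $\kappa(\tilde h^*L_i\otimes\mathcal O_{\wt Y}(\wt L))\leq 1$ does not by itself constrain $L_i\in\Pic^0(\ol B)$ to a torsion point; the proposed deformation argument in $\Pic^0(\ol B)$ is too vague to carry weight, and there is no evident finite-dimensional target to count against as $L_i$ varies. The paper does not attempt anything of this sort: once the $\mathcal U_i$ are known to be rank one, it invokes Deligne \cite[\S4.2]{deligne71} (the global invariant cycle theorem applied to the rank-one unitary subsystem) or equivalently Barja \cite[Theorem~3.4]{bar98}, which directly yield that a rank-one unitary local subsystem of $R^1\bar f_*\mathbb C$ has finite monodromy, hence the associated line bundle is torsion in $\Pic^0(\ol B)$. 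You should replace your last paragraph with this citation; the Hodge-theoretic input is essential and is not recoverable from the Bogomolov-type numerical information alone.
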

\begin{proof}
Similarly to the proof of Proposition \ref{boundofxi_0prop} on Page \pageref{proofofpropboundxi},
we may restrict ourselves to the situation that
the double cover $\pi:\,\ol S\to \ol Y$ induced by the hyperelliptic involution
is an admissible double cover.
Then the branched divisor $\wt R \subseteq \wt Y$ of the smooth double cover
$\tilde\pi:\, \wt S \to \wt Y$ in Figure \ref{hyperellipticdiagram}
is a union of $2g+2$ sections and some curves contained in fibres of $\tilde h:\, \wt Y \to \ol B$.
Let $D$ be such a section, and
$$F^{1,0}_{\ol B}=\bigoplus\limits_{i=1}^{t}\,\mathcal U_i,\qquad \text{with each~}\mathcal U_i\text{~irreducible}.$$

We first prove $\rank \mathcal U_i=1$ for any $i$.
Assume on the contrary that $\rank \mathcal U_i \geq 2$ for some $i$.
Let $f:\,S \to B=\ol B\setminus\Delta_{nc}$
be the largest part of $\bar f$ such that the relative Jacobian is smooth.
Then
\begin{equation}\label{eqnpfofthmFtrivial2}
\left(\mathbb V_{B}:=R^1jac(f)_*\mathbb C_{Jac(S/B)} \right)\big|_{\ol B\setminus\Delta} \cong
\mathbb V_{\ol B\setminus \Delta}:= R^1f_*\mathbb C_{f^{-1}(\ol B\setminus \Delta)}.
\end{equation}
Hence the inclusion
$$\mathcal U_i \hookrightarrow F^{1,0}_{\ol B} \hookrightarrow \bar f_*\omega_{\ol S/\ol B}
=\bar f_*\Omega^1_{\ol S/\ol B}(\log\Upsilon)$$
underlies a unitary locally constant subsheaf
$\mathbb U_i\hookrightarrow \mathbb V_{\ol B\setminus \Delta}:= R^1f_*\mathbb C_{f^{-1}(\ol B\setminus \Delta)}$.
By Lemma \ref{LR=0} with $\mathcal U=\mathcal U_i$,
we obtain $M\cdot D=0$,
i.e., $\deg\mathcal O_D(M)=0$.
As $D$ is a section, $D\cong \ol B$. Hence we may view $\calf:=\mathcal O_D(M)$ as an invertible
sheaf on $\ol B$, which is a quotient of $\mathcal U$ for $M$ is a quotient of $\tilde h^*\mathcal U$.
As $\mathcal U$ comes from a unitary local system, $\mathcal U$ is poly-stable.
Thus $\mathcal U=\calf \oplus \mathcal U'$ contradicting the irreducibility of $\mathcal U$.
Hence $\rank \mathcal U_i=1$ as required.

Now applying \cite[\S\,4.2]{deligne71} or \cite[Theorem 3.4]{bar98},
we get that $\mathcal U_i$ is torsion in ${\rm Pic}^0(\ol B)$.
So after a further suitable finite \'etale base change,
$\mathcal U_i \cong \mathcal O_{\ol B}$ for any $i$ as required.
\end{proof}

\begin{proof}[Proof of Theorem {\rm \ref{thmFtrivial}}]
Because of \eqref{eqnpfofthmFtrivial2}, the flat part $F_{\ol B}^{1,0}$ in the decomposition \eqref{decompB}
is a flat vector bundle underlying a unitary local subsystem of $\mathbb V_{\ol B\setminus\Delta}$
coming from a representation of the fundamental group
into a unitary group of rank $r=\rank F^{1,0}$:
$$\tilde\rho_{F}:~\pi_1\big(\ol B\setminus\Delta\big)\to U(r).$$
Note that the monodromy around $\Delta$ is unipotent, since $\bar f$ is semi-stable.
Hence $\tilde\rho_F$ actually factors through $\pi_1(\ol B)$:
$$
\xymatrix{
\pi_1\big(\ol B\setminus\Delta\big) \ar[rr]^-{\tilde\rho_F}\ar[dr]_-{i_*}&& U(r)\\
&\pi_1(\ol B)\ar[ur]_-{\rho_F}&
}$$

From Proposition \ref{theoremq_f=rankF10} it follows that the image of $\tilde\rho_F$ is finite.
Because $\tilde\rho_F$ factors through $\pi_1(\ol B)$
and $i_*$ is surjective, one gets that $\rho_F$ has also finite image.
It implies that $F_{\ol B}^{1,0}$ becomes trivial after a suitable finite \'etale base change.
From this together with Deligne's global invariant cycle theorem
(cf. \cite[\S\,4.1]{deligne71}) or Fujita's decomposition theorem (cf. \cite[Theorem\,3.1]{fujita78}),
it follows that $\rank F_{\ol B}^{1,0}=q_{\bar f}$ after passing a finite \'etale base change.
\end{proof}

\section{Examples and miscellaneous results}\label{sectionexample}
In this section, we construct some Shimura curves contained generically in the Torelli locus
in the low genus case,
and prove miscellaneous related results.

\begin{example}\label{exshimurag=3}
We construct a Shimura curve contained generically in the Torelli locus of hyperelliptic curves of genus $g=3$.
\vspace{0.2cm}

Let $C,\,H_{x_0}\subseteq X_0=\bbp^1\times \bbp^1$ be defined respectively by
$$1+(4t-2)x^2+x^4=0,\text{\quad and \quad} x=x_0,$$
where $t$ and $x$ are the coordinates of the first and second factor of $X_0$ respectively.
The projection of $C$ to the first factor $\bbp^1$ of $X_0$
branches exactly over three points, i.e., $\{0,1,\infty\}$.
Locally, it looks like the following.
\begin{center}
\setlength{\unitlength}{1.3mm}
\begin{picture}(90,20)
\multiput(15,2)(0,3){6}{\line(0,1){2}}
\qbezier(20,2)(10,5)(20,8)
\qbezier(20,12)(10,15)(20,18)
\put(15,5){\circle*{0.8}}
\put(15,15){\circle*{0.8}}
\put(12,-2){$t=0$}
\put(9,4){$-1$}
\put(11,14){$1$}

\multiput(45,2)(0,3){6}{\line(0,1){2}}
\qbezier(50,3)(40,6)(50,9)
\qbezier(50,13)(40,16)(50,19)
\put(45,6){\circle*{0.8}}
\put(45,16){\circle*{0.8}}
\put(42,-2){$t=1$}
\put(35,5){$-\sqrt{-1}$}
\put(37,15){$\sqrt{-1}$}

\multiput(75,2)(0,3){6}{\line(0,1){2}}
\qbezier(80,1)(70,4)(80,7)
\qbezier(80,11)(70,14)(80,17)
\put(75,4){\circle*{0.8}}
\put(75,14){\circle*{0.8}}
\put(72,-2){$t=\infty$}
\put(71,3){$0$}
\put(70,13){$\infty$}
\end{picture}
\end{center}
\vspace{0.3cm}

Let $\varphi:\,\bbp^1 \to \bbp^1$ be the cyclic cover of degree $4$
defined by $t=(t')^4$,
totally ramified over $\{0,\infty\}$.
Let $X_1$ be the normalization of the fibre-product $X_0\times_{\bbp^1}\bbp^1$
and $R$ the inverse image of
$$C\cup H_{1}\cup H_{-1}\cup H_0\cup H_{\infty}\,.$$
Then $R$ is a double divisor,
i.e.,  we can construct a double cover $S_1 \to X_1$ branched exactly over $R$.
Let $S' \to X_r$ be the canonical resolution,
and $\bar f:\,\ol S \to \bbp^1$ the relatively minimal smooth model as follows.
\begin{center}\mbox{}
\xymatrix{
&S'\ar[d]\ar[ld]\ar[r] &X_r \ar[d] &&\\
\ol S  \ar[dr]_{\bar f} & S_1\ar[r]\ar[d]
& X_1 \ar[rr]^{\Phi}  \ar[dl]^{\tau_2} && X_0 \ar[d]^{\tau_1}  \\
& \bbp^1 \ar[rrr]^{\varphi} &&&\bbp^1
}
\end{center}

By the theory of double covers (cf. \cite[\S\,III.22]{bhpv}),
it is not difficult to show that $\bar f:\,\ol S \to \bbp^1$
is a semi-stable hyperelliptic family of genus $g=3$.
In fact, there are exactly $6$ singular fibres in the family $\bar f$,
i.e., those fibres $\Upsilon$ over $\Delta:=\varphi^{-1}(0\cup1\cup\infty)$.
More precisely, for any fibre $F$ over $\varphi^{-1}(1)$,
$F$ is an irreducible singular elliptic curve with exactly two nodes, hence
$\xi_0(F)=2$ and $\delta_1(F)=\xi_1(F)=0;$
for any fibre $F$ over $\varphi^{-1}(0\cup\infty)$,
$F$ is a chain of three smooth elliptic curves, hence
$\delta_1(F)=2$ and $\xi_0(F)=\xi_1(F)=0.$
So
$$\xi_0(\Upsilon)=8,\qquad \delta_1(\Upsilon)=4,\quad\text{and}\quad \xi_1(\Upsilon)=0.$$
Therefore by \eqref{formulaofdegomega},
\begin{equation*}
\deg \bar f_*\omega_{\ol S/\mathbb P^1}=2.
\end{equation*}

By definition, those fibres over $\Delta_{ct}:=\varphi^{-1}(0\cup\infty)$ have compact Jacobian,
while those over $\Delta_{nc}:=\varphi^{-1}(1)$ have non-compact Jacobian.
Hence the Jacobian of $\bar f$ admits exactly $\left|\Delta_{nc}\right|=4$ singular fibres over $\bbp^1$.
By \cite[\S\,7]{vz04}, the Higgs field of any semi-stable family of abelian varieties over $\bbp^1$
with exactly $4$ singular fibres must be maximal.
Hence the base $\mathbb P^1 \setminus \Delta_{nc}$
(more precisely, the image of $\mathbb P^1 \setminus \Delta_{nc}$ in $\cala_3$)
is a totally geodesic curve by Corollary \ref{prophyper} and Theorem \ref{theoremnumchar}.

It remains to show that $\mathbb P^1 \setminus \Delta_{nc}$ is in fact a Shimura curve.
We present here two ways.

(i).~
Let
$$\left(\bar f_*\omega_{\ol S/\mathbb P^1}\oplus R^1\bar f_*\mathcal O_{\ol S},~\theta_{\mathbb P^1}\right)
=\left(A_{\mathbb P^1}^{1,0}\oplus A_{\mathbb P^1}^{0,1},~\theta_{\mathbb P^1}\big|_{A_{\mathbb P^1}^{1,0}}\right)
\oplus \left(F_{\mathbb P^1}^{1,0}\oplus F_{\mathbb P^1}^{0,1},~ 0\right)$$
be the decomposition of the associated logarithmic Higgs bundle as in \eqref{decompB}.
Since the base is $\bbp^1$, one has
$q_{\bar f}=\rank F_{\mathbb P^1}^{1,0}$.
Hence by \eqref{arakelovB},
$$2=\deg \bar f_*\omega_{\ol S/\mathbb P^1}=  \deg A_{\mathbb P^1}^{1,0}
=\frac{g-q_{\bar f}}{2}\cdot\deg \left(\Omega^1_{\bbp^1}(\log\Delta_{nc})\right)=3-q_{\bar f},
~~\Longrightarrow~~ q_{\bar f}=1.$$
Since the base $\bbp^1$ is simply connected, by \cite[Theorem 0.2]{vz04},
the relative Jacobian of $\bar f$ is isogenous over $\bbp^1$ to a product
\begin{equation}\label{exampledec}
E\times_{\bbp^1}\mathcal E\times_{\bbp^1}\mathcal E,
\end{equation}
where $E$ is a constant elliptic curve, and $\mathcal E \to \bbp^1$ is
a family of semi-stable elliptic curves with maximal Higgs field.
To show that $\mathbb P^1 \setminus \Delta_{nc}$ is a Shimura curve, it suffices to prove that
the constant part $E$ has complex multiplication.

It is not difficult to see that our family is actually defined by
\begin{equation}\label{exampledefoff}
y^2=\big(1+(4(t')^4-2)x^2+x^4\big)\cdot(x^2-1)\cdot x.
\end{equation}
Let $E_0$ be a constant elliptic curve defined by
$u^4=v\cdot(v+1)^2.$
Then it is clear that $E_0$ has complex multiplication by $\mathbb Z\left[\sqrt{-1}\right]$.
Define a morphism from the family $\bar f$ to the constant family $E_0$ by
$$(u,\,v)=\psi(x,\,y)=\left(\frac{\sqrt{2}\cdot t'y}{(x^2-1)^2},\,\frac{4(t')^4x^2}{(x^2-1)^2}\right).$$
It can be checked easily that $\psi$ is well-defined.
Hence the Jacobian of $\bar f$ contains a constant part $E_0$.
Note that the constant part $E$ in the decomposition \eqref{exampledec} is unique up to isogenous,
and the property with complex multiplication is invariant under isogenous.
Therefore, the constant part $E\sim E_0$ has complex multiplication, and so
 $\mathbb P^1 \setminus \Delta_{nc}$ is a Shimura curve.

(ii).~
We prove that $\mathbb P^1 \setminus \Delta_{nc}$ is a Shimura curve
by showing that our family $\bar f$ is actually isomorphic to
a known special family constructed by Moonen and Oort \cite{mo11}.
Let
$$u=\frac{1+x^2}{1-x^2},\qquad v=\frac{2y}{(1-x^2)^2},\qquad  w=\left(\frac{1+x^2}{1-x^2}\right)^2.$$
Then by virtue of \eqref{exampledefoff}, we see that our family is isomorphic to
$$ {\mathcal U_{t'}}:\quad \left\{
\begin{aligned}
u^2&=w,\\
v^4&= \Big(2(t')^4w-2\big((t')^4-1\big)\Big)^2\cdot(w-1).
\end{aligned}\right.$$
Such a family can be viewed as a family of abelian covers of $\bbp^1$ branched exactly over
$4$ points with Galois group $\mathbb Z_2\times \mathbb Z_4$ and local monodromy of
the branched points being $\big((1,0), (1,1), (0,1), (0,2)\big)$.
And it is just the family (22) given in \cite[\S 6,\,{\sc Table\,2}]{mo11}, which is special.

We remark that by \cite{mo11}, we do not know whether the corresponding Shimura curve
is complete or not (i.e., whether $\Delta_{nc}=\emptyset$ or not).
Our concrete description shows that such a Shimura curve is a non-complete rational Shimura curve.
\end{example}

\begin{example}\label{exshimurag=4}
We construct a Shimura curve with strictly maximal Higgs field
contained generically in the Torelli locus of hyperelliptic curves of genus $g=4$.
\vspace{0.15cm}

The construction is similar to Example \ref{exshimurag=3}.

Let $C$, $H_{x_0}$ and $X_0$ be the same as those in Example \ref{exshimurag=3}.
Let $\varphi:\,\ol B \to \bbp^1$ be a cover of degree $8$,
ramified uniformly over $\{0,1,\infty\}$ with ramification indices equal to $4$.
It is easy to see that such a cover exists, and
$$g(\ol B)=2,\quad |\Delta|=6,\qquad \text{where $\Delta=\varphi^{-1}(0\cup1\cup\infty)$.}$$
Let $X_1$ be the normalization of $X_0\times_{\bbp^1}\ol B$
and $R$ the inverse image of
$$C\cup H_{1}\cup H_{-1}\cup H_{\sqrt{-1}}\cup H_{-\sqrt{-1}}\cup H_0\cup H_{\infty}\,.$$
Then $R$ is a double divisor,
i.e.,  we can construct a double cover $S_1 \to X_1$ branched exactly over $R$.
Let $\bar f:\,\ol S \to \ol B$ the relatively minimal smooth model as follows.
\begin{center}\mbox{}
\xymatrix{
\ol S  \ar[dr]_{\bar f}\ar@{<-->}[r] & S_1\ar[r]\ar[d]
& X_1 \ar[rr]^{\Phi}  \ar[dl]^{\tau_2} && X_0 \ar[d]^{\tau_1}  \\
& \ol B \ar[rrr]^{\varphi} &&&\bbp^1
}
\end{center}

By \cite[\S\,III.22]{bhpv}, one can show that $\bar f:\,\ol S \to \ol B$
is a semi-stable hyperelliptic family of genus $g=4$
with $6$ singular fibres,
i.e., those fibres $\Upsilon$ over $\Delta$.
More precisely, for any fibre $F\in \Upsilon$,
$F$ consists of two smooth elliptic curves $D_1$, $D_2$, and a smooth closed curve $\wt D$ of genus\,$2$,
such that $D_1$ does not intersect $D_2$, and $\wt D$ intersects each $D_i$
transversely in one point for $i=1,2$.
Hence
$$\delta_1(F)=2,\quad \text{~and~} \quad \delta_2(F)=\xi_0(F)=\xi_{1}(F)=0,
\qquad \forall~ F \in \Upsilon.$$
Since $|\Delta|=6$, by \eqref{formulanoether}, \eqref{formulaofdegomega} and \eqref{formulaofdelta_f} one gets
\begin{equation*}
\delta_{\bar f}=12,\qquad
\deg \bar f_*\omega_{\ol S/\ol B}=4,\qquad
\omega_{\ol S/\ol B}^2=36.
\end{equation*}

By definition, any singular fibre of $\bar f$ has a compact Jacobian,
so the Jacobian of $\bar f$ is a smooth family of abelian varieties of dimension $4$.
Let $A_{\ol B}^{1,0}\subseteq \bar f_*\omega_{\ol S/\ol B}$ be the ample part in the decomposition \eqref{decompB}.
Then according to the Arakelov inequality, we have
$$4=\deg \bar f_*\omega_{\ol S/\ol B}
\leq \frac{\rank A_{\ol B}^{1,0}}{2}\cdot \deg \Omega_{\ol B}=\rank A_{\ol B}^{1,0}
\leq \rank \bar f_*\omega_{\ol S/\ol B} =g=4.$$
Hence the Jacobian of $\bar f$ reaches the Arakelov bound with $\rank A^{1,0}=g$,
i.e., the Higgs field associated to $\bar f$ is strictly maximal.
Therefore $B=\ol B$ (more precisely, the image of $\ol B$ in $\cala_4$)
 is a Shimura curve of type II by Corollary \ref{prophyper} and
 Theorem \ref{theoremnumchar} since $\Delta_{nc}=\emptyset$.

We remark that in this example,
$$c_1^2\left(\Omega_{\ol S}^1(\log D)\right)=3c_2\left(\Omega_{\ol S}^1(\log D)\right)=72,$$
where $D$ is the union of those $12$ smooth disjoint elliptic curves contained in $\Upsilon$.
Hence $\ol S\setminus D$ is a ball quotient by \cite{kobayashi} or \cite{mok12}.
\end{example}

\begin{example}\label{exshimurag=3strict}
We construct a Shimura curve of type I contained generically in the Torelli locus $\calt_3$,
which can not be represented by a family $\bar f: \ol S\to \ol B$ of semi-stable curves of genus $g=3$
with strictly maximal Higgs field.\vspace{0.1cm}

In Section \ref{sectiontwotype}, we have constructed Shimura curves of type I in $\cala_g$ for each $g$.
Since $\calt_3=\cala_3$, there is Shimura curve of type I contained in $\calt_3$.
Note that any Hecke translate of a Shimura curve of type I is still a Shimura curve of type I,
and all the Hecke translates of such a curve are dense in $\cala_g$.
Therefore there must be a Shimura curve $C$ of type I contained generically in $\calt_3$,
and moreover we may find such a curve $C$ which is not contained in the Torelli locus of hyperelliptic
curves.
This implies that the family $\bar f$ of semi-stable curves representing $C$ by the Torelli morphism
is non-hyperelliptic.
Hence by Theorem \ref{theoremstrictarak3'} blow, $\bar f$ cannot have strictly maximal Higgs field.
\end{example}

The next proposition can be viewed as a byproduct of the proof of Theorem \ref{thmlower1}.
\begin{proposition}\label{theoremlower4}
Let $\bar f:\,\ol S \to \ol B$ be a non-hyperelliptic family of semi-stable curves.
Assume that $\bar f_*\omega_{\ol S/\ol B}$ is semi-stable. Then
\begin{equation}\label{eqntheoremlower4}
\omega_{\ol S/\ol B}^2 \geq \frac{5g-6}{g}\deg \bar f_*\omega_{\ol S/\ol B}
+\sum_{p\in \Delta_{ct}}\big(3l_h(F_p)+2l_1(F_p)-3\big).
\end{equation}
\end{proposition}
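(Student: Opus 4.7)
The plan is to follow the scheme of the proof of Theorem \ref{thmlower1} verbatim, only simplified, since here we do not track the ramification locus $\Lambda$ and therefore need only the first, non-ramified, length estimate from Lemma \ref{lowlinshi2}. Concretely, since $\bar f$ is non-hyperelliptic, the natural multiplication
$$\varrho:~S^2\left(\bar f_*\omega_{\ol S/\ol B}\right)\lra \bar f_*\big(\omega_{\ol S/\ol B}^{\otimes2}\big)$$
is generically surjective, and yields an exact sequence
$$0\lra \calr\lra S^2\left(\bar f_*\omega_{\ol S/\ol B}\right)\ot{\varrho}\lra \bar f_*\big(\omega_{\ol S/\ol B}^{\otimes2}\big)\lra \cals\lra 0,$$
in which $\cals$ is a torsion sheaf and $\rank\calr=(g-2)(g-3)/2$. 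Taking degrees and using the standard computations $\deg S^2\left(\bar f_*\omega_{\ol S/\ol B}\right)=(g+1)\deg \bar f_*\omega_{\ol S/\ol B}$ together with $\deg \bar f_*\big(\omega_{\ol S/\ol B}^{\otimes2}\big)=\omega_{\ol S/\ol B}^2+\deg \bar f_*\omega_{\ol S/\ol B}$ produces the degree balance
$$\deg\calr+\omega_{\ol S/\ol B}^2+\deg \bar f_*\omega_{\ol S/\ol B}=(g+1)\deg \bar f_*\omega_{\ol S/\ol B}+\deg\cals.$$

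Next I would use the semi-stability hypothesis on $\bar f_*\omega_{\ol S/\ol B}$ to bound $\deg\calr$ from above. The symmetric square $S^2\left(\bar f_*\omega_{\ol S/\ol B}\right)$ is then semi-stable of slope $\mu_2=2\deg \bar f_*\omega_{\ol S/\ol B}/g$, so its subsheaf $\calr$ satisfies
$$\deg\calr\leq \rank\calr\cdot\mu_2=\frac{(g-2)(g-3)}{g}\cdot\deg \bar f_*\omega_{\ol S/\ol B}.$$
The remaining step is to bound $\deg\cals=\sum_{p}\text{length}(\cals_p)$ from below. For each $p\in\Delta_{ct}$, Lemma \ref{lowlinshi2} gives $\text{length}(\cals_p)\geq \dim\,\text{coker}(\nu_p)$, and Lemma \ref{lowlinshi1} gives
$$\dim\,\text{coker}(\nu_p)\geq 3\big(l_h(F_p)-1\big)+2l_1(F_p)=3l_h(F_p)+2l_1(F_p)-3.$$
Summing over $p\in\Delta_{ct}$ and substituting the two bounds into the degree balance, a routine rearrangement yields precisely \eqref{eqntheoremlower4}.

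The proof therefore presents no genuine obstacle: it is strictly a specialization of the argument for Theorem \ref{thmlower1}. The extra input in Theorem \ref{thmlower1} is the doubled length estimate at points of $\Lambda$ coming from Lemma \ref{lemmapropertyoff}, which leverages the fact that $\bar f_*\omega_{\ol S/\ol B}$ is locally generated by sections in $t^2$ near the ramification locus of the Torelli cover $j_B\colon B\to C$. Since the hypothesis of Proposition \ref{theoremlower4} does not involve the Torelli morphism and makes no mention of $\Lambda$, we do not invoke that sharper estimate, and the universal bound from the first part of Lemma \ref{lowlinshi2} suffices to close the argument.
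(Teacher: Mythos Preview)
Your proposal is correct and follows essentially the same approach as the paper's own proof, which simply remarks that the argument of Theorem \ref{thmlower1} goes through with \eqref{deg1}, \eqref{cokernelofnu_p}, and \eqref{S_pcokernelnu_p} in place of the sharper ramification estimates. The only minor difference is that you spell out the degree balance and the role of each lemma explicitly, whereas the paper compresses this into a sentence.
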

\begin{proof}
The proof is similar to that of Theorem \ref{thmlower1}.
We use the same notations.
The assumption that $\bar f_*\omega_{\ol S/\ol B}$ is semi-stable ensures that \eqref{deg1} is still valid.
Note that \eqref{cokernelofnu_p} and \eqref{S_pcokernelnu_p}
are true for any non-hyperelliptic semi-stable family.
Hence \eqref{eqntheoremlower4} follows.
\end{proof}

\begin{theorem}\label{theoremstrictarak3'}
Let $\bar f:\,\ol S \to \ol B$ be a non-isotrivial family of non-hyperelliptic semi-stable curves of genus $g\geq 3$.
Then $\bar f$ cannot have strictly maximal Higgs field, i.e., we have the following strict Arakelov inequality
\begin{equation}\label{eqnstrictarak3'}
\deg \bar f_*\omega_{\ol S/\ol B} < {g\over 2}\cdot\deg\Omega^1_{\ol B}(\log\Delta_{nc}).
\end{equation}
\end{theorem}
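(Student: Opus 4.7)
Argue by contradiction: suppose $\bar f$ has strictly maximal Higgs field, so that the Arakelov equality $\deg \bar f_*\omega_{\ol S/\ol B}=\tfrac{g}{2}\deg\Omega^1_{\ol B}(\log\Delta_{nc})$ holds. For $g\geq 5$ the assertion is already contained in Theorem \ref{theoremstrictarakfamily}, so the new content is restricted to $g=3,4$, where the non-hyperelliptic hypothesis is needed precisely to unlock the sharper slope inequality Proposition \ref{theoremlower4}. By \cite[Proposition 1.2]{vz04}, strictly maximal Higgs forces $\bar f_*\omega_{\ol S/\ol B}$ to be poly-stable, hence semi-stable, supplying exactly the hypothesis of Proposition \ref{theoremlower4}. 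Applying it yields
$$\omega_{\ol S/\ol B}^2 \;\geq\; \tfrac{5g-6}{g}\deg\bar f_*\omega_{\ol S/\ol B}+\sum_{p\in\Delta_{ct}}\bigl(3l_h(F_p)+2l_1(F_p)-3\bigr).$$

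Pair this lower bound with the Miyaoka-Yau upper bound Theorem \ref{thmupper2} (strict when $\Delta_{nc}\neq\emptyset$ or $\Delta=\emptyset$) and substitute the Arakelov equality for $\deg\bar f_*\omega_{\ol S/\ol B}$. Using the compact-Jacobian relations $\delta_0(F_p)=0$ and $\sum_i\delta_i(F_p)=\sum_i l_i(F_p)-1$ from \eqref{eqnFcompJac}, the combined inequality reduces to
$$\tfrac{g-2}{2}\deg\Omega^1_{\ol B}(\log\Delta_{nc})\;\leq\;\sum_{p\in\Delta_{ct}}\bigl[2l_0(F_p)+\delta_h(F_p)-l_h(F_p)+1\bigr],$$
with each summand on the right non-negative (the tree structure of the dual graph of a compact-Jacobian fibre giving $l_h(F_p)-1\leq\delta_h(F_p)$).

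I close the argument by a case split. Fujita semi-positivity and non-isotriviality force $\deg\bar f_*\omega_{\ol S/\ol B}>0$, hence also $\deg\Omega^1_{\ol B}(\log\Delta_{nc})>0$ via the Arakelov equality. If $\Delta_{ct}=\emptyset$ the right-hand sum vanishes and the strict form of Theorem \ref{thmupper2} yields $\tfrac{g-2}{2}\deg\Omega<0$, a contradiction for $g\geq 3$. When $\Delta_{ct}\neq\emptyset$ I upgrade to the refined Miyaoka-Yau bound Theorem \ref{theoremupperlinshi}: Lemma \ref{norational} shows that $\Lambda'\cap\Delta_{ct}=\emptyset$ in the compact-Jacobian setting, the contributions from fibres in $\Delta_{ct,ub}$ cancel against the Proposition \ref{theoremlower4} summand, and only fibres in $\Delta_{ct,b}$ produce a residual defect $\tfrac{7-g}{6}|\Delta_{ct,b}|$. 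The main obstacle is ruling out these "all-elliptic" degenerations for $g=3,4$: the plan is to absorb them via a further Kodaira-Parshin base change branched along $\Delta_{ct,b}$ (as in the proof of Theorem \ref{theoremupperlinshi}), where the per-fibre defect rescales faster than $\deg\Omega^1_{\ol B}(\log\Delta_{nc})$ and drives the combined inequality to a strict one, producing the desired contradiction with the Arakelov equality.
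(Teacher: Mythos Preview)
Your framework is on the right track for $g=4$ but has a genuine gap for $g=3$, and the mechanism you invoke to dispose of $\Delta_{ct,b}$ does not work in either case.

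\textbf{The claim about $\Lambda'$ is wrong.} Lemma~\ref{norational} only says that a stable hyperelliptic curve of compact type has no rational component; it does \emph{not} imply $\Lambda'\cap\Delta_{ct}=\emptyset$. In a non-hyperelliptic family individual fibres can still be hyperelliptic and lie in $\Lambda'$. What actually kills both $\Lambda'\cap\Delta_{ct}$ and $\Delta_{ct,b}$ in the paper's argument for $g=4$ is the observation that for $g=4$ the Arakelov equality forces equality simultaneously in \eqref{eqnupper2} and \eqref{eqnlower2} (the slack term $\tfrac{g-4}{g}(\delta_1+4\delta_h)$ vanishes), and then Remark~\ref{miyaoka12}(i) applies: equality in the Miyaoka--Yau bound \eqref{eqnupper2} forces $\Delta_{nc}=\emptyset$, $\Delta_{ct}\cap\Lambda'=\emptyset$, $\Delta_{ct,b}=\emptyset$, and $l_0(F_p)=0$ on $\Delta_{ct,ub}$. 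Once $\Delta_{ct}=\Delta_{ct,ub}$, your combination of Proposition~\ref{theoremlower4} with \eqref{eqnupperlinshi} gives $\tfrac{g-2}{g}\deg\bar f_*\omega_{\ol S/\ol B}\leq 0$, a contradiction. You should replace the Lemma~\ref{norational} appeal and the base-change handwave by this equality analysis.

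\textbf{The base-change step does not rescue $g=3$.} Your proposed ``further Kodaira--Parshin base change branched along $\Delta_{ct,b}$'' cannot make the defect $\tfrac{7-g}{6}|\Delta_{ct,b}|$ decay faster than $\deg\Omega^1_{\ol B}(\log\Delta_{nc})$: both quantities scale linearly in the degree, and optimizing the parameter $e_{ct,b}$ in the proof of Theorem~\ref{theoremupperlinshi} still leaves a strictly positive per-fibre residual when $g=3$ (e.g.\ for a chain of three elliptic curves with $A=\sum_q\tfrac{1}{\lambda_q+1}=2$ the minimum of $(2g-2)-\tfrac{(g-1)^2}{3A}$ exceeds $2g-3$). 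More fundamentally, for $g=3$ the coefficient $\tfrac{g-4}{g}$ in \eqref{eqnpfarak3} is negative, so the Arakelov equality does \emph{not} force equality in \eqref{eqnupper2}, and you cannot invoke Remark~\ref{miyaoka12}(i) to empty $\Delta_{ct,b}$. The paper therefore abandons the Miyaoka--Yau/slope route entirely for $g=3$ and instead uses the exact relations in $\Pic(\ol{\calm}_3)\otimes\Qbb$: writing $h=\deg\varphi^*([\ol{\calh}_3])$ for the pull-back of the hyperelliptic divisor class, one has the identities \eqref{eqnpfarak3'3}--\eqref{eqnpfarak3'4}, which combine with \eqref{eqnupper2} to give
\[
\deg\bar f_*\omega_{\ol S/\ol B}\;\leq\;\tfrac{3}{2}\deg\Omega^1_{\ol B}(\log\Delta_{nc})-\bigl(\tfrac{7}{18}h+\tfrac{1}{72}\delta_0(\Upsilon)+\tfrac{1}{24}\delta_1(\Upsilon)\bigr),
\]
strict because non-isotriviality makes one of $h,\delta_0,\delta_1$ positive. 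This $g=3$ argument is genuinely different input that your proposal is missing.
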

\begin{proof}
This is an improvement of Theorem \ref{theoremstrictarakfamily}.
It suffices to consider the cases $g=4$ or $3$ according to Theorem \ref{theoremstrictarakfamily}.

Consider first the case $g=4$. We argue by contradiction.
Suppose \eqref{eqnstrictarak3'} does not hold. Then by \eqref{eqnpfarak3}, one has
\begin{equation}\label{eqnpfarak3'1}
\deg \bar f_*\omega_{\ol S/\ol B} = \frac{g}{2}\cdot \deg\Omega^1_{\ol B}(\log\Delta_{nc})
=2\deg\Omega^1_{\ol B}(\log\Delta_{nc}).
\end{equation}
It follows that the associated Higgs field $\theta_{\ol B}$ is strictly maximal,
and both \eqref{eqnupper2} and \eqref{eqnlower2} are equalities.
By Remarks \ref{miyaoka12}\,(i), one has $\Delta_{nc}=\emptyset$ and $\Delta_{ct}=\Delta_{ct,ub}$,
where $\Delta_{ct,ub}$ is defined in Theorem \ref{theoremupperlinshi}.
Hence by \eqref{eqnupperlinshi} together with \eqref{eqnpfarak3'1}, one gets
\begin{equation}\label{eqnpfarak3'2}
\omega_{\ol S/\ol B}^2 \leq \frac{4(g-1)}{g}\cdot \deg \bar f_*\omega_{\ol S/\ol B}
+\sum_{p\in \Delta_{ct}}\big(3l_h(F_p)+2l_1(F_p)-3\big).
\end{equation}
As $\theta_{\ol B}$ is strictly maximal,
$\bar f_*\omega_{\ol S/\ol B}=E^{1,0}_{\ol B}$ is poly-stable by \cite[Proposition\,1.2]{vz04}.
In particular, $\bar f_*\omega_{\ol S/\ol B}$ is semi-stable.
Thus the condition of Proposition \ref{theoremlower4} is satisfied.
It follows from \eqref{eqntheoremlower4} and \eqref{eqnpfarak3'2} that
$\deg f_* \omega_{\ol S/\ol B}=0,$ which is impossible.

Now we consider the case $g=3$.
Let $\ol \calh_3 \subseteq \ol \calm_3$ be the locus of stable hyperelliptic curves, which is a divisor.
Let $[H]$ be its divisor class.
Let $$\varphi:~\ol B \lra \ol\calm_3,\qquad\quad p \mapsto [F_p],$$ be the moduli map of $\bar f$,
and $h=\deg \varphi^*([H])$.
Then by \cite[\S 3-H]{harrismorrison}, we have
\begin{eqnarray}
\deg \bar f_*\omega_{\ol S/\ol B} &=& \frac19h+\frac19\delta_0(\Upsilon)+\frac13\delta_1(\Upsilon),\label{eqnpfarak3'3}\\[0.1cm]
\omega_{\ol S/\ol B}^2 &=& \frac{4}{3}h+\frac13\delta_0(\Upsilon)+3\delta_1(\Upsilon).\label{eqnpfarak3'4}
\end{eqnarray}
Note that $\delta_h(\Upsilon)=0$ for $g=3$.
It follows from \eqref{eqnpfarak3'3}, \eqref{eqnpfarak3'4} and \eqref{eqnupper2} that
$$\begin{aligned}
\deg \bar f_*\omega_{\ol S/\ol B}&\,=\,
\frac{3}{8}\omega_{\ol S/\ol B}^2-\left(\frac{7}{18}h+\frac{1}{72}\delta_0(\Upsilon)+\frac{19}{24}\delta_1(\Upsilon)\right)\\
&\,\leq\,\frac{3}{2}\deg\Omega^1_{\ol B}(\log\Delta_{nc})
-\left(\frac{7}{18}h+\frac{1}{72}\delta_0(\Upsilon)+\frac{1}{24}\delta_1(\Upsilon)\right).
\end{aligned}$$
Since $\bar f$ is non-isotrivial, one of $\big\{h,\delta_0(\Upsilon),\delta_1(\Upsilon)\big\}$
must be positive due to \eqref{eqnpfarak3'3}.
Therefore, \eqref{eqnstrictarak3'} follows immediately for $g=3$.
The proof is complete.
\end{proof}

\begin{remarks}
(i). Together with \cite{grumoeller, vz04},
one can show that a family $\bar f$ of semi-stable genus-$g$ curves
can have strictly maximal Higgs field only when $\bar f$ is hyperelliptic and $g=2$ or $4$.
Indeed, by Theorems \ref{mainthm2} and \ref{theoremstrictarak3'}, It suffices to exclude the case
when $\bar f$ is hyperelliptic of genus $g=3$ with strictly maximal Higgs field.
It { this were the case},
it is proven in \cite{grumoeller} that $\Delta_{nc}\neq \emptyset$;
by \cite[Theorem\,0.5]{vz04}, it follows
that the relative Jacobian of $\bar f$ is isogenous a smooth family of abelian varieties
over a Shimura curve of Mumford type up to a finite \'etale base change,
which implies particularly that $g$ is even (cf Section \ref{sectiontwotype}), { which is absurd.}

(ii). We refer to \cite[\S\,3]{tu07} for such examples of $g=2$ and Example \ref{exshimurag=4} for an example of $g=4$.
\end{remarks}
\vspace{0.2cm}
\appendix
\stepcounter{section}
\phantomsection
\begin{center}
{\sc Appendix: involutions on the universal family of curves}
\end{center}
\addcontentsline{toc}{section}{Appendix: involutions on the universal family of curves}
\renewcommand{\theequation}{\Alph{section}-\arabic{equation}}

The purpose of the appendix is to prove the existence of involutions on the universal family of curves.
This is supposed to be a known result, { and we include it for readers' convenience.}

Recall that there is a universe family of stable curves over
the partial compactification $\calm_g^{ct}=\calm^{ct}_{g,[n]}$ of the moduli space of smooth projective genus-$g$ curves
with level-$n$ structure by adding those stable curves with  compact Jacobians as in \eqref{eqnuniverseonM}.
\setcounter{figure}{0}
\renewcommand{\thefigure}{\Alph{section}.\arabic{figure}\,}
\begin{lemma}\label{existprop}
There exists an involution $\sigma_g$ {\rm(}resp. $\tau_g${\rm)}
on $\cals_g$ {\rm(}resp. $\calm_g^{ct})$
such that the following diagram commutes.
$$\xymatrix{
\cals_g \ar[d]_{\mathfrak f}\ar[rr]^{\sigma_g} && \cals_g \ar[d]^{\mathfrak f}\\
\calm_g^{ct} \ar[rr]^{\tau_g}  && \calm_g^{ct}
}
$$
Moreover, the fixed locus of $\tau_g$ is exactly the hyperelliptic locus $\calh_g^{ct}\subseteq \calm_g^{ct}$,
and for $p\in \calh_g^{ct}$, $\sigma_g|_{C_p}:\,C_p \to C_p$ is the hyperelliptic involution of $C_p$,
where $C_p \subseteq \cals_g$ is the hyperelliptic curve over $p$.
\end{lemma}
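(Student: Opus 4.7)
The plan is to construct both involutions via the natural action of the central element $-I\in \Sp_{2g}(\Zbb/n)$ on the moduli space $\calm_g^{ct}=\calm_{g,[n]}^{ct}$ and on its universal family. Because the standing assumption $n\geq 3$ makes level-$n$ structures rigid (Serre's lemma: an automorphism of a stable curve with compact Jacobian that acts trivially on $H_1(J(C),\Zbb/n)$ must be the identity), $\calm_g^{ct}$ is a fine moduli space and any lift of a moduli-space automorphism to $\cals_g$ is unique.

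First I would define $\tau_g\colon \calm_g^{ct}\to \calm_g^{ct}$ by the moduli-theoretic rule $(C,\phi)\mapsto (C,-\phi)$; this is a nontrivial involution since $-I\neq I$ in $\Sp_{2g}(\Zbb/n)$ for $n\geq 3$. The lift $\sigma_g$ is obtained from the fine-moduli-space property: the underlying stable curve of $(C,\phi)$ and of $(C,-\phi)$ is literally the same $C$, so the identity map on $C$ canonically identifies the fibers of $\mathfrak f$ over $p$ and over $\tau_g(p)$, and these identifications glue over all of $\calm_g^{ct}$ into a morphism $\sigma_g\colon \cals_g\to \cals_g$ with $\mathfrak f\circ\sigma_g=\tau_g\circ\mathfrak f$. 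That $\sigma_g^2=\mathrm{id}$ follows because $\sigma_g^2$ is a lift of $\tau_g^2=\mathrm{id}$ that is fiberwise the identity, hence must equal $\mathrm{id}_{\cals_g}$ by the rigidity above.

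To compute the fixed locus, a point $p=(C,\phi)$ is fixed iff there is an automorphism $\iota\in \mathrm{Aut}(C)$ with $\iota_*\phi=-\phi$, equivalently acting as $-I$ on $H_1(J(C),\Zbb/n)$. Serre's rigidity first forces $\iota^2=\mathrm{id}$, since $\iota_*^2$ acts trivially on the level structure. A standard eigenvalue analysis then shows $\iota^*=-1$ on $H^0(C,\omega_C)$, so that $C/\langle\iota\rangle$ has arithmetic genus zero, whence $C$ is hyperelliptic and $\iota$ is its hyperelliptic involution. The stable-curve case with compact Jacobian reduces to the smooth one componentwise via the tree structure of the dual graph. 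Conversely, for any $C\in \calh_g^{ct}$ the hyperelliptic involution provides the required $\iota$, so $\mathrm{Fix}(\tau_g)=\calh_g^{ct}$.

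Finally, for $p\in \calh_g^{ct}$ the restriction $\sigma_g|_{C_p}$ is an automorphism of $C_p=C$ that carries $\phi$ to $-\phi$, and by the uniqueness argument in the preceding paragraph it must be the hyperelliptic involution. The main obstacle is the rigidity-and-uniqueness step for the lift, together with its extension to stable curves in $\calm_g^{ct}$: one has to verify carefully that the fiberwise identifications assemble into an algebraic (rather than merely set-theoretic) morphism, and that the characterization of automorphisms acting as $-I$ on level-$n$ homology as hyperelliptic involutions genuinely survives for stable curves with compact Jacobian; both reduce, with some care, to Serre's rigidity applied componentwise plus the combinatorial rigidity of a tree dual graph.
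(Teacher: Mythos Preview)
Your approach agrees with the paper's at the level of $\tau_g$: both set $\tau_g([C,\phi])=[C,-\phi]$ and identify the fixed locus with $\calh_g^{ct}$ (the paper citing Oort--Steenbrink for this). The construction of the lift $\sigma_g$, however, contains an internal inconsistency you should resolve.

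You write that ``the identity map on $C$ canonically identifies the fibers of $\mathfrak f$ over $p$ and over $\tau_g(p)$'' and assemble $\sigma_g$ from these fiberwise identities. Taken literally, this forces $\sigma_g|_{C_p}=\mathrm{id}$ whenever $p=\tau_g(p)$, i.e.\ precisely on $\calh_g^{ct}$, which contradicts your own final claim that $\sigma_g|_{C_p}$ is the hyperelliptic involution there. The point is that the fibers of the universal family over $p$ and over $\tau_g(p)$ are not literally the same scheme equipped with a canonical identity map; what the fine-moduli property actually supplies is the \emph{unique} isomorphism $(\cals_g,-\phi_{\mathrm{univ}})\xrightarrow{\sim}\tau_g^*(\cals_g,\phi_{\mathrm{univ}})$ of families with level structure, and over a fixed point this isomorphism is $\iota$, not the identity, exactly because $\iota$ is the unique automorphism carrying $\phi$ to $-\phi$. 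Once ``identity'' is replaced by ``the unique level-structure-compatible isomorphism given by the universal property'', your argument goes through and your last paragraph becomes consistent with the construction.

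The paper makes this transparent by routing through the moduli of one-pointed curves: it sets $\sigma_{g,1}([C,\alpha,x])=[C,-\alpha,x]$ on $\calm_{g,1}^{ct}$, notes that at a hyperelliptic point the equality of moduli points $[C,-\alpha,x]=[C,\alpha,\iota(x)]$ is realized by $\iota$, and then descends to $\cals_g$ via the forgetful contraction $\rho_g:\calm_{g,1}^{ct}\to\cals_g$. This makes visible exactly why the hyperelliptic involution, rather than the identity, appears on the fixed fibers, without having to unpack what the abstract universal-property isomorphism does pointwise.
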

\begin{proof}
According to \cite{os79}, there exists an involution $\tau_g$ on $\calm_{g}$.
But it is easy to see that it can be extended to $\calm_g^{ct}$,
which is defined by
\begin{equation}\label{defoftau}
\tau_g\big([C,\,\alpha]\big)=[C,\,-\alpha],
\end{equation}
where $C$ is a genus-$g$ stable curve of compact type and $\alpha$ is a level-$n$ structure of $C$.
Note that for a hyperelliptic curve $C$, the hyperelliptic involution gives an isomorphism
between $[C,\,\alpha]$ and $[C,\,-\alpha]$.
It follows that (cf. \cite{os79}) the fixed point of the involution $\tau_g$
is exactly the hyperelliptic locus $\calh_g^{ct} \subseteq \calm_g^{ct}$.

Let $\calm_{g,1}=\calm_{g,[n],1}$ (resp. $\calm^{ct}_{g,1}=\calm^{ct}_{g,[n],1}$) be
the moduli space of genus-$g$ smooth curves (resp. stable curves of compact type)
with level-$n$ structure and one marked point,
and ${\rm Fog}^{\rm o}_g:\,\calm_{g,1} \to \calm_{g}$
\big(resp. ${\rm Fog}_g:\,\calm^{ct}_{g,1} \to \calm_g^{ct}$\big)
the natural map by forgetting the marked point.
It is easy to see that ${\rm Fog}^{\rm o}_g$ (resp. ${\rm Fog}_g$) factors through
$\cals_g^{\rm o}=\mathfrak f^{-1}\left(\calm_{g}\right)$ \big(resp. $\cals_g$\big)
$$\xymatrix@M=0.15cm{
\calm_{g,1} \ar@/_9mm/"3,1"_{{\rm Fog}^{\rm o}_g} \ar[d]^{\rho^{\rm o}_g} \ar@{^(->}[rr]
       && \calm^{ct}_{g,1} \ar[d]_{\rho_g} \ar@/^9mm/"3,3"^{{\rm Fog}_g} \\
\cals_g^{\rm o} \ar[d]^{\mathfrak f^{\rm o}} \ar@{^(->}[rr] && \cals_g \ar[d]_{\mathfrak f}\\
\calm_{g} \ar@{^(->}[rr] && \calm_g^{ct}
}$$
Note that $\rho^{\rm o}_g$ is actually an isomorphism;
indeed, for any $p\in \calm_{g}$, the fibres over $p$ in both $\cals_g^{\rm o}$ and $\calm_{g,1}$
is isomorphic to $C_p$, where $C_p$ is the smooth curve corresponding to $p$.
However, it is not the case for $\rho_g$.
By the definition of $\rho_g$, it just maps a stable curve with one marked point to the curve.
But a stable curve with one marked point may become a non-stable curve when forgetting the marked point.
Hence if $p\in \calm_g^{ct} \setminus \calm_{g}$,
then $C_{p,1} \to C_{p}$ is just the contraction of non-stable components contained in $C_{p,1}$,
where $C_{p,1}$ (resp. $C_p$) be the fibre over $p$ in $\calm_{g,1}$ (resp. $\cals_g^{\rm o}$).

We define an involution $\sigma_{g,1}$ on $\calm^{ct}_{g,1}$ by
$$\sigma_{g,1}\big([C,\,\alpha,\,x]\big)=[C,\,-\alpha,\,x],$$
where $C$ is a genus-$g$ stable curve of compact type, $\alpha$ is a level-$n$ structure of $C$,
and $x\in C$ is a marked point.
Then it is clear that
\begin{equation}\label{existlinshi1}
{\rm Fog}_g\circ\sigma_{g,1}=\tau_g\circ{\rm Fog}_g.
\end{equation}
Define an involution $\sigma_g$ on $\cals_g$ by
$$\sigma_g(x)=\rho_g\circ\sigma_{g,1}\circ\rho_g^{-1}(x).$$
According to the description of $\rho_g$ above,
$\sigma_g$ is well-defined.
And the diagram in the lemma commutes by \eqref{existlinshi1}.

If $p\in \calh_g^{ct}$, then the fibre $C_{p,1}$ over $p$ in $\calm^{ct}_{g,1}$
is a stable hyperelliptic curve with one marked point.
The hyperelliptic involution $\iota$ induces an isomorphism
$$[C_{p,1},\,\alpha,\,x] ~\overset{\cong}\lra~ [C_{p,1},\,-\alpha,\,\iota(x)].$$
Hence
$$\sigma_{g,1}\big([C_{p,1},\,\alpha,\,x]\big)=[C_{p,1},\,-\alpha,\,x] = [C_{p,1},\,\alpha,\,\iota(x)].$$
It implies that $\sigma_{g,1}\big|_{C_{p.1}}:\,C_{p,1} \to C_{p,1}$ is just the hyperelliptic involution of $C_{p,1}$.
Hence $\sigma_g|_{C_p}:\,C_p \to C_p$ is the hyperelliptic involution of $C_p$.
\end{proof}

\vspace{0.3cm}
\noindent{\bf Acknowledgements.}
We would like to thank Chris Peters,  Guitang Lan and Jinxing Xu
for discussions on the topic related to global invariant
cycles with locally constant coefficients.
Especially, the proof of Lemma \ref{lemmapeters} comes from a discussion with Chris Peters.
We would also like to thank Shengli Tan and Hao Sun for discussing with us on
Miyaoka-Yau's inequality and the slope inequality,
and Alessandro Ghigi and Stefan M\"uller-Stach for their interests.
We thank Ke Chen for discussing with us on Shimura varieties and the formulation of the paper.
We are grateful to Yanhong Yang for her interests,
careful reading and valuable suggestions..

\end{document}